\newtheorem{thm}{Theorem}[section]
\newtheorem{conj}[thm]{Conjecture}
\newtheorem{cor}[thm]{Corollary}
\newtheorem{lem}[thm]{Lemma}
\newtheorem{prop}[thm]{Proposition}
\theoremstyle{definition}
\newtheorem{dfn}[thm]{Definition}
\newtheorem{ass}[thm]{Assumption}
\newtheorem{rmk}[thm]{Remark}
\newtheorem*{ntn}{Notation}
\numberwithin{equation}{section}
\newcommand{\mcA}{\mathcal{A}}
\newcommand{\mcB}{\mathcal{B}}
\newcommand{\mcC}{\mathcal{C}}
\newcommand{\mcE}{\mathcal{E}}
\newcommand{\mcF}{\mathcal{F}}
\newcommand{\mcH}{\mathcal{H}}
\newcommand{\mcJ}{\mathcal{J}}
\newcommand{\mcR}{\mathcal{R}}
\newcommand{\mcX}{\mathcal{X}}
\newcommand{\bbG}{\mathbb{G}}
\newcommand{\bbN}{\mathbb{N}}
\newcommand{\bbQ}{\mathbb{Q}}
\newcommand{\bbR}{\mathbb{R}}
\newcommand{\bbZ}{\mathbb{Z}}
\newcommand{\msfE}{\mathsf{E}}
\newcommand{\mean}[2]{\left\langle#1\right\rangle_{#2}}
\newcommand{\trinorm}[1]{{\left\vert\kern-0.25ex\left\vert\kern-0.25ex\left\vert #1
    \right\vert\kern-0.25ex\right\vert\kern-0.25ex\right\vert}}
\newcommand{\indicator}[1]{\mathds{1}_{#1}} %indicator
\newcommand{\compl}[1]{#1^{c}}
\newcommand{\closure}[1]{\overline{#1}}
\newcommand{\abs}[1]{\left\lvert#1\right\rvert}
\newcommand{\bord}[1]{\partial #1}
\newcommand{\norm}[1]{\left\lVert#1\right\rVert}
\newcommand{\diam}{\mathop{\operatorname{diam}}}
\newcommand{\dist}{\mathop{\operatorname{dist}}}
\newcommand{\supp}{\mathop{\operatorname{supp}}}
\def\Xint#1{\mathchoice
{\XXint\displaystyle\textstyle{#1}}%
{\XXint\textstyle\scriptstyle{#1}}%
{\XXint\scriptstyle\scriptscriptstyle{#1}}%
{\XXint\scriptscriptstyle\scriptscriptstyle{#1}}%
\!\int}
\def\XXint#1#2#3{{\setbox0=\hbox{$#1{#2#3}{\int}$}
\vcenter{\hbox{$#2#3$}}\kern-.5\wd0}}
\def\mint{\Xint-}
\newcommand{\GSC}{\mathrm{GSC}}
\begin{document}

% \title[short text for running head]{full title}
\title[Construction of energies on Sierpi\'{n}ski carpets]{Construction of $p$-energy and associated energy measures on Sierpi\'{n}ski carpets}

%    author one information
% \author[short version for running head]{name for top of paper}
\author{Ryosuke Shimizu}
\address{Graduate School of Informatics, Kyoto University, Yoshida-Honcho, Sakyo-ku, Kyoto 606-8501, Japan}
\curraddr{}
\email{r.shimizu@acs.i.kyoto-u.ac.jp}
\thanks{The author was supported in part by JSPS KAKENHI Grant Number JP20J20207.}

%    author two information
%\author{}
%\address{}
%\curraddr{}
%\email{}
%\thanks{}

%    \subjclass is required.
\subjclass[2020]{Primary 28A80, 30L99, 31E99; Secondary 46E36, 31C45.}

\date{March 26, 2023}

\dedicatory{}

\keywords{Sierpi\'{n}ski carpet, $p$-energy, $p$-energy measure, nonlinear potential theory}

%    Abstract is required.
\begin{abstract}
  We establish the existence of a scaling limit $\mathcal{E}_p$ of discrete $p$-energies on the graphs approximating a generalized Sierpi\'{n}ski carpet for $p > d_{\text{ARC}}$, where $d_{\text{ARC}}$ is the Ahlfors regular conformal dimension of the underlying generalized Sierpi\'{n}ski carpet.
  Furthermore, the function space $\mathcal{F}_{p}$ defined as the collection of functions with finite $p$-energies is shown to be a reflexive and separable Banach space that is dense in the set of continuous functions with respect to the supremum norm.
  In particular, $(\mathcal{E}_2, \mathcal{F}_2)$ recovers the canonical regular Dirichlet form constructed by Barlow and Bass \cite{BB89} or Kusuoka and Zhou \cite{KZ92}.
  We also provide $\mathcal{E}_{p}$-energy measures associated with the constructed $p$-energy and investigate its basic properties like self-similarity and chain rule.
\end{abstract}

\maketitle

%    Text of article.

%=== Introduction ===
\section{Introduction}\label{sec:intro}
On Euclidean spaces, the \emph{nonlinear potential theory} is built on the theory of the $(1, p)$-Sobolev spaces $W^{1, p}$ and the $p$-energy $\int \abs{\nabla f}^{p}\,dx$.
The main aim of this paper is to construct and study $p$-energies on Sierpi\'{n}ski carpets as a prototype of nonlinear potential theory on complicated metric spaces like ``fractals'' (see also \cite{KM21+}*{Problem 7.6}).
There has been significant progress on ``analysis and probability'' on complicated spaces beyond Euclidean spaces over the last several decades.
The earlier works are the constructions of diffusion processes, which is called the \emph{Brownian motions}, on self-similar sets in 1980s and 1990s.
(For details and precise history of ``analysis on fractals'', see the ICM survey of Kumagai \cite{Kum14} for example.)
In particular, a class of self-similar sets called \emph{generalized Sierpi\'{n}ski carpets} (see Figure \ref{fig.SCs}), is one of the successful examples. 
In this introduction, we restrict to the case of \emph{the standard Sierpi\'{n}ski carpet} (the left in Figure \ref{fig.SCs}), SC for short, for simplicity. 
The first Brownian motion on the SC was given by Barlow and Bass in \cite{BB89}, where they obtained the Brownian motion as a scaling limit of Brownian motions on Euclidean regions approximating the SC.
From an analytic viewpoint, the result of Barlow and Bass gives $2$-energy $\mcE_2$ and the associated $(1,2)$-``Sobolev'' space $\mcF_2$, namely \emph{regular Dirichlet form} on the SC.
Recall that a tuple of $2$-energy $\int\abs{\nabla f}^{2}\,dx$ (on $L^{2}(\bbR^{N}, dx)$) and $(1, 2)$-Sobolev space $W^{1, 2}$ is a typical example of regular Dirichlet forms, which corresponds to the classical Brownian motion on $\bbR^{N}$.
Although it is difficult to define the gradient $\nabla f$ on the SC, we can say that a suitable $2$-energy ``$\int\abs{\nabla f}^{2}\,dx$'' exists on the SC.
Later, Kusuoka and Zhou \cite{KZ92} gave an alternative construction of a regular Dirichlet form as a scaling limit of discrete $2$-energies on a series of graphs approximating the SC as shown in Figure \ref{fig.SCapp}.
Our work gives a ``canonical'' construction of $p$-energy $\mcE_p$ and the associated $(1,p)$-``Sobolev'' space $\mcF_p$ on the SC, which play the same roles as the pair of $\int\abs{\nabla f}^{p}\,dx$ and the Sobolev space $W^{1, p}$, by extending and simplifying the method of Kusuoka and Zhou.

%\begin{figure}[b]
%    \centering
%    \includegraphics[height=110pt]{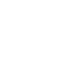}
%    \caption{The planar Sierpi\'{n}ski carpet}
%    \label{fig.SCs}
%\end{figure}

\begin{figure}[tb]\centering
	\includegraphics[height=80pt]{SC_level5.pdf}\hspace*{9pt}
	\includegraphics[height=80pt]{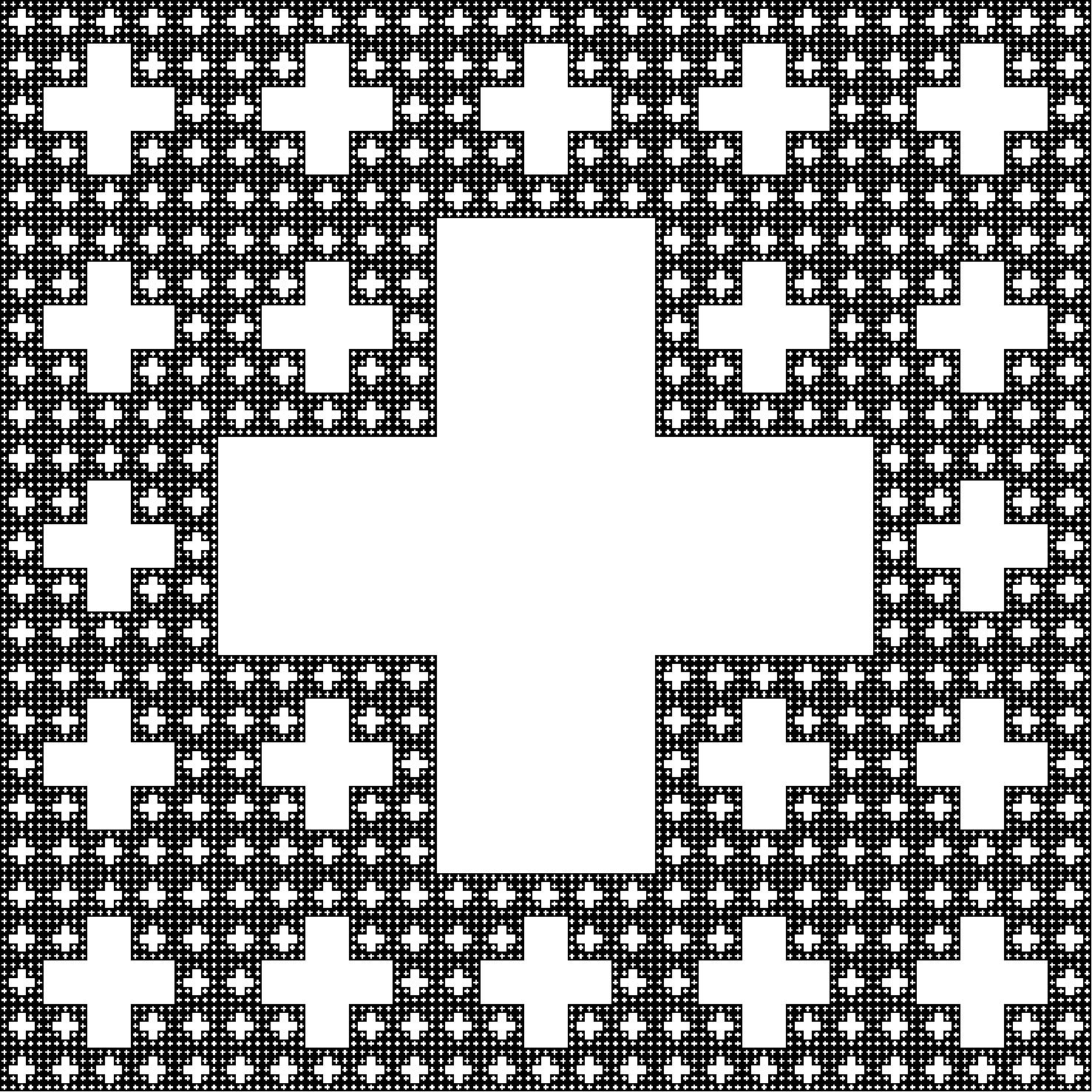}\hspace*{8pt}
	\includegraphics[height=80pt]{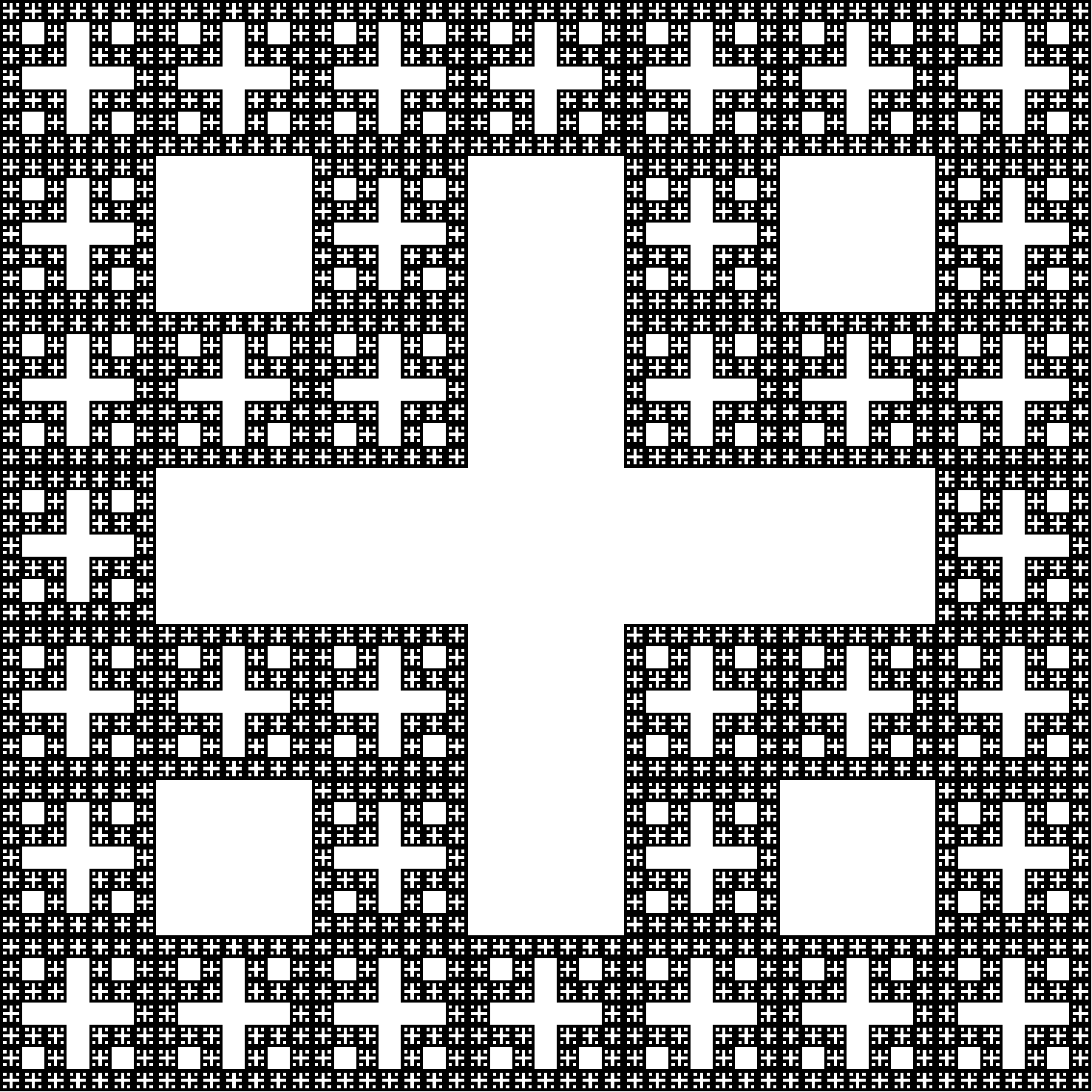}\hspace*{8pt}
	\includegraphics[height=80pt]{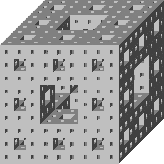}
	\caption{Sierpi\'{n}ski carpet (left), two other generalized Sierpi\'{n}ski
	carpets and Menger sponge (right)}\label{fig.SCs}
\end{figure}

Let us describe briefly our strategy to construct $(\mcE_p, \mcF_p)$ on the SC.
We write $(K, d, \mu)$ to denote the SC as a metric measure space, that is, $K$ is the Sierpi\'{n}ski carpet, $d$ is the Euclidean metric of $\bbR^{2}$ and $\mu$ is the $\dim_{\text{H}}(K, d)$-dimensional Hausdorff measure on $(K, d)$, where $\dim_{\text{H}}(K, d) = \log{8}/\log{3}$ is the Hausdorff dimension of $(K, d)$.
Let $\{ G_{n} \}_{n \ge 1}$ be a series of finite graphs approximating the SC whose edge set is denoted by $E_n$ (see Figure \ref{fig.SCapp} and Definition \ref{dfn.appSC}).
Then discrete $p$-energy $\mcE_{p}^{G_{n}}$ on $G_{n}$ is
\[
\mcE_{p}^{G_{n}}(f) = \frac{1}{2}\sum_{(x,y) \in E_{n}}\abs{M_{n}f(x) - M_{n}f(y)}^{p},
\]
where $M_{n}$ is a discretization operator from $L^{p}(K, \mu)$ to $\bbR^{G_{n}}$ (see Section \ref{sec:preli} for the precise definition).
To obtain an appropriate non-trivial limit of discrete $p$-energies, some renormalization is necessary (see \cite{Bar13} for example).
We will see that the behavior of $\mcR_{p}^{(n)}$ defined as
\begin{equation*}
\mcR_{p}^{(n)} \coloneqq \left(\inf\left\{ \mcE_{p}^{G_{n}}(M_{n}f) \;\middle|\;
    \begin{array}{c}
    \text{$f \in L^{p}(K, \mu)$ with $M_{n}f \equiv 0$ on the left side of $G_{n}$} \\
    \text{and $M_{n}f \equiv 1$ on   the right side} \\
    \end{array}
    \right\}\right)^{-1}
\end{equation*}
gives us the proper renormalization constant of discrete $p$-energies $\mcE_{p}^{G_{n}}$.
In fact, for $p = 2$, Barlow and Bass \cite{BB90} have proved that there exist constants $\rho_{2} > 0$ (the so-called \emph{resistance scaling factor}) and $C \ge 1$ such that
\begin{equation}\label{intro.2res}
    C^{-1}\rho_{2}^{n} \le \mcR_{2}^{(n)} \le C\rho_{2}^{n}, \quad n \in \bbN.
\end{equation}
What Kusuoka and Zhou have shown is that, roughly speaking, the Dirichlet form $(\mcE_{2}, \mcF_{2})$ on the SC is obtained as
\[
\mcF_{2} = \biggl\{ f \in L^{2}(K, \mu) \biggm| \sup_{n \ge 1}\rho_{2}^{n}\mcE_{2}^{G_{n}}(M_{n}f) < \infty \biggr\}
\]
and $\mcE_{2}(f) = \lim_{k \to \infty}\rho_{2}^{n_{k}}\mcE_{2}^{G_{n_k}}(M_{n_k}f)$ for some subsequence $\{ n_k \}_{k \ge 1}$.

\begin{figure}[t]
    \centering
    \includegraphics[height=85pt]{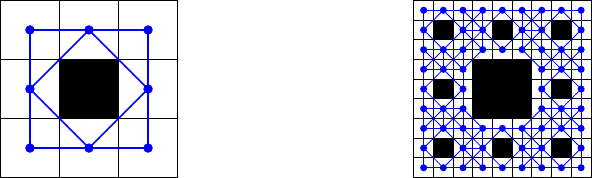}
    \caption{Graphical approximation $\{ G_{n} \}_{n \ge 1}$ of the SC (This figure draws $G_{1}$ and $G_{2}$ in blue)}
    \label{fig.SCapp}
\end{figure}

By using $p$-combinatorial modulus, which is one of fundamental tools in ``quasiconformal geometry'', Bourdon and Kleiner \cite{BK13} have generalized \eqref{intro.2res}, i.e. they have ensured the existence of a constant $\rho_{p} > 0$ such that
\begin{equation}\label{intro.pres}
    C^{-1}\rho_{p}^{n} \le \mcR_{p}^{(n)} \le C\rho_{p}^{n}, \quad n \in \bbN.
\end{equation}
Then our $(1,p)$-``Sobolev'' space $\mcF_{p}$ equipped with the norm $\norm{\,\cdot\,}_{\mcF_{p}}$ is defined by
\[
\mcF_{p} = \biggl\{ f \in L^{p}(K, \mu) \biggm| \sup_{n \ge 1}\rho_{p}^{n}\mcE_{p}^{G_{n}}(M_{n}f) < \infty \biggr\},
\]
and
\[
\norm{f}_{\mcF_{p}} = \norm{f}_{L^{p}} + \left(\sup_{n \ge 1}\rho_{p}^{n}\mcE_{p}^{G_{n}}(M_{n}f)\right)^{1/p}.
\]
Under the following assumption (see Assumption \ref{assum.lowdim}):
\begin{equation}\label{intro.assum}
    \text{$1 < p < \infty$ and $\rho_{p} > 1$},
\end{equation}
we will prove that $\mcF_{p}$ is continuously embedded in the H\"{o}lder space:
\[
\mcC^{0, \theta_{p}} = \biggl\{ f\colon K\to \bbR \biggm| \sup_{x \neq y \in K}\frac{\abs{f(x) - f(y)}}{d(x, y)^{\theta_{p}}} < \infty \biggr\},
\]
where $\theta_{p} \coloneqq \log\rho_{p}/p\log{3}$ (Theorem \ref{thm.embed}).
This embedding result is very powerful.
Indeed, we will deduce the \emph{closedness}, i.e. $(\mcF_{p}, \norm{\,\cdot\,}_{\mcF_{p}})$ is a Banach space, and the \emph{regularity}, i.e. $\mcF_{p}$ is dense in $\mcC(K) = \{ f\colon K \to \bbR \mid \text{$f$ is continuous} \}$ with the sup norm, from this embedding (see Theorems \ref{thm.close} and \ref{thm.core}).

Moreover, the \emph{separability} of $(\mcF_{p}, \norm{\,\cdot\,}_{\mcF_{p}})$ will be deduced from the \emph{reflexivity} of $\mcF_{p}$ (Theorems \ref{thm.newnorm} and \ref{thm.sep}).
Thanks to the separability, one easily sees that, by the diagonal procedure, a subsequential limit $\lim_{k \to \infty}\rho_{p}^{n_k}\mcE_{p}^{G_{n_k}}(M_{n_k}f)$ exists for all $f \in \mcF_{p}$.
Our final object $\mcE_{p}$ called the $p$-energy on the SC will be constructed through these subsequential limits\footnote{To construct ``canonical'' $p$-energy $\mcE_{p}$ on the SC, we need to follow some additional procedures as shown in the work of Kusuoka and Zhou. In this paper, we will introduce new graphs $\{ \bbG_{n} \}_{n \ge 1}$ and consider discrete $p$-energies on them to get a ``good'' $p$-energy. These procedures are described in Section \ref{sec:constr}. See Theorem \ref{thm.main1.2} for the meaning of canonical $p$-energies.}.

The assumption \eqref{intro.assum} is essential for the continuous embedding of $\mcF_{p}$ in the H\"{o}lder space $\mcC^{0, \theta_{p}}$ and has a close connection with the \emph{Ahlfors regular conformal dimension} $\dim_{\textup{ARC}}(K, d)$ which is defined by
\begin{equation}\label{intro.ARCdim}
    \dim_{\textup{ARC}}(K, d) = \inf\left\{ \alpha \;\middle|\;
    \begin{array}{c}
    \text{there exists a metric $\rho$ on $K$ which is} \\
    \text{quasisymmetric to $d$ and $\alpha$-Ahlfors regular} \\
    \end{array}
    \right\}.
\end{equation}
(For the precise definitions of \emph{Ahlfors regularity} and being \emph{quasisymmetric}, see \eqref{eq.AR} and Definition \ref{dfn.ARCdim}.)
Indeed, by results of Carrasco Piaggio \cite{CP13} and Kigami \cite{Kig20}, the condition \eqref{intro.assum} is equivalent to
\begin{equation}\label{intro.lowdim}
    p > \dim_{\textup{ARC}}(K, d).
\end{equation}
We expect that this condition \eqref{intro.lowdim} represents a ``low-dimensional'' phase.
More precisely, we regard the H\"{o}lder embedding $\mcF_{p} \hookrightarrow \mcC^{0,\theta_{p}}$ as a generalization of the classical Sobolev embedding (a consequence of Morrey's inequality).
For this reason, we naturally arrive at the following conjecture.

\begin{conj}\label{conj.crit-dim}
    $\dim_{\textup{ARC}}(K, d) = \inf\{ p \mid \text{$\mcF_{p}$ is embedded in a subset of $\mcC(K)$} \}$.
\end{conj}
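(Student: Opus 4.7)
First I would split the claimed equality into its two inequalities. The upper bound
\[
\inf\{p : \mcF_p \hookrightarrow \mcC(K)\} \;\le\; \dim_{\textup{ARC}}(K, d)
\]
is essentially free: for any $p > \dim_{\textup{ARC}}(K, d)$ the equivalence \eqref{intro.assum}$\,\Leftrightarrow\,$\eqref{intro.lowdim} of Carrasco Piaggio \cite{CP13} and Kigami \cite{Kig20} gives $\rho_p > 1$, so Theorem \ref{thm.embed} supplies $\mcF_p \hookrightarrow \mcC^{0, \theta_p} \subset \mcC(K)$. The substantive task is therefore the matching lower bound: for each $1 < p < \dim_{\textup{ARC}}(K, d)$, to exhibit an $f \in \mcF_p$ that admits no continuous representative.

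My plan for this lower bound is to obtain such an $f$ as a weak subsequential limit of the extremal discrete functions realizing $\mcR_p^{(n)}$. Let $\psi_n \in \bbR^{G_n}$ minimize $\mcE_p^{G_n}$ subject to $\psi_n \equiv 0$ on the left side and $\psi_n \equiv 1$ on the right side of $G_n$, so that $\mcE_p^{G_n}(\psi_n) = 1/\mcR_p^{(n)} \asymp \rho_p^{-n}$ by \eqref{intro.pres}. In the target regime one expects $\rho_p < 1$, so the unrenormalized energies blow up while the renormalized quantities $\rho_p^n \mcE_p^{G_n}(\psi_n)$ stay bounded; the challenge is to promote each $\psi_n$ to an extension $\Psi_n \colon K \to [0, 1]$ that remains uniformly bounded in $\mcF_p$. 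The naive ``cell-constant'' extension is not enough: each level-$n$ jump is resolved into $3^{m-n}$ level-$m$ edges, so $\rho_p^m \mcE_p^{G_m}(M_m \Psi_n) \asymp (3\rho_p)^{m-n}$, which is unbounded once $3\rho_p > 1$. I would instead build $\Psi_n$ scale by scale as a discrete $p$-harmonic extension, in which at each finer scale $m$ the projection $M_m \Psi_n$ minimizes $\mcE_p^{G_m}$ given the coarser data on $G_n$. Once such a family $\{\Psi_n\}$ is available, reflexivity (Theorem \ref{thm.newnorm}) together with the compactness of $\mcF_p \hookrightarrow L^p(K, \mu)$ yields a weak $\mcF_p$-limit $\Psi \in \mcF_p$ along a subsequence.

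The main obstacle, and the reason the statement remains a conjecture, is to verify that this $\Psi$ really is discontinuous. Each $\Psi_n$ equals $0$ on a $\mu$-positive neighborhood of the left side of $K$ and $1$ on one of the right side, but $L^p$-convergence alone does not control the trace of $\Psi$ on the $\mu$-null sides themselves. I would close this gap through a continuum $p$-capacity
\[
\mathrm{Cap}_p(A, B) \coloneqq \inf\bigl\{\,\norm{f}_{\mcF_p}^{p} \;\big|\; f \in \mcF_p,\; f \le 0 \text{ on } A,\; f \ge 1 \text{ on } B \,\bigr\},
\]
compared to the discrete resistance: the $p$-modulus estimates of Bourdon--Kleiner \cite{BK13} should deliver the upper bound $\mathrm{Cap}_p(\mathrm{left}, \mathrm{right}) \lesssim \mcR_p^{(n)} \to 0$, and a matching trace-based lower bound would then force $\mathrm{Cap}_p(\mathrm{left}, \mathrm{right}) = 0$, incompatible with $\mcF_p \hookrightarrow \mcC(K)$. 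Executing this capacity/trace dichotomy in the supercritical phase $\rho_p < 1$ is where all the real difficulty lies, and would identify $\dim_{\textup{ARC}}(K, d)$ as the critical exponent of Conjecture \ref{conj.crit-dim}.
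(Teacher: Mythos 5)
The statement you are addressing is Conjecture \ref{conj.crit-dim}, which the paper explicitly leaves open: no proof is given, and the introduction states that the missing ingredient is the regularity of $\mcF_{p}$ for $p \le \dim_{\textup{ARC}}(K, d)$, described as ``a big open problem for future work.'' So there is no proof in the paper to compare against, and your proposal must be judged as a research sketch. Your treatment of the easy inequality is correct and is the intended one: for $p > \dim_{\textup{ARC}}(K, d)$, Theorem \ref{thm.lowdim} gives $\rho_{p} > 1$ and Theorem \ref{thm.embed} gives $\mcF_{p} \hookrightarrow \mcC^{0, (\beta_{p} - \alpha)/p} \subset \mcC(K)$, so the infimum is at most $\dim_{\textup{ARC}}(K, d)$.

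For the hard inequality your sketch has concrete gaps beyond the one you flag. First, you invoke reflexivity (Theorem \ref{thm.newnorm}) and the compact embedding (Proposition \ref{prop.cptembed}) to extract a limit of the $\Psi_{n}$, but both are proved only under Assumption \ref{assum.lowdim}: their proofs run through the weak monotonicity (Corollary \ref{cor.wm}), hence through (KM$_{p}$), and in the case of Proposition \ref{prop.cptembed} through the very H\"{o}lder embedding you are trying to negate. None of this machinery is available for $p \le \dim_{\textup{ARC}}(K, d)$, where even the basic structure of $\mcF_{p}$ (completeness, comparability of $\sup_{n}$ and $\varliminf_{n}$) is not established. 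Second, the capacity step is miscalibrated: the norm $\norm{\,\cdot\,}_{\mcF_{p}}$ already carries the renormalization $\rho_{p}^{n}$, so the natural value of your $\mathrm{Cap}_{p}(\text{left}, \text{right})$ is $\sup_{n}\rho_{p}^{n}\,\mcC_{p}^{(n)}(\text{L}\leftrightarrow\text{R}) \asymp 1$ by Theorem \ref{thm.mult} and Lemma \ref{lem.top-bottom} --- bounded away from $0$ and $\infty$ for every $p$, not tending to $0$; you appear to have compared the capacity with the resistance $\mcR_{p}^{(n)}$ rather than with the conductance. Relatedly, a finite-energy function equal to $0$ on $K^{\textup{L}}$ and $1$ on $K^{\textup{R}}$ carries no obstruction to continuity: for $p > \dim_{\textup{ARC}}(K, d)$ such a function exists and is H\"{o}lder continuous (Lemma \ref{lem.p-harm_LR}), so a weak limit of your $\Psi_{n}$ could perfectly well be continuous. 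Any discontinuity must be produced locally, e.g.\ at points (where one expects zero $p$-capacity once $\beta_{p} < \alpha$), not across the left--right pair. Finally, note that your route differs from the one the paper itself proposes, which is to first establish the regularity (density of $\mcF_{p} \cap \mcC(K)$ in $\mcC(K)$) in the subcritical regime, in the spirit of \cite{BB99} for $p = 2$.
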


To show this conjecture, what we need is the regularity of $\mcF_{p}$, i.e. the density of $\mcF_{p} \cap \mcC(K)$ in $\mcC(K)$ with the sup norm, for $p \le \dim_{\textup{ARC}}(K, d)$ (see \cite{BB99} for $p = 2$).
This is a big open problem for future work.

Besides our ``Sobolev spaces'' $\mcF_{p}$, there has already been an established theory of ``Sobolev spaces on metric measure spaces'' based on the notion of \emph{upper gradients}, which is a counter part of $\abs{\nabla f}$ introduced by Heinonen and Koskela in \cite{HK98}.
We refer to \cites{HKST, Hei} for details.
From the viewpoint of this theory, our $(1, p)$-``Sobolev'' space $\mcF_{p}$ can be seen as a \emph{fractional Korevaar--Schoen Sobolev space}.
Indeed, we will give the following representation of $\mcF_{p}$ (Theorem \ref{thm.main2}):
\begin{equation}\label{intro.LB}
    \mcF_{p} = \left\{ f \in L^{p}(K, \mu) \;\middle|\; \varlimsup_{r \downarrow 0}\int_{K}\mint_{B_{d}(x, r)}\frac{\abs{f(x) - f(y)}^{p}}{r^{\beta_{p}}}\,d\mu(y)d\mu(x) < \infty \right\},
\end{equation}
where $\beta_{p} = \log(8\rho_{p})/\log{3}$.
When $p = 2$, this result is well-known (see \cites{GHL03, Kum00, KS05} for example) and the parameter $\beta_{2}$ is called the \emph{walk dimension}.
For detailed expositions of $\beta_{2}$, see \cites{GHL03, Kum14, KS05} for example.
If $\beta_{p} = p$, then the expression \eqref{intro.LB} coincides with (a slight modification of) the Korevaar--Schoen $(1, p)$-Sobolev space \cites{KS93,KM98}.
However, it is well-known that a strict inequality $\beta_{2} > 2$ holds on the SC (see \cite{BB99}*{Proposition 5.1} or \cite{Kaj20+}).
This phenomenon suggests that the existing theory of ``Sobolev spaces on metric measure spaces'' do not give any non-trivial $(1,p)$-Sobolev spaces on the SC\footnote{It is also well-known that the \emph{Newtonian $(1, p)$-Sobolev space} on the SC becomes $L^{p}(K, \mu)$ due to the lack of plenty rectifiable curves in the SC. See \cite{MT}*{Proposition 4.3.3} and \cite{HKST}*{Proposition 7.1.33} for example.}.
This is one of the reasons why we try to provide an alternative theory of $(1, p)$-``Sobolev'' space and $p$-energy on the SC.

Another major objective of this paper is the $\mcE_{p}$-energy measures associated with $p$-energy $\mcE_{p}$.
In terms of a Dirichlet form $(\mcE_{2}, \mcF_{2})$, $\mcE_{2}$-energy measure of a function $f \in \mcF_{2}$ is defined as the unique Borel measure $\mu_{\langle f \rangle}^{2}$ on $K$ such that
\begin{equation}\label{intro.em}
    \int_{K}g\,d\mu_{\langle f \rangle}^{2} = \mcE_{2}(f, fg) - \frac{1}{2}\mcE_{2}(f^{2}, g), \quad g \in \mcF_{2}.
\end{equation}
(Note that we can define the form $\mcE_{2}(f, g)$ by the polarization: $\mcE_{2}(f, g) \coloneqq \frac{1}{4}\bigl(\mcE_{2}(f + g) - \mcE_{2}(f - g)\bigr)$.)
This measure plays the role of $\abs{\nabla f(x)}^{2}\,dx$ if the underlying space is Euclidean.
On the other hand, for any $f \in \mcF_{2}$ with $\mcE_{2}(f) \neq 0$, the $\mcE_{2}$-energy measure $\mu_{\langle f \rangle}^{2}$ and the $\log_{3}{8}$-dimensional Hausdorff measure $\mu$ on the SC are mutually singular due to the fact that $\beta_{2} > 2$ by a result of Hino \cite{Hin05}.
See \cite{KM20} for an extension of this fact to general metric measure Dirichlet spaces.
This phenomenon is also far different from ``smooth'' settings and motivates the study of $\mcE_{2}$-energy measures on fractals.

For general $p$, due to the lack of  a counterpart of the expression in the right-hand side of \eqref{intro.em}, we will choose to generalize Hino's alternative method of the construction of $\mcE_{2}$-energy measure.
Namely, for any $f \in \mcF_{p}$, we first construct a measure $\mathfrak{m}^{p}_{\langle f \rangle}$ on the shift space $\{ 1, \dots, 8 \}^{\bbN \cup \{ 0 \}}$ associated with the SC and define our $\mcE_{p}$-energy measure $\mu^{p}_{\langle f \rangle}$ as the pushforward measure of $\mathfrak{m}^{p}_{\langle f \rangle}$ under the natural quotient map $\pi\colon\{ 1, \dots, 8 \}^{\bbN \cup \{ 0 \}} \to K$ (see Proposition \ref{prop.pi} for a description of $\pi$), i.e. $\mu^{p}_{\langle f \rangle}(A) = \mathfrak{m}^{p}_{\langle f \rangle}(\pi^{-1}(A))$ for any Borel set $A$ of $K$.
Then our $\mcE_{p}$-energy measure $\mu^{p}_{\langle f \rangle}$ is associated with $\mcE_{p}$ in the sense that $\mu^{p}_{\langle f \rangle}(K) = \mcE_{p}(f)$ (for more details on relations between $\mu^{p}_{\langle f \rangle}$ and $\mcE_{p}$, see Theorem \ref{thm.main3}-(c)).

Furthermore, we will show the \emph{chain rule}: for any $\Phi \in \mcC^{1}(\bbR)$, 
\begin{equation}\label{intro.chain}
    d\mu_{\langle \Phi \circ f \rangle}^{p} = \abs{\Phi' \circ f}^{p}d\mu_{\langle f \rangle}^{p}.
\end{equation}
When $p = 2$, the chain rule \eqref{intro.chain} is proved by using integral expressions of $\mcE_{2}$ (see \cite{FOT}*{(3.2.12)} for example), but such representations take full advantage of the fact that $p = 2$.
Alternatively, we prove \eqref{intro.chain} by introducing a new series of graphs $\{ \bbG_{n} \}_{n \ge 1}$ (see the beginning of subsection \ref{subsec:newgraph}), which is embedded in the SC, and analyzing discrete $p$-energies $\bigl\{ \mcE_{p}^{\bbG_{n}}(\Phi \circ f) \bigr\}_{n \ge 1}$.
This approach is actually valid since our $p$-energies are based on subsequential limits of $\bigl\{ \rho_{p}^{n}\mcE_{p}^{\bbG_{n}} \bigr\}_{n \ge 1}$.

The first result on the existence of suitable $p$-energy on fractals is due to \cite{HPS04}, where the Sierpi\'{n}ski gaskets are considered. 
(Added in revision: for \emph{p.c.f. self-similar sets}, there are also recent studies \cite{BC23, CGQ22, GYZ22}.)
In the very recent paper \cite{Kig21+}, Kigami has established a theory of $(1, p)$-Sobolev space and $p$-energy on \emph{$p$-conductively homogeneous} compact metric spaces (see \cite{Kig21+} for details).
His paper \cite{Kig21+} includes new construction results even if $p = 2$ (see \cite{Kig21+}*{Sections 12 and 13} for a gallery).
Also, a class of highly symmetric p.c.f. self-similar sets called \emph{nested fractals} is also treated in \cite{Kig21+}*{Section 14}.
However, the construction of $\mcE_{p}$-energy measures associated with the $p$-energy $\mcE_{p}$ is not treated in earlier works.

\noindent
\textbf{Outline.}
This paper is organized as follows.
In Section \ref{sec:preli}, we prepare basic frameworks in this paper and state the main results.
In particular, we give the definition of generalized Sierpi\'{n}ski carpets. 
Sections \ref{sec:poincare} and \ref{sec:lowdim} are devoted to extending results of Kusuoka and Zhou to fit our purpose.
Section \ref{sec:poincare} is a collection of basic estimates of $(p, p)$-Poincar\'{e} constants and $\mcR_{p}^{(n)}$.
In Section \ref{sec:lowdim}, we prove powerful results concerning $(p, p)$-Poincar\'{e} constants (uniform H\"{o}lder estimates and a condition called $p$-Knight Move \hyperlink{KMp}{\textup{(KM$_{p}$)}} for example) under Assumption \ref{assum.lowdim} and finish all preparations to construct $p$-energy $\mcE_{p}$ and $(1,p)$-``Sobolev'' space $\mcF_{p}$.
Section \ref{sec:domain} is devoted to investigating detailed properties of $\mcF_{p}$.
Then, in Section \ref{sec:constr}, we introduce another graphical approximation $\{ \bbG_{n} \}_{n \ge 1}$ and construct a canonical $p$-energy $\mcE_{p}$ (see Theorem \ref{thm.main1} for the precise meaning of `canonical').
Section \ref{sec:em} is devoted to discussions on $\mcE_{p}$-energy measures.
Finally, in Section \ref{sec:betap}, we prove $\beta_{p} > p$ (Theorem \ref{thm.betap_strict}) under the assumption that the underling generalized Sierpi\'{n}ski carpet is embedded in $\mathbb{R}^{2}$. 
The appendix contains proofs of some elementary lemmas.

\begin{ntn}
    In this paper, we use the following notation and conventions.
    \begin{itemize}
        \item [(1)] $\bbN \coloneqq \{ n \in \bbZ \mid n > 0\}$ and $\bbZ_{\ge 0} \coloneqq \bbN \cup \{ 0 \}$.
        \item [(2)] We set $a \vee b \coloneqq \max\{ a, b \}$, $a \wedge b \coloneqq \min\{ a, b \}$ for $a, b \in [-\infty, \infty]$.
        \item [(3)] For any countable set $V$, we define $\bbR^{V} \coloneqq \{ f \mid f\colon V \to \bbR \}$.
        \item [(4)]  For $f\colon \bbR \to \bbR$, define $\mathrm{Lip}(f) \coloneqq \sup_{x \neq y \in \bbR}\frac{\abs{f(x) - f(y)}}{\abs{x - y}}$.
        \item [(5)] Let $X$ be a compact topological space.
        We set $\mcC(X) \coloneqq \{ f\colon X \to \bbR \mid \text{$f$ is continuous} \}$ and write its sup norm by $\norm{f}_{\mcC(X)} \coloneqq \sup_{x \in X}\abs{f(x)}$.
        \item [(6)] Let $X$ be a topological space and let $A$ be a subset of $X$. The topological boundary of $A$ is denoted by $\partial A$, that is $\partial A \coloneqq \closure{A}^{X} \setminus \mathrm{int}_{X}A$.
        \item [(7)] Let $(X, d)$ be a metric space.
        The open ball with center $x \in X$ and radius $r > 0$ is denoted by $B_{d}(x, r)$, that is,
        \[
        B_{d}(x, r) \coloneqq \{ y \in X \mid d(x, y) < r \}.
        \]
        If the metric $d$ is clear in context, then we write $B(x, r)$ for short.
        \item [(8)] Let $K$ be a compact metrizable space and let $\mcB(K)$ denote the Borel $\sigma$-algebra of $K$.
        Let $\mu$ be a Borel (regular) measure on $K$.
        For any $A \in \mcB(K)$ with $\mu(A) > 0$ and $f \in L^{1}(K, \mu)$, we define
        \[
        \mint_{A}f\,d\mu \coloneqq \frac{1}{\mu(A)}\int_{A}f\,d\mu.
        \]
       	\item [(9)] We use $\bigsqcup$ to denote disjoint unions. 
        \item [(10)] Let $D \in \mathbb{N}$. Set $\mathbf{0} = \mathbf{0}^{D} \coloneqq (0)_{k = 1}^{D} \in \mathbb{R}^{D}$ and $\mathbf{e}_{j} = \mathbf{e}_{j}^{D} \coloneqq (\delta_{k, j})_{k = 1}^{D} \in \mathbb{R}^{D}$ for each $j \in \{ 1, \dots, D \}$, where $\delta_{j, k}$ is the Dirac delta. For $x = (x_{k})_{k = 1}^{D}, y = (y_{k})_{k = 1}^{D} \in \mathbb{R}^{D}$, we write $\abs{x - y}_{\mathbb{R}^{D}} = \left(\sum_{k = 1}^{D}\abs{x_{k} - y_{k}}^{2}\right)^{1/2}$. 
    \end{itemize}
\end{ntn}

\noindent 
\textbf{Acknowledgments}
This paper is mainly originated from the author's doctoral thesis at Kyoto University. 
The author would like to express his deepest gratitude toward Professor Jun Kigami for guidance during his graduate course, reading drafts, and stimulating conversations.  
He would also like to thank Professor Naotaka Kajino for giving him several valuable comments. 
In particular, Prof. Kajino has suggested using the approach in \cite{Kaj20+} to prove Theorem \ref{thm.betap_strict} and given him detailed comments about the results in \cite{BBKT10}. 
The author would also like to thank Professors Mathav Murugan and Toshiyuki Tanaka for their careful readings of an earlier version of the manuscript. 
In particular, Prof. Murugan has pointed out a gap in the previous proof of Theorem \ref{thm.embed}. 
Finally, he would like to thank Professor Fabrice Baudoin for a comment regarding the critical Besov exponent in \cite{ABCRST21} (Remark \ref{rmk.conj-betap}) and anonymous referees for  helpful comments and suggestions. 

%This research was supported by JSPS KAKENHI Grant Number JP20J20207.

%=== Preliminary ===
\section{Preliminary and results}\label{sec:preli}

\subsection{Generalized Sierpi\'{n}ski carpets and graphical approximations}\label{subsec.SCappgraph}

We start with the definition of \emph{generalized Sierpi\'{n}ski carpets} and related notations.
The reader is referred to \cite{AOF} for further background and more general framework, namely,  self-similar structure.

Let $D,a \in \mathbb{N}$ with $D \ge 2$, $a \ge 3$ and set $Q_{0} \coloneqq [-1,1]^{D}$. 
Let $S \subsetneq \{ 0, 1, \ldots, a - 1 \}^{D}$ be non-empty, define $f_{i}\colon\mathbb{R}^{D} \to \mathbb{R}^{D}$ by $f_{i}(x) \coloneqq a^{-1}x + 2a^{-1} \cdot i - (a - 1)a^{-1}\sum_{j = 1}^{D}\mathbf{e}_{j}$ for each $i \in \{ 0, 1, \ldots, a - 1 \}^{D}$ and set $Q_{1} \coloneqq \bigcup_{i \in S}f_{i}(Q_{0})$, so that $Q_{1} \subsetneq Q_{0}$.
%In this paper, we regard $S$ as a finite subset of $\mathbb{R}^{D}$. 
Let $K$ be the \emph{self-similar set} associated with $\{f_{i}\}_{i\in S}$, i.e., the unique non-empty compact subset of $\mathbb{R}^{D}$ such that $K = \bigcup_{i \in S}f_{i}(K)$, which exists and satisfies $K \subsetneq Q_{0}$ thanks to $Q_{1}\subsetneq Q_{0}$ by \cite{AOF}*{Theorem 1.1.4}.
Define $F_{i} \coloneqq f_{i}|_{K}$ for each $i \in S$ and $\GSC(D, a, S) \coloneqq (K, S, \{ F_{i} \}_{i \in S})$.
%For $y \in \mathbb{R}^{D}$, define a contraction $f_{i}[y]\colon \mathbb{R}^{D} \to \mathbb{R}^{D}$ by $f_{i}[y] = f_{i} + y$. 

By following \cite{Kaj20+}, we will introduce the notion of \emph{generalized Sierpi\'{n}ski carpets}. 
The following definition is essentially due to Barlow and Bass \cite{BB99}*{Section 2}.
The non-diagonality condition in \cite{BB99}*{Hypotheses 2.1} has been modified later in \cite{BBKT10}.
See \cite{BBKT10}*{Remark 2.10-1.} for details of this correction.
\begin{dfn}[Generalized Sierpi\'{n}ski carpet, {\cite{BBKT10}*{Subsection \textup{2.2}}}]\label{dfn.GSC}
$\GSC(D, a, S)$ is called a \emph{generalized Sierpi\'{n}ski carpet}
if and only if the following four conditions are satisfied:
\begin{enumerate}[label=\textup{(GSC\arabic*)},align=left,leftmargin=*,topsep=2pt,parsep=0pt,itemsep=2pt]
\item\label{GSC1} (Symmetry) $f(Q_{1})=Q_{1}$ for any isometry $f$ of $\mathbb{R}^{D}$ with $f(Q_{0}) = Q_{0}$.
\item\label{GSC2} (Connectedness) $Q_{1}$ is connected.
\item\label{GSC3} (Non-diagonality)
	$\mathrm{int}_{\mathbb{R}^{D}}\bigl(Q_{1}\cap \prod_{k = 1}^{D}[2(i_{k} - \varepsilon_{k})a^{-1}, 2(i_{k}+1)a^{-1}]\bigr)$ is either empty or connected for any $(i_{k})_{k = 1}^{D} \in \mathbb{Z}^{D}$ and
	any $(\varepsilon_{k})_{k = 1}^{D} \in \{ 0, 1 \}^{D}$.
\item\label{GSC4} (Borders included) $[-1, 1] \times \{ 1 \}^{D - 1}\subset Q_{1}$.
\end{enumerate}
\end{dfn}
\begin{rmk}
	In \cites{Kaj20+, BBKT10}, generalized Sierpi\'{n}ski carpets are defined as subspaces of $D$-dimensional unit cube $[0, 1]^{D}$. 
	In this paper, we consider GSC as subspaces of $[-1, 1]^{D}$ instead of $[0, 1]^{D}$ to follow \cite{Kig21+}*{Section 11}. 
\end{rmk}

As special cases of Definition \ref{dfn.GSC}, the \emph{standard Sierpi\'{n}ski carpet} (left in Figure \ref{fig.SCs}) and \emph{Menger sponge} (right in Figure \ref{fig.SCs}), are given by $\GSC(2, 3, \{ 0, 1, 2 \}^{2} \setminus \{ (1, 1) \})$ and $\GSC(3, 3, \{ (i_1, i_2, i_3) \in \{ 0, 1, 2 \}^{3} \mid \sum_{k = 1}^{3}\indicator{1}(i_k) \le 1 \})$ respectively. 

In this paper, we suppose that $\GSC(D, a, S) = (K, S, \{ F_i \}_{i \in S})$ is a generalized Sierpi\'{n}ski carpet and that $d \colon K \times K \to [0, +\infty)$ is the normalized Euclidean metric on $K$, i.e. $d(x, y) = \frac{1}{2\sqrt{D}}\abs{x - y}_{\mathbb{R}^{D}}$. 

Next, by following \cites{AOF, Kaj20+, Kig21+}, we introduce useful notations to express the symmetries of $(K, S, \{ F_{i} \}_{i \in S})$ and to describe the topological structure as a self-similar set of $(K, S, \{ F_{i} \}_{i \in S})$. 
\begin{dfn}\label{dfn.hyperplane}
	Define 
	\[
		B_{j, \sigma} = \{ (x_1, \dots, x_{D}) \in Q_{0} \mid x_{j} = \sigma \}
	\]
	for $j \in \{ 1, \dots, D \}$ and $\sigma \in \{ -1, 0, +1 \}$. 
	We also define the hyperplane 
	\[
	\mathcal{H}_{j_{1}, j_{2}}^{+} = \bigl\{ (x_1, \dots, x_{D}) \in \mathbb{R}^{D} \bigm| x_{j_{1}} =  x_{j_{2}} \bigr\}, 
	\]
	and
	\[
	\mathcal{H}_{j_{1}, j_{2}}^{-} = \bigl\{ (x_1, \dots, x_{D}) \in \mathbb{R}^{D} \bigm| x_{j_{1}} = - x_{j_{2}} \bigr\}, 
	\] 
	for $j_{1}, j_{2} \in \{ 1, \dots, D \}$ with $j_{1} \neq j_{2}$.
	Moreover, define 
	\[
	\mathcal{H}_{j_{1}, j_{2}}^{+, \le} = \bigl\{ (x_1, \dots, x_{D}) \in \mathbb{R}^{D} \bigm| x_{j_{1}} \le x_{j_{2}} \bigr\}
	\]
	and
	\[
	\mathcal{H}_{j_{1}, j_{2}}^{+, \ge} = \bigl\{ (x_1, \dots, x_{D}) \in \mathbb{R}^{D} \bigm| x_{j_{1}} \ge x_{j_{2}} \bigr\}. 
	\] 
	We also define $\mathcal{H}_{j_{1}, j_{2}}^{-, \le}$ and $\mathcal{H}_{j_{1}, j_{2}}^{-, \ge}$ in similar ways. 
\end{dfn}
\begin{dfn}\label{dfn.symmetry}
	We define 
	\[
	\mathbb{B}_{D} = \{ T \mid \text{$T \colon \mathbb{R}^{D} \to \mathbb{R}^{D}$ is an isometry such that $T(Q_{0}) = Q_{0}$}  \}, 
	\]
	and
	\begin{equation}\label{GSCsym}
		\mathcal{G}_{0} = \{ f|_{K} \mid f \in \mathbb{B}_{D} \}.   
	\end{equation}
	Then $\mathcal{G}_{0}$ is a finite subgroup of the set of homeomorphism of $K$ by virtue of \ref{GSC1}. 
	Furthermore, define $R_{j} \in \mathbb{B}_{D}$ as the reflection in the hyperplane $B_{j, 0}$ for each $j \in \{ 1, \dots, D \}$ and define $R_{j_{1}, j_{2}}^{\pm} \in \mathbb{B}_{D}$ as the (restriction of the) reflection in the hyperplane $\mathcal{H}_{j_{1}, j_{2}}^{\pm}$ for each $j_{1}, j_{2} \in \{ 1, \dots, D \}$.
	We also use the same symbols $R_{j}$ and $R_{j_{1}, j_{2}}^{\pm}$ to denote these restrictions to $K$, which are elements of $\mathcal{G}_{0}$. 
\end{dfn}

\begin{dfn}\label{dfn.word}

    (1) We set $W_{m} \coloneqq S^{m} = \{ w_{1}\cdots w_{m} \mid \text{$w_{i} \in S$ for $i \in \{ 1, \dots, m \}$}\}$ for $m \in \bbN$ and $W_{\#} \coloneqq \bigcup_{m = 1}^{\infty}W_{m}$.
    For $w = w_{1}\cdots w_{m} \in W_{\#}$, the unique $m \in \bbN$ with $w \in W_{m}$ is denoted by $\abs{w}$ and set $f_{w} \coloneqq f_{w_{1}} \circ \cdots \circ f_{w_{m}}$, $F_{w} \coloneqq F_{w_{1}} \circ \cdots \circ F_{w_{m}}$, $K_{w} \coloneqq F_{w}(K)$, $O_{w} \coloneqq K_{w} \setminus \bigcup_{v \in W_{m}; v \neq w}(K_{w} \cap K_{v})$, and $[w]_{n} \coloneqq w_{1}\cdots w_{n}$ for $n \in \{ 1, \dots, m \}$.
    We define $W_{0} \coloneqq \{ \emptyset \}$ and $[w]_{0} \coloneqq \emptyset$, where $\emptyset$ is an empty word.
    Set $W_{\ast} = W_{0} \cup W_{\#}$ and $F_{\emptyset} \coloneqq \mathrm{id}_{\mathbb{R}^{D}}|_{K}$.
    We also set $i^{m} \coloneqq i\cdots i \in W_{m}$ for each $i \in S$.
    For $n, m \ge 0$, $v = v_{1} \cdots v_{n} \in W_{n}$ and $w = w_{1} \cdots w_{m} \in W_{m}$, define $v \cdot w \in W_{n + m}$ by $v \cdot w = v_{1} \cdots v_{n} w_{1} \cdots w_{m}$. 
    We also write $vw$ for $v \cdot w$ if there is no confusion. 
    For $n \in \bbN$ and non-empty subset $A$ of $W_{n}$, we define $A \cdot W_{m}$ by setting
    \[
    A \cdot W_{m} = \{ vw \mid v \in A, w \in W_{m} \}.
    \]
    When $A = \{ v \}$ for some $v \in W_{n}$, we write $v \cdot W_{m}$ to denote $\{ v \} \cdot W_{m}$ for simplicity.

    (2) The collection of one-sided infinite sequences of symbols $S$ is denoted by $\Sigma$, that is,
    \begin{equation*}
        \Sigma = \{ \omega = \omega_{1}\omega_{2}\omega_{3}\cdots \mid \omega_{i} \in S \text{ for any } i \in \mathbb{N} \},
    \end{equation*}
    which is called the \emph{one-sided shift space} of symbols $S$.
    We define the \emph{shift map} $\sigma:\Sigma \to \Sigma$ by $\sigma(\omega_{1}\omega_{2}\cdots) = \omega_{2}\omega_{3}\cdots$ for each $\omega_{1}\omega_{2}\cdots \in \Sigma$.
    The branches of $\sigma$ are denoted by $\sigma_{i} \, (i \in S)$, namely $\sigma_{i}:\Sigma \to \Sigma$ is defined as $\sigma_{i}(\omega_{1}\omega_{2}\cdots) = i\omega_{1}\omega_{2}\cdots$ for each $i \in S$ and $\omega_{1}\omega_{2}\cdots \in \Sigma$.
    For $w = w_{1} \cdots w_{m} \in W_{\#}$, we write $\sigma_{w} = \sigma_{w_{1}} \circ \cdots \circ \sigma_{w_{m}}$ and $\Sigma_{w} \coloneqq \sigma_{w}(\Sigma)$.
    For $\omega = \omega_{1}\omega_{2}\cdots \in \Sigma$ and $m \in \mathbb{Z}_{\ge 0}$, we define $[\omega]_{m} = \omega_{1} \cdots \omega_{m} \in W_{m}$.
    
    (3) For any $w = w_{1}\cdots w_{m} \in W_{m}$, we define $\xi(w) = w_{1} \cdots w_{m - 1}$. 
    For a subset $A \subseteq W_{m}$, we write $\xi(A)$ for $\{ \xi(w) \mid w \in A \}$. 
    
    (4) For $A \subseteq \mathbb{R}^{D}$ and $n \in \mathbb{Z}_{\ge 0}$, define 
    \[
    W_{n}[A] = \{ w \in W_{n} \mid A \cap K_{w} \neq \emptyset \}. 
    \]
    
    (5) For $T \in \mathcal{G}_{0}$ and $n \in \mathbb{Z}_{\ge 0}$, $\tau[T]\colon W_{n} \to W_{n}$ is the bijection such that $T(K_{w}) = K_{\tau[T](w)}$ for any $w \in W_{n}$. 
\end{dfn}

We consider $\Sigma$ as a topological space equipped with the product topology of $S^{\bbN}$.
Then the following fact is elemental (see \cite{AOF}*{Theorem 1.2.3}).

\begin{prop}\label{prop.pi}
    For any $\omega = \omega_{1}\omega_{2}\cdots \in \Sigma$, the set $\bigcap_{m \ge 1}K_{[\omega]_{m}}$ contains only one point.
    If we define $\pi\colon \Sigma \to K$ by $\{ \pi(\omega) \} = \bigcap_{m \ge 1}K_{[\omega]_{m}}$, then $\pi$ is a continuous surjective map.
    Furthermore, it holds that $\pi \circ \sigma_{i} = F_{i} \circ \pi$ for each $i \in S$.
\end{prop}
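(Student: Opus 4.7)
The statement is a standard fact about self-similar sets, and my plan is to verify each of the three assertions in turn, using only that the $F_i$ are $a^{-1}$-contractions on the compact set $K$ with $K = \bigcup_{i \in S} F_i(K)$.

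For the first claim, I would observe that $\{K_{[\omega]_m}\}_{m \ge 1}$ is a decreasing sequence of nonempty compact sets: indeed $K_{[\omega]_{m+1}} = F_{[\omega]_m}(F_{\omega_{m+1}}(K)) \subseteq F_{[\omega]_m}(K) = K_{[\omega]_m}$ because $F_{\omega_{m+1}}(K) \subseteq K$. Since $F_{[\omega]_m}$ is a composition of $m$ maps each of which is an $a^{-1}$-similitude on $\bbR^2$, we have $\diam(K_{[\omega]_m}) \le a^{-m}\diam(K) \to 0$. By Cantor's intersection theorem in the compact space $K$ together with vanishing diameter, $\bigcap_{m \ge 1}K_{[\omega]_m}$ is exactly one point, which is the definition of $\pi(\omega)$.

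For continuity and surjectivity, the diameter bound already gives the modulus of continuity: if $\omega,\omega' \in \Sigma$ agree on the first $m$ symbols, then both $\pi(\omega)$ and $\pi(\omega')$ lie in $K_{[\omega]_m}$, so $d(\pi(\omega),\pi(\omega')) \le a^{-m}\diam(K)$, which shows continuity with respect to the product topology on $\Sigma$. For surjectivity, given $x \in K$, I would inductively build $\omega = \omega_1\omega_2\cdots$ by choosing at step $m$ an index $\omega_m \in S$ with $F_{[\omega]_{m-1}}^{-1}(x) \in F_{\omega_m}(K)$; such $\omega_m$ exists by the self-similar identity $K = \bigcup_i F_i(K)$. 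By construction $x \in K_{[\omega]_m}$ for every $m$, so $x = \pi(\omega)$.

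For the conjugacy $\pi \circ \sigma_i = F_i \circ \pi$, I would unwind the definitions: for $\omega \in \Sigma$ one has $[\sigma_i(\omega)]_{m+1} = i[\omega]_m$, hence $K_{[\sigma_i(\omega)]_{m+1}} = F_i(K_{[\omega]_m})$, while $K_{[\sigma_i(\omega)]_1} = K_i \supseteq F_i(K_{[\omega]_m})$. Therefore
\[
\{\pi(\sigma_i(\omega))\} = \bigcap_{m \ge 1} K_{[\sigma_i(\omega)]_m} = \bigcap_{m \ge 1} F_i(K_{[\omega]_m}) = F_i\Bigl(\bigcap_{m \ge 1} K_{[\omega]_m}\Bigr) = \{F_i(\pi(\omega))\},
\]
where pulling $F_i$ outside the intersection uses continuity and injectivity of $F_i$ on the decreasing family of compact sets $K_{[\omega]_m}$. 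Since everything reduces to this elementary bookkeeping with diameters and self-similarity, there is no real obstacle; the only point requiring a moment of care is verifying that the selection procedure in surjectivity actually produces an infinite word, which follows immediately from the self-similar equation applied at every stage.
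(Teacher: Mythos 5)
Your proof is correct and is essentially the standard argument that the paper delegates to the cited reference \cite{AOF}*{Theorem 1.2.3}: nested compact cells with diameters $a^{-m}\diam(K)\to 0$ give a well-defined, continuous $\pi$, the self-similar identity $K=\bigcup_{i\in S}F_{i}(K)$ applied inductively gives surjectivity, and the cylinder identity $K_{[\sigma_{i}(\omega)]_{m+1}}=F_{i}(K_{[\omega]_{m}})$ together with injectivity of $F_{i}$ gives the conjugacy. No gaps.
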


Set $N_{\ast} \coloneqq \#S$ and $\alpha \coloneqq \log{N_{\ast}}/\log{a}$. 
Note that $\alpha < D$ by $S \subsetneq \{ 0, 1, \ldots, a - 1 \}^{D}$.
Let $\mu$ be the self-similar probability measure on $K$ with weight $(1/N_{\ast}, \dots, 1/N_{\ast})$, namely $\mu$ is the unique Borel probability measure on $K$ such that $\mu = N_{\ast}(\mu \circ F_{i})$ for any $i \in S$.
It is known that $\alpha$ is the Hausdorff dimension of $(K, d)$ and that $\mu$ is a constant multiple of the $\alpha$-dimensional Hausdorff measure on $(K, d)$; see \cite{AOF}*{Proposition 1.5.8 and Theorem 1.5.7} for example.
In particular, $d$ is \emph{$\alpha$-Ahlfors regular}, that is, there exists a constant $C_{\text{AR}} \ge 1$ such that
\begin{equation}\label{eq.AR}
    C_{\text{AR}}^{-1}\,r^{\alpha} \le \mu(B(x, r)) \le C_{\text{AR}}\,r^{\alpha},
\end{equation}
for any $x \in K$ and $r \in (0, 1)$.
The following lemma on the self-similar measure $\mu$ is standard (see \cite{Kaj20+}*{Lemma 3.3} for example).
\begin{lem}\label{lem.invariant}
    Let $w \in W_{\#}$ and let $f\colon K \to [-\infty, \infty]$ be Borel measurable.
    Then
    \begin{align*}
       \int_{K}\abs{f \circ F_{w}}\,d\mu = N_{\ast}^{\abs{w}}\int_{K_{w}}\abs{f}\,d\mu \quad
       \text{and} \quad \int_{K_{w}}\abs{f \circ F_{w}^{-1}}\,d\mu = N_{\ast}^{-\abs{w}}\int_{K}\abs{f}\,d\mu.
    \end{align*}
\end{lem}

Now, we define some operators that are frequently used in this paper.
\begin{dfn}\label{dfn.mp}
    Let $p \in [1, \infty)$.
    For $w \in W_{\#}$, we define $F_{w}^{\ast}$, $(F_{w})_{\ast}\colon L^{p}(K, \mu) \to L^{p}(K, \mu)$ by setting
    \[
    F_{w}^{\ast}f \coloneqq f \circ F_{w}, \qquad (F_{w})_{\ast}f \coloneqq
    \begin{cases}
        f \circ F_{w}^{-1} \quad &\text{ on $K_{w}$,} \\
        0 \quad &\text{ on $K \setminus K_{w}$,}
    \end{cases}
    \]
    for each $f \in L^{p}(K, \mu)$.
    For $n \in \bbN$, define $M_n\colon L^{p}(K, \mu) \to \bbR^{W_{n}}$ by setting
    \[
    M_{n}f(w) \coloneqq \mint_{K_w}f\,d\mu = N_{\ast}^{n}\int_{K_w}f\,d\mu, \quad w \in W_{n},
    \]
    for each $f \in L^{p}(K, \mu)$.
\end{dfn}

Note that, from Lemma \ref{lem.invariant}, $M_{n}f(w) = \int_{K}F_{w}^{\ast}f\,d\mu$ for any $f \in L^{p}(K, \mu)$, which implies that, for $m \in \bbN$ and $v \in W_{m}$,
\begin{equation}\label{eq.commute}
    M_{n}(F_{v}^{\ast}f)(w) = \int_{K}F_{w}^{\ast}(F_{v}^{\ast}f)\,d\mu = \int_{K}F_{vw}^{\ast}f\,d\mu = M_{n + m}f(vw).
\end{equation}

We introduce graphical approximations of $K$ and related notations by following \cite{Kig20} and \cite{Kig21+}*{section 2}. 
\begin{dfn}\label{dfn.appSC}
    We define $G_{n} \coloneqq (W_{n}, E_{n})$ by setting
    \[
    E_{n} \coloneqq \{ (v, w) \mid \text{$v, w \in W_{n}$ with $v \neq w$ and $K_{v} \cap K_{w} \neq \emptyset$} \}.
    \]
    (This series of graphs $G_{n}$ is called the horizontal networks in \cite{Kig20}.)
    We also define $\widetilde{G}_{n} = \bigl(W_{n}, \widetilde{E}_{n}\bigr)$ by 
    \[
     \widetilde{E}_{n} \coloneqq \bigl\{ (v, w) \in E_{n} \bigm| \text{$F_{v}(Q_{0}) \cap F_{w}(Q_{0})$ is a $(D - 1)$-dimensional hypercube} \bigr\}. 
    \]
\end{dfn}
We use $d_{G_{n}}$ to denote the graph distance of $G_{n}$.
By \hyperlink{GSC2}{\textup{(GSC2)}}, the graph $G_{n}$ is connected. 
Furthermore, by virtue of \hyperlink{GSC3}{\textup{(GSC3)}}, $\bigl(V_{n}, \widetilde{E}_{n}\bigr)$ is also connected (see \cite{Kaj10.Rmk}*{Proposition 2.5}). 
Moreover, by \cite{Kaj10.Rmk}*{Theorem 2.6}, the following result holds. 

\begin{prop}\label{prop.h-network}
	Let $m \in \mathbb{N}$ and let $v, w \in W_{m}$ satisfy $K_{v} \cap K_{w} \neq \emptyset$. 
	Then it holds that $d_{\widetilde{G}_{m}}(v, w) \le D$. 
\end{prop}

For $n, m, k \in \bbN$ and $w \in W_{m}$, we define a subset $\mcB_{n}(w, k)$ of $W_{n + m}$ by setting
\[
\mcB_{n}(w, k) \coloneqq \bigcup_{v \in W_{m};d_{G_{m}}(v, w) \le k}v \cdot W_{n}.
\]
For each $x \in K$ and $s > 0$, we also define a subset $U_{1}(x, s)$ of $K$ by setting
\begin{equation}\label{dfn.quasiball}
    U_{1}(x, s) \coloneqq \bigcup_{w \in \Lambda_{s, 1}(x)}K_{w}, 
\end{equation}
where 
\begin{equation*}
\Lambda_{s, 1}(x) = \left\{ w \;\middle|\;
    \begin{array}{c}
        \text{$w \in W_{n}$ with $x \in K_{w}$ or $(v, w) \in E_{n}$ for some $v \in W_{n}$ } \\
        \text{s.t. $x \in K_{v}$, where $n \in \mathbb{Z}_{\ge 0}$ with $a^{-n} \le s < a^{-n + 1}$}. \\
    \end{array}
    \right\}
\end{equation*}
(See \cite{Kig20}*{Definition 2.3.6}.) 
Then the following proposition says that generalized Sierpi\'{n}ski carpets equipped with the (normalized) Euclidean metrics satisfy the \emph{basic framework} of \cite{Kig20}. 
See also \cite{Kig21+}*{Assumption 2.15 and Proposition 11.4}. 

\begin{prop}\label{prop.BF} 
	Let $(K, S, \{ F_{i} \}_{i \in S}) = \GSC(D, a, S)$ be a generalized Sierpi\'{n}ski carpet. 
	Then the following properties hold: 
    \begin{itemize}
    	\item [\textup{(0)}] \textup{(minimal, strongly finite)} for any $m \ge 0$ and $w \in W_{m}$, 
    	\[
    	O_{w} \coloneqq K_{w} \setminus \left(\bigcup_{v \in W_{m} \setminus \{ w \}}K_{v}\right) \neq \emptyset. 
    	\]
    	Furthermore, if we define 
    	\begin{equation}\label{degree}
    		L_{\ast} \coloneqq \sup_{w \in W_{\#}}\#\{ v \in W_{\abs{w}} \mid (v,w) \in E_{\abs{w}} \}, 
		\end{equation}
		then $L_{\ast} \le 3^{D} - 1$; 
    	\item [\textup{(1)}] for any $w \in W_{\ast}$, $K_{w}$ is connected; 
    	\item [\textup{(2A)}] \textup{($1$-adapted)} there exists a constant $C_{\textup{AD}}$ (depending only on $a, D$) such that for any $x \in K$ and $s \in (0, 1]$, 
    	\begin{equation}\label{1-adapted}
    		U_{1}\bigl(x, C_{\textup{AD}}^{-1}\,s\bigr) \subseteq B_{d}(x, s) \subseteq U_{1}\bigl(x, C_{\textup{AD}}s\bigr);
    	\end{equation}
    	\item [\textup{(2B)}] for any $m \ge 0$ and $w \in W_{m}$, $\diam(K_{w}, d) = a^{-m}$; 
    	\item [\textup{(2C)}] \textup{(thick)} for any $n \ge 1$ and $w \in W_{n}$, there exists $x \in K$ such that 
    	\[
    	K_{w} \supseteq B_{d}\Bigl(x, \frac{1}{\sqrt{D}}a^{-n}\Bigr); 
    	\]
    	\item [\textup{(3)}] for $m \ge 0$ and $v, w \in W_{m}$ with $v \neq w$, 
    	\[
    	\mu(K_{v} \cap K_{w}) = 0 \quad \text{and} \quad \mu(K_{v}) = N_{\ast}^{-m} = a^{-\alpha m};
    	\]
    	\item [\textup{(4)}] for any $m \in \mathbb{N}$ and $w \in W_{m}$, it holds that 
		\[
		\xi\Bigl(\{ v \in W_{m} \mid d_{G_{m}}(v, w) \le 1 \}\Bigr) \subseteq \bigl\{ z \in W_{m - 1} \bigm| d_{G_{m - 1}}\bigl(z, \xi(w)\bigr) \le 1 \bigr\}. 
		\]
    \end{itemize}
\end{prop}
\begin{proof}
	As mentioned in \cite{Kig21+}*{Proposition 11.4}, all statements can be easily verified. 
	Indeed, (0), (1), (2B), (2C) and (4) are immediate from the definition of generalized Sierpi\'{n}ski carpets. 
	A proof of (3) can be found in \cite{Kaj20+}*{Lemma 3.2} for example. 
	Finally, the condition (2A) follows by noting that 
	\begin{equation}\label{adapted-sep}
		\inf_{x \in K_{v}, y \in K_{w}}d(x, y) \ge \frac{1}{\sqrt{D}}a^{-m} \quad \text{if $m \ge 1$, $v, w \in W_{m}$ satisfy $K_{v} \cap K_{w} = \emptyset$.} 
	\end{equation}
\end{proof}

An intrinsic boundary $\partial_{\ast}G_{n}$ of the graph $G_{n}$ is the set of words that the associated $n$-cells intersect with the topological boundary of $[-1, 1]^{D}$, that is,
\[
\partial_{\ast}G_{n} \coloneqq W_{n}\Bigl[\partial [-1, 1]^{D}\Bigr] = \Bigl\{ w \in W_{n} \Bigm| K_{w} \cap \partial [-1, 1]^{D} \neq \emptyset \Bigr\}.
\]
We have the following proposition (see \cite{Kig21+}*{Assumption 2.10 and Proposition 2.16}). 
\begin{prop}\label{prop.boundary}
	Let $(K, S, \{ F_{i} \}_{i \in S}) = \GSC(D, a, S)$ be a generalized Sierpi\'{n}ski carpet. 
%	Then there exists $k_{\#} \in \mathbb{Z}_{\ge 0}$ (depending only on geometric data of $\GSC(D, a, S)$) such that $W_{k} \setminus \partial_{\ast}G_{k} \neq \emptyset$ for any $k \ge 2$. 
	Then $W_{k} \setminus \partial_{\ast}G_{k} \neq \emptyset$ for any $k \ge 2$. 
\end{prop}
\begin{proof}
	By \hyperlink{GSC1}{\textup{(GSC1)}} and \hyperlink{GSC4}{\textup{(GSC4)}}, we have 
	\[
	i_{\ast} \coloneqq (0)_{k = 1}^{D} \in S \quad \text{and} \quad \widehat{i}_{\ast} \coloneqq (a - 1)_{k = 1}^{D} \in S. 
	\] 
	Then we easily see that $i_{\ast}\widehat{i}_{\ast} \in W_{2} \setminus \partial_{\ast}G_{2}$. 
\end{proof}

We conclude this subsection by giving another aspect of \eqref{1-adapted}. 
Define
\begin{equation*}
    n(x, y) \coloneqq \max\left\{ m \in \bbZ_{\ge 0} \;\middle|\;
    \begin{array}{c}
    \text{there exist $v, w \in W_{m}$ such that} \\
    \text{$x \in K_{v}$, $y \in K_{w}$ and $K_{v} \cap K_{w} \neq \emptyset$} \\
    \end{array}
    \right\},
\end{equation*}
where we set $K_{\emptyset} \coloneqq K$.

\begin{lem}\label{lem.1-adapted}
    Let $(K, S, \{ F_{i} \}_{i \in S}) = \GSC(D, a, S)$ be a generalized Sierpi\'{n}ski carpet. 
    Then 
    \[
    \frac{1}{a\sqrt{D}}a^{-n(x, y)} \le d(x, y) \le 2a^{-n(x, y)}, \quad x, y \in K.
    \]
\end{lem}
\begin{proof}
	Let $x, y \in K$ and let $n = n(x, y)$. 
	The definition of $n(x, y)$ immediately implies that there exist $v, w \in W_{n}$ such that $x \in K_{v}$, $y \in K_{w}$ and $K_{v} \cap K_{w}$. 
	Hence 
	\[
	d(x, y) \le \diam\bigl(K_{v} \cup K_{w}, d\bigr) \le 2a^{-n}. 
	\]
	
	We next prove the converse inequality. 
	Let $v, w \in W_{n + 1}$ such that $x \in K_{v}$ and $y \in K_{w}$. 
	Then, by the definition of $n(x, y)$, we have $K_{v} \cap K_{w} = \emptyset$. 
	Therefore, 
	\[
	d(x, y) \ge \inf_{x' \in K_{v}, y' \in K_{w}}d(x', y') \ge \frac{1}{\sqrt{D}}a^{-(n + 1)}, 
	\] 
	where we used \eqref{adapted-sep}. 
	This completes the proof. 
\end{proof}

%----- defn of Poincare constants -----
\subsection{$p$-energies and Poincar\'{e} constants on finite graphs}

In this subsection, we review some basic results and definitions in discrete nonlinear potential theory and introduce $(p, p)$-Poincar\'{e} constants that will play essential roles in this paper.
%The reader is referred to \cite{Mthesis}*{Section 2-8} for further background on discrete nonlinear potential theory, where classical Yamasaki's work including \cite{NY76, Yam77} are surveyed.

Let $G = (V, E)$ be a directed, connected, simple finite graph, and let $p > 0$.
We always suppose that $(x,y) \in E$ if and only if $(y, x) \in E$. 

\begin{dfn}
    For $f\colon V \to \bbR$, we define its \emph{$p$-energy} $\mcE_{p}^{G}(f)$ by setting
    \[
    \mcE_{p}^{G}(f) \coloneqq \frac{1}{2}\sum_{(x, y) \in E}\abs{f(x) - f(y)}^{p}.
    \]
\end{dfn}

\begin{dfn}\label{dfn.graphsub}
    For disjoint subsets $A, B$ of $V$, we define their \emph{$p$-conductance} $\mcC_{p}^{G}(A, B)$ by setting
    \[
    \mcC_{p}^{G}(A, B) \coloneqq \inf\bigl\{ \mcE_{p}^{G}(f) \bigm| f|_{A} \equiv 1, f|_{B} \equiv 0 \bigr\}.
    \]
    For a given subset $A$ of $V$, define
    \[
    E^{A} \coloneqq \{ (x, y) \in E \mid x, y \in A \},
    \]
    and
    \[
    \mcE_{p, A}(f) \coloneqq \frac{1}{2}\sum_{(x, y) \in E^{A}}\abs{f(x) - f(y)}^{p}.
    \]
    To clarify the underlying graph, we also write $\mathcal{E}_{p, A}^{G}(f)$ for $\mcE_{p, A}(f)$. 
    We also set
    \[
    \closure{A} \coloneqq \{ x \in V \mid \text{$x \in A$ or $(x, y) \in E$ for some $y \in A$} \}, 
    \]
    and $\bord{A} \coloneqq \closure{A} \setminus A$. 
\end{dfn}

Then the following monotonicity of $p$-conductance is immediate (see \cite{Mthesis}*{Proposition 3.7-(2)} for example).

\begin{prop}\label{prop.mono}
    Let $A, B, A', B' \subseteq V$ with $A \subseteq A'$ and $B \subseteq B'$.
    Then $\mcC_{p}^{G}(A, B) \le \mcC_{p}^{G}(A', B')$.
\end{prop}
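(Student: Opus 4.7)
The plan is to derive this monotonicity directly from the variational definition of $\mcC_{p}^{G}$ as an infimum over an admissible set of test functions, using the elementary fact that enlarging the constraint sets $A, B$ can only shrink the class of admissible functions.

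Concretely, I would begin by fixing an arbitrary $f \colon V \to \bbR$ with $f|_{A'} \equiv 1$ and $f|_{B'} \equiv 0$. Since $A \subseteq A'$ and $B \subseteq B'$, the restrictions $f|_{A} \equiv 1$ and $f|_{B} \equiv 0$ follow tautologically, so $f$ is admissible in the infimum defining $\mcC_{p}^{G}(A, B)$. Hence
\[
\mcC_{p}^{G}(A, B) \;=\; \inf\bigl\{\mcE_{p}^{G}(g) \bigm| g|_{A} \equiv 1,\, g|_{B} \equiv 0\bigr\} \;\le\; \mcE_{p}^{G}(f).
\]
Taking the infimum over all such $f$ on the right hand side yields exactly $\mcC_{p}^{G}(A', B')$, which proves the desired inequality.

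There is no genuine obstacle here; the entire statement amounts to the observation that if $\mcA \supseteq \mcA'$ are two families of real numbers then $\inf \mcA \le \inf \mcA'$, applied to the admissible sets determined by the boundary data. The only minor subtlety is to note that the admissible set for $(A', B')$ is nonempty (otherwise the inequality is vacuous with the convention $\inf \emptyset = +\infty$), which is immediate because $A'$ and $B'$ are disjoint so the indicator-type function $\indicator{A'}$ on $V$ (or any extension of the prescribed boundary values) belongs to the set.
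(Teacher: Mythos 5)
Your proof is correct and is exactly the standard argument: the admissible class for $(A',B')$ is contained in that for $(A,B)$, so the infimum can only increase. The paper states this as immediate (citing an external reference) and gives no proof, so there is nothing to compare beyond noting that your argument, including the remark on nonemptiness of the admissible set, is the intended one.
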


The following property states the Markov property of discrete $p$-energy.
(This naming is borrowed from the case $p = 2$.)
This is also immediate from the definition.

\begin{prop}\label{prop.Markov}
    Let $\varphi\colon \bbR \to \bbR$ with $\mathrm{Lip}(\varphi) \le 1$.
    Then $\mcE_{p}^{G}(\varphi \circ f) \le \mcE_{p}^{G}(f)$ for any $f\colon V \to \bbR$.
    In particular, if we define $f^{\#} \coloneqq (f \vee 0) \wedge 1$, then $\mcE_{p}^{G}(f^{\#}) \le \mcE_{p}^{G}(f)$.
\end{prop}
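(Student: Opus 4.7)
The plan is to reduce everything to the elementary edgewise inequality
\[
\abs{\varphi(f(x)) - \varphi(f(y))} \le \mathrm{Lip}(\varphi)\,\abs{f(x) - f(y)} \le \abs{f(x) - f(y)}, \qquad (x,y) \in E,
\]
which is immediate from the definition of $\mathrm{Lip}(\varphi)$ and the hypothesis $\mathrm{Lip}(\varphi) \le 1$. Raising this inequality to the $p$-th power (valid since both sides are nonnegative and $t \mapsto t^p$ is increasing on $[0,\infty)$), summing over $(x,y) \in E$, and multiplying by $\tfrac12$ gives the desired bound $\mcE_{p}^{G}(\varphi \circ f) \le \mcE_{p}^{G}(f)$.

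For the second (``in particular'') statement, I would argue that the map $\varphi_{0}\colon t \mapsto (t \vee 0) \wedge 1$ has $\mathrm{Lip}(\varphi_{0}) \le 1$, so that $f^{\#} = \varphi_{0} \circ f$ falls under the first part. To verify this Lipschitz bound, I would use the standard fact that both $t \mapsto t \vee 0$ and $t \mapsto t \wedge 1$ are $1$-Lipschitz on $\bbR$: indeed, for any $a, b \in \bbR$, one has $\abs{(a \vee c) - (b \vee c)} \le \abs{a - b}$ and $\abs{(a \wedge c) - (b \wedge c)} \le \abs{a - b}$ for any $c \in \bbR$, which follows by a short case analysis on the signs of $a - c$ and $b - c$. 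Composing these two $1$-Lipschitz maps yields $\mathrm{Lip}(\varphi_{0}) \le 1$, completing the proof.

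There is no substantive obstacle here; the entire argument is a one-line edgewise estimate together with the routine check that truncation is $1$-Lipschitz. The only thing worth emphasizing is that $\varphi$ need not fix $0$ (so $\varphi \circ f$ may differ from $f$ by a constant on some components), but since $\mcE_{p}^{G}$ depends only on differences $f(x) - f(y)$, this plays no role in the inequality.
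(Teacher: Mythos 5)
Your proof is correct and is exactly the argument the paper has in mind: the paper states that the proposition "is also immediate from the definition," and your edgewise estimate $\abs{\varphi(f(x)) - \varphi(f(y))} \le \abs{f(x) - f(y)}$ followed by raising to the $p$-th power and summing over edges is precisely that one-line verification. The check that truncation is $1$-Lipschitz is likewise routine and correctly handled.
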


Next we define some types of $(p, p)$-Poincar\'{e} constants.
Let $\nu$ be a non-negative measure on $V$, and let $\partial_{\ast}G \subsetneq V$ be a given non-empty subset.

\begin{dfn}\label{dfn.Poiconst}
    For a non-empty subset $A$ of $V$ and $f\colon A \to \bbR$, define its mean $\mean{f}{A, \nu}$ by setting
    \[
    \mean{f}{A, \nu} \coloneqq \frac{1}{\sum_{x \in A}\nu(x)}\sum_{x \in A}f(x)\nu(x).
    \]
    We define $\lambda_{p}^{(G, \nu)}$ on $(G, \nu)$ by setting
    \[
    \lambda_{p}^{(G, \nu)} \coloneqq \sup\left\{\, \frac{\sum_{x \in V}\abs{f(x) - \mean{f}{V, \nu}}^{p}\nu(x)}{\mcE_{p}^{G}(f)} \;\middle|\; f \in \mathbb{R}^{V}, \mcE_{p}^{G}(f) \neq 0 \,\right\}.
    \]
    We consider its Dirichlet boundary conditioned version $\lambda_{p, \textup{Dir}}^{(G, \nu)}(\partial_{\ast}G)$ defined as
    \[
    \lambda_{p, \textup{Dir}}^{(G, \nu)}(\partial_{\ast}G)
    \coloneqq \sup\left\{\, \frac{\abs{\mean{f}{V, \nu}}^{p}}{\mcE_{p}^{G}(f)} \;\middle|\; f \in \bbR^{V},  \mcE_{p}^{G}(f) \neq 0 \text{ and } f|_{\partial_{\ast}G} \equiv 0 \,\right\}.
    \]
    For disjoint subsets $A, B$ of $V$, we also define
    \[
    \sigma_{p}^{(G, \nu)}(A, B) \coloneqq \sup\left\{\, \frac{\abs{\mean{f}{A,\nu} - \mean{f}{B,\nu}}^{p}}{\mcE_{p, A \cup B}(f)} \;\middle|\; f \in \bbR^{A \cup B},  \mcE_{p, A \cup B}(f) \neq 0 \,\right\}.
    \]
\end{dfn}

By standard arguments in calculus of variations (see \cite{Mthesis}*{proof of Lemma 3.3} for example), one can easily prove the following proposition.

\begin{prop}\label{prop.minimizer}
	Suppose that $p > 1$ and that $G$ is connected. 
    Let $A, B$ be non-empty disjoint subsets of $V$, and let $\partial_{\ast}G \subsetneq V$ be non-empty.
    \begin{itemize}
        \setlength{\parindent}{0pt}
        \item [\textup{(1)}] There exists a unique $f \in \bbR^{V}$ such that $f|_{A} \equiv 1$, $f|_{B} \equiv 0$ and $\mcE_{p}^{G}(f) = \mcC_{p}^{G}(A, B)$.
        \item [\textup{(2)}] There exists a unique $f \in \bbR^{V}$ such that $f|_{\partial_{\ast}G} \equiv 0$, $\mean{f}{V, \nu} = 1$ and $\mcE_{p}^{G}(f)^{-1} = \lambda_{p, \textup{Dir}}^{(G, \nu)}(\partial_{\ast}G)$.
        \item [\textup{(3)}] If both $A$ and $B$ are connected, then there exists $f \in \bbR^{A \cup B}$ such that 
        \[
        \abs{\mean{f}{A, \nu} - \mean{f}{B, \nu}} = 1 \quad \text{and} \quad \mcE_{p, A \cup B}(f)^{-1} = \sigma_{p}^{(G, \nu)}(A, B). 
        \]
        Moreover, such $f$ is unique up to an additive constant and to the multiplication by $-1$. 
    \end{itemize}
\end{prop}

We conclude this subsection by introducing notations of these quantities in specific settings.
We mainly consider $p$-conductance and $(p,p)$-Poincar\'{e} constants on approximating graphs $G_{m}$ introduced in subsection \ref{subsec.SCappgraph}.
Note that, by the self-similarity of $\{ K_{w} \}_{w}$, each subgraph $(w \cdot W_{m}, E^{w \cdot W_{m}})$ is a copy of $G_{m}$ for any $w \in W_{\#}$ and $m \in \bbN$.
Recall that $\mu$ denotes the self-similar probability measure on $K$ with weight $(1/N_{\ast}, \dots, 1/N_{\ast})$.
We consider that $\mu$ is also a measure on $W_{n}$ by setting $\mu(w) \coloneqq \mu(K_{w}) = N_{\ast}^{-n}$ for each $w \in W_{n}$.
Then, for any subset $A$ of $W_{n}$ and $f\colon A \to \bbR$,
\[
\mean{f}{A,\mu} = \frac{1}{\#A}\sum_{w \in A}f(w),
\]
and thus we write $\mean{f}{A}$ to denote $\mean{f}{A, \mu}$ for simplicity.
For $w \in W_{\#}$ and $n \in \bbN$, we define
\[
\mcC_{p}^{(n)} \coloneqq \sup_{w \in W_{\#}}\mcC_{p}^{G_{n + \abs{w}}}\bigl(w \cdot W_{n}, W_{n + \abs{w} }\setminus \mcB_{n}(w, 1)\bigr),
\]
and $\mcR_{p}^{(n)} \coloneqq \bigl(\mcC_{p}^{(n)}\bigr)^{-1}$.
We also set $\lambda_{p}^{(n)} \coloneqq \lambda_{p}^{(G_{n}, \mu)}$ and $\lambda_{p, \textup{Dir}}^{(n)} \coloneqq \lambda_{p, \textup{Dir}}^{(G_{n}, \mu)}(\partial_{\ast}G_{n})$.
Finally, for $v, w \in W_{\#}$ with $\abs{v} = \abs{w}$, define
\[
\sigma_{p}^{(n)}(v, w) \coloneqq \sigma_{p}^{(G_{\abs{v} + n}, \mu)}(v \cdot W_{n}, w \cdot W_{n}),
\]
and
\[
\sigma_{p}^{(n)} \coloneqq \adjustlimits\sup_{m \ge 1}\max_{(v, w) \in \widetilde{E}_{m}}\sigma_{p}^{(n)}(v, w).
\]

\begin{rmk}
    Our definitions of Poincar\'{e} constants are slightly changed from the original definitions adopted in \cite{KZ92}.
    Indeed, $N_{\ast}^{-n}\lambda_{2}^{(n)}$ in our notation is the same as $\lambda_{n}$ in \cite{KZ92}.
    The situations are the same for other Poincar\'{e} constants $\sigma_{2}^{(n)}, \lambda_{2, \textup{Dir}}^{(n)}$.
\end{rmk}

%--- Main Results ---
\subsection{Main results}\label{subsec:results}

Now, we are ready to state the main results of this paper.
Let $(K, S, \{ F_{i} \}_{i \in S}) = \GSC(D, a, S)$ be a generalized Sierpi\'{n}ski carpet. 
Then, for $p > 0$, it is well-known that there exists $\rho_{p} > 0$ such that $\lim_{n \to \infty}\bigl(\mathcal{C}_{p}^{(n)}\bigr)^{1/n} = \rho_{p}^{-1}$ (see Theorem \ref{thm.sub-mult}). 

The following two theorems state detailed properties of our $(1,p)$-``Sobolev'' space $\mcF_p$ on $(K, d, \mu)$.

\begin{thm}\label{thm.main1}
%	Let $\rho_p$ be the constant appearing in \eqref{intro.pres}. 
    Assume that $p > \dim_{\textup{ARC}}(K, d)$.
    Then a function space $\mcF_{p}$ defined as
    \[
    \mcF_{p} \coloneqq \biggl\{ f \in L^{p}(K, \mu) \biggm| \sup_{n \ge 1}\rho_{p}^{n}\,\mcE_{p}^{G_{n}}(M_{n}f) < \infty \biggr\}
    \]
    is a reflexive and separable Banach space equipped with a norm $\norm{\,\cdot\,}_{\mcF_{p}}$ defined by
    \[
    \norm{f}_{\mcF_{p}} \coloneqq \norm{f}_{L^{p}} + \left(\sup_{n \ge 1}\rho_{p}^{n}\,\mcE_{p}^{G_{n}}(M_{n}f)\right)^{1/p}.
    \]
    Moreover, $\mcF_{p}$ is continuously embedded in a H\"{o}lder space $\mathcal{C}^{0, (\beta_{p} - \alpha)/p}$ on $K$, where $\beta_{p} \coloneqq \log{(N_{\ast}\rho_{p})}/\log{a}$ and
    \[
    \mathcal{C}^{0, (\beta_{p} - \alpha)/p} \coloneqq \biggl\{ f \in \mcC(K) \biggm| \sup_{x \neq y \in K}\frac{\abs{f(x) - f(y)}}{d(x, y)^{(\beta_{p} - \alpha)/p}} < \infty \biggr\}.
    \]
    Furthermore, $\mcF_{p}$ is dense in $\mcC(K)$ with respect to the supremum norm.
\end{thm}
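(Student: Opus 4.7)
The assumption $p>\dim_{\textup{ARC}}(K,d)$ produces $\rho_p>1$ by combining the combinatorial-modulus estimate \eqref{intro.pres} of Bourdon--Kleiner with the equivalence of \eqref{intro.assum} and \eqref{intro.lowdim} due to Carrasco Piaggio and Kigami. That $\|\cdot\|_{\mcF_p}$ is a norm is then immediate: each map $f\mapsto\bigl(\mcE_p^{G_n}(M_nf)\bigr)^{1/p}$ is a seminorm (an $\ell^p$-norm of discrete gradients composed with the linear projection $M_n$), a supremum of seminorms is a seminorm, and adding $\|\cdot\|_{L^p}$ separates points.

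\textbf{Step 2 (H\"{o}lder embedding).} This is the structural heart of the theorem and I would prove it by a telescoping Poincar\'{e} chain adapted from \cite{KZ92} to the $p$-energy setting. For $x\neq y\in K$ set $n=n(x,y)$, and using Lemma \ref{lem.1-adapted} write $d(x,y)\asymp a^{-n}$. Pick $\omega,\omega'\in\Sigma$ with $\pi(\omega)=x$ and $\pi(\omega')=y$ and telescope:
\[
f(x)-f(y)=\bigl(M_nf([\omega]_n)-M_nf([\omega']_n)\bigr)+\sum_{m\ge n}\bigl(M_{m+1}f([\omega]_{m+1})-M_mf([\omega]_m)\bigr)-\sum_{m\ge n}\bigl(M_{m+1}f([\omega']_{m+1})-M_mf([\omega']_m)\bigr).
\]
Each increment in the two infinite sums is an oscillation of $M_{m+1}f$ among the $N_\ast$ children of a single scale-$m$ cell and is controlled by $(\lambda_p^{(1)})^{1/p}$ times the localised $p$-energy at scale $m+1$; the terminal difference is bounded via $\sigma_p^{(0)}$ applied to a pair of neighbouring scale-$n$ cells. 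Invoking $\rho_p^m\mcE_p^{G_m}(M_mf)\le\|f\|_{\mcF_p}^p$ at every scale and summing the geometric series in $\rho_p^{-1/p}$ (convergent since $\rho_p>1$) yields $|f(x)-f(y)|\le C\|f\|_{\mcF_p}\,a^{-n\log\rho_p/(p\log a)}=C\|f\|_{\mcF_p}\,d(x,y)^{(\beta_p-\alpha)/p}$, since $\beta_p-\alpha=\log\rho_p/\log a$.

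\textbf{Step 3 (completeness and density in $\mcC(K)$).} Given a Cauchy sequence $\{f_k\}\subset\mcF_p$, Step 2 supplies a uniform H\"{o}lder modulus on $\{f_k\}$, so by Arzel\`{a}--Ascoli a subsequence converges uniformly to a continuous $f$ which coincides with the $L^p$-limit. For each fixed $n$ the projection $M_n$ is continuous on $L^p$, hence $\rho_p^n\mcE_p^{G_n}(M_nf)\le\liminf_k\rho_p^n\mcE_p^{G_n}(M_nf_k)$; taking $\sup_n$ and using the Cauchy property gives $f\in\mcF_p$ with $\|f_k-f\|_{\mcF_p}\to 0$. For density of $\mcF_p\cap\mcC(K)$ in $\mcC(K)$ I would apply the lattice version of the Stone--Weierstrass theorem: $\mcF_p\cap\mcC(K)$ contains the constants and is closed under $\max,\min$ by the Markov property (Proposition \ref{prop.Markov}); point separation is then furnished by explicit $\mcF_p$-functions built from minimizers of the discrete conductance problem (Proposition \ref{prop.minimizer}) on approximating graphs $G_n$ with separating Dirichlet data, whose $\mcF_p$-norms are controlled uniformly via the bounds of Section \ref{sec:poincare}.

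\textbf{Step 4 (reflexivity and separability).} This step is the main obstacle, because the defining seminorm $\sup_n\bigl(\rho_p^n\mcE_p^{G_n}(M_n\,\cdot\,)\bigr)^{1/p}$ is of $\ell^\infty$-type and $\ell^\infty$ is not reflexive. Following the strategy suggested by Theorems \ref{thm.newnorm} and \ref{thm.sep}, I would replace the supremum by an equivalent $\ell^p$-type functional: fix a Banach limit $\mathrm{LIM}$ on $\ell^\infty(\bbN)$ and consider $\bigl(\|f\|_{L^p}^p+\mathrm{LIM}_n\,\rho_p^n\mcE_p^{G_n}(M_nf)\bigr)^{1/p}$, then show its equivalence with $\|\cdot\|_{\mcF_p}$ via the comparability estimates of Section \ref{sec:poincare} relating $\rho_p^n\mcE_p^{G_n}(M_nf)$ across scales. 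This realises $\mcF_p$ isometrically as a closed subspace of a direct sum $L^p(K,\mu)\oplus L^p(\text{auxiliary space})$, which is uniformly convex by Clarkson's inequalities, so $\mcF_p$ is reflexive. Separability then follows because a reflexive Banach space isometric to a subspace of a separable space is separable; alternatively, the H\"{o}lder embedding of Step 2 plus Arzel\`{a}--Ascoli renders closed bounded sets of $\mcF_p$ metrisable in the $\mcC(K)$-topology, from which separability is immediate.
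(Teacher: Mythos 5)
Your Steps 1 and 2 are essentially sound, and the multi-scale telescoping in Step 2 is a genuinely different route from mine for the embedding itself: I first prove a \emph{single-scale} uniform H\"{o}lder estimate (Theorem \ref{thm.unifhol}, Corollary \ref{cor.holder}) bounding oscillations of an arbitrary $f\colon W_{n+m}\to\bbR$ by its energy at the one level $n+m$, which forces me to establish $\lambda_{p}^{(n)}\le C\rho_{p}^{n}$ first, whereas your chain uses only the fixed constant $\lambda_{p}^{(1)}$ at each level together with $\mcE_{p}^{G_{m+1}}(M_{m+1}f)\le\rho_{p}^{-(m+1)}\abs{f}_{\mcF_{p}}^{p}$ and sums the geometric series in $\rho_{p}^{-1/p}$; for functions already known to lie in $\mcF_{p}$ this does yield the H\"{o}lder embedding, and more cheaply.

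The genuine gap is in Steps 3 and 4: both silently require the weak monotonicity $\sup_{n}\widetilde{\mcE}_{p}^{G_{n}}(M_{n}f)\le C\varliminf_{n}\widetilde{\mcE}_{p}^{G_{n}}(M_{n}f)$ (Corollary \ref{cor.wm}), equivalently the upper bound $\sigma_{p}^{(m)}\le C\rho_{p}^{m}$, and this is precisely the condition (KM$_{p}$) (Theorem \ref{thm.KM}) --- the hardest step of the paper, proved via the chain conductances $\mcC_{p}^{(n,M)}$ and the single-scale H\"{o}lder estimate that your Step 2 bypasses. The estimates of Section \ref{sec:poincare} alone give only the reverse bound $\sigma_{p}^{(m)}\ge c\,\mcR_{p}^{(m)}\ge c'\rho_{p}^{m}$. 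Without (KM$_{p}$), your Banach-limit functional need not be comparable to the supremum (a Banach limit only dominates the $\liminf$), so the norm equivalence claimed in Step 4 is unproved; and in Step 3 you cannot show that the extensions of discrete conductance minimizers actually lie in $\mcF_{p}$, since Lemma \ref{lem.subdiv.fcn} controls their energy only at the construction scale and weak monotonicity is exactly what propagates that bound to all scales (Lemma \ref{lem.corebase}); note that Lipschitz functions are \emph{not} in $\mcF_{p}$ (because $\beta_{p}>p$), so cheap separating functions are unavailable. Two further defects: your fallback separability argument is false --- compactness of $\mcF_{p}$-bounded sets in $\mcC(K)$ does not imply norm-separability (H\"{o}lder spaces are the standard counterexample), and the auxiliary $L^{p}$ space attached to a Banach limit is itself non-separable, so neither route in Step 4 gives separability; I instead prove density of the countable family $\{\varphi_{w}\}_{w\in W_{\#}}$ via reflexivity and Mazur's lemma (Theorem \ref{thm.sep}). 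Finally, for $1<p<2$ Clarkson's inequality does not pass through a Banach limit of the $p$-th powers without additional argument (the relevant Jensen inequalities point the wrong way), which is one reason Theorem \ref{thm.newnorm} works with a $\Gamma$-limit along a subsequence and its recovery sequences.
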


\begin{thm}[Theorem \ref{thm.Besov-charact}]\label{thm.main2}
    Assume that $p > \dim_{\textup{ARC}}(K, d)$.
    Let $\beta_{p}$ be the same constant as in Theorem \ref{thm.main1}.
    Then $\mcF_{p}$ has the following expression: 
    \begin{equation}\label{eq.LB}
        \mcF_{p} = \Biggl\{ f \in L^{p}(K, \mu) \Biggm| \varlimsup_{r \downarrow 0} \int_{K}\mint_{B_{d}(x, r)}\frac{\abs{f(x) - f(y)}^{p}}{r^{\beta_{p}}}\,d\mu(y)d\mu(x) < \infty \Biggr\}.
    \end{equation}
\end{thm}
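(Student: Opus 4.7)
The plan is to compare the Besov-type quantity $J_n(f) := \int_K \mint_{B(x,a^{-n})} a^{n\beta_p}|f(x)-f(y)|^p\,d\mu(y)d\mu(x)$ to the renormalized discrete $p$-energy $\rho_p^n\mcE_p^{G_n}(M_nf)$ at matching scales $r \asymp a^{-n}$, and show that the two quantities are simultaneously finite. Using the $\alpha$-Ahlfors regularity \eqref{eq.AR} and the identity $a^{n\beta_p}=(N_\ast\rho_p)^n$, I would first note
\[
J_n(f) \asymp N_\ast^{2n}\rho_p^n\int_K\int_{B(x,a^{-n})}|f(x)-f(y)|^p\,d\mu(y)d\mu(x),
\]
so the entire analysis is reduced to the raw double integral on the right.

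For the inclusion $\mcF_p \subseteq$ (RHS of \eqref{eq.LB}), I would fix $f \in \mcF_p$ and apply the trivial pointwise bound
\[
|f(x)-f(y)|^p \le 3^{p-1}\bigl(|f(x)-M_nf(v)|^p + |M_nf(v)-M_nf(w)|^p + |f(y)-M_nf(w)|^p\bigr)
\]
for $v,w \in W_n$ with $x \in K_v$, $y \in K_w$; Lemma \ref{lem.1-adapted} forces $d_{G_n}(v,w) \le L$ for an absolute $L$ when $y \in B(x,a^{-n})$. Combined with the prefactor $N_\ast^{2n}\rho_p^n$, the middle term telescopes along paths of length $\le L$ to a multiple of $\rho_p^n\mcE_p^{G_n}(M_nf) \le \norm{f}_{\mcF_p}^p$, while the two outer terms combine to a multiple of $N_\ast^n\rho_p^n\,\norm{f-M_n^\ast f}_{L^p}^p$ with $M_n^\ast f := \sum_{v \in W_n}M_nf(v)\indicator{K_v}$. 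To treat the latter I plan a martingale-type telescoping $f - M_n^\ast f = \sum_{k \ge n}(M_{k+1}^\ast f - M_k^\ast f)$ together with the unit-cell Poincaré constant $\lambda_p^{(1)}$ applied layer by layer, which would give
\[
\norm{M_{k+1}^\ast f - M_k^\ast f}_{L^p}^p \le CN_\ast^{-k}\lambda_p^{(1)}\mcE_p^{G_{k+1}}(M_{k+1}f) \le C(N_\ast\rho_p)^{-k}\norm{f}_{\mcF_p}^p.
\]
Since $\rho_p > 1$ by Assumption \ref{assum.lowdim}, Minkowski's inequality yields $\norm{f-M_n^\ast f}_{L^p}^p \le C(N_\ast\rho_p)^{-n}\norm{f}_{\mcF_p}^p$, which exactly cancels the prefactor $N_\ast^n\rho_p^n$ and produces a uniform bound on $J_n(f)$.

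For the reverse inclusion, I would apply Jensen to obtain $|M_nf(v)-M_nf(w)|^p \le \mint_{K_v}\mint_{K_w}|f(x)-f(y)|^p\,d\mu(y)d\mu(x)$, sum over $(v,w) \in E_n$ using the bounded degree $D_\ast$, and invoke Lemma \ref{lem.1-adapted} to absorb $K_w$ into $B(x,Ca^{-n})$ for each $x \in K_v$; this produces $\rho_p^n\mcE_p^{G_n}(M_nf) \le CJ(f,Ca^{-n})$, so the $\varlimsup$ hypothesis on $J(f,\cdot)$ gives $\sup_n\rho_p^n\mcE_p^{G_n}(M_nf) < \infty$, i.e., $f \in \mcF_p$. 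To prove $\beta_p \ge p$, I would test the definition of $\lambda_p^{(n)}$ against the ``coordinate function'' $g(w)$ defined as the first Euclidean coordinate of the centre of $K_w$: edges $(v,w) \in E_n$ satisfy $|g(v)-g(w)| \le Ca^{-n}$ and $\#E_n \asymp N_\ast^n$, yielding $\mcE_p^{G_n}(g) \le CN_\ast^n a^{-np}$, while $\sum_{w \in W_n}|g(w)-\mean{g}{W_n}|^p\mu(w) \asymp 1$ since $g$ spreads over an interval of length one; hence $\lambda_p^{(n)} \gtrsim a^{np}/N_\ast^n$. Combined with the upper bound $\lambda_p^{(n)} \le C\rho_p^n$ established in Sections \ref{sec:poincare}--\ref{sec:lowdim}, this forces $\rho_p \ge a^p/N_\ast$, equivalently $\beta_p \ge p$.

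The main obstacle is the sharp control of $\norm{f-M_n^\ast f}_{L^p}^p$ in the forward direction: the Hölder embedding of Theorem \ref{thm.main1} alone yields only the weaker decay $\rho_p^{-n}$, which falls short by a factor of $N_\ast^n$. Hence the layer-by-layer telescoping with the unit-cell Poincaré constant is unavoidable, and it is precisely the hypothesis $\rho_p > 1$ that makes the resulting geometric series Minkowski-summable.
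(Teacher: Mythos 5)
Your proposal is correct, but in two key places it takes a genuinely different route from the paper. For the inclusion $\mcF_{p}\subseteq\Lambda_{p,\infty}^{\beta_{p}/p}$ the paper uses the same decomposition (discrete energy term plus cell-oscillation term, the former handled by the identical chaining over paths of length $\le L$), but it controls the cell-oscillation term $a^{\beta_{p}n}\sum_{w}\int_{K_{w}}\abs{f-M_{n}f(w)}^{p}d\mu$ via Lemma \ref{lem.PI}: the H\"{o}lder embedding of Theorem \ref{thm.embed} is applied to each rescaled piece $F_{w}^{\ast}f$, and the self-similar identity \eqref{eq.commute} together with weak monotonicity gives $\sum_{w}\abs{F_{w}^{\ast}f}_{\mcF_{p}}^{p}\lesssim\rho_{p}^{-n}\abs{f}_{\mcF_{p}}^{p}$, recovering exactly the factor $N_{\ast}^{n}$ that (as you correctly observe) the global H\"{o}lder estimate misses. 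Your martingale telescoping $f-M_{n}^{\ast}f=\sum_{k\ge n}(M_{k+1}^{\ast}f-M_{k}^{\ast}f)$ with the level-one Poincar\'{e} constant achieves the same decay $(N_{\ast}\rho_{p})^{-n}$ by a more elementary route that bypasses the H\"{o}lder embedding entirely; the layer estimate $\norm{M_{k+1}^{\ast}f-M_{k}^{\ast}f}_{L^{p}}^{p}\le N_{\ast}^{-k}\lambda_{p}^{(1)}\mcE_{p}^{G_{k+1}}(M_{k+1}f)$ is a correct application of Proposition \ref{prop.basic}-(1) on each cell $w\cdot W_{1}$. For the reverse inclusion, the paper goes through the rather involved iterated Poincar\'{e} inequality of Lemma \ref{lem.LBPI} (needed there for general $\beta>\alpha$, e.g.\ for Corollary \ref{cor.Lp-Besov}), whereas your Jensen argument $\abs{M_{n}f(v)-M_{n}f(w)}^{p}\le\mint_{K_{v}}\mint_{K_{w}}\abs{f(x)-f(y)}^{p}$ yields \eqref{eq.PIKS-lower} in one line; this is a genuine simplification for the purposes of this theorem. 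Two small caveats. First, the summability of your geometric series requires only $N_{\ast}\rho_{p}>1$ (which holds unconditionally, since $N_{\ast}\rho_{p}\ge a^{p}$ by Proposition \ref{prop.betap}), not $\rho_{p}>1$; the hypothesis $\rho_{p}>1$ is what makes the H\"{o}lder embedding and weak monotonicity available, not what makes the series converge. Second, your proof of $\beta_{p}\ge p$ via the coordinate test function is essentially the same Lipschitz-test-function idea as the paper's Proposition \ref{prop.betap} (which tests $\mcC_{p}^{(n)}$ directly), but your version invokes $\lambda_{p}^{(n)}\le C\rho_{p}^{n}$, whose proof in the paper passes through Theorem \ref{thm.poi4}-(1) and there cites Proposition \ref{prop.betap} to get $N_{\ast}^{m}\mcR_{p}^{(m-k_{0})}\to\infty$; as literally stated this is circular, though easily repaired under Assumption \ref{assum.lowdim} since $\rho_{p}>1$ already gives that divergence.
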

Note that we will show that $\beta_{p} \ge p$ for any $p > 0$ in Proposition \ref{prop.betap}.

In Section \ref{sec:constr}, we construct a ``canonical'' $p$-energy $\mcE_{p}$ on $(K, d, \mu)$, which satisfies the following properties.
For the definition of Clarkson's inequality, see Definition \ref{dfn.clarkson}.

\begin{thm}\label{thm.main1.2}
%   Assume that $p > \dim_{\textup{ARC}}(K, d)$ and let $\rho_{p} > 1$ be the same constant as in Theorem \ref{thm.main1}.
	Assume that $p > \dim_{\textup{ARC}}(K, d)$. 
    Then there exists a functional $\mcE_{p}\colon \mcF_{p} \to [0, \infty)$ such that $\mcE_{p}(\,\cdot)^{1/p}$ is a semi-norm satisfying Clarkson's inequality and the associated norm $\norm{\,\cdot\,}_{\mcE_{p}} \coloneqq \norm{\,\cdot\,}_{L^{p}} + \mcE_{p}(\,\cdot\,)^{1/p}$ is equivalent to $\norm{\,\cdot\,}_{\mcF_{p}}$.
    Furthermore, $(\mcE_{p}, \mcF_{p})$ satisfies the following conditions:
    \begin{itemize}
        \item [\textup{(1)}] $\indicator{K} \in \mcF_{p}$, and, for $f \in \mcF_{p}$, $\mcE_{p}(f) = 0$ if and only if $f$ is constant.
        Furthermore, $\mcE_{p}(f + a\indicator{K}) = \mcE_{p}(f)$ for any $f \in \mcF_p$ and $a \in \bbR$;
        \item [\textup{(2)}]\textup{(Regularity)} $\mcF_{p}$ is dense in $\mcC(K)$ with respect to the sup norm;
        \item [\textup{(3)}]\textup{(Markov property)} if $f \in \mcF_{p}$ and $\varphi\colon \bbR \to \bbR$ with $\mathrm{Lip}(\varphi) \le 1$, then $\varphi \circ f \in \mcF_{p}$ and $\mcE_{p}(\varphi \circ f) \le \mcE_{p}(f)$;
        \item [\textup{(4)}]\textup{(Symmetry)} if $f \in \mcF_{p}$ and $T \in \mathcal{G}_{0}$, then $f \circ T \in \mcF_{p}$ and $\mcE_{p}(f \circ T) = \mcE_{p}(f)$;
        \item [\textup{(5)}]\textup{(Self-similarity)} it holds that
        \begin{equation}\label{eq.domain-ss}
            \mcF_{p} = \{ f \in \mcC(K) \mid f \circ F_{i} \in \mcF_{p} \text{ for all $i \in S$} \}
        \end{equation}
        and, for every $f \in \mcF_{p}$,
        \begin{equation}\label{eq.ene-ss}
            \mcE_{p}(f) = \rho_{p}\sum_{i \in S}\mcE_{p}(f \circ F_{i});
        \end{equation}
        \item [\textup{(6)}]\textup{(Strong locality)} if $f, g \in \mcF_{p}$ satisfy $\supp[f] \cap \supp[g - a\indicator{K}] = \emptyset$ for some $a \in \bbR$, then $\mcE_{p}(f + g) = \mcE_{p}(f) + \mcE_{p}(g)$.
    \end{itemize}
\end{thm}

\begin{rmk}
    When $p = 2$, there exists the unique Dirichlet form (up to constant multiples) satisfying all conditions (1)-(5) by \cite{BBKT10}*{Theorem 1.2}, \cite{Hin13}*{Proposition 5.1} and \cite{Kaj13.osc}*{Proposition 5.9}
    \footnote{To be precise, the uniqueness was proved in \cite{BBKT10} in an alternative formulation of (1)-(5). In particular, there is no proof of the self-similarity condition (5) in \cite{BBKT10}. The identity \eqref{eq.domain-ss} was proved in \cite{Hin13}*{Proposition 5.1} and an explicit proof of \eqref{eq.ene-ss} was given in \cite{Kaj13.osc}*{Proposition 5.9}.}.
    This is the reason why we say that a $p$-energy $\mcE_{p}$ satisfying these conditions (1)-(5) is canonical.
    (We will see that the condition (6) is automatically deduced from a combination of (1) and (5).)
    However, we do not know whether or not such uniqueness also holds for $p$-energy.
\end{rmk}

We next introduce $\mcE_{p}$-energy measure $\mu^{p}_{\langle f \rangle}$ for $f \in \mcF_p$ and establish a few properties of it in Section \ref{sec:em} (Theorems \ref{thm.peneMarkov}, \ref{thm.peneSS} and \ref{thm.peneLocal}).
\begin{thm}\label{thm.main3}
	Assume that $p > \dim_{\textup{ARC}}(K, d)$.
    For any $f \in \mcF_{p}$, there exists a Borel finite measure $\mu^{p}_{\langle f \rangle}$ on $K$ with $\mu_{\langle f \rangle}^{p}(K) = \mcE_{p}(f)$ satisfying the following conditions:

    \noindent
    \textup{(1)} if $f, g \in \mcF_p$ and $A \in \mcB(K)$ satisfy $(f - g)|_{A} \equiv \text{const.}$, then $\mu_{\langle f \rangle}^{p}(A) = \mu_{\langle g \rangle}^{p}(A)$;

    \noindent
    \textup{(2)} \textup{(Chain rule)} \,for any $\Phi \in \mcC^{1}(\bbR)$, it holds that $d\mu_{\langle \Phi \circ f \rangle}^{p} = \abs{\Phi' \circ f}^{p}d\mu_{\langle f \rangle}^{p}$;

    \noindent
    \textup{(3)} \textup{(Self-similarity)} \,for any $n \in \bbN$, it holds that
    \begin{equation*}
        \mu_{\langle f \rangle}^{p}(dx) = \rho_{p}^{n}\sum_{w \in W_{n}}(F_{w})_{\ast}\mu_{\langle f \circ F_{w} \rangle}^{p}(dx),
    \end{equation*}
    where $(F_{w})_{\ast}\mu_{\langle f \circ F_{w} \rangle}^{p}(A) \coloneqq \mu_{\langle f \circ F_{w} \rangle}^{p}(F_{w}^{-1}(A))$ for any $A \in \mcB(K)$.
\end{thm}
As mentioned in the introduction, this measure $\mu^{p}_{\langle f \rangle}$ plays the role of $\abs{\nabla f(x)}^{p}\,dx$ in the case of Euclidean spaces.
To treat $\mcE_{2}$-energy measures, there are established frameworks in terms of Dirichlet forms.
For further development of $\mcE_{p}$-energy measures, the lack of $p$-energy form ``$\mcE_{p}(f; g)$'' (formally written as $(f, g) \mapsto \int\langle\abs{\nabla f}^{p - 2}\nabla f, \nabla g \rangle\,dx$) is a big obstacle.
This paper contains no results in this direction.
\begin{rmk}\label{rmk.p-form}
    For the Sierpi\'{n}ski gasket, Herman, Peirone and Strichartz \cite{HPS04} have constructed $p$-energy $\mcE_{p}^{\text{HPS}}(f)$, and Strichartz and Wong \cite{SW04} have suggested an approach to interpret $\mcE_{p}^{\text{HPS}}(f; g)$ as subderivatives of $t \mapsto \frac{1}{p}\mcE_{p}^{\text{HPS}}(f + tg)$ at $t = 0$.
    The notion of $p$-harmonicity and $p$-Laplacian based on this form $\mcE_{p}^{\text{HPS}}(f; g)$ are also considered in \cite{SW04}.
\end{rmk}

Lastly, we prove $\beta_{p} > p$ for \emph{planar} generalized Sierpi\'{n}ski carpets. 
\begin{thm}\label{thm.betap_strict}
	Suppose that $D = 2$. 
    Then $\rho_{p} > N_{\ast}^{-1}a^{p}$.
    In particular, $\beta_{p} > p$ for any $p > 0$.
\end{thm}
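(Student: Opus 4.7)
The plan is to strengthen the non-strict bound $\rho_p \ge N_\ast^{-1}a^p$---equivalently $\beta_p \ge p$, already recorded in Theorem \ref{thm.main2}---to a strict inequality. The non-strict bound itself comes from testing $\mcC_p^{(n)}$ against the $1$-Lipschitz coordinate function $h(x_1,x_2) = x_1$: by Lemma \ref{lem.1-adapted}, $\abs{M_n h(v) - M_n h(w)} \le C\,a^{-n}$ for any adjacent $v,w \in W_n$, and summing over the at most $D_\ast N_\ast^n$ edges of $G_n$ (together with a routine cutoff modification to realize the $0/1$ boundary values demanded in the definition of $\mcC_p^{(n)}$) yields $\mcC_p^{(n)} \le C(N_\ast a^{-p})^n$. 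To upgrade this to $\rho_p > N_\ast^{-1}a^p$ it suffices, via iteration together with \eqref{intro.pres}, to exhibit some $m \ge 1$ and $\eta < 1$ with
\[
\mcC_p^{(n+m)} \le \eta\,(N_\ast a^{-p})^m\,\mcC_p^{(n)} \qquad \text{for all sufficiently large } n,
\]
which immediately gives $\rho_p \ge \eta^{-1/m}\,N_\ast^{-1}a^p > N_\ast^{-1}a^p$.

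The mechanism that produces the gap $\eta$ is the missing center $1$-cell of the SC. My plan is to take an extremal function $f_n$ realizing $\mcC_p^{(n)}$ and lift it via self-similarity to a candidate test function $\tilde f$ on $G_{n+m}$; the ``Euclidean'' (filled $3^m\times 3^m$ grid) estimate for this lift would give $\mcE_p^{G_{n+m}}(\tilde f) \approx (N_\ast a^{-p})^m\,\mcE_p^{G_n}(f_n)$. Since each subdivision removes one out of nine potential subcells, a positive fraction of would-be conducting edges is absent, forcing every admissible ``current'' to reroute around the hole; by strict convexity of $t \mapsto \abs{t}^p$ ($p > 1$) applied to the edge increments, this rerouting strictly decreases the minimal achievable energy below the Euclidean value, and averaging over $\mathrm{Sym}(K)$ (see Remark \ref{rmk.D4}) renders the reduction uniform in the choice of cell. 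Alternatively, one can directly invoke the $p$-Knight Move condition $(\mathrm{KM}_p)$ established under Assumption \ref{assum.lowdim} in Section \ref{sec:lowdim}: combining $(\mathrm{KM}_p)$ with the self-similar submultiplicative recursion for $\{\mcR_p^{(n)}\}_{n \ge 1}$ (in the spirit of \cite{KZ92}) produces the same strict gap.

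The main obstacle is the lack, for $p \neq 2$, of the linear series/parallel electrical-network calculus that Barlow--Bass and subsequent authors used to prove $\beta_2 > 2$. I expect the delicate step to be verifying that the rerouting penalty is uniformly bounded below as $n$ varies. My plan is to do this by a compactness argument on the finite-dimensional cone of $\mathrm{Sym}(K)$-symmetric profiles on $G_1$, combined with the strict convexity of the $p$-energy functional, to convert the qualitative statement ``the minimizer for $\mcC_p^{(1)}$ is not the restriction of an affine function'' into a quantitative, scale-invariant strict improvement $\eta < 1$.
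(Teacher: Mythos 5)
Your high-level strategy --- compare the self-similar lift of an extremal function with the true extremizer at the finer scale, and show the lift is strictly suboptimal --- is the right shape, but the proposal is missing the one step that actually carries the proof. The paper works with the continuum energy $\mcE_p$ rather than with the discrete conductances: it takes the $\mcE_p$-harmonic function $h_0$ with $h_0|_{K^{\textup{L}}}\equiv 1$, $h_0|_{K^{\textup{R}}}\equiv 0$, forms the piecewise self-similar lift $h_2$ (which satisfies $\mcE_p(h_2) = (\rho_p N_\ast a^{-p})^2\,\mcE_p(h_0)$ exactly, by Theorem \ref{thm.main1.2}-(e)), and proves that $h_2$ is \emph{not} $\mcE_p$-harmonic (Lemma \ref{lem.not-harm}); uniqueness of the harmonic minimizer (from Clarkson's inequality) then forces $\mcE_p(h_2)>\mcE_p(h_0)$, hence $\rho_p N_\ast a^{-p}>1$. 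The entire content of the theorem sits in Lemma \ref{lem.not-harm}, and ``a positive fraction of edges is absent, so rerouting strictly decreases the energy'' is not a proof of it: the missing cell is already accounted for in the factor $N_\ast=8$, and for a space such as the unit interval the lift of the harmonic function \emph{is} harmonic and equality $\rho_p=N_\ast^{-1}a^p$ holds, so strictness cannot follow from convexity plus the mere presence of a hole. One must exhibit why harmonicity of the lift fails; the paper does this by a localization argument (perturbations supported in $K_{83}\cup K_{84}$ transplant, via self-similarity and symmetry, to perturbations of $h_0$ relative to the boundary $K^{\textup{L}}\cup K^{\textup{B}}$) followed by a reflection across the anti-diagonal, which would force $h_0|_{K^{\textup{T}}}\equiv 0$ and contradict the boundary data. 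Your proposal contains no substitute for this: ``the minimizer for $\mcC_p^{(1)}$ is not the restriction of an affine function'' is a different statement from ``the lift of the minimizer is not a minimizer,'' and only the latter is relevant.

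Two further problems. First, the uniformity in $n$ that you flag as delicate is not resolved by compactness on $\mathrm{Sym}(K)$-symmetric profiles on $G_1$: the competitors live on $G_{n+m}$, whose dimension grows with $n$, and a gap $\eta_n<1$ for each fixed $n$ could a priori tend to $1$. The paper avoids this entirely by passing to the limit object $\mcE_p$, for which self-similarity is an exact identity and a single scale-free comparison suffices; a purely discrete route would essentially force you to reconstruct $h_0$ as a limit of discrete minimizers anyway. Second, the alternative route via $(\mathrm{KM}_p)$ and the (sub)multiplicative recursion cannot work: those inputs only yield $\mcR_p^{(n)}\asymp\rho_p^n$ together with the non-strict bound $\rho_p\ge N_\ast^{-1}a^p$ of Proposition \ref{prop.betap}, and nothing in them distinguishes the strict from the non-strict case.
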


\begin{rmk}
	Our proof of Theorem \ref{thm.betap_strict} is inspired by \cite{Kaj20+}, where $\beta_{2} > 2$ is proved for all generalized Sierpi\'{n}ski carpets. 
	However, our argument is limited to the planar case due to the lack of suitable $p$-energy form ``$\mcE_{p}(f; g)$'' (see also Remark \ref{rmk.p-form}). 
	In a forthcoming paper \cite{KS22+}, the required $p$-energy forms will be constructed. 
	Using these $p$-energy forms and following the arguments in \cite{Kaj20+}, we can show $\beta_{p} > p$ for all generalized Sierpi\'{n}ski carpets without assuming $D = 2$. 
	The details will be provided in \cite{KS22+}. 
\end{rmk}

%=== Poincare ===
\section{Estimates of Poincar\'{e} constants and conductances}\label{sec:poincare}

In this section and Section \ref{sec:lowdim}, we investigate relations among $(p,p)$-Poincar\'{e} constants $\lambda_{p}^{(n)}$, $\lambda_{p, \textup{Dir}}^{(n)}$, $\sigma_{p}^{(n)}$ and $p$-conductances $\mcC_{p}^{(n)}$ (and its reciprocal $\mcR_{p}^{(n)}$).
Almost all parts of this section are $p$-energy analogs of \cite{KZ92}*{Section 2}.
The ultimate goal is to show that $\lambda_{p}^{(n)}$, $\sigma_{p}^{(n)}$ and $\mcR_{p}^{(n)}$ are comparable without depending on the level $n$.
In particular, the estimate $\sigma_{p}^{(n)} \vee \lambda_{p}^{(n)} \le C\mcR_{p}^{(n)}$ will be needed in later sections (especially Theorem \ref{thm.holder_KM} and Corollary \ref{cor.wm}).
However, we need some hard preparations to this end.
In the case $p = 2$, this was done in \cite{KZ92}*{Theorem 7.16} under two assumptions: \cite{KZ92}*{(B-1) and (B-2)}.
The following conditions are generalizations of these assumptions to fit our $p$-energy context.
\begin{enumerate}[label=\textup{(con)},align=left,leftmargin=*,topsep=4pt,parsep=0pt,itemsep=2pt]
	\item[\hypertarget{Bp}{\textup{(B$_{p}$)}}] There exist $k_{\ast} \in \bbZ_{\ge 0}$ and a positive constant $C_{\ast}$ (that depends only on $p$ and $N_{\ast}$) such that $\sigma_{p}^{(n)} \le C_{\ast}\lambda_{p, \textup{Dir}}^{(n+k_{\ast})}$ for every $n \in \bbN$.
	\item[\hypertarget{KMp}{\textup{(KM$_{p}$)}}] There exists $C_{\text{KM}} > 0$ such that $\lambda_{p}^{(n)} \le C_{\text{KM}}\,\mcR_{p}^{(n)}$ for every $n \in \bbN$.
\end{enumerate}
A proof of (B$_{2}$) for the Sierpi\'{n}ski carpet is given in \cite{KZ92}*{Proposition 8.1}, and we also prove \hyperlink{Bp}{\textup{(B$_{p}$)}} for all $p$ by a similar method to theirs in Section \ref{sec:lowdim} (see Proposition \ref{prop.B}).
The condition \hyperlink{KMp}{\textup{(KM$_{p}$)}} is essential for our goals.
We prove \hyperlink{KMp}{\textup{(KM$_{p}$)}} and show that $\lambda_{p}^{(n)}$, $\sigma_{p}^{(n)}$ and $\mcR_{p}^{(n)}$ are comparable in the next section (see Theorem \ref{thm.KM}).
This section is devoted to a part of preparations toward Theorem \ref{thm.KM}.

\begin{rmk}\label{rmk.CH}
    Kusuoka and Zhou have proved (KM$_{2}$) using the result of Barlow and Bass \cite{BB89} that is called the \emph{Knight Move argument} (see \cite{KZ92}*{Theorem 7.16}).
    The original Knight Move condition \cite{KZ92}*{condition (KM)} is a uniform estimate for discrete harmonic functions with some boundary conditions.
    We can check that a $p$-harmonic analog of \cite{KZ92}*{condition (KM)} is equivalent to \hyperlink{KMp}{\textup{(KM$_{p}$)}} under Assumption \ref{assum.lowdim}, which will be introduced later, and so we call the condition \hyperlink{KMp}{\textup{(KM$_{p}$)}} $p$-Knight Move instead.
    A recent study by Kigami reveals new important aspects of \hyperlink{KMp}{\textup{(KM$_{p}$)}}, and he introduced an important condition that is called \emph{$p$-conductive homogeneity}, which plays a similar role as \hyperlink{KMp}{\textup{(KM$_{p}$)}} (see \cite{Kig21+}*{Theorem 1.1 and 1.3}).
\end{rmk}

In this section, let $(K, S, \{ F_{i} \}_{i \in S}) = \GSC(D, a, S)$ be a generalized Sierpi\'{n}ski carpet and let $p > 0$. 

%--- basic properties of Poincare constants ---
\subsection{Basic estimates without (B$_{p}$) and (KM$_{p}$)}
Let us start by preparing some basic facts.
The following proposition is easily derived from the definition of $(p, p)$-Poincar\'{e} constants (for $p = 2$, see \cite{KZ92}*{Proposition 1.5}).
\begin{prop}\label{prop.basic}
    Let $n \ge 1$, $m \ge 0$, $w \in W_{m}$ and $f \in \bbR^{W_{n + m}}$.
    \begin{itemize}
        \item [\textup{(1)}] It holds that
        \[
        \sum_{v \in w \cdot W_{n}}\abs{f(v) - \mean{f}{w \cdot W_{n}}}^{p} \le N_{\ast}^{n}\lambda_{p}^{(n)}\mcE_{p, w \cdot W_{n}}^{G_{n + m}}(f).
        \]
        In particular,
        \begin{equation}\label{est.basic1}
            \sum_{v \in W_{n}}\abs{f(v) - \mean{f}{W_{n}}}^{p} \le N_{\ast}^{n}\lambda_{p}^{(n)}\mcE_{p}^{G_n}(f).
        \end{equation}
        \item [\textup{(2)}]
        It holds that
        \begin{equation}\label{est.basic2}
             \abs{\mean{f}{w \cdot W_{n}} - \mean{f}{W_{n + m}}}^{p} \le N_{\ast}^{m}\Bigl(1 \vee N_{\ast}^{-(p - 1)n}\Bigr)\lambda_{p}^{(n + m)}\mcE_{p}^{G_{n + m}}(f).
        \end{equation}
        Moreover, for $w \in W_{n}$, $k \in \{ 1, \dots, n \}$ and $f \in \bbR^{W_{n}}$,
        \begin{align}\label{est.tele}
            \abs{\mean{f}{[w]_{n - k}\cdot W_{k}} - \mean{f}{[w]_{n - k + 1}\cdot W_{k - 1}}}^{p} \le N_{\ast}\lambda_{p}^{(k)}\mcE_{p, [w]_{n - k} \cdot W_k}^{G_{n + m}}(f).
        \end{align}
        \item [\textup{(3)}] For any $(v, w) \in \widetilde{E}_{m}$,
        \begin{equation}\label{est.basic3}
            \abs{\mean{f}{v\cdot W_{n}} - \mean{f}{w\cdot W_{n}}}^{p} \le \sigma_{p}^{(n)}\mcE_{p, \{ v, w \} \cdot W_{n}}^{G_{n + m}}(f).
        \end{equation}
    \end{itemize}
\end{prop}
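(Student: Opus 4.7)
The plan is to observe that Proposition \ref{prop.basic} is a bookkeeping consequence of three ingredients: the definitions of $\lambda_p^{(n)}$ and $\sigma_p^{(n)}$, the self-similarity of the approximating graphs (each subgraph on $w\cdot W_n$ is graph-isomorphic to $G_n$), and Jensen's inequality applied to the uniform probability measure on $W_{n+m}$. Recall that on $W_N$ the measure $\mu$ is uniform with mass $N_\ast^{-N}$, so that the supremum defining $\lambda_p^{(N)}$ unfolds to the inequality $\sum_{x\in W_N}\abs{f(x)-\mean{f}{W_N}}^p \le N_\ast^{N}\lambda_p^{(N)}\mcE_p^{G_N}(f)$.

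For part (1), I would observe that the restriction of the edge set $E_{n+m}$ to $w\cdot W_n$ reproduces, under the natural bijection $v\mapsto wv$, the graph $G_n$ together with its uniform measure; therefore applying the unfolded definition of $\lambda_p^{(n)}$ to $v\mapsto f(wv)$ yields the first inequality immediately. The estimate \eqref{est.basic1} is the special case $m=0$ (equivalently $w=\emptyset$). For part (2), I would write the coarse mean as an average of fine means,
\[
\mean{f}{W_{n+m}} = \frac{1}{N_\ast^m}\sum_{u\in W_m}\mean{f}{u\cdot W_n},
\]
and then estimate
\[
\abs{\mean{f}{w\cdot W_n} - \mean{f}{W_{n+m}}}^{p} = \Bigl|\frac{1}{N_\ast^n}\sum_{v\in w\cdot W_n}\bigl(f(v) - \mean{f}{W_{n+m}}\bigr)\Bigr|^{p}
\]
by Jensen's inequality applied to $t\mapsto \abs{t}^p$, yielding
\[
\le \frac{1}{N_\ast^n}\sum_{v\in w\cdot W_n}\abs{f(v) - \mean{f}{W_{n+m}}}^{p} \le \frac{1}{N_\ast^n}\sum_{v\in W_{n+m}}\abs{f(v) - \mean{f}{W_{n+m}}}^{p},
\]
and finally invoking \eqref{est.basic1} at level $n+m$ produces the factor $N_\ast^{n+m}\lambda_p^{(n+m)}\mcE_p^{G_{n+m}}(f)/N_\ast^n = N_\ast^{m}\lambda_p^{(n+m)}\mcE_p^{G_{n+m}}(f)$. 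The telescoping estimate \eqref{est.tele} will then be obtained by applying \eqref{est.basic2} inside the subgraph $[w]_{n-k}\cdot W_k \cong G_k$, taking the roles of $n$ and $m$ there to be $k-1$ and $1$ respectively.

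Finally, part (3) is essentially a tautology: the inequality \eqref{est.basic3} restates the defining supremum of $\sigma_p^{(n)}(v,w)$, and the bound $\sigma_p^{(n)}(v,w)\le \sigma_p^{(n)}$ for $(v,w)\in\widetilde E_m$ is built into the definition of $\sigma_p^{(n)}$. There is no real obstacle; the only thing to be careful about is keeping track of the normalization factors $N_\ast^{-N}$ coming from the fact that the supremum in Definition~\ref{dfn.Poiconst} uses the weighted sum $\sum_x \abs{\,\cdot\,}^p\nu(x)$ rather than the unweighted one, which is what accounts for the factors $N_\ast^n$ and $N_\ast^m$ appearing on the right-hand sides.
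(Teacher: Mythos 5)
Your proof is correct and follows essentially the same route as the paper: part (1) and part (3) are unfoldings of the definitions of $\lambda_p^{(n)}$ and $\sigma_p^{(n)}$ combined with the self-similarity of the subgraphs, and part (2) is the same computation the paper performs (your Jensen step is the paper's H\"{o}lder step), including the identification of the roles $n = k-1$, $m = 1$ inside the copy $[w]_{n-k}\cdot W_k \cong G_k$ to deduce \eqref{est.tele} from \eqref{est.basic2}. The normalization bookkeeping with the factors $N_\ast^n$ and $N_\ast^m$ is handled correctly.
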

\begin{proof}
    (1) This is immediate from the definition.

    (2) Note that a simple computation yields that
    \[
    \mean{f}{w \cdot W_{n}} - \mean{f}{W_{n + m}} = N_{\ast}^{-n}\sum_{v \in w \cdot W_{n}}(f(v) - \mean{f}{W_{n + m}}).
    \]
    Applying H\"{o}lder's inequality, we have that
    \begin{align*}
        \abs{\mean{f}{w \cdot W_{n}} - \mean{f}{W_{n + m}}}^{p}
        &\le N_{\ast}^{-pn}\Bigl(N_{\ast}^{(p - 1)n} \vee 1\Bigr)\sum_{v \in w \cdot W_{n}}\abs{f(v) - \mean{f}{W_{n + m}}}^{p} \\
        &\le N_{\ast}^{m}\Bigl(N_{\ast}^{-(n + m)} \vee N_{\ast}^{-(pn + m)}\Bigr)\sum_{v \in W_{n + m}}\abs{f(v) - \mean{f}{W_{n + m}}}^{p} \\
        &\le N_{\ast}^{m}\Bigl(1 \vee N_{\ast}^{-(p - 1)n}\Bigr)\lambda_{p}^{(n + m)}\mcE_{p}^{G_{n + m}}(f),
    \end{align*}
    which proves \eqref{est.basic2}.
    Lastly, by viewing $[w]_{n - k} \cdot W_{k}$ as a copy of $W_{k}$, we see that the estimate \eqref{est.basic2} becomes \eqref{est.tele}.

    (3) It is obvious from the definition.
\end{proof}

For $n \ge 0$ and $m \ge 1$, we define $P_{n + m, n}\colon \bbR^{W_{n + m}} \to \bbR^{W_{n}}$ by setting
\[
P_{n + m, n}f(w) \coloneqq \mean{f}{w \cdot W_{m}}, \quad w \in W_n.
\]
When $m$ is clear in the context, we abbreviate $P_n$ to denote $P_{n + m, n}$.
Note that $\mean{P_{n + m, m}f}{W_n} = \mean{f}{W_{n + m}}$ by a simple calculation.

While the following lemma is also immediate from the definition of $\sigma_{p}^{(n)}$ (for $p = 2$, see \cite{KZ92}*{Lemma 2.12}), this lemma will derive some important properties later.
In particular, the weak monotonicity (Corollary \ref{cor.wm}) comes from this lemma.

\begin{lem}\label{lem.wm}
    For all $n \ge 0$, $m \ge 1$, $f \in \bbR^{W_{n + m}}$ and a subset $A$ of $W_n$,
    \begin{align}\label{est.prewm}
        \mcE_{p, A}^{G_{n}}(P_{n+m, n}f) \le \Bigl(D^{(p - 1)} \vee 1\Bigr)L_{\ast}\sigma_{p}^{(m)}\mcE_{p, A \cdot W_m}^{G_{n + m}}(f).
    \end{align}
\end{lem}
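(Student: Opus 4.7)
The strategy is to estimate each term in the expansion
\[
\mcE_p^{A}(P_{n+m,n}f) = \tfrac{1}{2}\sum_{(v, w) \in E^A} \abs{\mean{f}{v \cdot W_m} - \mean{f}{w \cdot W_m}}^p
\]
by a sub-energy of $f$ supported on (a small enlargement of) $\{v, w\} \cdot W_m$ via the inequality \eqref{est.basic3}, and then aggregate while carefully controlling how many times each edge of $E^{A \cdot W_m}$ is used. Recall that \eqref{est.basic3} directly applies only to line-segment adjacent pairs $(v, w) \in \widetilde{E}_n$.

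First, for each edge $(v, w) \in E^A \cap \widetilde{E}_n$, inequality \eqref{est.basic3} yields $\abs{\mean{f}{v \cdot W_m} - \mean{f}{w \cdot W_m}}^p \le \sigma_p^{(m)} \mcE_p^{\{v, w\} \cdot W_m}(f)$ at once. For corner-touching pairs $(v, w) \in E^A \setminus \widetilde{E}_n$, I would select an intermediate cell $u = u(v, w) \in W_n$ with $(v, u), (u, w) \in \widetilde{E}_n$ (the local geometry of the Sierpi\'nski carpet supplies such a $u$), and then combine the triangle inequality with the convexity bound $(a+b)^p \le 2^{p-1}(a^p + b^p)$ and \eqref{est.basic3} to obtain
\[
\abs{\mean{f}{v \cdot W_m} - \mean{f}{w \cdot W_m}}^p \le 2^{p-1}\sigma_p^{(m)}\bigl(\mcE_p^{\{v, u\}\cdot W_m}(f) + \mcE_p^{\{u, w\}\cdot W_m}(f)\bigr).
\]
In either case, each summand is dominated by $2^{p-1}\sigma_p^{(m)}$ times a sum of sub-energies of $f$ indexed by line-segment pairs near $(v, w)$.

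It then remains to count multiplicities. For each fixed edge $(x, y) \in E^{A \cdot W_m}$ with $x \in v_x \cdot W_m$ and $y \in v_y \cdot W_m$, the contribution $(x, y)$ appears in $\mcE_p^{\{v', w'\}\cdot W_m}(f)$ only when $\{v_x, v_y\} \subseteq \{v', w'\}$, so the number of ordered pairs $(v', w')$ contributing is bounded by the degree of $v_x$ (or $v_y$) in $G_n$, which is at most $D_*$ by \eqref{degree}. Combining the pointwise bound with this multiplicity count (and the $\tfrac{1}{2}$ factor) produces the desired estimate with constant $2^{p-1} D_{\ast} \sigma_p^{(m)}$.

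The main obstacle is the handling of the corner-touching edges in $E^A \setminus \widetilde{E}_n$: one must verify that a line-segment-connected intermediate $u$ can be chosen uniformly and that the enlarged sub-energies still respect the $D_*$ multiplicity bound. For the line-segment pairs alone, the argument collapses to a direct application of \eqref{est.basic3} coupled with the multiplicity count.
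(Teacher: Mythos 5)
Your proposal follows essentially the same route as the paper's proof: the paper rewrites each summand as $\abs{\mean{\tilde{f}[v,w]}{v \cdot W_m} - \mean{\tilde{f}[v,w]}{w \cdot W_m}}^{p}\mcE_{p}^{\{v,w\}\cdot W_{m}}(f)$ with $\tilde{f}[v,w] = \mcE_{p}^{\{v,w\}\cdot W_{m}}(f)^{-1/p}f$, bounds the sum by $2^{p-1}\sigma_{p}^{(m)}\sum_{(v,w)\in\widetilde{E}_{n}^{A}}\mcE_{p}^{\{v,w\}\cdot W_{m}}(f)$ --- the factor $2^{p-1}$ and the passage from $E_{n}^{A}$ to $\widetilde{E}_{n}^{A}$ being exactly your intermediate-cell device for corner-touching pairs --- and then applies the degree bound $D_{\ast}$ for the multiplicity count, just as you do. The obstacle you flag (that for a corner pair $(v,w)$ the chosen intermediate $u$ must lie in $A$, since otherwise $\mcE_{p}^{\{v,u\}\cdot W_{m}}(f)$ involves edges outside $E^{A\cdot W_{m}}$ and cannot be absorbed into $\mcE_{p}^{A\cdot W_{m}}(f)$) is passed over silently in the paper's proof as well, so your write-up is no less complete than the original.
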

\begin{proof}
    If $f$ is a constant function on $W_{n + m}$, then we have nothing to be proved.
    Let $f\colon W_{n + m} \to \bbR$ be a function that is not constant.
    For each $v, w \in W_n$, define a function $\tilde{f}[v, w]$ on $W_{n + m}$ by setting $\tilde{f}[v, w] \coloneqq \mcE_{p, \{ v, w \} \cdot W_{m}}^{G_{n + m}}(f)^{-1/p}\cdot f$.
    Now, it is a simple computation that 
    \begin{align*}
        \mcE_{p, A}^{G_{n}}(P_{n}f)
        &= \sum_{(v, w) \in E_{n}^{A}}\abs{\mean{f}{v \cdot W_m} - \mean{f}{w \cdot W_m}}^{p} \\
        &= \sum_{(v, w) \in E_{n}^{A}}\abs{\mean{\tilde{f}[v, w]}{v \cdot W_m} - \mean{\tilde{f}[v, w]}{w \cdot W_m}}^{p}\mcE_{p, \{ v, w \} \cdot W_{m}}^{G_{n + m}}(f) \\
        &\le \Bigl(D^{p - 1} \vee 1\Bigr)\sigma_{p}^{(m)}\sum_{(v, w) \in \widetilde{E}_{n}^{A}}\mcE_{p, \{ v, w \} \cdot W_{m}}^{G_{n + m}}(f) \\
        &\le \Bigl(D^{(p - 1)} \vee 1\Bigr)L_{\ast}\sigma_{p}^{(m)}\mcE_{p, A \cdot W_m}^{G_{n + m}}(f), 
    \end{align*}
    where we used Proposition \ref{prop.h-network} in the third line. 
\end{proof}

The following theorem states the \emph{submultiplicative inequality} of $\mcC_{p}^{(n)}$, whose proof can be found in many literatures (e.g. \cite{BK13}*{Proposition 3.6}, \cite{CP13}*{Lemma 3.7}, \cite{Kig20}*{Lemma 4.9.3} or \cite{Kig21+}*{Theorem 4.3}).

\begin{thm}\label{thm.sub-mult}
    There exists $C_{\ref{thm.sub-mult}} > 0$ (depending only on $p, L_{\ast}$) such that
    \begin{equation}\label{eq.sub-mult}
        \mcC_{p}^{(n + m)} \le C_{\ref{thm.sub-mult}}\mcC_{p}^{(n)}\mcC_{p}^{(m)} \quad \text{for any $n, m \in \mathbb{N}$.}
    \end{equation}
    In particular, the limit $\lim_{n \to \infty}\bigl(\mcC_{p}^{(n)}\bigr)^{1/n} \eqqcolon \rho_{p}^{-1} > 0$ exists and
    \[
    \mcC_{p}^{(n)} \ge C_{\ref{thm.sub-mult}}^{-1}\,\rho_{p}^{-n} \quad \text{for any $n \in \bbN$.}
    \]
\end{thm}

The constant $\rho_{p}$ in the above theorem will play indispensable roles in this paper.
The following proposition is an extension of \cite{KZ92}*{Proposition 2.7} and gives an estimate of $\rho_p$.
\begin{prop}\label{prop.betap}
    There exists a positive constant $C_{\ref{prop.betap}}$ depending only on $p, a, D, L_{\ast}$ such that
    \[
    \mcC_{p}^{(n)} \le C_{\ref{prop.betap}}(N_{\ast}a^{-p})^{n} \quad \text{for all $n \in \mathbb{N}$.}
    \]
    In particular, it holds that $\rho_{p} \ge N_{\ast}^{-1}a^{p}$.
\end{prop}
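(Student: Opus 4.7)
The plan is to exhibit, for each $w \in W_{\#}$, an explicit test function witnessing the bound $\mcC_{p}^{G_{n+|w|}}\bigl(w \cdot W_{n},\, W_{n+|w|} \setminus \mcB_{n}(w,1)\bigr) \le C_{\ref{prop.betap}} (N_{\ast} a^{-p})^{n}$ uniformly in $w$. Once this is achieved, the estimate $\rho_{p} \ge N_{\ast}^{-1} a^{p}$ follows immediately by taking $n$-th roots and invoking Theorem \ref{thm.mult}.

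Fix $w \in W_{\#}$, set $m \coloneqq |w|$, and first establish a Euclidean separation estimate. If $u \in W_{m}$ satisfies $d_{G_{m}}(u, w) \ge 2$, then $K_{u} \cap K_{w} = \emptyset$, so for any $x \in K_{u}$ and $y \in K_{w}$ one has $n(x,y) \le m - 1$, and Lemma \ref{lem.1-adapted} yields $d(x, y) \ge C_{\text{AD}}^{-1} a^{-(m-1)}$. Setting $c_{0} \coloneqq C_{\text{AD}}^{-1} a > 0$, this gives $d(K_{u}, K_{w}) \ge c_{0} a^{-m}$. Next, define a cutoff $\phi \colon K \to [0,1]$ by
\[
\phi(x) \coloneqq \max\bigl(0,\, 1 - c_{0}^{-1} a^{m}\, d(x, K_{w})\bigr),
\]
which satisfies $\phi \equiv 1$ on $K_{w}$, $\phi \equiv 0$ on $K_{u}$ for every $u \in W_{m}$ with $d_{G_{m}}(u, w) \ge 2$, and has Lipschitz constant $c_{0}^{-1} a^{m}$. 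Pick an arbitrary $x_{v} \in K_{v}$ for each $v \in W_{n+m}$ and set $f(v) \coloneqq \phi(x_{v})$. Then $f \equiv 1$ on $w \cdot W_{n}$ and $f \equiv 0$ on $W_{n+m} \setminus \mcB_{n}(w,1)$, so $f$ is admissible for the $p$-conductance problem above.

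For any edge $(v_{1}, v_{2}) \in E_{n+m}$, the cells $K_{v_{1}}, K_{v_{2}}$ intersect, so $n(x_{v_{1}}, x_{v_{2}}) \ge n + m$ and Lemma \ref{lem.1-adapted} gives $d(x_{v_{1}}, x_{v_{2}}) \le C_{\text{AD}} a^{-(n+m)}$; hence
\[
|f(v_{1}) - f(v_{2})| \le c_{0}^{-1} a^{m} \cdot C_{\text{AD}} a^{-(n+m)} = c_{0}^{-1} C_{\text{AD}} a^{-n}.
\]
Moreover, $f$ vanishes outside $\mcB_{n}(w, 1)$, which contains at most $(D_{\ast} + 1) N_{\ast}^{n}$ vertices (the closed graph-neighborhood of $w$ in $W_{m}$ has at most $D_{\ast} + 1$ elements by \eqref{degree}, each descending to $N_{\ast}^{n}$ words at level $n + m$). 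Consequently, the number of edges in $E_{n+m}$ contributing non-trivially to $\mcE_{p}^{G_{n+m}}(f)$ is at most $D_{\ast}(D_{\ast} + 1) N_{\ast}^{n}$, and combining the two estimates yields
\[
\mcE_{p}^{G_{n+m}}(f) \le \tfrac{1}{2}\, D_{\ast} (D_{\ast} + 1) N_{\ast}^{n} \cdot (c_{0}^{-1} C_{\text{AD}})^{p} a^{-np} = C_{\ref{prop.betap}} (N_{\ast} a^{-p})^{n},
\]
with $C_{\ref{prop.betap}}$ depending only on $p$, $D_{\ast}$, $C_{\text{AD}}$. Taking the supremum over $w$ completes the proof. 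There is no substantial obstacle here: the only step that requires care is the geometric separation bound in the second paragraph, which is handled cleanly via the $1$-adapted property.
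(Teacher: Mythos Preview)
Your approach is essentially the paper's: build a Lipschitz cutoff at scale $a^{-m}$, show it yields an admissible test function for the conductance, and bound the energy by counting the $O(N_{\ast}^{n})$ contributing edges with gradient $O(a^{-n})$. The only cosmetic difference is that the paper averages the cutoff over cells via $M_{n+m}$, whereas you evaluate at chosen points $x_{v}$.

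There is one small gap in your separation argument. The assertion ``for any $x \in K_{u}$ and $y \in K_{w}$ one has $n(x,y) \le m-1$'' fails for boundary points: if $u'$ is a common $G_{m}$-neighbor of $u$ and $w$, then for $x \in K_{u} \cap K_{u'}$ and $y \in K_{u'} \cap K_{w}$ one may take the pair $(u',u')$ at level $m$ in the definition of $n(x,y)$, giving $n(x,y) \ge m$. The bound the paper actually uses is $n(x,y) \le m$, yielding $\dist(K_{u}, K_{w}) \ge C_{\text{AD}}^{-1} a^{-m}$. With this replacement your Lipschitz constant becomes $C_{\text{AD}}\, a^{m}$ rather than $c_{0}^{-1} a^{m}$, and the rest of your argument goes through verbatim with a slightly different final constant.
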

\begin{proof}
    Let $z \in W_{m}$ and set $A \coloneqq z \cdot W_{n}$, $B \coloneqq W_{n + m} \setminus \mcB_{n}(z, 1)$, $K_A \coloneqq \bigcup_{w \in A}K_w$, and $K_B \coloneqq \bigcup_{w \in B}K_w$.
    Then, by Lemma \ref{lem.1-adapted}, we have that $\dist{(K_A, K_B)} \coloneqq \inf\{ d(x, y) \mid x \in K_A, y \in K_B \} \ge ca^{-m}$, where $c$ is a positive constant depending only on $a, D$.
    Define a continuous function $f\colon K \to \bbR$ by setting
    \[
    f(x) \coloneqq \frac{\dist(x, K_A)}{\dist(K_A, K_B)} \wedge 1
    \]
    for each $x \in K$, where $\dist(x, F) \coloneqq \inf_{y \in F}d(x, y)$ for any subset $F$ of $K$.
    Then it is immediate that $f|_{K_A} \equiv 0$ and $f|_{K_B} \equiv 1$, and thus $M_{n + m}f|_{A} \equiv 0$ and $M_{n + m}f|_{B} \equiv 1$.
    This yields that $\mcC_{p}^{G_{n + m}}(A, B) \le \mcE_{p}^{G_{n + m}}(M_{n + m}f)$.

    Next, we will estimate the $p$-energy of $M_{n + m}f$ by estimating distances.
    For $(v, w) \in E_{n + m}$, by the triangle inequality, we have
    \begin{align*}
        \abs{\dist(F_v(x), K_A) - \dist(F_w(x), K_A)} \le d(F_v(x), F_w(x)) \le 2a^{-(n + m)}. 
    \end{align*}
    By Lemma \ref{lem.invariant},
    \begin{align*}
        &\abs{M_{n + m}f(v) - M_{n + m}f(w)} \\
        &\quad=
        \abs{\int_{K}F_{v}^{\ast}f\,d\mu - \int_{K}F_{w}^{\ast}f\,d\mu}  \\
        &\quad\le
        \frac{1}{\dist(K_A, K_B)}\int_{K}\abs{\dist(F_v(x), K_A) - \dist(F_w(x), K_A)}\,d\mu(x) \\
        &\quad\le
        c^{-1}a^{-n}.
    \end{align*}
    Consequently, we conclude that
    \begin{align*}
        \mcC_{p}^{G_{n + m}}(A, B)
        &\le
        \mcE_{p}^{G_{n + m}}(M_{n + m}f) \\
        &\le
        \sum_{w \in \mcB_{n}(z, 1)}\sum_{v \in W_{n + m}}\abs{M_{n + m}f(v) - M_{n + m}f(w)}^{p}\indicator{E_{n + m}}\bigl((v, w)\bigr) \\
        &\le
        c^{-p}L_{\ast}\bigl(\#\mcB_{n}(z, 1)\bigr)a^{-pn} 
        \le
        c^{-p}L_{\ast}(L_{\ast} + 1)N_{\ast}^{n}a^{-pn}. \qedhere
    \end{align*}
\end{proof}

Since ``gluing'' maximizers of $\lambda_{p, \textup{Dir}}^{(n)}$ does not increase energies, the next proposition follows (for $p = 2$, see \cite{KZ92}*{Proposition 2.11}).
\begin{prop}\label{prop.poiD}
    For all $n \ge 1$, 
    \[
    \lambda_{p, \textup{Dir}}^{(n)} \le N_{\ast}\Bigl(1 \vee N_{\ast}^{-(p - 1)n}\Bigr)\lambda_{p}^{(n + 1)}. 
    \]
\end{prop}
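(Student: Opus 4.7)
My plan is to construct a test function on $G_{n+1}$ by gluing $N_\ast$ copies of the $\lambda_{\ast,p}^{(n)}$-minimizer inside the larger graph, and then apply the variational bound defining $\lambda_p^{(n+1)}$. First I would invoke Proposition~\ref{prop.minimizer}(2) to obtain $f\colon W_n \to \bbR$ satisfying $f|_{\partial_\ast G_n} \equiv 0$, $\langle f\rangle_{W_n} = 1$, and $\mcE_p^{G_n}(f) = 1/\lambda_{\ast,p}^{(n)}$. Under the parametrization $W_{n+1} = S \cdot W_n$, I would define the glued extension $\tilde f\colon W_{n+1}\to\bbR$ by $\tilde f(iw) \coloneqq f(w)$ for every $(i,w) \in S\times W_n$.

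The essential geometric observation is that every edge $(iw_1, jw_2) \in E_{n+1}$ with $i \neq j$ forces both $w_1$ and $w_2$ to lie in $\partial_\ast G_n$: since $K_{iw_1}\cap K_{jw_2} \subseteq K_i\cap K_j \subseteq F_i(\partial[-1/2,1/2]^2)$, pulling back by $F_i$ places $K_{w_1}$ against $\partial[-1/2,1/2]^2$, so $w_1 \in \partial_\ast G_n$, and similarly for $w_2$. Because $f \equiv 0$ on $\partial_\ast G_n$, such cross-edges contribute zero to $\mcE_p^{G_{n+1}}(\tilde f)$. Combined with the self-similar identification of each subgraph on $i\cdot W_n$ with $G_n$, this yields
\[
\mcE_p^{G_{n+1}}(\tilde f) = N_\ast\,\mcE_p^{G_n}(f),
\]
while direct averaging gives $\langle \tilde f\rangle_{W_{n+1}} = \langle f\rangle_{W_n} = 1$, and the same geometric argument implies $\tilde f|_{\partial_\ast G_{n+1}} \equiv 0$.

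The final step applies the variational bound $\sum_v|\tilde f(v) - \langle\tilde f\rangle_{W_{n+1}}|^p\mu(v) \le \lambda_p^{(n+1)}\mcE_p^{G_{n+1}}(\tilde f)$ to $\tilde f$ and extracts $|\langle f\rangle_{W_n}|^p \le \lambda_p^{(n+1)}\mcE_p^{G_n}(f)$, which rearranges to the claim. The main technical obstacle I expect is this last extraction: the Poincar\'e bound controls a variance-type quantity rather than $|\langle f\rangle_{W_n}|^p$ directly, so I would need to carefully exploit the zero set $S\cdot\partial_\ast G_n$ of $\tilde f$ (which contains $\partial_\ast G_{n+1}$) to absorb the factor $N_\ast$ produced by $\mcE_p^{G_{n+1}}(\tilde f) = N_\ast\mcE_p^{G_n}(f)$. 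If the plain copy extension fails to produce the clean prefactor, I would instead attempt a signed variant $\tilde f(iw) = \varepsilon_i f(w)$ with $\varepsilon_i \in \{\pm1\}$ chosen so that $\sum_{i\in S}\varepsilon_i = 0$, which forces $\langle\tilde f\rangle_{W_{n+1}} = 0$ while preserving both the energy identity and the vanishing of cross-edge contributions, reducing the Poincar\'e bound directly to a statement about $\sum_w|f|^p\mu_n(w)$, which dominates $|\langle f\rangle_{W_n}|^p$ by Jensen's inequality.
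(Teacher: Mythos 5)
Your primary construction (all $N_\ast$ copies glued with the same sign) does not work, for exactly the reason you flag at the end: with $\tilde f(iw) = f(w)$ for all $i$ you get $\mean{\tilde f}{W_{n+1}} = \mean{f}{W_n} = 1$, so the Neumann Poincar\'e bound for $G_{n+1}$ only controls $\sum_{w}\abs{f(w) - 1}^{p}\mu_n(w)$, i.e.\ the variance of $f$ about its own mean. That quantity carries no lower bound in terms of $\abs{\mean{f}{W_n}}^{p} = 1$; the only leverage is that $f$ vanishes on $\partial_\ast G_n$, and $\mu_n(\partial_\ast G_n) = \#(\partial_\ast G_n)/N_\ast^{n} \to 0$, so no level-independent constant comes out. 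The unsigned gluing is therefore a dead end, not merely a technical inconvenience.

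Your fallback, however, is precisely the paper's proof. The paper takes the minimizer $f$ from Proposition \ref{prop.minimizer}-(2), fixes two indices $i \neq j \in S$, and sets $f_\ast = +f$ on $i\cdot W_n$, $-f$ on $j\cdot W_n$, and $0$ elsewhere — a special case of your signed weights $\varepsilon_i$ with $\sum_i \varepsilon_i = 0$ (here taking values in $\{+1,-1,0\}$). The mean of $f_\ast$ is then zero, the cross-edge contributions vanish by your (correct) geometric observation that an edge $(iw_1, jw_2) \in E_{n+1}$ with $i \neq j$ forces $w_1, w_2 \in \partial_\ast G_n$, and the Poincar\'e bound reduces to $\sum_w \abs{f(w)}^p \mu_n(w) \ge \abs{\mean{f}{W_n}}^{p}$ via Jensen, exactly as you describe. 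One caveat applies equally to your version and to the paper's: if one keeps track of the normalization $\mu(z) = N_\ast^{-(n+1)}$ on $W_{n+1}$ when invoking Proposition \ref{prop.basic}-(1) at level $n+1$, both constructions actually yield $\lambda_{\ast,p}^{(n)} \le N_\ast\,\lambda_{p}^{(n+1)}$ rather than the stated clean inequality; this is harmless for every later use (Theorem \ref{thm.poi4} only needs comparability up to constants), but you should not expect the prefactor to disappear. So: commit to the signed variant from the start, and your argument is the paper's.
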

\begin{proof}
    Let $f\colon W_n \to \bbR$ satisfy $f|_{\partial_{\ast}G_{n}} \equiv 0$, $\mcE_{p}^{G_{n}}(f) = 1$, and $\mean{f}{W_{n}} = \bigl(\lambda_{p, \textup{Dir}}^{(n)}\bigr)^{1/p}$ (see Proposition \ref{prop.minimizer}-(2)).
    Fix $i \neq j \in S$ and define $f_{\ast}\colon W_{n + 1} \to \bbR$ by
    \[
    f_{\ast}(z) \coloneqq
    \begin{cases}
        f(w) \quad &\text{if $z = iw$ for some $w \in W_n$,} \\
        -f(w) \quad &\text{if $z = jw$ for some $w \in W_n$,} \\
        0 \quad &\text{otherwise.}
    \end{cases}
    \]
    Since $f|_{\partial_{\ast}G_{n}} = 0$, we easily see that $\mean{f_{\ast}}{W_{n + 1}} = 0$ and $\mcE_{p}^{G_{n + 1}}(f_{\ast}) = 2$.
    By H\"{o}lder's inequality,
    \begin{align*}
        &\Bigl(N_{\ast}^{-n} \vee N_{\ast}^{-pn}\Bigr)\sum_{z \in W_{n + 1}}\abs{f_{\ast}(z) - \mean{f_{\ast}}{W_{n + 1}}}^{p} \\
        &=
        2\Bigl((\#W_{n})^{-1} \vee (\#W_{n})^{-p}\Bigr)\sum_{w \in W_{n}}\abs{f(w)}^{p} \\
        &\ge
        2(\#W_{n})^{-p}\abs{\sum_{w \in W_{n}}f(w)}^{p}
        =
        2\abs{\mean{f}{W_{n}}}^{p}
        =
        2\lambda_{p, \textup{Dir}}^{(n)}.
    \end{align*}
    Combining with Proposition \ref{prop.basic}-(1), we finish the proof.
\end{proof}

Next, we see relations between two $(p, p)$-Poincar\'{e} constants $\lambda_{p}^{(n)}$ and $\sigma_{p}^{(n)}$.
The following proposition states that the submultiplicative inequality of $\sigma_{p}^{(n)}$ holds (for $p = 2$, see \cite{KZ92}*{a part of Proposition 2.13}).
\begin{prop}\label{thm.poi2}
    \begin{itemize}
        \item [\textup{(1)}] For any $n, m \in \bbN$, 
        \begin{equation*}
            \lambda_{p}^{(n + m)} \le \bigl(2^{p - 1} \vee 1\bigr)\Bigl\{ \lambda_{p}^{(m)}N_{\ast}^{-n} + L_{\ast}\bigl(D^{p - 1} \vee 1\bigr)\lambda_{p}^{(n)}\sigma_{p}^{(m)} \Bigr\}. 
        \end{equation*}
        \item [\textup{(2)}] For any $n, m \in \bbN$, 
        \begin{equation*}
        	\sigma_{p}^{(n + m)} \le \bigl(D^{p - 1} \vee 1\bigr)L_{\ast}\sigma_{p}^{(n)}\sigma_{p}^{(m)}. 
        \end{equation*}
    \end{itemize}
\end{prop}
\begin{proof}
    (1) Let $f\colon W_{n + m} \to \bbR$ with $\mcE_{p}^{G_{n + m}}(f) = 1$.
    Then we see from Proposition \ref{prop.basic}-(1) and Lemma \ref{lem.wm} that
    \begin{align*}
        & N_{\ast}^{-(n + m)}\sum_{v \in W_n}\sum_{w \in v \cdot W_m}\abs{f(w) - \mean{f}{W_{n + m}}}^{p} \\
        &\le \bigl(2^{p - 1} \vee 1\bigr)N_{\ast}^{-(n + m)}\sum_{\substack{v \in W_n, \\ w \in v \cdot W_m}}\left(\abs{f(w) - \mean{f}{v \cdot W_{m}}}^{p} + \abs{\mean{f}{v \cdot W_{m}} - \mean{P_{n+m,n}f}{W_{n}}}^{p}\right) \\
        &\le \bigl(2^{p - 1} \vee 1\bigr)\lambda_{p}^{(m)}N_{\ast}^{-n} + \bigl(2^{p - 1} \vee 1\bigr)N_{\ast}^{-n}\sum_{v \in W_n}\abs{P_{n+m,n}f(v) - \mean{P_{n+m,n}f}{W_{n}}}^{q} \\
        &\le \bigl(2^{p - 1} \vee 1\bigr)\Bigl\{ \lambda_{p}^{(m)}N_{\ast}^{-n} + L_{\ast}\bigl(D^{p - 1} \vee 1\bigr)\lambda_{p}^{(n)}\sigma_{p}^{(m)} \Bigr\}. 
    \end{align*}
    Since $f$ with $\mcE_{p}^{G_{n + m}}(f) = 1$ is arbitrary, we obtain the desired estimate.

    (2) Let $k \in \bbN$, let $(v, w) \in \widetilde{E}_k$ and let $f\in \bbR^{W_{n + m + k}}$ satisfy $\mcE_{p, \{ v, w \} \cdot W_{n + m}}(f) = 1$.
    Note that $\mean{f}{v \cdot W_{n + m}} = \mean{P_{n + k}f}{v \cdot W_n}$, where $P_{n + k} = P_{n + m + k, n + k}$.
    Indeed,
    \begin{align*}
        \mean{f}{v \cdot W_{n + m}}
        &= N_{\ast}^{-(n + m)}\sum_{v' \in W_n}\sum_{z \in vv' \cdot W_m}f(z) = N_{\ast}^{-n}\sum_{v' \in W_n}\mean{f}{vv' \cdot W_m} \\
        &= N_{\ast}^{-n}\sum_{v' \in W_n}P_{n + k}f(vv') = \mean{P_{n + k}f}{v \cdot W_n}.
    \end{align*}
    Similar computation yields that $\mean{f}{v \cdot W_{n + m}} = \mean{P_{m + k}f}{v \cdot W_m}$.
    From Lemma \ref{lem.wm}, 
    \begin{align*}
        \abs{\mean{f}{v \cdot W_{n + m}} - \mean{f}{w \cdot W_{n + m}}}^{p}
        &= \abs{\mean{P_{n + k}f}{v \cdot W_n} - \mean{P_{n + k}f}{w \cdot W_n}}^{p} \\
        &\le \sigma_{p}^{(n)}\mcE_{p, \{ v, w \} \cdot W_n}^{G_{n + k}}(P_{n + k}f) \\
        &\le \bigl(D^{p - 1} \vee 1\bigr)L_{\ast}\sigma_{p}^{(n)}\sigma_{p}^{(m)}.
    \end{align*}
    The desired result is immediate from this estimate.
\end{proof}

In the rest of this subsection, we prove the following relation with $(p, p)$-Poincar\'{e} constant $\lambda_{p}^{(n)}$ and $p$-conductance $\mcC_{p}^{(n)}$ (see \cite{KZ92}*{Proposition 2.10} for $p = 2$).

\begin{prop}\label{prop.poi1}
	Let $p \ge 1$. 
    For every $n, m, k \in \bbN$ with $W_k \setminus \partial_{\ast}G_k \neq \emptyset$,
    \[
    \frac{\#(W_k \setminus \partial_{\ast}G_k)}{N_{\ast}^{k}\#(\partial_{\ast}G_{k})}\mcR_{p}^{(m)}\lambda_{p}^{(n)} \le C_{\ref{prop.poi1}}\lambda_{p}^{(n + m + k)},
    \]
    where $C_{\ref{prop.poi1}}$ is a positive constant depending only on $p$ and $L_{\ast}$.
\end{prop}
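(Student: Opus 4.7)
The plan is to exhibit an explicit test function $F\colon W_{n+m+k}\to\bbR$ whose $(p,p)$-Poincar\'e ratio realizes the right-hand side of the claimed inequality up to a constant depending only on $p$ and $D_\ast$. The construction fuses a maximizer for $\lambda_p^{(n)}$, which seeds variance at the innermost scale, with the extremal cutoffs defining $\mcC_p^{(m)}$, which control transitions between neighboring length-$k$ cells.

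First I would pick a maximizer $g\colon W_n\to\bbR$ of $\lambda_p^{(n)}$ with $\mean{g}{W_n}=0$ and $\mcE_p^{G_n}(g)=1$ (via Proposition \ref{prop.minimizer}), so that $\sum_{w\in W_n}\abs{g(w)}^p = N_\ast^n\lambda_p^{(n)}$. For each interior word $z\in W_k\setminus\partial_\ast G_k$, Proposition \ref{prop.minimizer} combined with the Markov property (Proposition \ref{prop.Markov}) produces a $[0,1]$-valued minimizer $\phi_z\colon W_{k+m}\to[0,1]$ with $\phi_z|_{z\cdot W_m}\equiv 1$, $\phi_z|_{W_{k+m}\setminus\mcB_m(z,1)}\equiv 0$, and $\mcE_p^{G_{k+m}}(\phi_z)\le\mcC_p^{(m)}=1/\mcR_p^{(m)}$. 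Writing $u=u'w$ with $u'\in W_{k+m}$ and $w\in W_n$, I define
\[
F(u'w)\coloneqq\sum_{z\in W_k\setminus\partial_\ast G_k}\phi_z(u')\,g(w).
\]
The identity $\mean{g}{W_n}=0$ gives $\mean{F}{W_{n+m+k}}=0$ directly, and the disjointness of $\{z\cdot W_m\}_z$ together with $\phi_z|_{z\cdot W_m}\equiv 1$ yields the variance lower bound
\[
N_\ast^{-(n+m+k)}\sum_{u\in W_{n+m+k}}\abs{F(u)}^p\;\ge\;c_1\,\frac{\#(W_k\setminus\partial_\ast G_k)}{N_\ast^k}\,\lambda_p^{(n)}.
\]

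The main work is the matching upper bound on $\mcE_p^{G_{n+m+k}}(F)$. I would split edges of $G_{n+m+k}$ into those whose endpoints share a length-$(k+m)$ prefix and those crossing two prefixes along an edge of $E_{k+m}$. Same-prefix edges contribute $\Psi(u_0)(g(w)-g(w'))$ with $\Psi\coloneqq\sum_z\phi_z\le D_\ast+1$ and $(w,w')\in E_n$, giving contribution at most $\sum_{u_0}\abs{\Psi(u_0)}^p\,\mcE_p^{G_n}(g)\le C\,\#(W_k\setminus\partial_\ast G_k)\,N_\ast^m$. Cross-prefix edges are handled via
\[
\abs{\Psi(u_0)g(w)-\Psi(u_0')g(w')}^p\le 2^{p-1}\Bigl(\norm{g}_\infty^p\abs{\Psi(u_0)-\Psi(u_0')}^p+(D_\ast+1)^p\abs{g(w)-g(w')}^p\Bigr),
\]
so that the $\Psi$-term aggregates (using the overlap estimate $\mcE_p^{G_{k+m}}(\Psi)\le C\,\#(W_k\setminus\partial_\ast G_k)\,\mcC_p^{(m)}$) while the $g$-term collapses to $O(1)$ copies of $\mcE_p^{G_n}(g)=1$ after a careful edge count along the one-dimensional interfaces $K_{u_0}\cap K_{u_0'}$.

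The hard part will be squeezing out the exact factor $1/\#(\partial_\ast G_k)$: the cross-prefix bound carries an unavoidable $\norm{g}_\infty^p$, and turning this into the stated inequality requires either a Morrey-type $\norm{g}_\infty$-bound for the Euler--Lagrange extremal $g$ (trading the global Poincar\'e information for pointwise control at a cost proportional to the boundary size of $G_k$), or an initial pigeonhole refinement that replaces the sum defining $F$ by one over a sparse subset of interior words whose $\phi_z$-supports are essentially pairwise disjoint. Either route ultimately charges the $\norm{g}_\infty$ term to $\#(\partial_\ast G_k)\,\lambda_p^{(n)}$, and combining with the variance estimate delivers the proposition with $C_{\ref{prop.poi1}}$ depending only on $p$ and $D_\ast$.
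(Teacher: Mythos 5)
Your construction inverts the scales, and this is fatal. You place the Poincar\'e maximizer $g$ at the \emph{innermost} scale, so that $F = \Psi(u')\,g(w)$ contains a full copy of $g$ inside every $(k+m)$-cell on which $\Psi > 0$. Already the ``same-prefix'' edges then contribute
\[
\sum_{u' \in W_{k+m}} \Psi(u')^{p}\,\mcE_{p}^{G_{n}}(g) \;\ge\; \#(W_{k}\setminus\partial_{\ast}G_{k})\,N_{\ast}^{m},
\]
exactly as you compute. But to deduce the proposition from your variance lower bound you would need $\mcE_{p}^{G_{n+m+k}}(F) \le C\,\#(\partial_{\ast}G_{k})\,\mcC_{p}^{(m)}$, and $\#(W_{k}\setminus\partial_{\ast}G_{k})\,N_{\ast}^{m}$ exceeds this by a factor of order $N_{\ast}^{k+m}/\bigl(\#(\partial_{\ast}G_{k})\,\mcC_{p}^{(m)}\bigr)$. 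Pairing your variance estimate with your energy estimate only yields $\lambda_{p}^{(n+m+k)} \gtrsim N_{\ast}^{-(k+m)}\lambda_{p}^{(n)}$, which is trivial and does not involve $\mcR_{p}^{(m)}$ at all. The difficulty you flag at the end (the $\norm{g}_{\infty}^{p}$ term in the cross-prefix edges and the missing $1/\#(\partial_{\ast}G_{k})$) is a symptom, but the obstruction is more basic: no sparsification or sup-norm bound on $g$ can repair the same-prefix contribution, which is the dominant term and is already too large by the time you write it down.

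The working argument puts the scales the other way around. Take an \emph{arbitrary} $f\colon W_{n}\to\bbR$ with $\mcE_{p}^{G_{n}}(f)=1$ at the \emph{coarsest} scale and extend it to $f_{\ast}\colon W_{n+m+k}\to\bbR$ (words $ww'v$ with $w\in W_{n}$, $w'\in W_{k}$, $v\in W_{m}$) using a partition of unity built from the conductance minimizers at scale $m$, so that $f_{\ast}\equiv f(w)$ on $ww'\cdot W_{m}$ whenever $w'\in W_{k}\setminus\partial_{\ast}G_{k}$. Because $f_{\ast}$ is locally constant except near the $k$-cell boundaries inside each $n$-cell, and the transitions there are smoothed by the $m$-scale cutoffs, one gets $\mcE_{p}^{G_{n+m+k}}(f_{\ast}) \le C\,\#(\partial_{\ast}G_{k})\,\mcC_{p}^{(m)}$ --- this is where the factors $\#(\partial_{\ast}G_{k})$ and $\mcR_{p}^{(m)}$ actually come from. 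The interior cells then give the variance lower bound $N_{\ast}^{-(n+m+k)}\sum_{u}\abs{f_{\ast}(u)-\mean{f_{\ast}}{W_{n+m+k}}}^{p} \ge c\,\frac{\#(W_{k}\setminus\partial_{\ast}G_{k})}{N_{\ast}^{k}}\,N_{\ast}^{-n}\sum_{w}\abs{f(w)-\mean{f}{W_{n}}}^{p}$, and taking the supremum over $f$ yields the claim. Note also that one must work with an arbitrary $f$, not just a maximizer of $\lambda_{p}^{(n)}$, since the quantity being bounded below is a supremum over all test functions on $W_{n}$; this costs nothing but is the cleaner formulation.
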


Similar ideas of its proof appear in many contexts (see \cite{BB90}*{proof of Theorem 3.3}, \cite{BK13}*{proof of Proposition 3.6} for example).
In the following two lemmas, we prepare estimates for ``partition of unity'' (for $p = 2$, see \cite{KZ92}*{Lemmas 2.8 and 2.9}).
\begin{lem}\label{lem.unity}
    Let $n, m \in \bbN$, and let $\bigl\{ \varphi_{w}^{(m)} \bigr\}_{w \in W_{n}}$ be a family of $[0, 1]$-valued functions on $W_{n + m}$ such that $\sum_{w \in W_{n}}\varphi_{w}^{(m)} \equiv 1$ and $\bigl.\varphi_{w}^{(m)}\bigr|_{W_{n + m} \setminus \mcB_{m}(w, 1)} \equiv 0$ for each $w \in W_n$.
    If $f \in \bbR^{W_{n}}$, then
    \[
    \mcE_{p}^{G_{n+m}}(f_{\ast}) \le C_{\ref{lem.unity}}\mcE_{p}^{G_n}(f)\max_{w \in W_n}\mcE_{p}^{G_{n+m}}\Bigl(\varphi_{w}^{(m)}\Bigr)
    \]
    where $C_{\ref{lem.unity}} > 0$ is constant depending only on $p, L_{\ast}$, and $f_{\ast} \in \bbR^{W_{n + m}}$ is defined as
    \[
    f_{\ast}(z) \coloneqq \sum_{w \in W_{n}}f(w)\varphi_{w}^{(m)}(z), \quad z \in W_{n + m}.
    \]
\end{lem}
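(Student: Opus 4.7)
The plan is to combine the partition-of-unity identity $\sum_w \varphi_w^{(m)} \equiv 1$ with the fact that each $\varphi_w^{(m)}$ vanishes outside $\mcB_m(w, 1)$. Given a directed edge $(z_1, z_2) \in E_{n+m}$, set $v := [z_1]_n$ and subtract the constant $f(v)$ (legal because of the partition-of-unity identity) to write
\[
f_\ast(z_1) - f_\ast(z_2) = \sum_{w \in W_n} \bigl(f(w) - f(v)\bigr)\bigl(\varphi_w^{(m)}(z_1) - \varphi_w^{(m)}(z_2)\bigr).
\]
The key geometric observation is that this summand vanishes unless $z_1$ or $z_2$ lies in $\mcB_m(w, 1)$; since $(z_1, z_2) \in E_{n+m}$ forces $[z_2]_n$ to equal $v$ or be a neighbor of $v$ in $G_n$, this confines $w$ to the set $N(v) := \{u \in W_n \mid d_{G_n}(u, v) \le 2\}$, whose cardinality is bounded by a constant $N_0$ depending only on $D_\ast$.

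The power-mean inequality (applied to an $N_0$-term sum) then yields
\[
\abs{f_\ast(z_1) - f_\ast(z_2)}^{p} \le N_0^{p - 1}\sum_{w \in N(v)}\abs{f(w) - f(v)}^{p}\,\abs{\varphi_w^{(m)}(z_1) - \varphi_w^{(m)}(z_2)}^{p}.
\]
Summing over $(z_1, z_2) \in E_{n+m}$, grouping by $v = [z_1]_n$ and then by $w \in N(v)$, and using that
\[
\sum_{(z_1, z_2) \in E_{n+m},\, [z_1]_n = v}\abs{\varphi_w^{(m)}(z_1) - \varphi_w^{(m)}(z_2)}^{p} \le 2\mcE_{p}^{G_{n+m}}(\varphi_w^{(m)})
\]
(the left side is a sub-sum of all directed edges), I arrive at
\[
\mcE_{p}^{G_{n+m}}(f_\ast) \le N_0^{p - 1}\,\max_{w \in W_n}\mcE_{p}^{G_{n+m}}(\varphi_w^{(m)})\cdot\sum_{v \in W_n}\sum_{w \in N(v)}\abs{f(w) - f(v)}^{p}.
\]

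It then remains to absorb the double sum on the right into $\mcE_{p}^{G_n}(f)$. For each pair $(v, w)$ with $d_{G_n}(v, w) \le 2$, I would fix a shortest path in $G_n$ of length at most $2$ and estimate $\abs{f(v) - f(w)}^{p}$ by a convex combination of $p$-th powers of the edge differences along the path, at the cost of a multiplicative factor $2^{p-1}$. Since each edge of $E_n$ appears in only a bounded number (depending on $D_\ast$) of such length-$\le 2$ paths, one obtains $\sum_{v \in W_n}\sum_{w \in N(v)}\abs{f(v) - f(w)}^{p} \le C\mcE_{p}^{G_n}(f)$ with $C = C(p, D_\ast)$, completing the argument. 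I expect no analytic subtlety here; the main (minor) obstacle is bookkeeping in this last combinatorial step, namely verifying that the overlap constants depend only on the uniform degree bound $D_\ast$ and not on $n$ or $m$.
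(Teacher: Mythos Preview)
Your proposal is correct and follows essentially the same route as the paper's proof: subtract the constant $f(v)$ using $\sum_w\varphi_w^{(m)}\equiv1$, localize the sum over $w$ to a $G_n$-ball of radius $2$ around $v=[z_1]_n$ (the paper phrases this as paths $[w^1,\dots,w^l]$ with $l\le3$, i.e.\ at most two edges), and finish with the combinatorial path-counting bound $\sum_v\sum_{w\in N(v)}|f(v)-f(w)|^p\le C(p,D_\ast)\,\mcE_p^{G_n}(f)$. The only cosmetic difference is that the paper separates the two factors via H\"older's inequality, obtaining $(\sum|f(v)-f(w)|^p)\cdot(\sum|\varphi_v(z)-\varphi_v(z')|^{p/(p-1)})^{p-1}$ and then bounding the second factor by $M^{p-1}\max_v|\varphi_v(z)-\varphi_v(z')|^p$, whereas you apply the power-mean inequality directly to the products; your version is marginally cleaner since it avoids the conjugate exponent, but the two arguments are otherwise identical.
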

\begin{proof}
    For each $z, z' \in W_{n + m}$, we set
    \[
    A(z, z') \coloneqq \Bigl\{ w \in W_n \Bigm| \varphi_{w}^{(m)}(z) \vee \varphi_{w}^{(m)}(z') > 0 \Bigr\}.
    \]
    Since $\supp\bigl[\varphi_{w}^{(m)}\bigr] \subseteq \mcB_{m}(w, 1)$, we can verify that there exists $M \in \bbN$ depending only on $L_{\ast}$ such that $\#A(z, z') \le M$ for any $n, m \in \bbN$ and $z, z' \in W_{n + m}$.
    Furthermore, we see that
    \begin{align*}
        f_{\ast}(z) - f_{\ast}(z')
        =
        \sum_{w \in A(z, z')}f(w)\Bigl(\varphi_{w}^{(m)}(z) - \varphi_{w}^{(m)}(z')\Bigr)
    \end{align*}
    and $\sum_{w \in A(z, z')}(\varphi_{w}^{(m)}(z) - \varphi_{w}^{(m)}(z')) = 0$.
    From these identities, we have that
    \begin{align}\label{eq.unity.uastpene}
        &\mcE_{p}^{G_{n + m}}(f_{\ast}) \\
        &= \frac{1}{2}\sum_{w \in W_n}\sum_{z \in w \cdot W_m}\sum_{\substack{z' \in W_{n + m}; \\ (z,z') \in E_{n + m}}}\abs{f_{\ast}(z) - f_{\ast}(z')}^{p} \nonumber \\
        &= \frac{1}{2}\sum_{\substack{w \in W_n, \\ z \in w \cdot W_m}}\sum_{\substack{z' \in W_{n + m}; \\ (z,z') \in E_{n + m}}}\abs{\sum_{v \in A(z, z')}(f(v) - f(w))\Bigl(\varphi_{v}^{(m)}(z) -  \varphi_{v}^{(m)}(z')\Bigr)}^{p}. \nonumber
    \end{align}
    
    We first consider the case $p > 1$. 
    By H\"{o}lder's inequality we obtain
    \begin{align}\label{ineq.unity.decomp}
        &\abs{\sum_{v \in A(z, z')}(f(v) - f(w))\Bigl(\varphi_{v}^{(m)}(z) - \varphi_{v}^{(m)}(z')\Bigr)}^{p} \\
        \le &\left(\sum_{v \in A(z, z')}\abs{f(v) - f(w)}^{p}\right)\left(\sum_{v \in A(z, z')}\abs{\varphi_{v}^{(m)}(z) - \varphi_{v}^{(m)}(z')}^{p/(p - 1)}\right)^{p-1}. \nonumber
    \end{align}
    To bound the term $\sum_{v \in A(z, z')}\abs{f(v) - f(w)}^{p}$, for each $v \in A(z, z')$, $w \in W_{n}$ with $z \in w \cdot W_{m}$, and $(z, z') \in E_{n + m}$, we find a path $\bigl[w^1, \dots, w^{l}\bigr]$ in $G_n$ from $v$ to $w$ with $l \le 3$, that is, $(w^{i}, w^{i+1}) \in E_{n}$ or $w^{i} = w^{i + 1}$ for each $i = 1, \dots, l- 1$, and $w^{1} = v, w^{l} = w$.
    Define
    \[
    \Gamma^{(n)}_{\le 3}(w) \coloneqq \left\{ \Bigl[w^1, \dots, w^{l}\Bigr] \;\middle|\;
    \begin{array}{c}
         \text{$l \le 3$, $w^{i} \in W_{n}$, $w^{l} = w$, and}  \\
         \text{$(w^{i}, w^{i+1}) \in E_{n}$ for each $i = 1, \dots, l- 1$}
    \end{array}
    \right\} \neq \emptyset.
    \]
    Then, for any $(z, z') \in E_{n + m}$, we see that
    \begin{align}\label{ineq.unity.covering}
        &\sum_{v \in A(z, z')}\abs{f(v) - f(w)}^{p} \\
        &\quad\le C_{1}\sum_{v \in A(z, z')}\sum_{i = 1}^{l - 1}\abs{f(w^{i}) - f(w^{i + 1})}^{p} \nonumber \\
        &\quad\le
        C_{1}\sum_{[w^1, \dots, w^{l}] \in \Gamma^{(n)}_{\le 3}(w)}\sum_{i = 1}^{l - 1}\abs{f(w^{i}) - f(w^{i + 1})}^{p} \eqqcolon S_{f}(w), \nonumber
    \end{align}
    where $C_{1}$ is a constant depending only on $p, l$.
    Note that the number $\#\Gamma^{(n)}_{\le 3}(w)$ is bounded above by a constant depending only on $L_{\ast}$.
    Thus we conclude that there exists a constant $C_{2}$ depending only on $C_{1}$ and $L_{\ast}$ such that $\sum_{w \in W_n}S_{f}(w) \le C_{2}\mcE_{p}^{G_{n}}(f)$ for any $n \in \bbN$ and $f\colon W_{n} \to \bbR$.
    Combining these estimates \eqref{eq.unity.uastpene}, \eqref{ineq.unity.decomp} and \eqref{ineq.unity.covering}, we obtain
    \begin{align*}
        &\mcE_{p}^{G_{n + m}}(f_{\ast}) \\
        &\le \frac{1}{2}\sum_{w \in W_n}S_{f}(w)\sum_{z \in w \cdot W_m}\sum_{\substack{z' \in W_{n + m}; \\ (z,z') \in E_{n + m}}}\left(\sum_{v \in A(z, z')}\abs{\varphi_{v}^{(m)}(z) - \varphi_{v}^{(m)}(z')}^{p/(p - 1)}\right)^{p-1} \\
        &\le
        M^{p - 1}\sum_{w \in W_n}S_{f}(w)\sum_{z \in w \cdot W_m}\sum_{\substack{z' \in W_{n + m}; \\ (z,z') \in E_{n + m}}}\max_{v \in W_n}\abs{\varphi_{v}^{(m)}(z) - \varphi_{v}^{(m)}(z')}^{p} \\
        &\le
        M^{p - 1}\sum_{w \in W_n}S_{f}(w)\max_{v \in W_n}\mcE_{p, \closure{w \cdot W_m}}^{G_{n + m}}\Bigl(\varphi_{v}^{(m)}\Bigr) \\
        &\le
        C_{2}M^{p - 1}\mcE_{p}^{G_{n}}(f)\max_{v \in W_n}\mcE_{p}^{G_{n + m}}\Bigl(\varphi_{v}^{(m)}\Bigr), 
    \end{align*}
    which finishes the proof when $p > 1$. 
    
    If $p \in (0, 1]$, then we can use 
    \begin{align*}
        &\abs{\sum_{v \in A(z, z')}(f(v) - f(w))\Bigl(\varphi_{v}^{(m)}(z) - \varphi_{v}^{(m)}(z')\Bigr)}^{p} \\
        &\le \left(\sum_{v \in A(z, z')}\abs{f(v) - f(w)}^{p}\right)\left(\sum_{v \in A(z, z')}\abs{\varphi_{v}^{(m)}(z) - \varphi_{v}^{(m)}(z')}^{p}\right) 
    \end{align*}
    instead of \eqref{ineq.unity.decomp} and we obtain the desired estimate in a similar way. 
\end{proof}

\begin{lem}\label{lem.subdiv.fcn}
    Let $n, m, k \in \bbN$ with $W_k \setminus \partial_{\ast}G_k \neq \emptyset$.
    If $f \in \bbR^{W_{n}}$, then there exists a function $f_{\ast} \in \bbR^{W_{n + m + k}}$ satisfying
    \begin{align}\label{subdiv.ext}
        f_{\ast}(v) = f(w) \quad \text{if $w \in W_n$ and $v \in ww' \cdot W_m$ for some $w' \in W_k \setminus \partial_{\ast}G_k$,}
    \end{align}
    and
    \begin{align}\label{ineq.subdiv.fcn}
        \mcE_{p}^{G_{n + m + k}}(f_{\ast}) \le C_{\ref{lem.subdiv.fcn}}\#(\partial_{\ast}G_{k})\mcC_{p}^{(m)}\mcE_{p}^{G_{n}}(f),
    \end{align}
    where $C_{\ref{lem.subdiv.fcn}}$ is a positive constant depending only on $p$ and $L_{\ast}$.
\end{lem}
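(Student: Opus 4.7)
My plan is to extend $f$ to a function $f_{\ast}$ on $W_{n+m+k}$ that is constant equal to $f(w)$ on the \emph{interior} portion of every $n$-cell, and to use $p$-energy minimizers on the boundary $m$-subcells to handle the transition. Concretely, first set $f_{\ast}(ww'v') \coloneqq f(w)$ whenever $w \in W_n$, $w' \in W_k \setminus \partial_{\ast}G_k$, and $v' \in W_m$; this enforces \eqref{subdiv.ext}, and any edge of $G_{n+m+k}$ joining two such interior vertices contributes zero to $\mcE_p^{G_{n+m+k}}(f_\ast)$. Hence the energy is supported on edges touching some boundary $m$-block $B_{w,w'} \coloneqq ww' \cdot W_m$ with $w' \in \partial_{\ast}G_k$.

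For each such $B_{w,w'}$ (a subgraph of $G_{n+m+k}$ isomorphic to $G_m$), partition its vertices that are adjacent in $G_{n+m+k}$ to vertices outside $B_{w,w'}$ into an \emph{interior-facing} part $I_{w,w'}$ (adjacent to $w \cdot (W_k \setminus \partial_{\ast}G_k) \cdot W_m$, where $f_{\ast}$ is already $f(w)$) and, for each neighbor $u$ of $w$ in $G_n$ whose $(m+k)$-cell meets $K_{ww'}$, an \emph{exterior-facing} part $E_{w,w',u}$ (adjacent to some boundary $m$-block $B_{u,u'}$ of $u$). Prescribe boundary data on $B_{w,w'}$ by $f_{\ast} = f(w)$ on $I_{w,w'}$ and $f_{\ast} = \tfrac{1}{2}(f(w)+f(u))$ on each $E_{w,w',u}$. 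The symmetry of the latter in $\{w, u\}$ ensures that across every edge of $G_{n+m+k}$ crossing from $B_{w,w'}$ to $B_{u,u'}$ the endpoints carry the same value, so such interfacing edges contribute zero. Extend $f_{\ast}$ to the remaining vertices of $B_{w,w'}$ as a minimizer of $\mcE_{p}^{B_{w,w'}}$ with these prescribed boundary values. Writing this minimizer as a linear combination of admissible competitors for the $\mcC_{p}^{(m)}$-type conductances between $I_{w,w'}$ and each $E_{w,w',u}$ (legitimate since $I_{w,w'}$ and each $E_{w,w',u}$ sit on opposite portions of $\partial B_{w,w'}$, separated appropriately at scale $m$ by the geometry of the SC together with Lemma~\ref{lem.1-adapted}), combined with Proposition~\ref{prop.Markov} and the elementary inequality $\abs{\sum_{i=1}^{N} a_i}^{p} \le N^{p-1}\sum_{i=1}^{N}\abs{a_i}^{p}$, yields
\[
\mcE_{p}^{B_{w,w'}}(f_{\ast}) \le C\,\mcC_{p}^{(m)} \sum_{u}\abs{f(w) - f(u)}^{p},
\]
the sum being over the at most $D_{\ast}$ neighbors $u$ of $w$ in $G_n$ for which $E_{w,w',u}$ is nonempty.

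Summing this estimate over $w \in W_n$ and $w' \in \partial_{\ast}G_k$, and noting that a given neighboring pair $(w, u) \in E_n$ appears in at most $\#(\partial_{\ast}G_k)$ such boundary blocks of $w$, one obtains $\mcE_{p}^{G_{n+m+k}}(f_{\ast}) \le C' \#(\partial_{\ast}G_k)\,\mcC_{p}^{(m)}\,\mcE_{p}^{G_n}(f)$, which is \eqref{ineq.subdiv.fcn}. The main obstacle is the combinatorial-geometric step: correctly identifying $I_{w,w'}$ and the sets $E_{w,w',u}$ from the SC geometry, and verifying that the capacities between $I_{w,w'}$ and each $E_{w,w',u}$ inside $B_{w,w'}$ are uniformly comparable to $\mcC_{p}^{(m)}$ (so that the definition of $\mcC_{p}^{(m)}$ as a conductance outside a $1$-neighborhood does apply). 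Once these geometric facts are in hand, the per-block estimate and the final counting are a routine convexity-plus-bookkeeping argument.
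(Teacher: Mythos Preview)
Your construction differs from the paper's, which builds $f_\ast$ via a partition of unity: for each $z \in W_{n+k}$, let $h_z^{(m)}$ be the minimizer realizing $\mcC_p^{G_{n+m+k}}(z \cdot W_m, W_{n+m+k} \setminus \mcB_m(z,1))$, normalize to $\varphi_z^{(m)} = h_z^{(m)}/\sum_v h_v^{(m)}$, and set $f_\ast(v) = \sum_{z} f([z]_n)\,\varphi_z^{(m)}(v)$. The bound \eqref{ineq.subdiv.fcn} then follows from Lemma~\ref{lem.unity} together with $\max_z \mcE_p^{G_{n+m+k}}(\varphi_z^{(m)}) \le c\,\mcC_p^{(m)}$, which is a direct quotient estimate since the $h_z^{(m)}$ are \emph{by definition} minimizers of the $\mcC_p^{(m)}$-type problem; no auxiliary capacity comparison is needed.

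Your piecewise approach has a real gap at the step you yourself flag as ``the main obstacle''. The sets $I_{w,w'}$ and $E_{w,w',u}$ are portions of $\partial_\ast G_m$ (viewing $B_{w,w'}\cong G_m$), so bounding the capacity between them by $C\,\mcC_p^{(m)}$ amounts to comparing a face-to-face conductance on $G_m$ with the cell-to-outside-its-neighborhood conductance that defines $\mcC_p^{(m)}$. That comparison is precisely the content of Lemma~\ref{lem.top-bottom}, proved later in the paper and requiring its own argument (restricting a $\mcC_p^{(m)}$-minimizer to a neighboring sub-cell to obtain a competitor for the left--right problem); it is not a mere geometric fact extractable from Lemma~\ref{lem.1-adapted}. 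Without it, or a forward reference to it, your per-block bound $\mcE_p^{B_{w,w'}}(f_\ast)\le C\,\mcC_p^{(m)}\sum_u |f(w)-f(u)|^p$ is unjustified. There is also a smaller bookkeeping gap: your partition of the external-boundary vertices of $B_{w,w'}$ omits those adjacent to another \emph{boundary} block $B_{w,w''}$ of the same $w$ (with $w''\in\partial_\ast G_k$, $w''\neq w'$); the two per-block minimizers need not agree along that interface, so those edges carry uncontrolled energy. Enlarging $I_{w,w'}$ to include such vertices (so that both sides take the value $f(w)$ there) fixes this, but it should be stated.
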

\begin{proof}
    For each $w \in W_{n + k}$, let $h_{w}^{(m)}\colon W_{n + m + k} \to [0, 1]$ satisfy $h_{w}^{(m)}|_{w \cdot W_{m}} \equiv 1$, $h_{w}^{(m)}|_{W_{n + m + k} \setminus \mcB_{m}(w, 1)} \equiv 0$, and
    \[
    \mcE_{p}^{G_{n + m + k}}\Bigl(h_{w}^{(m)}\Bigr) \le 2\mcC_{p}^{G_{n + m + k}}\bigl(w \cdot W_{m}, W_{n + m + k} \setminus \mcB_{m}(w, 1)\bigr).
    \]
    Define $\Psi \coloneqq \sum_{w \in W_{n + k}}h_{w}^{(m)}$.
    Then it is obvious that $\Psi \ge 1$, and so a family $\bigl\{ \varphi_{w}^{(m)} \bigr\}_{w \in W_{n + k}}$ given by $\varphi_{w}^{(m)} \coloneqq \Psi^{-1}h_{w}^{(m)}$ satisfies the conditions in Lemma \ref{lem.unity}.
    For each $f\colon W_{n} \to \bbR$, define a function $f_{\ast}\colon W_{n + m + k} \to \bbR$ by setting
    \[
    f_{\ast}(v) \coloneqq \sum_{z \in W_{n + k}}f([z]_{n})\varphi_{z}^{(m)}(v), \quad v \in W_{n + m + k}.
    \]
    We will prove that $f_{\ast}$ is the required function.

    First, we check \eqref{subdiv.ext}.
    Define $f_{n + k} \colon W_{n + k} \to \bbR$ by
    \[
    f_{n + k}(w) \coloneqq f([w]_{n}), \quad w \in W_{n + k}.
    \]
    Since $\supp\bigl[\varphi_{w}^{(m)}\bigr] \subseteq \mcB_{m}(w, 1)$, we can write
    \[
    f_{\ast}(v) = \sum_{\substack{\text{$z \in W_{n + k}$:} \\ \text{$v \in \mcB_{m}(z, 1)$}}}f_{n + k}(z)\varphi_{z}^{(m)}(v), \quad v \in W_{n + m + k}.
    \]
    Let $v \in W_{n + m + k}$ and $w \in W_n$ such that $v \in ww' \cdot W_{m}$ for some $w' \in W_k \setminus \partial_{\ast}G_k$.
    From $w' \not\in \partial_{\ast}G_k$, it follows that $\mcB_{m}(ww', 1) \cap \compl{(w\cdot W_{m + k})} = \emptyset$, and thus, for any $z \in W_{n + k}$ with $v \in \mcB_{m}(z, 1)$, we obtain $[z]_n = w$.
    From this observation, it holds that $f_{n + k}(z) = f(w)$, and thus we obtain
    \[
    f_{\ast}(v) = \sum_{\substack{\text{$z \in W_{n + k}$;} \\ \text{$v \in \mcB_{m}(z, 1)$}}}f([z]_n)\varphi_{z}^{(m)}(v) = \sum_{\substack{\text{$z \in W_{n + k}$;} \\ \text{$v \in \mcB_{m}(z, 1)$}}}f(w)\varphi_{z}^{(m)}(v) = f(w),
    \]
    which proves \eqref{subdiv.ext}.

    To prove \eqref{ineq.subdiv.fcn}, it will suffice to show the bound
    \begin{align}\label{est.unityabove}
        \max_{w \in W_{n + k}}\mcE_{p}^{G_{n + m + k}}\Bigl(\varphi_{w}^{(m)}\Bigr) \le c_{1}\mcC_{p}^{(m)},
    \end{align}
    where $c_{1}$ is a positive constant depending only on $p$ and $L_{\ast}$.
    Indeed, by Lemma \ref{lem.unity}, we have
    \begin{align*}
        \mcE_{p}^{G_{n + m + k}}(f_{\ast})
        &\le C_{\ref{lem.unity}}\mcE_{p}^{G_{n + k}}(f_{n + k})\max_{w \in W_{n + k}}\mcE_{p}^{G_{n + m + k}}\Bigl(\varphi_{w}^{(m)}\Bigr) \\
        &\le 2C_{\ref{lem.unity}}\#(\partial_{\ast}G_{k})\mcE_{p}^{G_{n}}(f)\max_{w \in W_{n + k}}\mcE_{p}^{G_{n + m + k}}\Bigl(\varphi_{w}^{(m)}\Bigr).
    \end{align*}
    A combination of this estimate and \eqref{est.unityabove} yields \eqref{ineq.subdiv.fcn}.
    Towards proving \eqref{est.unityabove}, we start by observing that $\#A_{m}(w) \le M$ for some constant $M$ depending only on $L_{\ast}$, where
    \[
    A_{m}(w) \coloneqq \Bigl\{ z \in W_{n + k} \Bigm| \mcB_{m}(w, 1) \cap \closure{\mcB_{m}(z, 1)} \neq \emptyset \Bigr\},
    \]
    for each $w \in W_{n + k}$.
    Indeed, we have $z \in A_{m}(w)$ if $z \in W_{n + k}$ satisfies $h_{w}^{(m)}(v) \wedge h_{z}^{(m)}(v) \neq 0$ for some $v \in W_{n + m + k}$, and thus we obtain (similarly to the bound of $A(z, z')$ in the proof of Lemma \ref{lem.unity}) that $\#A_{m}(w) \le M$ for any $n, m, k \in \bbN$ and $w \in W_{n + k}$.
    Now, it is a simple computation that, for any $v, v' \in W_{n + m + k}$,
    \[
    \varphi_{w}^{(m)}(v) - \varphi_{w}^{(m)}(v') = \frac{1}{\Psi(v)\Psi(v')}\biggl(\Psi(v)\Bigl(h_{w}^{(m)}(v) - h_{w}^{(m)}(v')\Bigr) - h_{w}^{(m)}(v)\bigl(\Psi(v) - \Psi(v')\bigr)\biggr).
    \]
    If we put $h_{w}(v, v') = h_{w}^{(m)}(v) - h_{w}^{(m)}(v')$ for each $w \in W_{n + k}$ and $(v, v') \in E_{n + m + k}$, then we have from the above identity that
    \begin{align*}
        &\mcE_{p}^{G_{n + m + k}}\Bigl(\varphi_{w}^{(m)}\Bigr) \\
        &\le \bigl(2^{p - 1} \vee 1\bigr)\sum_{(v, v') \in E_{n + m + k}}\Biggl(\frac{1}{\abs{\Psi(v')}^{p}}\abs{h_{w}(v, v')}^{p} + \frac{\abs{h_{w}^{(m)}(v)}^{p}}{\abs{\Psi(v)\Psi(v')}^{p}}\abs{\Psi(v) - \Psi(v')}^{p}\Biggr) \\
        &\le \bigl(2^{p - 1} \vee 1\bigr)\left(\mcE_{p}^{G_{n + m + k}}\Bigl(h_{w}^{(m)}\Bigr) + \sum_{(v, v') \in E_{n + m + k}}\abs{\sum_{w' \in A_{m}(w)}h_{w'}(v, v')}^{p}\,\right) \\
        &\underset{\text{($\ast$)}}{\le} \bigl((2M)^{p - 1} \vee 1\bigr)\left(\mcE_{p}^{G_{n + m + k}}\Bigl(h_{w}^{(m)}\Bigr) + \sum_{w' \in A_{m}(w)}\mcE_{p}^{G_{n + m + k}}\Bigl(h_{w'}^{(m)}\Bigr)\right) \\
        &\le \bigl((2M)^{p - 1} \vee 1\bigr)(M + 1)\max_{w \in W_{n + k}}\mcE_{p}^{G_{n + m + k}}\Bigl(h_{w}^{(m)}\Bigr)
        \le 2\bigl((2M)^{p - 1} \vee 1\bigr)(M + 1)\mcC_{p}^{(m)},
    \end{align*}
    where we used H\"{o}lder's inequality in ($\ast$).
    This shows \eqref{est.unityabove}.
\end{proof}

With these preparations in place, we are ready to prove Proposition \ref{prop.poi1}.

\noindent
\textit{Proof of Proposition \ref{prop.poi1}.}\,
    Let $f\colon W_{n} \to \bbR$ with $\mcE_{p}^{G_{n}}(f) = 1$, and let $f_{\ast}\in\bbR^{W_{n + m + k}}$ be a function obtained by applying Lemma \ref{lem.subdiv.fcn} to $f$.
    From Lemma \ref{lem.subdiv.fcn} and $\mcE_{p}^{G_{n}}(f) = 1$, we have $\mcE_{p}^{G_{n + m + k}}(f_{\ast}) \le C_{\ref{lem.subdiv.fcn}}\#(\partial_{\ast}G_{k})\mcC_{p}^{(m)}$.
    On the one hand, Proposition \ref{prop.basic}-(1) yields that
    \begin{align*}
        N_{\ast}^{-(n + m + k)}\sum_{w \in W_{n + m + k}}\abs{f_{\ast}(w) -\mean{f_{\ast}}{W_{n + m + k}}}^{p} \le C_{\ref{lem.subdiv.fcn}}\lambda_{p}^{(n + m + k)}\#(\partial_{\ast}G_{k})\mcC_{p}^{(m)}.
    \end{align*}
    On the other hand, from the property of $f_{\ast}$ in \eqref{subdiv.ext}, we have that
    \begin{align*}
        &N_{\ast}^{-(n + m + k)}\sum_{w \in W_{n + m + k}}\abs{f_{\ast}(w) -\mean{f_{\ast}}{W_{n + m + k}}}^{p} \\
        &\ge N_{\ast}^{-(n + m + k)}\sum_{w \in W_n}\sum_{w' \in W_k \setminus \partial_{\ast}G_k}\sum_{z \in ww'\cdot W_m}\abs{f(w) -\mean{f_{\ast}}{W_{n + m + k}}}^{p} \nonumber\\
        &\ge \#(W_k \setminus \partial_{\ast}G_k)N_{\ast}^{-(n + k)}\sum_{w \in W_n}\abs{f(w) -\mean{f_{\ast}}{W_{n + m + k}}}^{p} \nonumber\\
        &\ge 2^{-p}\frac{\#(W_k \setminus \partial_{\ast}G_k)}{\#W_{k}}N_{\ast}^{-n}\sum_{w \in W_n}\abs{f(w) -\mean{f}{W_{n}}}^{p},
    \end{align*}
    where we used $\sum_{w \in W_{n}}\abs{f(w) - c}^{p} \ge 2^{-p}\sum_{w \in W_n}\abs{f(w) -\mean{f}{W_{n}}}^{p}$ for any $c \in \bbR$ (see \cite{BB.NPT}*{Lemma 4.17} for example) in the last line, which requires $p \ge 1$.
    Since $f$ with $\mcE_{p}^{G_{n}}(f) = 1$ is arbitrary, we obtain the desired estimate. \hspace{\fill}$\square$
    
We conclude this subsection by giving a submultiplicative-like inequality of Poincar\'{e} constants. 
\begin{lem}\label{lem.sub-like}
	Let $p \ge 1$. 
	There exists a positive constant $C_{\ref{lem.sub-like}}$ depending only on $p$, $a$, $D$, $L_{\ast}$, $N_{\ast}$ such that for all $n, m \in \bbN$, 
	\begin{equation*}\label{eq.poi4-0}
    	\lambda_{p}^{(n)} \le C_{\ref{lem.sub-like}}N_{\ast}^{m}\lambda_{p}^{(m)}\sigma_{p}^{(n)}. 
    \end{equation*}
    In particular, 
    \begin{equation}\label{eq.poi4-0}
    	\lambda_{p}^{(n + m)} \le (2D)^{p - 1}L_{\ast}C_{\ref{lem.sub-like}}\lambda_{p}^{(m)}\sigma_{p}^{(n)}. 
    \end{equation}
\end{lem}
\begin{proof}
	Recall that, by Proposition \ref{prop.boundary}, $W_{k} \setminus \partial_{\ast}G_{k} \neq \emptyset$ if $k \ge 2$. 
	By Proposition \ref{prop.poi1}, we have that $\mcR_{p}^{(m - 2)}\lambda_{p}^{(n)} \le c_1\lambda_{p}^{(n + m)}$ for all $n \in \bbN$ and $m \ge 3$, where $c_1$ depends only on $a, D, N_{\ast}$ and $C_{\ref{prop.poi1}}$.
    Combining this estimate with Proposition \ref{thm.poi2}-(1), we obtain
    \begin{align*}
        \mcR_{p}^{(m - 2)}\lambda_{p}^{(n)} \le c_2\Bigl(N_{\ast}^{-m}\lambda_{p}^{(n)} + \lambda_{p}^{(m)}\sigma_{p}^{(n)}\Bigr),
    \end{align*}
    where $c_2 \coloneqq c_1L_{\ast}(2D)^{p - 1} > 0$.
    Since $N_{\ast}^{m}\mcR_{p}^{(m - 2)} \to \infty$ as $m \to \infty$ by Proposition \ref{prop.betap},
    there exists $M_{0} \in \bbN$ such that
    \[
    \inf_{m \ge M_{0}}N_{\ast}^{m}\mcR_{p}^{(m - 2)} \ge c_2 + 1.
    \]
    From these estimates, we have that $(c_2 + 1)N_{\ast}^{-m}\lambda_{p}^{(n)} \le c_2\bigl(N_{\ast}^{-m}\lambda_{p}^{(n)} + \lambda_{p}^{(m)}\sigma_{p}^{(n)}\bigr)$ for all $n \in \bbN$ and $m \ge M_{0}$.
    Hence, we conclude that
    \begin{equation*}
        \lambda_{p}^{(n)} \le c_2N_{\ast}^{m}\lambda_{p}^{(m)}\sigma_{p}^{(n)}. 
    \end{equation*}
    This together with Proposition \ref{thm.poi2}-(1) implies \eqref{eq.poi4-0}.
\end{proof}

%--- basic properties of Poincare constants ---
\subsection{Comparability under (B$_{p}$) and (KM$_{p}$)}
Now we will prove submultiplicative inequality of $\lambda_{p}^{(\,\cdot\,)}$ under assuming \hyperlink{Bp}{\textup{(B$_{p}$)}} (see \cite{KZ92}*{Theorem 2.1} for $p = 2$). 
The following theorem gives a nonlinear analog of \cite{KZ92}*{Propositions 2.13 and 4.1}. 
\begin{thm}\label{thm.poi4}
	Let $p \ge 1$ and assume that \hyperlink{Bp}{\textup{(B$_{p}$)}} holds. 
    Then there exists a positive constant $C_{\ref{thm.poi4}}$ depending only on $p, a, D, L_{\ast}, N_{\ast}$ and the constants associated with \hyperlink{Bp}{\textup{(B$_{p}$)}} such that the following statements hold:
    \begin{itemize}
        \item [\textup{(1)}] for every $n \in \bbN$,
            \begin{equation}\label{eq.poi4-1}
                C_{\ref{thm.poi4}}^{-1}\sigma_{p}^{(n)} \le \lambda_{p}^{(n)} \le C_{\ref{thm.poi4}}\sigma_{p}^{(n)};
            \end{equation}
        \item [\textup{(2)}] for every $n, m \in \bbN$,
            \begin{equation}\label{eq.poi4-3}
                \lambda_{p}^{(n + m)} \le C_{\ref{thm.poi4}}\lambda_{p}^{(n)}\lambda_{p}^{(m)};
            \end{equation}
        \item [\textup{(3)}] for every $n \in \bbN$,
            \begin{equation}\label{eq.poi4-2}
                C_{\ref{thm.poi4}}^{-1}\lambda_{p}^{(n)} \le \lambda_{p, \textup{Dir}}^{(n)} \le C_{\ref{thm.poi4}}\lambda_{p}^{(n)};
            \end{equation}
        \item [\textup{(4)}] for every $n \in \bbN$,
            \begin{equation}\label{eq.poi4-4}
                \mcR_{p}^{(n)} \le C_{\ref{thm.poi4}}\lambda_{p}^{(n)}.
            \end{equation}
    \end{itemize}
\end{thm}
\begin{proof}
    (1) By \hyperlink{Bp}{\textup{(B$_{p}$)}}, Proposition \ref{prop.poiD} and \eqref{eq.poi4-0} in Lemma \ref{lem.sub-like}, for $n \ge 1$,  
    \[
    \sigma_{p}^{(n)} \le (2D)^{p - 1}C_{\ast}N_{\ast}L_{\ast}C_{\ref{lem.sub-like}}\sigma_{p}^{(k_{\ast} + 1)}\lambda_{p}^{(n)}. 
    \]
    This implies that $\sigma_{p}^{(n)} \le c_{1}\lambda_{p}^{(n)}$ for any $n \in \bbN$, where $c_1$ is a positive constant depending only on $p, a, D, N_{\ast}, L_{\ast}$ and the constants associated with \hyperlink{Bp}{\textup{(B$_{p}$)}}.
    Lemma \ref{lem.sub-like} also implies that $\lambda_{p}^{(n)} \le C_{\ref{lem.sub-like}}N_{\ast}\lambda_{p}^{(1)}\sigma_{p}^{(n)}$ for any $n \in \bbN$.
    Hence we get \eqref{eq.poi4-1}.

    (2) The submultiplicative inequality \eqref{eq.poi4-3} is immediate from Proposition \ref{thm.poi2}-(2) and \eqref{eq.poi4-1}.

    (3) Applying Proposition \ref{thm.poi2}-(2) and \hyperlink{Bp}{\textup{(B$_{p}$)}}, we have that $\sigma_{p}^{(n)} \le C_{\ast}L_{\ast}\sigma_{p}^{(k_{\ast})}\lambda_{p, \textup{Dir}}^{(n)}$ for all $n \ge k_{\ast} + 1$, which together with \eqref{eq.poi4-1} implies that
    \begin{equation}\label{eq.poi4-41}
        \lambda_{p}^{(n)} \le c_2\lambda_{p, \textup{Dir}}^{(n)} \quad \text{for any $n \in \bbN$},
    \end{equation}
    where $c_2$ is a positive constant depending only on $p, a, D, N_{\ast}, L_{\ast}$ and the constants associated with \hyperlink{Bp}{\textup{(B$_{p}$)}}.
    The converse inequality of \eqref{eq.poi4-41} is immediate from Proposition \ref{prop.poiD} and \eqref{eq.poi4-3}.

    (4) We immediately get \eqref{eq.poi4-4} from Proposition \ref{prop.poi1} and \eqref{eq.poi4-3}.
\end{proof}

Next we derive supermultiplicative inequalities of $(p,p)$-Poincar\'{e} constants under assuming both \hyperlink{Bp}{\textup{(B$_{p}$)}} and \hyperlink{KMp}{\textup{(KM$_{p}$)}}. 
\begin{thm}\label{thm.superP}
	Let $p \ge 1$ and assume that both \hyperlink{Bp}{\textup{(B$_{p}$)}} and \hyperlink{KMp}{\textup{(KM$_{p}$)}} hold.
    Then there exists a positive constant $C_{\ref{thm.superP}}$ (depending only on $p, a, D, L_{\ast}, N_{\ast}$ and the constants associated with \hyperlink{Bp}{\textup{(B$_{p}$)}} and \hyperlink{KMp}{\textup{(KM$_{p}$)}}) such that for all $n, m \in \mathbb{N}$ 
    \begin{equation}\label{super-lambda}
        C_{\ref{thm.superP}}^{-1}\lambda_{p}^{(n)}\lambda_{p}^{(m)} \le \lambda_{p}^{(n + m)} \le C_{\ref{thm.superP}}\lambda_{p}^{(n)}\lambda_{p}^{(m)}, 
    \end{equation}
    and 
    \begin{equation}\label{super-res}
    	C_{\ref{thm.superP}}^{-1}\mathcal{R}_{p}^{(n)}\mathcal{R}_{p}^{(m)} \le \mathcal{R}_{p}^{(n + m)} \le C_{\ref{thm.superP}}\mathcal{R}_{p}^{(n)}\mathcal{R}_{p}^{(m)}. 
    \end{equation}
\end{thm}
\begin{proof}
	Thanks to Theorem \ref{thm.poi4}-(2), in order to show \eqref{super-lambda} it will suffice to prove the supermultiplicative inequality: $\lambda_{p}^{(n)}\lambda_{p}^{(m)} \lesssim \lambda_{p}^{(n + m)}$. 
    From Proposition \ref{prop.poi1} and Theorem \ref{thm.poi4}-(2), we have that $\lambda_{p}^{(n)}\mcR_{p}^{(m)} \le c_{1}\lambda_{p}^{(n + m)}$ for any $n, m \in \bbN$, where $c_{1}$ depends only on $p, a, D, L_{\ast}, N_{\ast}$ and the constants associated with \hyperlink{Bp}{\textup{(B$_{p}$)}}.
    By \hyperlink{KMp}{\textup{(KM$_{p}$)}}, we deduce that
    \[
    C_{\text{KM}}^{-1}\,\lambda_{p}^{(n)}\lambda_{p}^{(m)} \le \lambda_{p}^{(n)}\mcR_{p}^{(m)} \le c_{1}\lambda_{p}^{(n + m)}, 
    \]
    and hence we have $\lambda_{p}^{(n)}\lambda_{p}^{(m)} \le c_{1}C_{\text{KM}}\lambda_{p}^{(n + m)}$. 
    
    Since $\lambda_{p}^{(n)}$ and $\mathcal{R}_{p}^{(n)}$ are comparable by Theorem \ref{thm.poi4}-(4) and \hyperlink{KMp}{\textup{(KM$_{p}$)}}, the multiplicative inequality \eqref{super-res} follows from \eqref{super-lambda}. 
\end{proof}

Note that $\sigma_{p}^{(n)}$ and $\lambda_{p}^{(n)}$ are also comparable under the situation of Theorem \ref{thm.superP}. 
From these multiplicative inequalities, we deduce the following behaviors of $\mathcal{R}_{p}^{(n)}$, $\lambda_{p}^{(n)}$ and $\sigma_{p}^{(n)}$: for all $n \in \mathbb{N}$, 
\begin{align}\label{Poi-super}
    c_{\ast}^{-1}\rho_{p}^{n} \le \mathcal{R}_{p}^{(n)} \le c_{\ast}\rho_{p}^{n}, 
    \quad c_{\ast}^{-1}\rho_{p}^{n} \le \lambda_{p}^{(n)} \le c_{\ast}\rho_{p}^{n} 
    \quad\text{and}\quad c_{\ast}^{-1}\rho_{p}^{n} \le \sigma_{p}^{(n)} \le c_{\ast}\rho_{p}^{n}, 
\end{align}
where $\rho_{p} = \lim_{n \to \infty}\bigl(\mcR_{p}^{(n)}\bigr)^{1/n}$ (see Theorem \ref{thm.sub-mult}) and $c_{\ast}$ depends only on $p, a, D,$ $N_{\ast}$, $L_{\ast}$ and the constants associated with \hyperlink{Bp}{\textup{(B$_{p}$)}} and \hyperlink{KMp}{\textup{(KM$_{p}$)}}.

%=== Holder, KM ===
\section{Checking (B$_{p}$) and (KM$_{p}$) for GSCs}\label{sec:lowdim}

This section gives $p$-energy analogs of \cite{KZ92}*{Lemma 3.9, Proposition 3.10, Theorem 7.2, (B-1) and (B-2)}.
The condition \hyperlink{Bp}{\textup{(B$_{p}$)}} can be proved in a combinatorial way. 
In order to prove \hyperlink{KMp}{\textup{(KM$_{p}$)}}, we will assume that $p > \dim_{\textup{ARC}}(K, d)$. 
This assumption is needed to derive (uniform) H\"{o}lder estimates in Theorem \ref{thm.unifhol}.

In this section, let $(K, S, \{ F_{i} \}_{i \in S}) = \GSC(D, a, S)$ be a generalized Sierpi\'{n}ski carpet and let $p > 1$.  

%--- Bp ---
\subsection{Proof of (B$_{p}$)}\label{subsec:lowdim}
The condition \hyperlink{Bp}{\textup{(B$_{p}$)}} plays the converse role of the Proposition \ref{prop.poiD}.
Let us start by providing preparations from \emph{asymptotic geometry} in order to simplify several arguments.
The following definition extends the notion of \emph{rough isometry among graphs} to that among sequences of graphs.

\begin{dfn}\label{dfn.URI}
    For each $i = 1, 2$, let $\bigl\{ G_{n}^{i} = \bigl(V_{n}^{i}, E_{n}^{i}\bigr) \bigr\}_{n \ge 1}$ be a series of finite graphs with
    \begin{equation}\label{eq.bg}
        L_{\ast}^{i} \coloneqq \sup_{n \in \bbN}\max_{x \in V_{n}^{i}}\#\bigl\{ y \in V_{n}^{i} \bigm| (x, y) \in E_{n}^{i} \bigr\} < \infty.
    \end{equation}
    A family of maps $\{ \varphi_{n} \}_{n \ge 1}$, where $\varphi_{n}\colon V_{n}^{1} \to V_{n}^{2}$, is said to be a \emph{uniform rough isometry} from $\bigl\{ G_{n}^{1} \bigr\}_{n \ge 1}$ to $\bigl\{ G_{n}^{2} \bigr\}_{n \ge 1}$ if:
    \begin{itemize}
        \item [(1)] there exist constants $C_{1}, C_{2}$ such that, for every $n \in \bbN$ and $x, y \in V_{n}^{1}$,
        \[
        C_{1}^{-1}d_{G_{n}^{1}}(x, y) - C_{2} \le d_{G_{n}^{2}}(\varphi_{n}(x), \varphi_{n}(y)) \le C_{1}d_{G_{n}^{1}}(x, y) + C_{2};
        \]
        \item [(2)] there exists a constant $C_{3}$ such that, for every $n \in \bbN$,
        \[
        \bigcup_{x \in V_{n}^{1}}B_{d_{G_{n}^{2}}}(\varphi_{n}(x), C_{3}) = V_{n}^{2};
        \]
        \item [(3)] there exists a constant $C_{4}$ such that, for every $n \in \bbN$ and $x \in V_{n}^{1}$,
        \[
        C_{4}^{-1} \le \frac{\#\bigl\{ y' \in V_{n}^{2} \bigm| (\varphi_{n}(x), y') \in E_{n}^{2} \bigr\}}{\#\bigl\{ y \in V_{n}^{1} \bigm| (x, y) \in E_{n}^{1} \bigr\}} \le C_{4}.
        \]
    \end{itemize}
\end{dfn}

\begin{rmk}
    Since each $\varphi_{n}$ is a rough isometry from $G_{n}^{1}$ to $G_{n}^{2}$, there exists a rough isometry $\widetilde{\varphi}_n$ from $G_{n}^{2}$ to $G_{n}^{1}$.
    Moreover, we can choose $\widetilde{\varphi}_n$ so that $\{ \widetilde{\varphi}_n \}_{n \ge 1}$ is a uniform rough isometry from $\{ G_{n}^{2} \}_{n \ge 1}$ to $\{ G_{n}^{1} \}_{n \ge 1}$.
    Consequently, being uniform rough isometry gives an equivalence relation among series of finite graphs satisfying \eqref{eq.bg}.
\end{rmk}

Then the following stability result holds.
Its proof is a straightforward modification of \cite{Mthesis}*{proof of Lemma 8.5}, and so we omit it here (see Appendix \ref{sec:URI} for a proof).

\begin{lem}\label{lem.URI}
    \it
    Let $\bigl\{ G_{n}^{i} = \bigl(V_{n}^{i}, E_{n}^{i}\bigr) \bigr\}_{n \ge 1}$ be a series of finite graphs with
    \[
    L_{\ast}^{i} = \sup_{n \in \bbN}\max_{x \in V_{n}^{i}}\#\bigl\{ y \in V_{n}^{i} \bigm| (x, y) \in E_{n}^{i} \bigr\} < \infty,
    \]
    for each $i = 1, 2$, and let $\varphi_{n}\colon V_{n}^{1} \to V_{n}^{2}$ be a uniform rough isometry from $\bigl\{ G_{n}^{1} \bigr\}_{n \ge 1}$ to $\bigl\{ G_{n}^{2} \bigr\}_{n \ge 1}$.
    Then there exists a positive constant $C_{\textup{URI}}$ (depending only on $C_{1}, C_{2}$ in Definition \ref{dfn.URI}, $L_{\ast}^{1}$ and $p$) such that
    \begin{equation}\label{eq.URI}
        \mcE_{p}^{G_{n}^{1}}(f \circ \varphi_{n}) \le C_{\textup{URI}}\,\mcE_{p}^{G_{n}^{2}}(f),
    \end{equation}
    for every $n \in \bbN$ and $f\colon V_{n}^{2} \to \bbR$.
    In particular,
    \begin{equation}\label{eq.URIcap}
    	\mcC_{p}^{G_{n}^{1}}\bigl(\varphi_{n}^{-1}(A_{n}), \varphi_{n}^{-1}(B_{n})\bigr) \le C_{\textup{URI}}\mcC_{p}^{G_{n}^{2}}(A_{n}, B_{n})
    \end{equation}
    for every $n \in \bbN$ and disjoint subsets $A_{n}, B_{n}$ of $V_{n}^{2}$.
\end{lem}

Next we introduce variants of Poincar\'{e} constants $\widetilde{\lambda}_{p, \textup{Dir}}^{\,(\,\cdot\,)}$ and $\widetilde{\sigma}_{p}^{\,(\,\cdot\,)}$ by setting 
\[
\widetilde{\lambda}_{p, \textup{Dir}}^{\,(n)} \coloneqq \lambda_{p, \textup{Dir}}^{\bigl(\widetilde{G}_{n}, \mu\bigr)}(\partial_{\ast}G_{n}), \quad \text{and} \quad \widetilde{\sigma}_{p}^{\,(n)} \coloneqq \adjustlimits\sup_{m \ge 1}\max_{(v, w) \in \widetilde{E}_{m}}\sigma_{p}^{\bigl(\widetilde{G}_{m + n}, \mu\bigr)}(v \cdot W_{n}, w \cdot W_{n}). 
\]
We can easily verify that there exists a uniform rough isometry between $\{ G_{n} \}_{n \ge 0}$ and $\bigl\{ \widetilde{G}_{n} \bigr\}_{n \ge 0}$ (with $C_{1}, C_{2}, C_{3}, C_{4}$ depend only on $D, L_{\ast}$). 
Our new Poincar\'{e} constants have the following variational expressions: 
\[
\widetilde{\lambda}_{p, \textup{Dir}}^{\,(n)} = \inf\Bigl\{ \mathcal{E}_{p}^{\widetilde{G}_{n}}(f) \Bigm| f \in \mathbb{R}^{W_{n}}, f|_{\partial_{\ast}G_{n}} \equiv 0, \mean{f}{G_{n}} = 1 \Bigr\}^{-1}
\]
and, for $(v, w) \in \widetilde{E}_{m}$,
\begin{align*}
	&\sigma_{p}^{\bigl(\widetilde{G}_{m + n}, \mu\bigr)}(v \cdot W_{n}, w \cdot W_{n}) \\
	&= \inf\biggl\{ \mathcal{E}_{p, \{ v, w \} \cdot W_{n}}^{\widetilde{G}_{n + m}} \biggm| f \in \mathbb{R}^{\{ v, w \} \cdot W_{n}}, \mean{f}{v \cdot W_{n}} - \mean{f}{w \cdot W_{n}} = 1 \biggr\}^{-1}. 
\end{align*}
By noting that average $\mean{f}{A, \mu}$ does not depend on edge sets $E_{n}$ and $\widetilde{E}_{n}$, we have the following lemma as a consequence of Lemma \ref{lem.URI} (see also \cite{Kig21+}*{Section 18}). 

\begin{lem}\label{lem.Poi-URI}
	There exists a constant $C_{\ref{lem.Poi-URI}}$ depending only on $p, D, L_{\ast}$ such that
	\[
	C_{\ref{lem.Poi-URI}}^{-1}\lambda_{p, \textup{Dir}}^{(n)} \le \widetilde{\lambda}_{p, \textup{Dir}}^{\,(n)} \le C_{\ref{lem.Poi-URI}}\lambda_{p, \textup{Dir}}^{(n)} \quad \text{for all $n \in \mathbb{N}$,}
	\]
	and 
	\[
	C_{\ref{lem.Poi-URI}}^{-1}\sigma_{p}^{(n)} \le \widetilde{\sigma}_{p}^{\,(n)} \le C_{\ref{lem.Poi-URI}}\sigma_{p}^{(n)} \quad \text{for all $n \in \mathbb{N}$.}
	\]
\end{lem}

Now, we prove \hyperlink{Bp}{\textup{(B$_{p}$)}}. 
We heavily use the symmetries of the underlying (generalized) Sierpi\'{n}ski carpet, namely the symmetries of $[-1, 1]^{D}$, to prove \hyperlink{Bp}{\textup{(B$_{p}$)}}.
Recall notations in Definitions \ref{dfn.hyperplane}, \ref{dfn.symmetry} and \ref{dfn.word}. 
\begin{prop}\label{prop.B}
    There exists a positive constant $C_{\ast}$ depending only on $p, D, N_{\ast}$, $L_{\ast}$ such that $\sigma_{p}^{(n)} \le C_{\ast}\lambda_{p, \textup{Dir}}^{(n + 2)}$ for any $n \in \bbN$.
    In particular, \hyperlink{Bp}{\textup{(B$_{p}$)}} holds (with $k_{\ast} = 2$).
\end{prop}
\begin{proof}
    The proof is a straightforward (but complicated) generalization of \cite{KZ92}*{Proposition 8.1}, where only the standard Sierpi\'{n}ski carpet is considered. 
    Thanks to Lemma \ref{lem.Poi-URI}, it is enough to show that $\widetilde{\sigma}_{p}^{\,(n)} \lesssim \widetilde{\lambda}_{p, \textup{Dir}}^{\,(n + 2)}$. 
    
    Let $n \in \bbN$.
    Using \hyperlink{GSC1}{\textup{(GSC1)}}, \hyperlink{GSC4}{\textup{(GSC4)}} and the self-similarity of $K$, we easily see that $\widetilde{\sigma}_{p}^{\,(n)} = \widetilde{\sigma}_{p}^{\,\bigl(\widetilde{G}_{n + 1}, \mu\bigr)}(\mathbf{0}\cdot W_{n}, \mathbf{e}_{D} \cdot W_{n})$.
    By Proposition \ref{prop.minimizer}-(3), there exists $f\colon \{ \mathbf{0}, \mathbf{e}_{D} \} \cdot W_{n} \to \bbR$ with $\mathcal{E}_{p, \{ \mathbf{0}, \mathbf{e}_{D} \} \cdot W_{n}}^{\widetilde{G}_{n + 1}}(f) = 1$ such that $\abs{\mean{f}{\mathbf{0} \cdot W_{n}} - \mean{f}{\mathbf{e}_{D} \cdot W_{n}}} = \bigl(\widetilde{\sigma}_{p}^{\,(n)}\bigr)^{1/p}$. 
    Adding a constant if necessary, we may assume that $\mean{f}{\mathbf{0} \cdot W_{n}} = -\mean{f}{\mathbf{e}_{D} \cdot W_{n}}$. 
    Then, by the self-similarity and the uniqueness (Proposition \ref{prop.minimizer}-(3)), we can show that 
    \begin{equation}\label{Bp-sym}
    	f(\mathbf{e}_{D} \cdot w) = -f\bigl(\mathbf{0}\cdot\tau[R_{D}](w)\bigr), \quad w \in W_{n}. 
    \end{equation}
    Indeed, a function $f_{\ast}\colon \{ \mathbf{0}, \mathbf{e}_{D} \} \cdot W_{n} \to \bbR$ given by 
    \[
    f_{\ast}(iw) = 
    \begin{cases}
    	f\bigl(\mathbf{e}_{D} \cdot \tau[R_{D}](w)\bigr) \quad &\text{if $i = \mathbf{0}$ and $w \in W_{n}$,} \\
    	f\bigl(\mathbf{0} \cdot \tau[R_{D}](w)\bigr) \quad &\text{if $i = \mathbf{e}_{D}$ and $w \in W_{n}$,}
    \end{cases}
    \]
    satisfies $\mathcal{E}_{p, \{ \mathbf{0}, \mathbf{e}_{D} \} \cdot W_{n}}^{\widetilde{G}_{n + 1}}(f_{\ast}) = 1$, $\abs{\mean{f_{\ast}}{\mathbf{0} \cdot W_{n}} - \mean{f_{\ast}}{\mathbf{e}_{D} \cdot W_{n}}} = \bigl(\widetilde{\sigma}_{p}^{\,(n)}\bigr)^{1/p}$ and $\mean{f_{\ast}}{\mathbf{0} \cdot W_{n}} = -\mean{f_{\ast}}{\mathbf{e}_{D} \cdot W_{n}}$.
    Moreover, it is immediate that $\mean{f}{\mathbf{0} \cdot W_{n}} = -\mean{f_{\ast}}{\mathbf{0} \cdot W_{n}}$. 
    The uniqueness in Proposition \ref{prop.minimizer}-(3) implies that $\bigl(\sigma_{p}^{(n)}\bigr)^{-1/p}f = \pm\bigl(\sigma_{p}^{(n)}\bigr)^{-1/p}f_{\ast}$. 
   	If $f = f_{\ast}$, then we have $\mean{f_{\ast}}{\mathbf{0} \cdot W_{n}} = -\mean{f_{\ast}}{\mathbf{0} \cdot W_{n}}$, which leads to a contradiction since $\widetilde{\sigma}_{p}^{(n)} = \abs{\mean{f_{\ast}}{\mathbf{0} \cdot W_{n}} - \mean{f_{\ast}}{\mathbf{e}_{D} \cdot W_{n}}}^{p} = 0$. 
   	Hence the case $f = f_{\ast}$ does not happen and thus $f = -f_{\ast}$. 
   	This proves \eqref{Bp-sym}. 
   	
   	Next we will show that $f|_{\mathbf{0} \cdot W_{n}} \ge 0$ or $f_{\ast}|_{\mathbf{0} \cdot W_{n}} \ge 0$ holds. 
   	Suppose that $\mean{f}{\mathbf{0} \cdot W_{n}} \ge 0$. 
   	Obviously, a function $f_{\star}\colon \{ \mathbf{0}, \mathbf{e}_{D} \} \cdot W_{n} \to \bbR$ given by 
    \[
    f_{\star}(iw) = 
    \begin{cases}
    	\abs{f(\mathbf{0} \cdot w)} \quad &\text{if $i = \mathbf{0}$ and $w \in W_{n}$,} \\
    	-\abs{f(\mathbf{e}_{D} \cdot w)} \quad &\text{if $i = \mathbf{e}_{D}$ and $w \in W_{n}$,}
    \end{cases}
    \]
	satisfies 
	\[
	\abs{\mean{f_{\star}}{\mathbf{0} \cdot W_{n}} - \mean{f_{\star}}{\mathbf{e}_{D} \cdot W_{n}}} 
	\ge \abs{\mean{f}{\mathbf{0} \cdot W_{n}} - \mean{f}{\mathbf{e}_{D} \cdot W_{n}}}. 
	\]
   	Furthermore, by noting that $f = - f_{\ast}$ implies $\abs{f_{\star}(x) - f_{\star}(y)} = \abs{f(x) - f(y)}$ for $(x, y) \in \widetilde{E}_{n + 1}$ with $ x \in \mathbf{0} \cdot W_{n}$ and $y \in \mathbf{e}_{D} \cdot W_{n}$, we have 
   	\[
   	\mathcal{E}_{p, \{ \mathbf{0}, \mathbf{e}_{D} \} \cdot W_{n}}^{\widetilde{G}_{n + 1}}(f_{\star}) \le \mathcal{E}_{p, \{ \mathbf{0}, \mathbf{e}_{D} \} \cdot W_{n}}^{\widetilde{G}_{n + 1}}(f) = 1. 
   	\]
   	Since $f$ is an optimizer of $\widetilde{\sigma}_{p}^{\,\bigl(\widetilde{G}_{n + 1}, \mu\bigr)}(\mathbf{0}\cdot W_{n}, \mathbf{e}_{D} \cdot W_{n})$, we deduce from Proposition \ref{prop.minimizer}-(3) that $f = \pm f_{\star}$.
   	The condition $\mean{f}{\mathbf{0} \cdot W_{n}} \ge 0$ implies $f|_{\mathbf{0} \cdot W_{n}} \ge 0$. 
   	The proof in the case $-\mean{f_{\ast}}{\mathbf{0} \cdot W_{n}} = \mean{f}{\mathbf{0} \cdot W_{n}} \le 0$ is similar and we have $f_{\ast}|_{\mathbf{0} \cdot W_{n}} \ge 0$ in this case.
   	
   	By considering $f_{\ast}$ instead of $f$ if necessary, we can assume that $f$ satisfies $f|_{\mathbf{0} \cdot W_{n}} \ge 0$. 
   	Then we have $\mean{f}{\mathbf{0} \cdot W_{n}} = -\mean{f}{\mathbf{e}_{D} \cdot W_{n}} \ge 0$ and hence 
   	\begin{equation}\label{Bp-mean}
   		\mean{f}{\mathbf{0} \cdot W_{n}} = \frac{1}{2}\Bigl(\widetilde{\sigma}_{p}^{\,(n)}\Bigr)^{1/p}. 
   	\end{equation}
   	Furthermore, by the Markov property of $\mathcal{E}_{p}$ (Proposition \ref{prop.Markov}), 
   	\begin{equation}\label{Bp-Econtroll}
   		\mathcal{E}_{p, \{ \mathbf{0}, \mathbf{e}_{D} \} \cdot W_{n}}^{\widetilde{G}_{n + 1}}(f \vee 0) \le \mathcal{E}_{p, \{ \mathbf{0}, \mathbf{e}_{D} \} \cdot W_{n}}^{\widetilde{G}_{n + 1}}(f) = 1. 
   	\end{equation}
   	Note that $(f \vee 0)|_{\mathbf{0} \cdot W_{n}} \equiv f|_{\mathbf{0} \cdot W_{n}}$ and $(f \vee 0)|_{\mathbf{e}_{D} \cdot W_{n}} \equiv 0$. 
   	Roughly speaking, the estimate \eqref{Bp-Econtroll} tells us that values of a function $f_{0}\colon W_{n} \to [0, +\infty)$  given by 
    \[
    f_{0}(w) = f(\mathbf{0} \cdot w), \quad w \in W_{n}, 
    \]
    on $W_{n}\bigl[B_{D, +1}\bigr]$ are small so that its $p$-energy arising from ``boundaries'' can be controlled. 
    Clearly, $\mean{f_{0}}{W_{n}} = \mean{f}{\mathbf{0} \cdot W_{n}} = \frac{1}{2}\bigl(\widetilde{\sigma}_{p}^{\,(n)}\bigr)^{1/p}$. 
    By the uniqueness, we have that $f_{0} \circ \tau\bigl[R_{j}\bigr] = f_{0}$ for any $j = 1, \dots, D - 1$ and that $f_{0} \circ \tau\bigl[R_{j, k}^{\pm}\bigr] = f_{0}$ for any $j, k = 1, \dots, D - 1$. 
	Next, for $j = 1, \dots, D - 1$, we inductively define $f_{j}\colon W_{n} \to [0, +\infty)$ by 
    \[
    f_{j}(w) = 
    \begin{cases}
    	f_{j - 1}(w) \quad &\text{if $w \in W_{n}\bigl[\mathcal{H}_{j, D}^{+, \le}\bigr]$,} \\
    	f_{j - 1}\Bigl(\tau\bigl[R_{j, D}^{+}\bigr](w)\Bigr) \quad &\text{if $w \in W_{n}\bigl[\mathcal{H}_{j, D}^{+, \ge}\bigr]$.}
    \end{cases}
    \]
	This construction yields $f_{j} \circ \tau[R_{k}] = f_{j}$ if $0 \le j < k < D$.
	Moreover, $f_{j} \circ \tau\bigl[R_{k, l}^{+}\bigr] = f_{j}$ for $k, l = 1, \dots, j$ with $k < l$. 
    It is also immediate that $\mathcal{E}_{p}^{\widetilde{G}_{n}}(f_{0}) \le \mathcal{E}_{p, \{ \mathbf{0}, \mathbf{e}_{D} \} \cdot W_{n}}^{\widetilde{G}_{n + 1}}(f) = 1$ and $\mathcal{E}_{p}^{\widetilde{G}_{n}}(f_{j}) \le 2\mathcal{E}_{p}^{\widetilde{G}_{n}}(f_{0}) \le 2$ for each $j = 0, \dots, D - 1$ (we used both \eqref{Bp-mean} and \eqref{Bp-Econtroll}). 
    
    Finally, we construct a function $h\colon W_{n + 2} \to [0, +\infty)$, whose estimates of energies and averages will deduce \hyperlink{Bp}{\textup{(B$_{p}$)}}. 
    To this end, let us introduce `fundamental region' $\Delta^{a,D} = \Delta$ of $[-3a^{-1}, 3a^{-1}]^{D - 1} \times [1 - 2a^{-1}, 1] \eqqcolon \mathcal{R}_{a,D}$ that is given by  
    \[
    \Delta = \bigl\{ (x_1, \dots, x_{D}) \bigm| x_{k} \in [0, 3a^{-1}] \, (1 \le k \le D - 1), x_{D} \in [1 - 2a^{-1}, 1] \bigr\} \cap \bigcap_{i = 1}^{D - 2}\mathcal{H}_{i, i + 1}^{+, \ge}. 
    \]
    Here, we regard $\bigcap_{i = 1}^{D - 2}\mathcal{H}_{i, i + 1}^{+, \ge}$ as $Q_{0}$ when $D = 2$. 
    For each $k \in \{ 0, \dots, D - 1 \}$, define $u_{k} \in \mathbb{R}^{D}$ by $u_{k} \coloneqq \sum_{j = 1}^{k}\mathbf{e}_{j}$, where we set $u_{0} \coloneqq \mathbf{0}^{D}$. 
    Then we have 
    \[
    \Delta \subseteq \bigcup_{k = 0}^{D - 1}f_{u_{k}}(Q_{0}). 
    \]
    Next we inductively define $\{ \Delta_{k} \}_{k = 0}^{D - 2}$ as follows. 
    Define $\Delta_{0} \coloneqq \Delta$. 
    For given $\Delta_{0}, \dots, \Delta_{k - 1}$, define $\Delta_{k}$ as
    \[
    \Delta_{k} \coloneqq \Delta_{k - 1} \cup R_{k, k + 1}^{+}\bigl(\Delta_{k - 1}\bigr). 
    \]
    Note that this construction yields $\Delta_{D - 2} = \bigl[0, 3a^{-1}\bigr]^{D - 1} \times [1 - 2a^{-1}, 1]$ for all $D \ge 2$. 
    Also, we define subsets $\{ \square_{k} \}_{k = 0}^{D - 1}$ of $\mathcal{R}_{a,D}$ as follows. 
    Define $\square_{0} \coloneqq \Delta_{D - 2}$. 
    For given $\square_{0}, \dots, \square_{k - 1}$, define $\square_{k}$ as 
    \[
    \square_{k} \coloneqq \square_{k - 1} \cup R_{k}\bigl(\square_{k - 1}\bigr). 
    \]
    This construction yields $\square_{D - 1} = \mathcal{R}_{a, D}$. 
    
    By $S \subsetneq \{ 0, 1, \dots, a - 1 \}^{D}$, $a \ge 3$, \hyperlink{GSC1}{\textup{(GSC1)}} and \hyperlink{GSC4}{\textup{(GSC4)}}, we can find $v = v_{1}v_{2} \in W_{2}$ such that $v_{1} - \mathbf{e}_{D} \not\in S$, $v \in W_{2}\bigl[ B_{D, -1} \bigr]$ and $\mathcal{N}(v) \subseteq v_{1} \cdot W_{1}$, where $\mathcal{N}(v) \coloneqq \{ z \in W_{2} \mid d_{G_{2}}(z, v) \le 1 \}$.
    Set
    \[
    A_{v} \coloneqq \{ v_{1} \cdot (v_{2} + u_{k}) \mid \text{$k \in \{ 1, \dots, D - 1 \}$ with $v_{2} + u_{k} \in S$} \}
    \]
    and define $\widetilde{h} \colon A_{v} \cdot W_{n} \to [0, +\infty)$ by 
    \[
    \widetilde{h}\bigl(v_{1} \cdot (v_{2} + u_{k}) \cdot z\bigr) = f_{k}(z) \quad \text{for $z \in W_{n}$}. 
    \]
    Note that $A_{v} \cdot W_{n} \supseteq W_{n + 2}\bigl[f_{v_{1}}\bigl(\Delta_{0}\bigr)\bigr]$. 
    The desired function $h\colon W_{n + 2} \to [0, +\infty)$ will be constructed by `unfolding' $\widetilde{h}$ in a suitable way as described below. 
    Define $\widetilde{h}_{0} \coloneqq \widetilde{h}$. 
    Inductively, for $z \in W_{n + 1}$ with $v_{1}z \in W_{n + 2}\bigl[f_{v_{1}}\bigl(\Delta_{k}\bigr)\bigr]$, define $\widetilde{h}_{k} \colon W_{n + 2}\bigl[f_{v_{1}}\bigl(\Delta_{k}\bigr)\bigr] \to [0, +\infty)$ by 
    \[
    \widetilde{h}_{k}(v_{1}z) = 
    \begin{cases}
    	\widetilde{h}_{k - 1}(v_{1}z) \quad &\text{if $v_{1}z \in W_{n + 2}\bigl[f_{v_{1}}\bigl(\Delta_{k - 1}\bigr)\bigr]$,} \\
    	\widetilde{h}_{k - 1}\Bigl(v_{1}\cdot\tau\bigl[R_{k, k + 1}^{+}\bigr](z)\Bigr) &\text{otherwise.}
    \end{cases}
    \]
    Also, define $h_{0} \coloneqq \widetilde{h}_{D - 2} \colon W_{n + 2}\bigl[f_{v_{1}}\bigl(\square_{0}\bigr)\bigr] \to [0, +\infty)$. 
    Inductively, for $z \in W_{n + 1}$ with $v_{1}z \in W_{n + 2}\bigl[f_{v_{1}}\bigl(\square_{k}\bigr)\bigr]$, define $h_{k} \colon W_{n + 2}\bigl[f_{v_{1}}\bigl(\square_{k}\bigr)\bigr] \to [0, +\infty)$ by 
    \[
    h_{k}(v_{1}z) = 
    \begin{cases}
    	h_{k - 1}(v_{1}z) \quad &\text{if $v_{1}z \in W_{n + 2}\bigl[f_{v_{1}}\bigl(\square_{k - 1}\bigr)\bigr]$,} \\
    	h_{k - 1}\Bigl(v_{1}\cdot\tau\bigl[R_{k}\bigr](z)\Bigr) &\text{otherwise.}
    \end{cases}
    \]
    Lastly, we define $h \colon W_{n + 2} \to [0, +\infty)$ by 
    \[
    h(z) = 
    \begin{cases}
    	h_{D - 1}(z) \quad &\text{if $z \in W_{n + 2}\bigl[f_{v_{1}}\bigl(\mathcal{R}_{a, D}\bigr)\bigr]$,} \\
    	0 &\text{otherwise.}
    \end{cases}
    \]
 	
 	Clearly, we have $h|_{W_{n + 2} \setminus \mathcal{N}(v) \cdot W_{n}} \equiv 0$ and $h|_{\partial_{\ast}G_{n + 2}} \equiv 0$ from the construction. 
 	Furthermore, we have from the symmetries of $\{ f_{j} \}_{j = 0}^{D - 1}$ and the definition of $h$ that 
 	\begin{align*}
    	\mathcal{E}_{p}^{\widetilde{G}_{n + 2}}(h) 
    	&\le \sum_{j = 0}^{D - 1}2^{j}\binom{D - 1}{j}\Bigl\{\mathcal{E}_{p}^{\widetilde{G}_{n}}(f_{j}) +(j + 1)\mathcal{E}_{p, \{ \mathbf{0}, \mathbf{e}_{D} \} \cdot W_{n}}^{\widetilde{G}_{n + 1}}(f \vee 0)\Bigr\} \\
    	&\le \sum_{j = 0}^{D - 1}2^{j}(j + 3)\binom{D - 1}{j} \\
    	&= (D + 2)\sum_{j = 0}^{D - 1}2^{j}\binom{D - 1}{j} 
    	= 3^{D - 1}(D + 2), 
    \end{align*}
   	and that 
   	\begin{align*}
   		\mean{h}{W_{n + 2}}
   		\ge N_{\ast}^{-(n + 2)}\sum_{w \in W_{n}}f_{0}(w) 
   		= N_{\ast}^{-2}\mean{f_{0}}{W_{n}}  
   		= \frac{1}{2}N_{\ast}^{-2}\Bigl(\widetilde{\sigma}_{p}^{\,(n)}\Bigr)^{1/p}. 
   	\end{align*}
   	Hence, by putting $h_{\ast} \coloneqq \mcE_{p}^{\widetilde{G}_{n + 2}}(h)^{-1/p} \cdot h$, we conclude that
    \begin{align*}
    	\widetilde{\lambda}_{p, \textup{Dir}}^{\,(n + 2)} 
    	\ge \abs{\mean{h_{\ast}}{W_{n + 2}}}^{p}
    	= \mathcal{E}_{p}^{\widetilde{G}_{n + 2}}(h)^{-1}\abs{\mean{h}{W_{n + 2}}}^{p} 
    	\ge C_{\ast}^{-1}\widetilde{\sigma}_{p}^{\,(n)}
    \end{align*}
    which shows (B$_{p}$), where $k_{\ast} = 2$ and $C_{\ast} = 3^{D - 1}(D + 2) \cdot (2N_{\ast}^{2})^{p}C_{\ref{lem.Poi-URI}}^{2}$.
\end{proof}

%--- Holder estimates ---
\subsection{Uniform H\"{o}lder estimate: $p > \dim_{\textup{ARC}}(K, d)$}\label{subsec:Uhol}
Next we will prove useful H\"{o}lder type estimates. 
In order to obtain these estimates, a ``low-dimensional'' condition: Assumption \ref{assum.lowdim}, which is described in terms of the \emph{Ahlfors regular conformal dimension}, will be essential.
The notion of Ahlfors regular conformal dimension was implicitly introduced by Bourdon and Pajot \cite{BP03}.
The exact definition of this dimension is as follows.

\begin{dfn}\label{dfn.QS}
    Let $X$ be a metrizable space (without isolated points) and let $d_{i}\,(i = 1, 2)$ be compatible metrics on $X$.
    We say that $d_{1}$ and $d_{2}$ are \emph{quasisymmetric} to each other if there exists a homeomorphism $\eta\colon [0, \infty) \to [0, \infty)$ such that
    \[
    \frac{d_{2}(x, a)}{d_{2}(x, b)} \le \eta\left(\frac{d_{1}(x, a)}{d_{1}(x, b)}\right),
    \]
    for every triple $x, a, b \in X$ with $x \neq b$.
    (It is easy to show that being quasisymmetric gives an equivalence relation among metrics.)
    The \emph{Ahlfors regular conformal gauge} $\mcJ_{\text{AR}}(X, d_{1})$ of $(X, d_{1})$ is defined as
    \[
    \mcJ_{\text{AR}}(X, d_{1}) \coloneqq \left\{ d_{2} \;\middle|\;
    \begin{array}{c}
    \text{$d_{2}$ is a metric on $X$, $d_{2}$ is quasisymmetric to $d_{1}$,} \\
    \text{and $d_{2}$ is $\alpha'$-Ahlfors regular for some $\alpha' > 0$.}
    \end{array}
    \right\}.
    \]
    (For the definition of Ahlfors regularity, recall \eqref{eq.AR}.
    Note that $\dim_{\text{H}}(X, d_{2}) = \alpha'$ if $d_{2}$ is $\alpha'$-Ahlfors regular.)
    Then the \emph{Ahlfors regular conformal dimension} (ARC-dimension for short) of $(X, d_{1})$ is
    \begin{equation}\label{dfn.ARCdim}
    \dim_{\text{ARC}}(X, d_{1}) \coloneqq \inf_{d_{2} \in \mcJ_{\text{AR}}(X, d_{1})}\dim_{\text{H}}(X, d_{2}).
    \end{equation}
\end{dfn}

The notion of quasisymmetric and the exact definition of ARC-dimension are not essential in this paper.
We refer the reader to a monograph \cite{MT} and surveys \cites{Bon06, Kle06} for details of the ARC-dimension and related subjects.

The following assumption describes our ``low-dimensional'' setting (see \cite{KZ92}*{the condition (R)} in the context of probability theory).

\begin{ass}\label{assum.lowdim}
    A positive real number $p$ satisfies $p > \dim_{\textup{ARC}}(K, d)$.
\end{ass}

\begin{rmk}
    The following bound concerning the Ahlfors regular conformal dimension of the standard Sierpi\'{n}ski carpet is known:
    \[
    1 + \frac{\log{2}}{\log{3}} \le \dim_{\textup{ARC}}(K, d) < \alpha = \frac{\log{N_{\ast}}}{\log{a}} = \frac{\log{8}}{\log{3}}.
    \]
    The lower bound follows from a general result due to Tyson \cite{Tys00}.
    The strict inequality in the upper bound is proved by Keith and Laakso \cite{KL04}.
\end{rmk}

To promote understanding Assumption \ref{assum.lowdim}, we recall characterization results by Carrasco Piaggio \cite{CP13} and Kigami \cite{Kig20} in our setting.
Recall $\lim_{n \to \infty}\bigl(\mcR_{p}^{(n)}\bigr)^{1/n} = \rho_{p}$ (Theorem \ref{thm.sub-mult}).

\begin{thm}[\cite{CP13}*{Theorem 1.3}, \cite{Kig20}*{Theorems 4.6.9 and 4.7.6}]\label{thm.ARCdim}
	Assumption \ref{assum.lowdim} is equivalent to $\rho_{p} > 1$. 
	In particular, if Assumption \ref{assum.lowdim} holds, then there exist $C_{\ref{thm.ARCdim}} > 0$ and $\theta > 0$ (depending only on $a, \rho_{p}$) such that 
    \begin{equation*}
    	\mathcal{C}_{p}^{(n)} \le C_{\ref{thm.ARCdim}}a^{-\theta n} \quad \text{for all $n \in \mathbb{N}$.}
    \end{equation*}
\end{thm}

Under Assumption \ref{assum.lowdim}, we can show the following powerful H\"{o}lder continuity.
\begin{thm}\label{thm.unifhol}
    Suppose Assumption \ref{assum.lowdim} holds.
    Then there exist constants $\widetilde{C}_{\textup{UH}} > 0$ (depending only on $p, a, D, N_{\ast}, L_{\ast}, \rho_{p}$) and $\theta > 0$ (depending only on $a, \rho_{p}$) such that, for any $n, m \in \bbN$, $z \in W_{m}$,  $v, w \in \mcB_{n}(z, 1)$ and $f \in \bbR^{W_{n + m}}$,
    \begin{align}\label{unif.holder}
        \abs{f(v) - f(w)}^{p} \le \widetilde{C}_{\textup{UH}}\lambda_{p}^{(n + m)}\mcE_{p}^{G_{n + m}}(f)a^{-m\theta}.
    \end{align}
\end{thm}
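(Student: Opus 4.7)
The plan is to reduce the statement, via the identity $a^{\beta_p - \alpha} = \rho_p$, to
$$|f(v) - f(w)|^p \le \widetilde{C}_{\textup{UH}}\, \lambda_p^{(n+m)} \mcE_p^{G_{n+m}}(f)\, \rho_p^{-m}.$$
The decisive consequence of Assumption \ref{assum.lowdim} is $\rho_p > 1$ (Theorem \ref{thm.lowdim}), which makes geometric series in powers of $\rho_p^{-1/p}$ summable. I would first establish the auxiliary comparison that, for all $n', \ell \in \bbN$,
$$\lambda_p^{(n')} \le C_0\, \lambda_p^{(n' + \ell)}\, \rho_p^{-\ell},$$
by applying Proposition \ref{prop.poi1} with $k = k_0$, absorbing the extra factor $\lambda_p^{(k_0)}$ using the submultiplicativity \eqref{eq.poi4-3}, and invoking the lower bound $\mcR_p^{(\ell)} \ge C_{\textup{Mult}}^{-1}\rho_p^\ell$ from Theorem \ref{thm.mult}.

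With the auxiliary bound in hand, the triangle inequality decomposes
$$f(v) - f(w) = \bigl(f(v) - \mean{f}{[v]_m \cdot W_n}\bigr) + \bigl(\mean{f}{[v]_m \cdot W_n} - \mean{f}{[w]_m \cdot W_n}\bigr) - \bigl(f(w) - \mean{f}{[w]_m \cdot W_n}\bigr).$$
I would control the two outer terms by the same intra-cell telescoping: telescope through the nested averages $\mean{f}{[v]_j \cdot W_{n+m-j}}$ as $j$ runs from $m$ to $n+m$. By \eqref{est.tele} of Proposition \ref{prop.basic}-(2), each consecutive difference is dominated by $(N_\ast \lambda_p^{(n+m-j)} \mcE_p^{[v]_j \cdot W_{n+m-j}}(f))^{1/p}$; substituting $\lambda_p^{(n+m-j)} \le C_0 \lambda_p^{(n+m)} \rho_p^{-j}$, bounding the local energies by $\mcE_p^{G_{n+m}}(f)$, and summing the convergent geometric tail $\sum_{j \ge m} \rho_p^{-j/p}$ gives
$$|f(v) - \mean{f}{[v]_m \cdot W_n}|^p \le C\, \lambda_p^{(n+m)} \mcE_p^{G_{n+m}}(f)\, \rho_p^{-m},$$
and symmetrically for $w$.

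For the middle term, $[v]_m$ and $[w]_m$ both lie in the graph-ball $\{u \in W_m : d_{G_m}(u,z) \le 1\}$, which contains at most $D_\ast + 1$ cells. The needed geometric fact about the planar Sierpi\'{n}ski carpet is that any two such cells can be joined by a path $[v]_m = u_0, u_1, \dots, u_L = [w]_m$ in $G_m$ of uniformly bounded length $L \le L_0$ whose every edge lies in $\widetilde{E}_m$; any corner-only adjacency through $z$ is rerouted by inserting a diagonal cell meeting both endpoints along a line segment. Applying \eqref{est.basic3} of Proposition \ref{prop.basic}-(3) along this path and summing via the power-mean inequality, combined with $\sigma_p^{(n)} \le C \lambda_p^{(n)}$ from Theorem \ref{thm.poi4} and the auxiliary estimate at $\ell = m$, yields
$$|\mean{f}{[v]_m \cdot W_n} - \mean{f}{[w]_m \cdot W_n}|^p \le C\, \lambda_p^{(n+m)} \mcE_p^{G_{n+m}}(f)\, \rho_p^{-m}.$$
Combining the three bounds via $|a + b + c|^p \le 3^{p-1}(|a|^p + |b|^p + |c|^p)$ and recalling $\rho_p^{-m} = a^{-(\beta_p - \alpha)m}$ then concludes the proof. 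The main obstacle is verifying the geometric routing claim with $L_0$ independent of $m$ and $z$: this would be handled using the self-similarity and the action of $\mathrm{Sym}(K)$ from Definition \ref{dfn.D4} to reduce the check to finitely many configurations at a fixed scale.
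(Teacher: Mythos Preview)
Your proposal is correct and follows essentially the same approach as the paper: the paper packages your telescoping step as Lemma~\ref{lem.variance}, derives the identical auxiliary comparison $\lambda_p^{(k)} \le c_1 \lambda_p^{(n+m)} \rho_p^{-(n+m-k)}$ from Proposition~\ref{prop.poi1}, Theorem~\ref{thm.poi4}-(3) and Theorem~\ref{thm.mult}, and bridges the inter-cell means via the same short $\widetilde{E}_m$-path (which the paper also asserts without a detailed routing argument). The only cosmetic difference is that the paper first treats the special case $v \in z \cdot W_n$, $w \in \mcB_n(z,1)$ and then finishes ``by chaining,'' whereas you handle general $v, w \in \mcB_n(z,1)$ in a single pass.
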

This theorem is proved by iterating Proposition \ref{prop.basic}-(2). 
Kusuoka and Zhou \cite{KZ92} prepared a general estimate using signed measures (\cite{KZ92}*{Lemma 3.9}) to show H\"{o}lder type estimates, but we need only the case of Dirac measures for our purpose.
Here we give a simplified extension of \cite{KZ92}*{Lemma 3.9 and Proposition 3.10}.
\begin{lem}\label{lem.variance}
    Let $n, m \in \bbN$, let $v \in W_{m}$ and let $f \in \bbR^{W_{n + m}}$.
    Then, for any $w \in v \cdot W_{n}$,
    \begin{equation}\label{eq.variance}
        \abs{f(w) - \mean{f}{v\cdot W_{n}}} \le N_{\ast}^{1/p}\mcE_{p, v \cdot W_{n}}^{G_{n + m}}(f)^{1/p}\sum_{k = 1}^{n}\left(\lambda_{p}^{(k)}\right)^{1/p}.
    \end{equation}
\end{lem}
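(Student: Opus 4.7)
The plan is to telescope $f(w) - \mean{f}{v \cdot W_n}$ along a chain of averages over nested cells contained in $v \cdot W_n$, and to bound each successive increment by the one-step estimate \eqref{est.tele} already proved in Proposition~\ref{prop.basic}-(2). The point is that $v \cdot W_n$ is a ``copy'' of $W_n$ sitting inside $W_{n+m}$, so the natural telescope runs through the $n$ scales internal to this sub-copy rather than through all scales of $W_{n+m}$.

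Concretely, I would write $w = v \cdot u$ with $u \in W_n$ and set $c_j \coloneqq \mean{f}{[w]_{m+j} \cdot W_{n-j}}$ for $j = 0, 1, \ldots, n$. Since $[w]_m = v$ and $[w]_{n+m} = w$, one has $c_0 = \mean{f}{v \cdot W_n}$ and $c_n = f(w)$, so that
\[
f(w) - \mean{f}{v \cdot W_n} = \sum_{j=1}^{n}(c_j - c_{j-1}).
\]
Applying \eqref{est.tele} to $w$ viewed as an element of $W_{n+m}$, with the choice $k = n - j + 1$, gives for each $j \in \{1, \ldots, n\}$
\[
\abs{c_j - c_{j-1}}^{p} \le N_{\ast} \, \lambda_{p}^{(n-j+1)} \, \mcE_{p}^{[w]_{m+j-1} \cdot W_{n-j+1}}(f).
\]
The key geometric observation is that, for every such $j$, the cell $[w]_{m+j-1} \cdot W_{n-j+1}$ lies inside $v \cdot W_n$ (its first $m$ letters agree with $v$, as $m + j - 1 \ge m$), hence the right-hand energy is bounded above by $\mcE_{p}^{v \cdot W_n}(f)$. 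Taking $p$-th roots, summing via the triangle inequality, and re-indexing through $k = n - j + 1$ then produces the desired bound of the form $N_{\ast}^{1/p} \, \mcE_{p}^{v \cdot W_n}(f)^{1/p} \sum_{k} \bigl(\lambda_{p}^{(k)}\bigr)^{1/p}$.

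The hard part of the argument is not algebraic but bookkeeping: one must correctly align the indexing in \eqref{est.tele} with the nested sub-cell structure internal to $v \cdot W_n$, and verify that each intermediate cell arising in the telescope genuinely lies inside $v \cdot W_n$ so that the restriction of the energy to $v \cdot W_n$ is legitimate. This containment is precisely what prevents the full energy $\mcE_{p}^{G_{n+m}}(f)$ from appearing on the right-hand side, and thus what makes the lemma a genuine localization of the crude global bound \eqref{est.basic2} to the sub-cell $v \cdot W_n$.
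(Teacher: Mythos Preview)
Your approach is correct and is essentially the same telescoping argument as the paper's: set up a chain of nested averages inside $v \cdot W_n$, bound each increment via \eqref{est.tele}, and use the containment $[w]_{m+j-1} \cdot W_{n-j+1} \subseteq v \cdot W_n$ (valid since $m+j-1 \ge m$) to localize the energy.

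One point deserves comment. Your re-indexing $k = n - j + 1$ with $j \in \{1, \dots, n\}$ produces the range $k \in \{1, \dots, n\}$, whereas the lemma as stated has $k \in \{n-m+1, \dots, n\}$. Your range is the correct one: the paper's own proof, as written, telescopes over $l = 0, \dots, m-1$ with terms $\mean{f}{[w]_l \cdot W_{n-l}}$, which neither begins at $f(w)$ nor ends at $\mean{f}{v \cdot W_n}$ (and makes no sense at $l=0$ since $[w]_0 \cdot W_n = W_n \not\subseteq W_{n+m}$); evidently $m$ and $n$ have been transposed in a few places. The application in the proof of Theorem~\ref{thm.unifhol} uses precisely the sum $\sum_{k=1}^{n}(\lambda_p^{(k)})^{1/p}$ that your argument yields. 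So you have proved the version of \eqref{eq.variance} that is actually needed downstream, and the discrepancy with the displayed summation range is a typo in the paper rather than a gap in your reasoning.
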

\begin{proof}
    Let $w \in v \cdot W_{n}$ and set $w_{l} \coloneqq [w]_{l}$ for each $l = m, \dots, n + m$.
    Note that $w_{m} = v$ and $w_{n + m} = w$.
    From Proposition \ref{prop.basic}-(2), we see that
    \begin{align*}
        \abs{f(w) - \mean{f}{v \cdot W_{n}}}
        &\le \sum_{l = m}^{n + m - 1}\abs{\mean{f}{w_{l} \cdot W_{n + m - l}} - \mean{f}{w_{l + 1} \cdot W_{n + m - l - 1}}} \\
        &\le N_{\ast}^{1/p}\mcE_{p, v \cdot W_{n}}(f)^{1/p}\sum_{l = m}^{n + m - 1}\bigl(\lambda_{p}^{(n + m - l)}\bigr)^{1/p} \\
        &= N_{\ast}^{1/p}\mcE_{p, v \cdot W_{n}}(f)^{1/p}\sum_{k = 1}^{n}\bigl(\lambda_{p}^{(k)}\bigr)^{1/p}. \qedhere
    \end{align*}
\end{proof}

\noindent
\textit{Proof of Theorem \ref{thm.unifhol}.}\,
    By Assumption \ref{assum.lowdim} and Theorem \ref{thm.ARCdim}, there exists $\theta > 0$ such that 
    \begin{align}\label{est.super}
        \mcC_{p}^{(n)} \le C_{\ref{thm.ARCdim}}a^{-n\theta} \quad \text{for every $n \in \bbN$.}
    \end{align}
    From \eqref{est.super}, Proposition \ref{prop.poi1} and Theorem \ref{thm.poi4}-(2), we have $\lambda_{p}^{(n)} \le c_1\lambda_{p}^{(n + m)}a^{-m\theta}$ for every $n, m \in \bbN$, where $c_1 > 0$ depends only on $p, a, D, L_{\ast}, N_{\ast}, \rho_{p}$.
    In particular,
    \begin{align}\label{ineq.hol1}
        \lambda_{p}^{(k)} \le c_1\lambda_{p}^{(n + m)}a^{-(n + m - k)\theta} \quad \text{for every $n, m, k \in \bbN$ with $k \le n$.}
    \end{align}
    By Lemma \ref{lem.variance}, for any $z \in W_{m}$, $v \in z \cdot W_{n}$ and $w \in \mcB_{n}(z, 1)$,
    \begin{align*}
        &\abs{f(v) - f(w)} \\
        &\le \abs{f(v) - \mean{f}{z \cdot W_{n}}} + \abs{\mean{f}{z \cdot W_{n}} - \mean{f}{[w]_{m} \cdot W_{n}}} + \abs{f(w) - \mean{f}{[w]_{m} \cdot W_{n}}} \\
        &\le \sum_{i = 1}^{D}\abs{\mean{f}{z(i - 1) \cdot W_{n}} - \mean{f}{z(i) \cdot W_{n}}} + 2N_{\ast}^{1/p}\mcE_{p}^{G_{n + m}}(f)^{1/p}\sum_{k = 1}^{n}\Bigl(\lambda_{p}^{(k)}\Bigr)^{1/p},
    \end{align*}
    where $z(i) \in W_{m} \, (i = 0, \dots, D)$ with $z(0) = z$, $z(D) = [w]_{m}$ satisfy $(z(i - 1), z(i)) \in \widetilde{E}_{m}$ or $z^{i - 1} = z^{i}$ for $i = 1, \dots, D$. 
    (Such $\{ z(i) \}_{i}$ exists due to Proposition \ref{prop.h-network}.)
    From Proposition \ref{prop.basic}-(3), Theorem \ref{thm.poi4}-(1) and \eqref{ineq.hol1}, we see that
    \begin{align*}
        \abs{f(v) - f(w)}
        &\le 2D^{(p - 1)/p}\mcE_{p}^{G_{n + m}}(f)^{1/p}\left(\Bigl(\sigma_{p}^{(n)}\Bigr)^{1/p} + N_{\ast}^{1/p}\sum_{k = 1}^{n}\Bigl(\lambda_{p}^{(k)}\Bigr)^{1/p}\right) \\
        &\le c_{2}\mcE_{p}^{G_{n + m}}(f)^{1/p}\sum_{k = 1}^{n}\Bigl(\lambda_{p}^{(k)}\Bigr)^{1/p} \\
        &\le c_{1}^{1/p}c_{2}\Bigl(\lambda_{p}^{(n + m)}\mcE_{p}^{G_{n + m}}(f)\Bigr)^{1/p}\sum_{k = 1}^{n}a^{-(n + m - k)\theta/p},
    \end{align*}
    where $c_{2}$ is a positive constant depending only on $p, a, D, N_{\ast}, L_{\ast}$.
    Since $\theta > 0$ and
    \begin{align*}
        \sum_{k = 1}^{n}a^{-(n + m - k)\theta/p}
        &= a^{-m\theta/p}\sum_{k = 0}^{n - 1}a^{-k\theta/p} \\
        &= a^{-m\theta/p} \cdot \frac{1 - a^{-n\theta/p}}{1 - a^{-\theta/p}} \le \frac{1}{1 - a^{-\theta/p}} \cdot a^{-m\theta/p},
    \end{align*}
    we have the desired estimate for $v \in z \cdot W_{n}$ and $w \in \mcB_{n}(z, 1)$.
    A combination of this estimate and the triangle inequality proves the desired estimate.\hspace{\fill}$\square$

%--- Proof of KMp ---
\subsection{Proof of (KM$_{p}$): $p > \dim_{\textup{ARC}}(K, d)$}\label{subsec:KM}
The aim of this subsection is to prove \hyperlink{KMp}{\textup{(KM$_{p}$)}} under Assumption \ref{assum.lowdim}.
Our strategy for proving \hyperlink{KMp}{\textup{(KM$_{p}$)}} comes from a recent study by Cao and Qiu \cite{CQ21+}, where they give an ``analytic'' proof of (KM$_{2}$) using estimates of Poincar\'{e} constants in \cite{KZ92}.
Although our proof of \hyperlink{KMp}{\textup{(KM$_{p}$)}} is similar to the argument in \cite{CQ21+}*{Section 4}, we give a complete proof of \hyperlink{KMp}{\textup{(KM$_{p}$)}} for the reader's convenience.
Our argument will depend heavily on the uniform H\"{o}lder estimate (Theorem \ref{thm.unifhol}) and on behaviors of ``chain'' type $p$-conductance $\mcC_{p}^{(n, M)}$ (its definition will be given later).

Let us start by introducing a new graph $G_{n, M}$, which is a ``horizontal chain'' consisting of $M$ copies of $G_{n}$.
The exact definition of $G_{n, M}$ is as follows.
Let $n, M \in \bbN$ with $M \ge 2$ and pick $m \in \bbN$ such that $a^{m} \ge M$.
Then, by \hyperlink{GSC4}{\textup{(GSC4)}}, we can find a simple path $\bigl[w(1), \dots, w(M)\bigr]$ in $\bigl(V_{m}, \widetilde{E}_{m}\bigr)$ such that
\[
F_{w(i)}\bigl(\mathcal{K}^{+}\bigr) = F_{w(i + 1)}\bigl(\mathcal{K}^{-}\bigr) \quad \text{ for each $i = 1, \dots, M - 1$,}
\]
where
\begin{equation}\label{dfn.LR}
    \mathcal{K}^{-} \coloneqq K \cap B_{1, -1}, \quad \mathcal{K}^{+} \coloneqq K \cap B_{1, +1}.
\end{equation}
($\mathcal{K}^{\,\pm}$ denotes a couple of opposite faces of $K$.)
We define $G_{n, M} = (V_{n, M}, E_{n, M})$ as a subgraph of $G_{n + m}$ given by
\[
V_{n, M} \coloneqq \bigcup_{i = 1}^{M} w(i) \cdot W_{n} \quad \text{and} \quad E_{n, M} \coloneqq \{ (x, y) \in E_{n + m} \mid x, y \in V_{n, M} \}.
\]
We also consider a horizontal network $\widetilde{E}_{n, M}$ defined by 
\[
\widetilde{E}_{n, M} \coloneqq \Bigl\{ (x, y) \in \widetilde{E}_{n + m} \Bigm| x, y \in V_{n, M} \Bigr\}.
\]
Now, we set
\[
V_{n, M}^{-} \coloneqq w(1) \cdot W_{n} \quad \text{and} \quad V_{n, M}^{+} \coloneqq w(M) \cdot W_{n}.
\]
and define $\mcC_{p}^{(n, M)}$ (see Figure \ref{fig:chain_nM}) by setting
\[
\mcC_{p}^{(n, M)} \coloneqq \mcC_{p}^{G_{n, M}}\bigl(V_{n, M}^{-}, V_{n, M}^{+}\bigr).
\]
We easily see that these definitions do not depend on choices of large $m \in \bbN$ and horizontal chain $\bigl[w(1), \dots, w(M)\bigr]$.

The following lemma describes a key behavior of $\mcC_{p}^{(n, M)}$.

\begin{figure}[t]
    \centering
    \includegraphics[height=90pt]{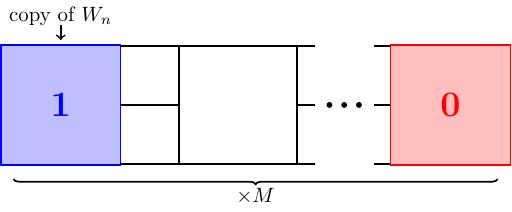}
    \caption{The conductance $\mcC_{p}^{(n, M)}$ (the planar case)}
    \label{fig:chain_nM}
\end{figure}

\begin{lem}\label{lem.chain}
    For every $M \ge 3$ there exists a constant $C(M) \ge 1$ depending only on $p, L_{\ast}, N_{\ast}, M$ such that
    \begin{align}
        C(M)^{-1}\mcC_{p}^{(n)} \le \mcC_{p}^{(n, M)} \le C(M)\mcC_{p}^{(n)} \quad \text{for any $n \in \bbN$.}
    \end{align}
\end{lem}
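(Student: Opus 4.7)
The proof splits cleanly into an upper bound and a lower bound, with the upper bound being essentially immediate and the lower bound requiring an averaging argument whose conclusion meets a known obstacle.

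For the upper bound $\mcC_p^{(n, M)} \le \mcC_p^{(n)}$, I would fix $m \in \bbN$ with $a^m \ge M$ and realize the horizontal chain $[w^1, \ldots, w^M]$ as a simple path in $G_m$, for which $d_{G_m}(w^1, w^i) = i - 1$. Since $M \ge 3$, every index $i \ge 3$ satisfies $d_{G_m}(w^1, w^i) \ge 2$, hence $w^i \cdot W_n \subseteq W_{n+m} \setminus \mcB_n(w^1, 1)$. Let $\phi$ be the minimizer of $\mcC_p^{G_{n+m}}(w^1 \cdot W_n, W_{n+m} \setminus \mcB_n(w^1, 1))$ provided by Proposition~\ref{prop.minimizer}(1); then $\phi \equiv 1$ on $V_{n, M}^{\,\text{L}}$ and $\phi \equiv 0$ on $V_{n, M}^{\,\text{R}}$, so $\phi|_{V_{n, M}}$ is a valid competitor in $\mcC_p^{(n, M)}$. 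Because $E_{n, M} = E_{n+m}^{V_{n, M}} \subseteq E_{n+m}$,
\[
\mcC_p^{(n, M)} \le \mcE_p^{G_{n, M}}(\phi|_{V_{n, M}}) \le \mcE_p^{G_{n+m}}(\phi) \le \mcC_p^{(n)}.
\]

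For the lower bound $\mcC_p^{(n, M)} \ge C(M)^{-1}\mcC_p^{(n)}$, I would take the minimizer $f$ of $\mcC_p^{(n, M)}$ from Proposition~\ref{prop.minimizer}(1) and define the cellular averages $c_i := \mean{f}{w^i \cdot W_n}$, so $c_1 = 1$ and $c_M = 0$. Since each consecutive pair $(w^i, w^{i+1})$ lies in $\widetilde{E}_m$ (the chain is built from shared line segments), Proposition~\ref{prop.basic}(3) gives $|c_i - c_{i+1}|^p \le \sigma_p^{(n)}\mcE_p^{\{w^i, w^{i+1}\} \cdot W_n}(f)$. Summing over $i$, with each edge in $E_{n, M}$ counted at most twice, yields $\sum_{i=1}^{M-1}|c_i - c_{i+1}|^p \le 2\sigma_p^{(n)}\mcC_p^{(n, M)}$, while the power-mean inequality applied to the telescoping identity $\sum_{i=1}^{M-1}(c_i - c_{i+1}) = 1$ gives $\sum_{i=1}^{M-1}|c_i - c_{i+1}|^p \ge (M-1)^{1-p}$. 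Combining these yields $\mcC_p^{(n, M)} \ge \bigl(2(M-1)^{p-1}\sigma_p^{(n)}\bigr)^{-1}$.

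The main obstacle is upgrading this bound to one in terms of $\mcC_p^{(n)}$, which would require $\sigma_p^{(n)}\mcC_p^{(n)} \lesssim 1$; via Theorem~\ref{thm.poi4}(2) and~(5), this is equivalent to $\lambda_p^{(n)} \lesssim \mcR_p^{(n)}$, i.e.\ precisely the condition~\textup{(KM$_p$)} that this subsection aims to prove, so a direct appeal would be circular. I would therefore route through the uniform H\"{o}lder estimate (Theorem~\ref{thm.unifhol}): applied to the minimizer realizing $\mcC_p^{(n)}$ at an auxiliary level $m$, it bounds the oscillation over a $1$-neighborhood in terms of $\lambda_p^{(n+m)}\mcC_p^{(n)}a^{-m(\beta_p - \alpha)}$, and under Assumption~\ref{assum.lowdim} combined with the supermultiplicative bound $\mcC_p^{(n)} \le C_{\textup{Mult}}a^{-n(\beta_p - \alpha)}$ and Propositions~\ref{prop.poi1} and~\ref{thm.poi2}, this should furnish the required uniform estimate on $\sigma_p^{(n)}\mcC_p^{(n)}$. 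A more geometric alternative is to glue $\mcC_p^{(n, M)}$-minimizers along chains emanating in all directions from an arbitrary cell $w$, using the symmetries of $\mathrm{Sym}(K)$ from Definition~\ref{dfn.D4}, with radial cutoffs that taper smoothly to zero outside $\mcB_n(w, 1)$; the delicate part is coordinating the cutoffs so the resulting function vanishes exactly outside $\mcB_n(w, 1)$ without incurring jumps of order $1$ at the chain-exterior interface.
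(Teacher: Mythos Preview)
Your upper bound is correct and essentially equivalent to the paper's (the paper routes through $\mcC_p^{(n)}(\text{L}\leftrightarrow\text{R})$ via Lemma~\ref{lem.top-bottom}, but monotonicity does the job either way).

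The lower bound, however, has a genuine gap. Your averaging argument yields $\mcC_p^{(n,M)} \ge \bigl(2(M-1)^{p-1}\sigma_p^{(n)}\bigr)^{-1}$, and you correctly identify that closing the estimate requires $\sigma_p^{(n)}\mcC_p^{(n)} \lesssim 1$, equivalently $(\text{KM}_p)$. Your proposed detour through Theorem~\ref{thm.unifhol} does not break the circle: the H\"older bound still carries the factor $\lambda_p^{(n+m)}$, and replacing it by something comparable to $\mcR_p^{(n+m)}$ (or $\rho_p^{\,n+m}$) is precisely the content of $(\text{KM}_p)$; the only place in the paper where that replacement is established is Theorem~\ref{thm.KM}, whose proof explicitly invokes Lemma~\ref{lem.chain}. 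Neither Proposition~\ref{prop.poi1} nor Proposition~\ref{thm.poi2} supplies an upper bound on $\lambda_p^{(n)}/\mcR_p^{(n)}$---the inequality you have available goes the other way (Theorem~\ref{thm.poi4}(5)). Your geometric alternative is also left unresolved at exactly the critical step you flag. Note too that the lemma demands $C(M)$ depend only on $p, D_{\ast}, N_{\ast}, M$; any route through the H\"older machinery would drag in $\rho_p$, $C_{\text{Mult}}$, $C_{\text{UH}}$ and related constants.

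The paper's argument avoids Poincar\'e constants entirely. It first reduces to comparing with $\mcC_p^{(n)}(\text{L}\leftrightarrow\text{R})$ via Lemma~\ref{lem.top-bottom}. For $M=3$, a uniform rough isometry between $G_n$ and a trimmed version $\widetilde{G}_{n,3}$ of $G_{n,3}$ (Lemma~\ref{lem.URI}) gives $\mcC_p^{(n)}(\text{L}\leftrightarrow\text{R}) \lesssim \mcC_p^{(n,3)}$. The key idea for general $M$ is a symmetry-based doubling: for $M_k = 2^k + 2$, uniqueness of the minimizer $f_{n,k}$ (Proposition~\ref{prop.minimizer}) together with the reflection symmetry of the chain forces $f_{n,k} \ge \tfrac{1}{2}$ on the left half and $f_{n,k} \le \tfrac{1}{2}$ on the right half; the Markov property applied to $f_{n,k} \vee \tfrac{1}{2}$ and $f_{n,k} \wedge \tfrac{1}{2}$ then gives $\mcC_p^{(n,M_k)} \ge 2^{-p}\mcC_p^{(n,M_{k-1})}$, which iterates down to $M=3$. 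Monotonicity in $M$ handles intermediate values. This purely combinatorial argument is what makes the subsequent proof of $(\text{KM}_p)$ non-circular.
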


Its proof will be a straightforward modification of \cite{CQ21+}*{Lemma 4.7}. 
In order to prove Lemma \ref{lem.chain}, we need to show that $\mcC_{p}^{(n)}$ behaves similarly to the conductance that appeared in the work of Barlow and Bass (see the quantity $R_{n}^{-1}$ in \cite{BB90}).
To state rigorously, we define
\[
W_{n}^{-} \coloneqq W_{n}\bigl[B_{1, -1}\bigr], \quad W_{n}^{+} \coloneqq W_{n}\bigl[B_{1, +1}\bigr] \quad \text{and} \quad \mcC_{p, \textup{face}}^{(n)} \coloneqq \mcC_{p}^{G_{n}}\bigl(W_{n}^{-}, W_{n}^{+}\bigr).
\]
The next lemma is proved in \cite{BK13}*{Lemma 4.4} for some special cases of Sierpi\'{n}ski carpets by using $p$-combinatorial modulus instead of $p$-conductance.
We give a simple proof without using $p$-modulus.

\begin{lem}\label{lem.top-bottom}
    There exists a constant $C_{\ref{lem.top-bottom}} \ge 1$ depending only on $p, D, L_{\ast}$ such that
    \[
    C_{\ref{lem.top-bottom}}^{-1}\mcC_{p}^{(n)} \le \mcC_{p, \textup{face}}^{(n)} \le C_{\ref{lem.top-bottom}}\mcC_{p}^{(n)} \quad \text{for any $n \in \bbN$.}
    \]
\end{lem}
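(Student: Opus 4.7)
I prove each inequality by constructing, via the self-similarity $W_n\cong v\cdot W_n$, an explicit test function for one conductance from the extremal of the other.

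For $\mcC_{p}^{(n)}(\text{L}\leftrightarrow\text{R}) \le C_{\ref{lem.top-bottom}}\mcC_{p}^{(n)}$ I use the corner cell $w=1 \in W_1$, whose only $G_1$-neighbors are the two face-adjacent cells $2$ and $8$. Let $h\colon W_{n+1}\to[0,1]$ be the extremal from Proposition \ref{prop.minimizer}-(1) for $\mcC_p^{G_{n+1}}\bigl(1\cdot W_n,\,W_{n+1}\setminus\mcB_n(1,1)\bigr)$, so that $h\equiv 1$ on $1\cdot W_n$ and $h\equiv 0$ on $\{3,\ldots,7\}\cdot W_n$. Identifying $W_n$ with $2\cdot W_n$ via $u\mapsto 2u$ and using that $K_2$ meets $K_1$ along its left face and $K_3$ along its right face, I set
\[
g(u)\coloneqq\begin{cases} 1,& u\in W_n^{\,\text{L}},\\ 0,& u\in W_n^{\,\text{R}},\\ h(2u),& \text{otherwise},\end{cases}
\]
which is admissible for $\mcC_p^{(n)}(\text{L}\leftrightarrow\text{R})$. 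Splitting $\mcE_p^{G_n}(g)$ into interior edges (each contributing exactly $|h(2u)-h(2u')|^p$) and boundary edges meeting $W_n^{\,\text{L}}\cup W_n^{\,\text{R}}$, and using $|1-h(2u')|^p\le 2^{p-1}(|1-h(2u)|^p+|h(2u)-h(2u')|^p)$ for boundary edges with $u\in W_n^{\,\text{L}}$, the correction $|1-h(2u)|^p$ equals the $p$-energy of an edge in $E_{n+1}$ from $2u$ to a vertex in $1\cdot W_n$ (such an edge exists since $K_{2u}$ meets $K_1$), and each edge of $E_{n+1}$ is counted at most $D_\ast$ times. This gives $\mcE_p^{G_n}(g)\le C(p,D_\ast)\mcE_p^{G_{n+1}}(h)\le C(p,D_\ast)\mcC_p^{(n)}$.

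For the reverse inequality $\mcC_{p}^{(n)}\le C_{\ref{lem.top-bottom}}\mcC_{p}^{(n)}(\text{L}\leftrightarrow\text{R})$, I fix any $w\in W_m$ and construct a test function $f\colon W_{n+m}\to[0,1]$ for the annular conductance by setting $f\equiv 1$ on $w\cdot W_n$ and $f\equiv 0$ outside $\mcB_n(w,1)$. On each of the at most $D_\ast$ $G_m$-neighbors $v$ of $w$, I place on $v\cdot W_n$ the extremal of the auxiliary ``one-face-versus-three-faces'' conductance
\[
\tilde{\mcC}_p^{(n)} \coloneqq \mcC_p^{G_n}\bigl(W_n^{\,\text{L}},\,\partial_\ast G_n\setminus W_n^{\,\text{L}}\bigr),
\]
transported and rotated via $\mathrm{Sym}(K)$ so that the face of $K_v$ meeting $K_w$ carries the value $1$ and every other boundary face of $K_v$ carries the value $0$. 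Because the $\tilde{\mcC}_p^{(n)}$-extremal vanishes on \emph{all} three non-designated boundary faces, $f$ is automatically consistent on every edge shared by two distinct neighbors of $w$; the only residual discrepancies occur at isolated corner-vertices between $w\cdot W_n$ and a corner-neighbor, contributing $O(D_\ast)$ in total. Hence $\mcC_p^{(n)}\le C(p,D_\ast)\tilde{\mcC}_p^{(n)}$.

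The main obstacle is the purely planar comparison $\tilde{\mcC}_p^{(n)}\le C\,\mcC_p^{(n)}(\text{L}\leftrightarrow\text{R})$. Starting from the L-R extremal $\phi$ and its three $90^\circ$-rotated cousins obtained via $\mathrm{Sym}(K)$, I plan to form a suitable $1$-Lipschitz composition---for instance by taking successive minima of $\phi$ with each $1$ minus a rotated L-R extremal---and then invoke the Markov property (Proposition \ref{prop.Markov}) to bound the resulting $p$-energy by a constant multiple of $\mcC_p^{(n)}(\text{L}\leftrightarrow\text{R})$. The delicate step is arranging these compositions so the result is admissible for $\tilde{\mcC}_p^{(n)}$, that is, equals $1$ on $W_n^{\,\text{L}}$ and vanishes on the remaining boundary faces, while the $1$-Lipschitz absorption keeps the four L-R-type energies as bounded additive summands; once this is achieved, the chain of inequalities completes the proof.
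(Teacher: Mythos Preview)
Your first direction ($\mcC_{p}^{(n)}(\text{L}\leftrightarrow\text{R})\le C\,\mcC_{p}^{(n)}$) is sound; the paper does the same thing more cleanly by first thickening the annular boundary data to $A_n=\overline{w\cdot W_n}$, $B_n=\overline{W_{n+m}\setminus\mcB_n(w,1)}$ via a rough isometry (Lemma~\ref{lem.URI}), so that the restriction of the extremal to a face-adjacent cell $v$ is already admissible for $\mcC_{p}^{(n)}(\text{L}\leftrightarrow\text{R})$ with no boundary correction needed.

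The genuine gap is in your second direction, precisely at the ``delicate step'' $\tilde{\mcC}_p^{(n)}\le C\,\mcC_p^{(n)}(\text{L}\leftrightarrow\text{R})$. Your suggested device---taking successive minima of $\phi$ with $1$ minus rotated copies---cannot produce an admissible function for $\tilde{\mcC}_p^{(n)}$. The L--R extremal $\phi$ has \emph{no a priori control} on the top and bottom faces: its values there lie somewhere in $[0,1]$. If $\psi$ is a $90^\circ$ rotate of $\phi$ (say $\psi\equiv 1$ on bottom, $\psi\equiv 0$ on top), then on $W_n^{\,\text{L}}$ we have $\phi=1$ but $\psi$ and $1-\psi$ are uncontrolled, so $\min(\phi,1-\psi,\dots)$ will not equal $1$ on $W_n^{\,\text{L}}$. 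No $1$-Lipschitz combination of rotated L--R extremals can force the value $1$ on all of $W_n^{\,\text{L}}$ while forcing $0$ on the other three faces, because the information about top/bottom simply is not present in $\phi$. So the Markov property gives you nothing here.

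The paper sidesteps $\tilde{\mcC}_p^{(n)}$ entirely. It passes to scale $n+1$ and compares instead with the center-to-boundary conductance $\mcC_p^{G_{n+1}}(C_{n+1},\partial_\ast G_{n+1})$, where $C_{n+1}=\{w\in W_{n+1}:K_w\cap[-1/6,1/6]^2\ne\emptyset\}$; this dominates $\mcC_p^{(n)}$ by the cutting law. For the test function one uses a \emph{quadrant folding}: on each small cell $w=iv\in W_{n+1}$, set $g_{n+1}(w)$ equal to $f_n(v)$, $f_n(\iota_{T_+}(v))$, $f_n(\iota_{T_-}(v))$, or $f_n(\iota_{T_+}\iota_{T_-}(v))$ according to which of the four diagonal sectors $K_w$ meets. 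The horizontal symmetry $f_n=f_n\circ T_{\text{h}}$ of the L--R extremal makes the pieces match along the diagonals, so no cross-edge energy is created between adjacent level-$1$ cells; one gets $\mcE_p^{G_{n+1}}(g_{n+1})=N_\ast\,\mcE_p^{G_n}(f_n)$ exactly. The point is that by working one scale down and folding across the diagonals (rather than taking minima at scale $n$), the uncontrolled top/bottom values of $\phi$ never enter.
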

\begin{proof}
    By the self-similarity of $K$ and \hyperlink{GSC1}{\textup{(GSC1)}}, there exist $m \in \bbN$ and $w \in W_{m}$ such that $\mcC_{p}^{(n)} = \mcC_{p}^{G_{n + m}}\bigl(w \cdot W_{n}, W_{n + m} \setminus \mcB_{n}(w, 1)\bigr)$.
    Define $A_n \coloneqq \overline{w \cdot W_n}$ and $B_n \coloneqq \overline{W_{n + m} \setminus \mcB_{n}(w, 1)}$.
    It is immediate from Proposition \ref{prop.mono} that $\mcC_{p}^{G_{n + m}}(A_n, B_n) \ge \mcC_{p}^{(n)}$. 
    For $n \in \mathbb{N}$ and $v \in \bord{(w \cdot W_{n})}$ (resp. $v \in \bord{\compl{\mathcal{B}_{n}(w, 1)}}$), we fix $\mathsf{d}(v) \in w \cdot W_{n}$ (resp. $\mathsf{d}(v) \in \compl{\mathcal{B}_{n}(w, 1)}$) such that $(v, \mathsf{d}(v)) \in E_{n + m}$. 
    Define $\varphi_{n}\colon W_{n + m} \to W_{n + m}$ by 
    \[
    \varphi_{n}(v) = 
    \begin{cases}
    	v \quad &\text{if $v \not\in \bord{(w \cdot W_{n})} \cup \bord{\compl{\mathcal{B}_{n}(w, 1)}}$,} \\
    	\mathsf{d}(v) \quad &\text{if $v \in \bord{(w \cdot W_{n})} \cup \bord{\compl{\mathcal{B}_{n}(w, 1)}}$.} 
    \end{cases}
    \]
    Then we easily see that $\varphi_{n}$ is a rough isometry with $C_{1} = 1$, $C_{2} = 2$, $C_{3} = 2$ and $C_{4} = L_{\ast}/2$ in Definition \ref{dfn.URI}.     
    Note that $\mcC_{p}^{G_{n + m}}\bigl(\bord{(w \cdot W_{n})}, \bord{\compl{\mathcal{B}_{n}(w, 1)}}\bigr) = \mcC_{p}^{G_{n + m}}(A_n, B_n)$.
    Applying \eqref{eq.URIcap} in Lemma \ref{lem.URI} and Proposition \ref{prop.mono}, we deduce that there exists $c_{1} > 0$ (depending only on $p, L_{\ast}$) such that, for any $n \in \bbN$,
    \[
    \mcC_{p}^{G_{n + m}}(A_n, B_n) \le c_{1}\mcC_{p}^{(n)}.
    \]
    Let $f\colon W_{n + m} \to \bbR$ satisfy $f|_{A_n} \equiv 1$, $f|_{B_n} \equiv 0$ and $\mcE_{p}^{G_{n + m}}(f) = \mcC_{p}^{G_{n + m}}(A_n, B_n)$.
    If $v \in W_{m}$ satisfies $(v,w) \in \widetilde{E}_{m}$, then we have that
    \[
    \mcC_{p}^{G_{n + m}}(A_n, B_n) \ge \mcE_{p, v \cdot W_{n}}^{G_{n + m}}(f) \ge \mcC_{p, \textup{face}}^{(n)},
    \]
    and thus we conclude that $\mcC_{p, \textup{face}}^{(n)} \le c_{1}\mcC_{p}^{(n)}$.
    
	The converse can be shown in a very similar way as Proposition \ref{prop.B}. 
	Define `fundamental region' $\widetilde{\Delta}^{a,D} = \widetilde{\Delta}$ of $[-3a^{-1}, 3a^{-1}]^{D} \eqqcolon \mathcal{Q}_{a,D}$ by  
    \[
    \widetilde{\Delta} = \bigl[0, 3a^{-1}\bigr]^{D} \cap \bigcap_{i = 1}^{D - 1}\mathcal{H}_{i, i + 1}^{+, \ge}. 
    \]
    For each $k \in \{ 0, \dots, D \}$, define $u_{k} \in \mathbb{R}^{D}$ by $u_{k} \coloneqq \sum_{j = 1}^{k}\mathbf{e}_{j}$, where we set $u_{0} \coloneqq \mathbf{0}^{D}$. 
    Then we have 
    \[
    \widetilde{\Delta} \subseteq \bigcup_{k = 0}^{D}f_{u_{k}}(Q_{0}). 
    \]
    Next we inductively define $\{ \widetilde{\Delta}_{k} \}_{k = 0}^{D - 1}$ as follows. 
    Define $\widetilde{\Delta}_{0} \coloneqq \widetilde{\Delta}$. 
    For given $\widetilde{\Delta}_{0}, \dots, \widetilde{\Delta}_{k - 1}$, define $\widetilde{\Delta}_{k}$ as
    \[
    \widetilde{\Delta}_{k} \coloneqq \widetilde{\Delta}_{k - 1} \cup R_{k, k + 1}^{+}\bigl(\widetilde{\Delta}_{k - 1}\bigr). 
    \]
    Note that this construction yields $\widetilde{\Delta}_{D - 1} = \bigl[0, 3a^{-1}\bigr]^{D}$ for all $D \ge 2$. 
    Also, we define subsets $\bigl\{ \widetilde{\square}_{k} \bigr\}_{k = 0}^{D}$ of $\mathcal{Q}_{a,D}$ as follows. 
    Define $\widetilde{\square}_{0} \coloneqq \widetilde{\Delta}_{D - 1}$. 
    For given $\widetilde{\square}_{0}, \dots, \widetilde{\square}_{k - 1}$, define $\widetilde{\square}_{k}$ as 
    \[
    \widetilde{\square}_{k} \coloneqq \widetilde{\square}_{k - 1} \cup R_{k}\bigl(\widetilde{\square}_{k - 1}\bigr). 
    \]
    This construction yields $\widetilde{\square}_{D} = \mathcal{Q}_{a, D}$. 
    
    Next we will introduce a new graph $\Gamma_{n} \, (n \ge 1)$ as follows. 
    For each $n \ge 1$, define $\widetilde{W}_{n} \subseteq \bigl(\{ 0, \dots, a - 1 \}^{D}\bigr)^{n}$ by 
    \[
    \widetilde{W}_{n} \coloneqq \Bigl\{ (\sigma_{k})_{k = 1}^{D} \cdot w \Bigm| \text{$\sigma_{k} \in \{ -1, 0, 1 \}$ for each $k \in \{ 1, \dots, D \}$}, w \in W_{n - 1} \Bigr\}, 
    \] 
    and $E\bigl(\widetilde{W}_{n}\bigr) \subseteq \widetilde{W}_{n} \times \widetilde{W}_{n}$ by 
    \begin{align*}
    	&E\bigl(\widetilde{W}_{n}\bigr) \coloneqq \\
    	&\hspace*{20pt}\bigl\{ (v, w) \bigm| v, w \in \widetilde{W}_{n},  \text{$f_{v}(Q_{0}) \cap f_{w}(Q_{0})$ is a $(D - 1)$-dimensional hypercube} \bigr\}. 
    \end{align*}
    Then $\Gamma_{n} \coloneqq \Bigl(\widetilde{W}_{n}, E\bigl(\widetilde{W}_{n}\bigr)\Bigr)$. 
    Define subsets $I_{n}, O_{n}$ of $\widetilde{W}_{n}$ by 
    \[
    I_{n} \coloneqq \bigl\{ w \in \widetilde{W}_{n} \bigm| f_{w}(Q_{0}) \cap f_{\mathbf{0}}(Q_{0}) \neq \emptyset \bigr\} = \bigl\{ w \in \widetilde{W}_{n} \bigm| f_{w}(Q_{0}) \cap \bigl[-a^{-1}, a^{-1}\bigr]^{D} \neq \emptyset \bigr\}, 
    \]
    and 
    \[
    O_{n} \coloneqq \bigl\{ w \in \widetilde{W}_{n} \bigm| f_{w}(Q_{0}) \cap \bord{\mathcal{Q}_{a, D}} \neq \emptyset \bigr\}. 
    \]
    By the cutting law of $p$-conductances (see \cite{Mthesis}*{Proposition 3.18} for example), 
    \[
    \mcC_{p}^{\Gamma_{n + 1}}\bigl(I_{n + 1}, O_{n + 1}\bigr) \ge \mcC_{p}^{\widetilde{G}_{n + m}}(A_n, B_n) \ge c\cdot\mcC_{p}^{G_{n + m}}(A_n, B_n) \ge c\cdot\mcC_{p}^{(n)}, 
    \]
    where $c > 0$ depends only on $p, D, L_{\ast}$ (we used Lemma \ref{lem.URI}).  
    Let $f \colon W_{n} \to [0, 1]$ satisfy
    \[
    f|_{W_{n}^{-}} \equiv 1, \quad f|_{W_{n}^{+}} \equiv 0 \quad\text{and}\quad \mcE_{p}^{G_{n}}(f) = \mcC_{p, \textup{face}}^{(n)}.
    \]
    From the uniqueness of the optimizer of $\mathcal{C}_{p, \textup{face}}^{(n)}$, we have $f \circ \tau[R_{j}] = f$ and $f \circ \tau\bigl[R_{j, k}^{\pm}\bigr] = f$ for any $j, k = 2, \dots, D$. 
    Define $h_{1} \colon W_{n} \to [0, 1]$ as $h_{1} \coloneqq f$. 
    Inductively, define $h_{k} \colon W_{n} \to [0, 1]$ by 
    \[
    h_{k}(w) = 
    \begin{cases}
    	h_{k - 1}(w) \quad &\text{if $w \in W_{n}\bigl[\mathcal{H}_{1, k}^{+, \ge}\bigr]$,} \\
    	h_{k - 1}\bigl(\tau\bigl[R_{1, j}^{+}\bigr](w)\bigr) \quad &\text{if $w \in W_{n}\bigl[\mathcal{H}_{1, k}^{+, \le}\bigr]$.}
    \end{cases}
    \]
    Then we have $\mathcal{E}_{p}^{G_{n}}\bigl(h_{k}\bigr) \le 2\mathcal{E}_{p}^{G_{n}}\bigl(h_{k - 1}\bigr)$ and hence 
    \[
    \max_{k \in \{1, \dots, D\}}\mathcal{E}_{p}^{G_{n}}\bigl(h_{k}\bigr) \le 2^{D - 1}\mathcal{E}_{p}^{G_{n}}\bigl(h_{0}\bigr) = 2^{D - 1}\mcC_{p, \textup{face}}^{(n)}. 
    \]
    Let $\widetilde{h} \colon \{ u_{k} \cdot w \mid k \in \{ 1, \dots, D \}, w \in W_{n} \} \to [0, 1]$ such that 
    \[
    \widetilde{h}\bigl(u_{k} \cdot w\bigr) = h_{k}(w) \quad \text{for all $k \in \{ 1, \dots, D \}$ and $w \in W_{n}$. }
    \]
    Note that $\{ u_{k} \cdot w \mid k \in \{ 1, \dots, D \}, w \in W_{n} \}$ contains $\bigl\{ w \in \widetilde{W}_{n + 1} \bigm| f_{w}(Q_{0}) \cap \widetilde{\Delta} \neq \emptyset \bigr\}$ and that 
    \[
    \mathcal{E}_{p}\Bigl(\widetilde{h}\Bigr) = \sum_{k = 1}^{D}\mathcal{E}_{p}^{\widetilde{G}_{n}}\bigl(h_{k}\bigr), 
    \]
    where $\mathcal{E}_{p}\bigl(\widetilde{h}\bigr)$ denotes the $p$-energy of $\widetilde{h}$ on the induced subgraph of $\Gamma_{n + 1}$ whose vertex set is given by $\{ u_{k} \cdot w \mid k \in \{ 1, \dots, D \}, w \in W_{n} \}$. 
    Similar construction by `unfolding' $\widetilde{h}$ as in the proof of Proposition \ref{prop.B} yields a function $h_{\ast} \colon \widetilde{W}_{n + 1} \to [0, 1]$ satisfying 
    \[
    h_{\ast}|_{I_{n + 1}} \equiv 1, \quad h_{\ast}|_{O_{n + 1}} \equiv 0 
    \]
    and 
    \begin{align*}
    	\mathcal{E}_{p}^{\Gamma_{n + 1}}(h_{\ast}) 
    	= \sum_{k = 1}^{D}2^{k}\binom{D}{k}\mathcal{E}_{p}^{\widetilde{G}_{n}}(h_{k})
    	\le 2^{D - 1}\mcC_{p, \textup{face}}^{(n)}\sum_{k = 1}^{D}2^{k}\binom{D}{k} 
    	\le 2^{D - 1}3^{D}\mcC_{p, \textup{face}}^{(n)}. 
    \end{align*}
    Hence we conclude that 
    \begin{equation*}
    	c \cdot \mathcal{C}_{p}^{(n)} \le \mcC_{p}^{\Gamma_{n + 1}}\bigl(I_{n + 1}, O_{n + 1}\bigr) \le \mathcal{E}_{p}^{\Gamma_{n + 1}}(h_{\ast}) \le 2^{D - 1}3^{D}\mcC_{p, \textup{face}}^{(n)}, 
    \end{equation*}
    which completes the proof. 
\end{proof}

Now we are ready to prove Lemma \ref{lem.chain}.

\noindent
\textit{Proof of Lemma \ref{lem.chain}.}\,
    Thanks to Lemma \ref{lem.top-bottom}, it will suffice to compare $\mcC_{p, \textup{face}}^{(n)}$ and $\mcC_{p}^{(n, M)}$.
    Suppose that $G_{n, M}$ is realized as a subgraph of $G_{n + m}$ using a horizontal chain $\bigl[w(1), \dots, w(M)\bigr]$ in $\bigl(V_{m}, \widetilde{E}_{m}\bigr)$, that is, $V_{n, M} = \bigcup_{i = 1}^{M} w(i) \cdot W_{n}$.
    First, we consider the case $M = 3$.
    By the monotonicity of $p$-conductance (Proposition \ref{prop.mono}), we immediately have that $\mcC_{p}^{(n, 3)} \le \mcC_{p, \textup{face}}^{(n)}$.
    We will prove the converse by using Lemma \ref{lem.URI}.
    Let us define a subgraph $\widehat{G}_{n, 3} =  \bigl(\widehat{V}_{n, 3}, \widehat{E}_{n, 3}\bigr)$ of $G_{n, 3}$ by
    \[
    \widehat{V}_{n, 3} \coloneqq \bigl\{ w(1)v \bigm| v \in W_{n}^{+} \bigr\} \cup w(2) \cdot W_{n} \cup \bigl\{ w(3)v \bigm| v \in W_{n}^{-} \bigr\},
    \]
    and 
    \[
    \widehat{E}_{n, 3} \coloneqq \Bigl\{ (v, w) \in E_{n, 3} \Bigm| v, w \in \widetilde{V}_{n, 3} \Bigr\}.
    \]
    Then we easily see that
    \[
    \mcC_{p}^{(n, 3)} = \mcC_{p}^{\widehat{G}_{n, 3}}\Bigl(\bigl\{ w(1)v \mid v \in W_{n}^{+} \bigr\}, \bigl\{ w(3)v \mid v \in W_{n}^{-} \bigr\}\Bigr).
    \]
    Define $\varphi_{n}\colon W_{n} \to \widehat{V}_{n, 3}$ by
    \[
    \varphi_{n}(w) \coloneqq
    \begin{cases}
        w^{1}\tau[R_{1}](w) \quad &\text{if $w \in W_{n}^{-}$,} \\
        w^{2}w \quad &\text{if $w \not\in W_{n}^{\,-} \cup W_{n}^{+}$,} \\
        w^{3}\tau[R_{1}](w) \quad &\text{if $w \in W_{n}^{+}$.}
    \end{cases}
    \]
    Then $\{ \varphi_{n} \}_{n \ge 1}$ is a uniform rough isometry between $\{ G_{n} \}_{n \ge 1}$ and $\bigl\{ \widehat{G}_{n, 3} \bigr\}_{n \ge 1}$ (with $C_1 = 1, C_2 = 2$, $C_{3} = 2$ and $C_{4} = L_{\ast}/2$ in Definition \ref{dfn.URI}).
    Applying Lemma \ref{lem.URI}, we get $\mcC_{p, \textup{face}}^{(n)} \le c_1\mcC_{p}^{(n, 3)}$, where $c_1 > 0$ depends only on $p$ and $L_{\ast}$. 

    Next, let us consider the case $M_{k} \coloneqq 2^{k} + 2$ for $k \in \bbN$.
    Define $\widetilde{\mathcal{C}}_{p}^{\,(n, M)}$ by 
    \[
    \widetilde{\mathcal{C}}_{p}^{\,(n, M)} = \mathcal{C}_{p}^{\widetilde{G}_{n, M}}\bigl(V_{n, M}^{-}, V_{n, M}^{+}\bigr), 
    \]
    where $\widetilde{G}_{n, M} = \bigl(V_{n, M}, \widetilde{E}_{n, M}\bigr)$. 
    We easily see that $G_{n, M}$ and $\bigl(V_{n, M}, \widetilde{E}_{n, M}\bigr)$ are uniformly rough isometric and thus Lemma \ref{lem.URI} implies that there exists a constant $c_{2} > 0$ (depending only on $p, L_{\ast}, D$) such that 
    \[
    c_{2}^{-1}\mathcal{C}_{p}^{\,(n, M)} \le \widetilde{\mathcal{C}}_{p}^{\,(n, M)} \le c_{2}\mathcal{C}_{p}^{\,(n, M)}
    \] 
    Let $f_{n, k}\colon V_{n, M_k} \to \bbR$ satisfy
    \[
    f_{n, k}|_{V_{n, M_k}^{-}} \equiv 1, \quad f_{n, k}|_{V_{n, M_k}^{+}} \equiv 0, \quad\text{and}\quad \mcE_{p}^{G_{n, M_k}}\bigl(f_{n, k}\bigr) = \widetilde{\mcC}_{p}^{\,(n, M_k)}.
    \]
    We shall show that 
    \begin{equation}\label{fnk-value}
        \min\bigl\{ f_{n, k}(v) \bigm| v \in w(i) \cdot W_{n}, 1 \le i \le 2^{k - 1} + 1 \bigr\} \ge \frac{1}{2}.
    \end{equation}
    (See Figure \ref{fig.sym}.)
    To this end, let us consider a function $\widehat{f}_{n,k}$ given by
    \[
    \widehat{f}_{n,k}(v) \coloneqq
    \begin{cases}
        f_{n,k}(v) \vee (1 - f_{n,k}(v)) \quad \text{if $v \in \bigcup_{i = 1}^{2^{k - 1} + 1}w(i) \cdot W_{n}$,} \\
        f_{n,k}(v) \wedge (1 - f_{n,k}(v)) \quad \text{if $v \in \bigcup_{i = 2^{k}}^{2^{k} + 2}w(i) \cdot W_{n}$,}
    \end{cases}
    \]
    which obviously satisfies $\widehat{f}_{n,k}|_{V_{n, M_k}^{-}} \equiv 1$ and $\widehat{f}_{n,k}|_{V_{n, M_k}^{+}} \equiv 0$.
    From the uniqueness of $f_{n, k}$, for $i = 1, \dots, M_{k}$ and $w \in W_{n}$, 
    \[
    f_{n, k}\bigl(w(i)w\bigr) = 1 - f_{n, k}\bigl(w(M_{k} - i + 1)\tau[R_{1}](w)\bigr). 
    \] 
    This yields that 
    \[
    \abs{f_{n, k}(x) - f_{n, k}(y)} = \abs{\widehat{f}_{n, k}(x) - \widehat{f}_{n, k}(y)}, \quad (x, y) \in E_{n}\Bigl(w\bigl(2^{k - 1} + 1\bigr), w\bigl(2^{k - 1} + 2\bigr)\Bigr), 
    \]
    where $E_{n}\bigl(w(2^{k - 1} + 1), w(2^{k - 1} + 2)\bigr)$ denotes 
    \[
    \Bigl\{ (x_{1}, x_{2}) \in E_{n + m} \Bigm| x_{i} \in w\bigl(2^{k - 1} + i\bigr) \cdot W_{n} \text{ for $i = 1, 2$} \Bigr\}. 
    \]
    Hence we have $\mcE_{p}\bigl(\,\widehat{f}_{n,k}\,\bigr) \le \mcE_{p}(f_{n,k})$.
    By the uniqueness of $f_{n, k}$, we have $\widehat{f}_{n,k} = f_{n,k}$, which deduces \eqref{fnk-value}.
    Similarly, we also have
    \begin{equation}\label{fnk-value2}
        \max\bigl\{ f_{n, k}(v) \bigm| v \in w(i) \cdot W_{n}, \,2^{k} \le i \le 2^{k} + 2 \bigr\} \le \frac{1}{2}.
    \end{equation}
    By \eqref{fnk-value}, \eqref{fnk-value2} and the Markov property of $p$-energies on graphs (Proposition \ref{prop.Markov}),
    \begin{align*}
        \widetilde{\mcC}_{p}^{\,(n, M_k)}
        &= \mcE_{p}^{\widetilde{G}_{n, M_{k}}}(f_{n,k}) \\
        &\ge \frac{1}{2}\biggl[\mcE_{p}^{\widetilde{G}_{n, M_{k}}}\Bigl(f_{n,k} \vee \frac{1}{2}\Bigr) + \mcE_{p}^{\widetilde{G}_{n, M_{k}}}\Bigl(f_{n,k} \wedge \frac{1}{2}\Bigr)\biggr]
        = 2^{-p}\widetilde{\mcC}_{p}^{\,(n, M_{k - 1})}.
    \end{align*}
    Iterating this estimate, we conclude that, for any $k \in \mathbb{N}$, 
    \[
    \mathcal{C}_{p}^{(n, M_{k})} \ge 
    c_{2}^{-1}\widetilde{\mcC}_{p}^{\,(n, M_k)} \ge c_{2}^{-1}2^{-pk}\widetilde{\mcC}_{p}^{\,(n, 3)} \ge c_{1}^{-1}c_{2}^{-2}2^{-pk}\mathcal{C}_{p, \textup{face}}^{(n)}
    \]
    Since $\mcC_{p}^{(n, M)} \ge \mcC_{p}^{(n, M')}$ for $M \le M'$, we obtain the desired estimate for all $M$. \hspace{\fill}$\square$
    
\begin{figure}[tb]
    \centering
    \includegraphics{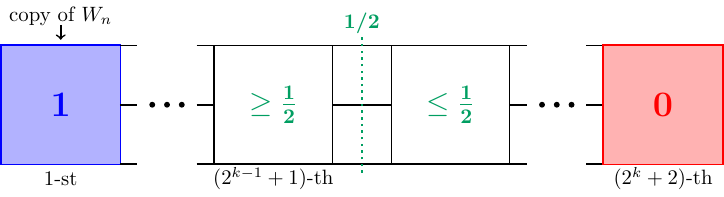}
    \caption{Values of $f_{n, k}$}
    \label{fig.sym}
\end{figure}

Finally, we prove \hyperlink{KMp}{\textup{(KM$_{p}$)}}.
We mainly follow the method in \cite{CQ21+}*{Lemma 4.8}.
\begin{thm}\label{thm.KM}
    Suppose Assumption \ref{assum.lowdim} holds.
    Then \hyperlink{KMp}{\textup{(KM$_{p}$)}} holds.
\end{thm}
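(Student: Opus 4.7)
The approach follows the analytic strategy of Cao and Qiu \cite{CQ}: extract from an optimizer of $\lambda_{\ast, p}^{(n)}$ two $m_{0}$-cells on which the optimizer is separated by a uniform amount via the H\"{o}lder bound of Theorem \ref{thm.unifhol}, and then compare the resulting cell-to-cell $p$-conductance with $\mcC_{p}^{(n)}$ via the chain bound of Lemma \ref{lem.chain}. By Theorem \ref{thm.poi4}-(4) it suffices to show $\lambda_{\ast, p}^{(n)} \le C\,\mcR_{p}^{(n)}$ for all $n$.

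Fix $n$ and let $f\colon W_{n} \to \bbR$ be the optimizer from Proposition \ref{prop.minimizer}-(2), normalized so that $f|_{\partial_{\ast}G_{n}} \equiv 0$, $\mean{f}{W_{n}} = 1$, and $\mcE_{p}^{G_{n}}(f) = 1/\lambda_{\ast, p}^{(n)}$. Theorem \ref{thm.unifhol} applied at scales $(n - m_{0}, m_{0})$, combined with the comparison $\lambda_{p}^{(n)} \le C_{\ref{thm.poi4}} \lambda_{\ast, p}^{(n)}$, gives for any $z \in W_{m_{0}}$ and $u, u' \in z \cdot W_{n - m_{0}}$,
\begin{equation*}
\abs{f(u) - f(u')}^{p} \le \widetilde{C}_{\textup{UH}}\, C_{\ref{thm.poi4}}\, a^{-(\beta_{p} - \alpha) m_{0}}.
\end{equation*}
Since $\beta_{p} > \alpha$ under Assumption \ref{assum.lowdim}, I can fix $m_{0} \in \bbN$ independent of $n$ so that the right-hand side is at most $(1/4)^{p}$. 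Because $\mean{f}{W_{n}} = 1$ forces $\max f \ge 1$, picking $x$ attaining the maximum and setting $z_{x} \coloneqq [x]_{m_{0}}$ yields $f \ge 3/4$ on $z_{x} \cdot W_{n - m_{0}}$. On the other hand, taking any $z_{y} \in \partial_{\ast}G_{m_{0}}$, say $z_{y} = 1^{m_{0}}$, the cell $z_{y} \cdot W_{n - m_{0}}$ contains a word in $\partial_{\ast}G_{n}$ at which $f$ vanishes, and hence the same estimate forces $\abs{f} \le 1/4$ throughout $z_{y} \cdot W_{n - m_{0}}$.

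Composing $f$ with the $2$-Lipschitz truncation $t \mapsto 0 \vee 2(t - 1/4) \wedge 1$ and invoking Proposition \ref{prop.Markov} then produces a $[0,1]$-valued $g$ equal to $1$ on $z_{x} \cdot W_{n - m_{0}}$ and $0$ on $z_{y} \cdot W_{n - m_{0}}$ with $\mcE_{p}^{G_{n}}(g) \le 2^{p}/\lambda_{\ast, p}^{(n)}$, whence
\begin{equation*}
\mcC_{p}^{G_{n}}\bigl(z_{x} \cdot W_{n - m_{0}},\, z_{y} \cdot W_{n - m_{0}}\bigr) \le 2^{p}/\lambda_{\ast, p}^{(n)}.
\end{equation*}
The last step is to bound this conductance below by $c(m_{0})\,\mcC_{p}^{(n)}$: since $(W_{m_{0}}, \widetilde{E}_{m_{0}})$ is connected, $z_{x}$ and $z_{y}$ are joined by a path of length at most some $M = M(m_{0})$, and iterating Lemma \ref{lem.chain} along this path together with the submultiplicative bound of Theorem \ref{thm.mult} delivers the required estimate. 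Combining yields $\lambda_{\ast, p}^{(n)} \le C_{\textup{KM}}\,\mcR_{p}^{(n)}$, hence (KM$_{p}$).

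The main obstacle I anticipate is precisely this last geometric comparison: Lemma \ref{lem.chain} is stated only for a single horizontal chain $G_{n, M}$, so to handle a general path in $(W_{m_{0}}, \widetilde{E}_{m_{0}})$ one must either re-run its proof on a bent chain (the symmetrization argument in Lemma \ref{lem.chain} really uses only the left-right symmetry of the chain itself rather than its orientation) or decompose the path into straight pieces and bridge them through suitable pivot cells using elements of $\mathrm{Sym}(K)$ (Definition \ref{dfn.D4}). Because $m_{0}$ is a universal constant, only finitely many chain configurations appear and the dependence of the resulting constant on $m_{0}$ is explicit, but this combinatorial bookkeeping requires care.
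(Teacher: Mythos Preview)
Your proposal follows the same high-level strategy as the paper (the analytic argument of Cao--Qiu), and the first half---extracting via Theorem~\ref{thm.unifhol} a cell $z_x \cdot W_{n-m_0}$ on which $f \ge 3/4$ and a boundary cell $z_y \cdot W_{n-m_0}$ on which $|f| \le 1/4$---is correct. The genuine divergence is in the final geometric step, and the obstacle you identify is real rather than mere bookkeeping.

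The paper avoids the bent-chain problem entirely by a more refined construction: instead of applying the H\"older bound once at a fixed scale $m_0$ and landing on two cells in arbitrary relative position, it first telescopes the difference $f_n(w^*) - f_n(v^*)$ (with $v^* \in \partial_\ast G_n$) through a short chain of $2$-cells inside a single $1$-cell, passing through the \emph{corner vertices} $i^* i_k c^{\,n-2}$, $c \in \{1,3,5,7\}$. The between-cell increments are single-edge differences, hence bounded by $\mcE_p^{G_n}(f_n)^{1/p} = 1$; since $\lambda_{\ast,p}^{(n)} \to \infty$ one obtains a single $2$-cell $i^*i_k$ and a pair of \emph{adjacent} corners $j_0, j_1$ with $|f_n(i^*i_k j_0^{\,n-2}) - f_n(i^*i_k j_1^{\,n-2})|$ of order $(\lambda_{\ast,p}^{(n)})^{1/p}$. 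Only then is the H\"older bound invoked, at a scale $l$, to get separation on $i^*i_k j_0^{\,l} \cdot W_{n-l-2}$ and $i^*i_k j_1^{\,l} \cdot W_{n-l-2}$; because $j_0, j_1$ are adjacent corners of a square, these two cells sit at the ends of a straight horizontal chain of exactly $3^l$ cells along one edge, and Lemma~\ref{lem.chain} applies directly with $M = 3^l$.

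Your fix~(a) does not obviously go through: the symmetrization in the proof of Lemma~\ref{lem.chain} relies on the optimizer inheriting the endpoint-swapping symmetry of the chain, which a generic bent path (say right--right--up) does not possess. Fix~(b) is plausible in principle, but ``bridging'' straight segments through pivot cells requires controlling how the test function transitions through a corner cell where the incoming and outgoing faces are perpendicular; this is not a series-resistance estimate and needs an argument of roughly the same weight as the paper's corner-telescoping. (Concretely, at level $m_0 = 3$ there exist cells such as $z_x = 828$ with no straight path to $\partial_\ast G_{m_0}$ in any cardinal direction, so the bent-chain case genuinely arises.) The paper's route---telescope through corner vertices first, drill down second---is precisely what makes Lemma~\ref{lem.chain} sufficient as stated.
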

\begin{proof}
    Let $f_{n}\colon W_n \to \bbR$ satisfy
    \[
    \mcE_{p}^{G_n}(f_{n}) = 1, \quad f_{n}|_{\partial_{\ast}G_{n}} \equiv 0, \quad\text{and}\quad \mean{f_{n}}{W_{n}} = \Bigl(\lambda_{p, \textup{Dir}}^{(n)}\Bigr)^{1/p}.
    \]
    Note that $f_{n}$ is non-negative.
    Pick $w^{\ast} \in W_{n}$ such that $f_{n}(w^{\ast}) = \max_{w \in W_n}f_{n}(w)$.
    Then we easily see that $f_{n}(w^{\ast}) \ge \bigl(\lambda_{p, \textup{Dir}}^{(n)}\bigr)^{1/p}$.
    Let $\theta, \widetilde{C}_{\textup{UH}} > 0$ be constants in Theorem \ref{thm.unifhol} and choose $l_{\ast} \in \mathbb{N}$ such that
    \begin{equation}\label{choice.bigl}
        \bigl(\widetilde{C}_{\textup{UH}}a^{-l_{\ast}\theta}\bigr)^{1/p} \le \frac{1}{8DN_{\ast}}. 
    \end{equation}
    Then, by Theorem \ref{thm.unifhol}, for any $n \ge l_{\ast}$,
    \begin{align}\label{e.osc}
        \max\bigl\{ \abs{f_{n}(v) - f_{n}(w)} \bigm| k \le n - l_{\ast}, z \in W_{l_{\ast} + k}, v, w \in z \cdot W_{n - l_{\ast} - k} \bigr\} \\
        &\hspace*{-60pt}\le \frac{1}{8DN_{\ast}}\Bigl(\lambda_{p, \textup{Dir}}^{(n)}\Bigr)^{1/p}. \nonumber 
    \end{align}
    Now, we will choose a chain of $(n - 2)$-cells from $\partial_{\ast}G_{n}$ to $w^{\ast}$. 
    Define 
    \[
    S_{\textsf{corner}} \coloneqq \left\{ \sum_{k = 1}^{D}\sigma_{k}\mathbf{e}_{k} \;\middle|\; \sigma_{k} \in \{ -1, +1 \} \right\} 
    \]
    that satisfies $\#S_{\textsf{corner}} = 2^{D}$.  
    Fix a path $[i(1), \dots, i(L)]$ in $G_{1}$, i.e. $i(k) \in S$ and $\bigl(i(k), i(k + 1)\bigr) \in E_{1}$ for any $l$, such that $i(1) \in \partial_{\ast}G_{1}$ and $w^{\ast} \in i(L) \cdot W_{n - 1}$. 
    Note that $L \le \diam G_{1} < N_{\ast}$. 
    We can find ``corners'' $z(1), z(2), \dots, z(2L - 1) \in \{ i^{n - 1} \mid i \in S_{\textsf{corner}} \}$ satisfying $v^{\ast} \coloneqq i(1)z(1) \in \partial_{\ast}G_{n}$ and 
    \[
    \bigl(i(k)z(2k), i(k + 1)z(2k + 1)\bigr) \in E_{n} \quad \text{for $k = 1, \dots, L - 1$.}
    \]
    Since $f_n(v^{\ast}) = 0$ and $\mcE_{p}^{G_n}(f_n) = 1$,
    \begin{align*}
        \Bigl(\lambda_{p, \textup{Dir}}^{(n)}\Bigr)^{1/p} &\le f_{n}(w^{\ast}) = f_{n}(w^{\ast}) - f_{n}(v^{\ast}) \\
        &\le \abs{f_{n}(v^{\ast}) - f_{n}(w^{\ast})} \\
        &\le \sum_{k = 1}^{L - 1}\Bigl(\abs{f_{n}\bigl(i(k)z(2k - 1)\bigr) - f_{n}\bigl(i(k)z(2k)\bigr)} \\
        &\hspace*{50pt}+ \abs{f_{n}\bigl(i(k)z(2k)\bigr) - f_{n}\bigl(i(k + 1)z(2k + 1)\bigr)}\Bigr) \\
        &\le \left\{ \sum_{k = 1}^{L - 1}\abs{f_{n}\bigl(i(k)z(2k - 1)\bigr) - f_{n}\bigl(i(k)z(2k)\bigr)} \right\} + L - 1.
    \end{align*}
    Now, we have $\lambda_{p, \textup{Dir}}^{(n)} \to \infty$ as $n \to \infty$ by Assumption \ref{assum.lowdim} and Theorem \ref{thm.poi4}.
    Hence we may assume that
    \[
    \sum_{k = 1}^{L - 1}\abs{f_{n}\bigl(i(k)z(2k - 1)\bigr) - f_{n}\bigl(i(k)z(2k)\bigr)} \ge
    \frac{1}{2}\Bigl(\lambda_{p, \textup{Dir}}^{(n)}\Bigr)^{1/p},
    \]
    for all large $n$.
    Then there exists $l \in \{ 1, \dots, L - 1 \}$ such that
    \begin{align*}
        \abs{f_{n}\bigl(i(k)z(2k - 1)\bigr) - f_{n}\bigl(i(k)z(2k)\bigr)} 
        \ge \frac{1}{2(L - 1)}\Bigl(\lambda_{p, \textup{Dir}}^{(n)}\Bigr)^{1/p}.
    \end{align*}
    Moreover, by the triangle inequality, there exist $i^{\ast}(0), i^{\ast}(1) \in S_{\textsf{corner}}$ and $j \in \{ 1, \dots, D \}$ with $i^{\ast}(0) - i^{\ast}(1) \in \{ -\mathbf{e}_{j}, \mathbf{e}_{j} \}$ such that 
    \begin{align}\label{e.osc-cell}
        \abs{f_{n}\bigl(i(l)i^{\ast}(0)^{n - 1}\bigr) - f_{n}\bigl(i(l)i^{\ast}(1)^{n - 1}\bigr)} \ge \frac{1}{2D(L - 1)}\Bigl(\lambda_{p, \textup{Dir}}^{(n)}\Bigr)^{1/p}.
    \end{align}
    By using the symmetries of $f_{n}$ if necessary, we may assume that 
    \[
    f_{n}\bigl(i(l)i^{\ast}(1)^{n - 1}\bigr) \ge f_{n}\bigl(i(l)i^{\ast}(0)^{n - 1}\bigr). 
    \]
    Then we see from \eqref{e.osc} and \eqref{e.osc-cell} that
    \begin{align*}
      \delta_{n}
      &\coloneqq \min_{i(l)i^{\ast}(1)^{l_{\ast}}\cdot W_{n - l_{\ast} - 1}}f_{n} - \max_{i(l)i^{\ast}(0)^{l_{\ast}}\cdot W_{n - l_{\ast} - 1}}f_{n} \\
      &\ge \biggl[\frac{1}{2D(L - 1)} - 2\cdot\frac{1}{8DN_{\ast}}\biggr]\Bigl(\lambda_{p, \textup{Dir}}^{(n)}\Bigr)^{1/p}
      \ge \frac{1}{4DN_{\ast}}\Bigl(\lambda_{p, \textup{Dir}}^{(n)}\Bigr)^{1/p},
    \end{align*}
    where we used a bound $L - 1 \le N_{\ast}$ in the last inequality.
    Let us consider the ``horizontal chain of $(n - l_{\ast} - 1)$-cell along $j$-axis'' from $i(l)i^{\ast}(0)^{l_{\ast}}\cdot W_{n - l_{\ast} - 1}$ to $i(l)i^{\ast}(1)^{l_{\ast}}\cdot W_{n - l_{\ast} - 1}$.
    Then, by putting $g_{n} \coloneqq  \bigl((\delta_{n}^{-1}f_{n} + c_{n}) \vee 0\bigr) \wedge 1$, where $c_{n}$ is a constant such that $g_{n}|_{i(l)i^{\ast}(\varepsilon)^{l_{\ast}}\cdot W_{n - l_{\ast} - 1}} \equiv \varepsilon$ for $\varepsilon = 0, 1$, we have that
    \begin{align*}
        \mcC_{p}^{(n - l_{\ast} - 1, a^{l_{\ast}})} \le \mcE_{p}^{G_n}(g_{n})
        \le \delta_{n}^{-p}\mcE_{p}^{G_n}(f_n) \le \bigl(4DN_{\ast}\bigr)^{p}\Bigl(\lambda_{p, \textup{Dir}}^{(n)}\Bigr)^{-1}.
    \end{align*}
    From Lemma \ref{lem.chain}, Theorems \ref{thm.sub-mult} and \ref{thm.poi4}-(3), we conclude that
    \begin{align*}
    	\mathcal{C}_{p}^{(n)} 
    	&\le C_{\ref{thm.sub-mult}}\mathcal{C}_{p}^{(l_{\ast} + 1)}\mathcal{C}_{p}^{(n - l_{\ast} - 1)} \\ 
    	&\le C_{\ref{thm.sub-mult}}C(a^{l_{\ast}})\mathcal{C}_{p}^{(l_{\ast} + 1)}\mathcal{C}_{p}^{(n - l_{\ast} - 1, a^{l_{\ast}})} \\
    	&\le C_{\ref{thm.sub-mult}}C(a^{l_{\ast}})\mathcal{C}_{p}^{(l_{\ast} + 1)}(4DN_{\ast})^{p}\Bigl(\lambda_{p, \textup{Dir}}^{(n)}\Bigr)^{-1} 
    	\le C_{\textup{KM}}\Bigl(\lambda_{p}^{(n)}\Bigr)^{-1}, 
    \end{align*}
    where $C_{\textup{KM}} = C(a^{l_{\ast}})\mathcal{C}_{p}^{(l_{\ast} + 1)}(4DN_{\ast})^{p}C_{\ref{thm.poi4}}^{-1}$, which depends only on $p, a, D, L_{\ast}, N_{\ast}, \rho_{p}$. 
    This proves \hyperlink{KMp}{\textup{(KM$_{p}$)}}.
\end{proof}

We conclude this section by giving some useful estimates to construct $p$-energies. 
Under Assumption \ref{assum.lowdim}, we have multiplicative inequality of $\mathcal{R}_{p}^{(\,\cdot\,)}$ by virtue of Proposition \ref{prop.B}, Theorems \ref{thm.KM} and \ref{thm.superP}.
In particular, $\mathcal{R}_{p}^{(n)} \asymp \rho_{p}^{n}$ for all $n \in \mathbb{N}$.   
Then Theorem \ref{thm.ARCdim}) implies that  
\begin{equation}\label{dfn.betap}
    \beta_{p} \coloneqq \frac{\log{N_{\ast}\rho_{p}}}{\log{a}}
\end{equation}
satisfies $\beta_p - \alpha > 0$, where $\alpha = \log{N_{\ast}}/\log{a}$ is the Hausdorff dimension of $(K, d)$. 

Now, we define the rescaled discrete $p$-energy $\widetilde{\mcE}_{p}^{G_{n}}$ by setting
\[
\widetilde{\mcE}_{p}^{G_{n}}(f) \coloneqq \rho_{p}^{n}\mcE_{p}^{G_{n}}(f), \quad f \in \mathbb{R}^{W_{n}}. 
\]
Similarly to Theorem \ref{thm.unifhol}, we can show the following H\"{o}lder estimate.
\begin{thm}\label{thm.holder_KM}
    For every $n, m \in \bbN$, $z \in W_{m}$,  $v, w \in \mcB_{n}(z, 1)$ and $f \in \bbR^{W_{n + m}}$,
    \begin{equation}\label{rescale.holder}
        \abs{f(v) - f(w)}^{p} \le C_{\textup{UH}}\,\widetilde{\mcE}_{p}^{G_{n + m}}(f)a^{-(\beta_p - \alpha)m},
    \end{equation}
    where $C_{\textup{UH}} > 0$ is a constant depending only on $p, a, D, L_{\ast}, N_{\ast}, \rho_p$.
\end{thm}
\begin{proof}
	The required uniform H\"{o}lder estimate \eqref{rescale.holder} can be proved by using $\mathcal{C}_{p}^{(n)} \lesssim a^{-(\beta_{p} - \alpha)n}$, which is a consequence of supermultiplicative inequality of $\mathcal{C}_{p}^{(\,\cdot\,)}$, instead of \eqref{est.super} in the proof of Theorem \ref{thm.unifhol}. 
\end{proof}

Lastly, we observe a monotonicity result (the so-called \emph{weak monotonicity} in \cite{GY19}).
This is proved in \cite{KZ92}*{Proposition 5.2} for $p = 2$.
\begin{cor}\label{cor.wm}
    For every $n, m \in \bbN$ and $f \in L^{p}(K, \mu)$,
    \begin{equation}\label{est.wm}
        \widetilde{\mcE}_{p}^{G_{n}}(M_{n}f) \le C_{\textup{WM}}\,\widetilde{\mcE}_{p}^{G_{n + m}}(M_{n + m}f),
    \end{equation}
    where $C_{\textup{WM}} > 0$ is a constant depending only on $p, a, D, L_{\ast}, N_{\ast}, \rho_{p}$.
    In particular,
    \begin{equation}\label{est.lim-nehavor}
        \sup_{n \in \bbN}\widetilde{\mcE}_{p}^{G_{n}}(M_{n}f) \le C_{\textup{WM}}\varliminf_{n \to \infty}\widetilde{\mcE}_{p}^{G_{n}}(M_{n}f).
    \end{equation}
\end{cor}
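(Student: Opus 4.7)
The plan is to reduce the weak monotonicity to Lemma \ref{lem.wm} together with the supermultiplicative-type growth bound \eqref{Poi-super} on $\sigma_p^{(m)}$ (which was a consequence of (KM$_p$)).

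The key observation is the consistency identity $M_n f = P_{n+m,n}(M_{n+m}f)$. Indeed, for any $w \in W_n$, Lemma \ref{lem.noISmeas} yields $\mu(K_{v} \cap K_{v'}) = 0$ for distinct $v, v' \in w \cdot W_m$, so that
\[
P_{n+m,n}(M_{n+m}f)(w) = \frac{1}{N_{\ast}^{m}}\sum_{v \in w \cdot W_m} \mint_{K_v} f\,d\mu = \frac{1}{N_{\ast}^{m}} \cdot \frac{N_{\ast}^{n+m}}{N_{\ast}^{n}} \cdot N_{\ast}^{n}\int_{K_w} f\,d\mu = \mint_{K_w} f\,d\mu = M_n f(w).
\]

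With this identity, I would apply Lemma \ref{lem.wm} to the function $M_{n+m}f$ on $W_{n+m}$ with $A = W_n$ (so that $A \cdot W_m = W_{n+m}$), obtaining
\[
\mcE_{p}^{G_{n}}(M_{n}f) = \mcE_{p}^{G_{n}}\bigl(P_{n+m,n}(M_{n+m}f)\bigr) \le 2^{p-1}D_{\ast}\,\sigma_{p}^{(m)}\,\mcE_{p}^{G_{n+m}}(M_{n+m}f).
\]
Multiplying both sides by $\rho_{p}^{n}$ and invoking \eqref{Poi-super}, which gives $\sigma_{p}^{(m)} \le c_{\ast}\rho_{p}^{m}$, the factor $\rho_{p}^{n}$ reorganizes as $\rho_{p}^{-m}\cdot\rho_{p}^{n+m}$, so that $\sigma_{p}^{(m)}\rho_{p}^{-m} \le c_{\ast}$, yielding
\[
\widetilde{\mcE}_{p}^{G_{n}}(M_{n}f) \le 2^{p-1}D_{\ast}\,c_{\ast}\,\widetilde{\mcE}_{p}^{G_{n+m}}(M_{n+m}f),
\]
which is \eqref{est.wm} (the factor $2^{p-1}$ may be absorbed into the $p$-dependent constant $c_{\ast}$).

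For \eqref{est.lim-nehavor}, I would fix $n \in \bbN$ and take $\varliminf_{m \to \infty}$ on the right-hand side of \eqref{est.wm}, noting that $\widetilde{\mcE}_{p}^{G_{n+m}}(M_{n+m}f) = \widetilde{\mcE}_{p}^{G_{k}}(M_{k}f)$ for $k = n + m \to \infty$; the resulting bound is independent of $n$, so taking $\sup_{n \in \bbN}$ on the left gives the stated inequality. I do not anticipate a genuine obstacle here: the proof is essentially a one-line consequence of Lemma \ref{lem.wm} and \eqref{Poi-super}, with the only non-trivial input being the identity $M_n = P_{n+m,n} \circ M_{n+m}$, which follows immediately from the self-similar structure and the vanishing measure of cell intersections.
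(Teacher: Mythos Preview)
Your proof is correct and follows essentially the same approach as the paper's own argument: both establish the consistency identity $M_n f = P_{n+m,n}(M_{n+m}f)$ via Lemma~\ref{lem.noISmeas}, then apply Lemma~\ref{lem.wm} together with the bound $\sigma_p^{(m)} \le c_{\ast}\rho_p^{m}$ from \eqref{Poi-super}. Your treatment is slightly more explicit (in particular you spell out the passage to \eqref{est.lim-nehavor}), and your remark about absorbing the factor $2^{p-1}$ into the $p$-dependent constant $c_{\ast}$ is exactly right.
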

\begin{proof}
    By Proposition \ref{prop.BF}-(3), for any $n, m \in \bbN$, $w \in W_{n}$ and $f \in L^{p}(K, \mu)$,
    \begin{align*}
        P_{n + m, n}(M_{n + m}f)(w)
        = N_{\ast}^{n}\sum_{v \in w \cdot W_{m}}\int_{K_{v}}f\,d\mu
        = M_{n}f(w).
    \end{align*}
    Hence $P_{n + m, n}(M_{n + m}f) = M_{n}f$.
    We get \eqref{est.wm} from Lemma \ref{lem.wm} and \eqref{Poi-super}.
\end{proof}

\begin{rmk}\label{rmk.harnack}
    One can derive a uniform Harnack type estimate for discrete $p$-harmonic functions as an application of \hyperlink{KMp}{\textup{(KM$_{p}$)}}.
    For $p = 2$, this was done by \cite{BB89}*{Theorem 3.1} or \cite{KZ92}*{Lemma 7.8}.
    We expect that such type estimate will be important for future work, but we omit this since its proof does not fit the purpose of this paper.
\end{rmk}

%=== Domain Properties ===
\section{The domain of $p$-energy}\label{sec:domain}

This section aims to prove a part of our main results: Theorems \ref{thm.main1} and \ref{thm.main2}.

Let $(K, S, \{ F_{i} \}_{i \in S}) = \GSC(D, a, S)$ be a generalized Sierpi\'{n}ski carpet. 
Thanks to Corollary \ref{cor.wm}, we know that the following quantity:
\begin{equation}\label{dfn.1p-semi}
	\abs{f}_{\mcF_{p}} \coloneqq \sup_{n \in \bbN}\widetilde{\mcE}_{p}^{G_{n}}(M_{n}f)^{1/p}.
\end{equation}
describes the limit behavior of rescaled $p$-energy $\widetilde{\mcE}_{p}^{G_{n}}(M_{n}f)$.
Then we easily see that $\abs{\,\cdot\,}_{\mcF_{p}}$ defines a ($[0, \infty]$-valued) semi-norm.
We also define a function space $\mcF_{p}$ and its norm $\norm{\,\cdot\,}_{\mcF_{p}}$ by setting
\begin{equation*}\label{dfn.Fp}
    \mcF_{p} \coloneqq \{ f \in L^{p}(K, \mu) \mid \abs{f}_{\mcF_{p}} < \infty \} \quad \text{and} \quad \norm{f}_{\mcF_{p}} \coloneqq \norm{f}_{L^{p}} + \abs{f}_{\mcF_{p}}.
\end{equation*}
Ideally, $\mcF_{p}$ plays the same role as the Sobolev space $W^{1, p}$ in smooth settings like Euclidean spaces.
As stated in \cite{KM21+}*{Section 7}, this $(1,p)$-``Sobolev'' space $\mcF_{p}$ should be \emph{closable} and have \emph{regularity}, that is,
\begin{itemize}
    \item [$\bullet$] any Cauchy sequence $\{ f_n \}_{n \ge 1}$ in $\abs{\,\cdot\,}_{\mcF_{p}}$ with $f_n \to 0$ in $L^{p}$ converges to $0$ in $\mcF_p$;
    \item [$\bullet$] $\mcF_{p} \cap \mcC(K)$ is dense in $\mcC(K)$ with respect to the supremum norm.
\end{itemize}
We prove these properties in subsection \ref{subsec:regular}.
In addition, the \emph{separability} of $\mcF_{p}$ is proved in subsection \ref{subsec:separable}.
The separability will be essential to follow our construction of $p$-energy in section \ref{sec:constr}.
We also see in subsection \ref{subsec:LBtype} that $\mcF_{p}$ has a Besov-like representation, which is an extension of results for $\mcF_{2}$ in \cite{GHL03, Kum00}.

Throughout this section, we suppose Assumption \ref{assum.lowdim} holds.

%----- regularity -----
\subsection{Closability and regularity}\label{subsec:regular}

First, we derive the following H\"{o}lder estimate from the uniform H\"{o}lder estimates on graphical approximations (Theorem \ref{thm.holder_KM}) in the same way as \cite{Kig21+}*{Lemmas 6.10 and 6.13}.

\begin{thm}\label{thm.embed}
    There exists a positive constant $C_{\textup{H\"{o}l}}$ (depending only on $p, a, D, L_{\ast}$, $N_{\ast}$, $\rho_{p}$) such that every $f \in \mathcal{F}_{p}$ has a continuous modification $f_{\ast} \in \mcC(K)$ with
    \[
    \abs{f_{\ast}(x) - f_{\ast}(y)}^{p} \le C_{\textup{H\"{o}l}}\abs{f}_{\mcF_{p}}^{p}d(x, y)^{\beta_{p} - \alpha},
    \]
    for every $x, y \in K$.
    Moreover, the inclusion map $\mcF_{p} \ni f \mapsto f_{\ast} \in \mcC(K)$ is injective.
    In particular, $\mcF_{p}$ is continuously embedded in the H\"{o}lder space $\mcC^{0, (\beta_{p} - \alpha)/p}$.
\end{thm}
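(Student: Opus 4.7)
The plan is to construct $f_\ast$ as the uniform limit on $K$ of the piecewise-constant cell-averages $g_N(x):=M_Nf([x]_N)$, where $[x]_N\in W_N$ denotes any word with $x\in K_{[x]_N}$ (this is single-valued off the $\mu$-null set $\bigcup_{v\ne w}K_v\cap K_w$, and the ambiguity will vanish in the limit by the estimates below). All the work is in iterating Corollary \ref{cor.holder}.

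First I would establish a \emph{H\"{o}lder estimate at level $N$}: given $x\ne y$ and $k:=n(x,y)\le N$, the definition of $n(x,y)$ produces $v^0,w^0\in W_k$ with $K_{v^0}\cap K_{w^0}\ne\emptyset$ (equivalently $w^0\in\mcB_0(v^0,1)$ inside $W_k$) and $x\in K_{v^0}$, $y\in K_{w^0}$. Choosing $[x]_N\in v^0\cdot W_{N-k}$ and $[y]_N\in w^0\cdot W_{N-k}$ places both words in $\mcB_{N-k}(v^0,1)\subseteq W_N$, so Corollary \ref{cor.holder} with $z=v^0$, $m=k$, $n=N-k$, together with $\widetilde{\mcE}_p^{G_N}(M_Nf)\le|f|_{\mcF_p}^p$, gives
\[
|g_N(x)-g_N(y)|^p\le C_{\textup{UH}}|f|_{\mcF_p}^p\,a^{-(\beta_p-\alpha)k}.
\]
Next I would prove a \emph{Cauchy estimate}: for $N<N'$ and fixed $x$, the identity $P_{N',N}(M_{N'}f)=M_Nf$ (from the proof of Corollary \ref{cor.wm}) gives $g_N(x)=\mean{M_{N'}f}{[x]_N\cdot W_{N'-N}}$; every $w\in[x]_N\cdot W_{N'-N}$ and $[x]_{N'}$ lie in $\mcB_{N'-N}([x]_N,1)$, so applying Corollary \ref{cor.holder} with $z=[x]_N$, $m=N$, $n=N'-N$ to each summand and invoking Jensen yields
\[
|g_N(x)-g_{N'}(x)|^p\le C_{\textup{UH}}|f|_{\mcF_p}^p\,a^{-(\beta_p-\alpha)N}.
\]
Under Assumption \ref{assum.lowdim} we have $\beta_p-\alpha>0$, so $\{g_N\}$ is uniformly Cauchy on $K$; the uniform limit $f_\ast$ is continuous, and passing $N\to\infty$ in the first display and then using $a^{-k}\le C_{\textup{AD}}d(x,y)$ from Lemma \ref{lem.1-adapted} yields the stated H\"{o}lder bound with $C_{\textup{H\"{o}l}}:=C_{\textup{UH}}C_{\textup{AD}}^{\beta_p-\alpha}$.

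It remains to identify $f_\ast$ with $f$ in $L^p$ and to extract the continuous embedding. For the identification I would invoke the $L^p$-martingale convergence theorem with the filtration $\{\mathscr{F}_N\}_{N\ge 1}$ generated by the partitions $\{K_w:w\in W_N\}$ (off a $\mu$-null set); then $g_N=\mathbb{E}[f\mid\mathscr{F}_N]$ a.e., and since $\diam K_w\to 0$ uniformly we have $\bigvee_N\mathscr{F}_N=\mcB(K)$, so $g_N\to f$ a.e.\ and in $L^p$, which combined with the uniform convergence $g_N\to f_\ast$ forces $f=f_\ast$ a.e. Injectivity of $f\mapsto f_\ast$ is then automatic. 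For the continuous embedding into $\mcC^{0,(\beta_p-\alpha)/p}$, the H\"{o}lder seminorm of $f_\ast$ is already bounded by $C_{\textup{H\"{o}l}}^{1/p}|f|_{\mcF_p}$; for the sup norm I would fix $x\in K$, average the pointwise inequality $|f_\ast(x)|\le|f_\ast(y)|+C_{\textup{H\"{o}l}}^{1/p}|f|_{\mcF_p}d(x,y)^{(\beta_p-\alpha)/p}$ against $\mu(dy)$, and use $\mu(K)=1$ and $\|f_\ast\|_{L^1}\le\|f\|_{L^p}$ to conclude $\|f_\ast\|_{\mcC(K)}\le\|f\|_{L^p}+C_{\textup{H\"{o}l}}^{1/p}(\diam K)^{(\beta_p-\alpha)/p}|f|_{\mcF_p}$.

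The main obstacle is the a.e.\ identification $f=f_\ast$; the pure H\"{o}lder/Cauchy iteration is a direct consequence of Corollary \ref{cor.holder}, but this identification step tacitly requires that the cell $\sigma$-algebras exhaust $\mcB(K)$, which rests on $\diam K_w\to 0$. An alternative that I would keep in reserve is Lebesgue differentiation for the Ahlfors regular measure $\mu$ on $(K,d)$: the inclusion $K_{[x]_N}\subseteq B(x,\diam K\cdot a^{-N})$ together with \eqref{eq.AR} exhibits $\{K_{[x]_N}\}_N$ as a doubling family of sets shrinking to $x$, so $\mint_{K_{[x]_N}}f\,d\mu\to f(x)$ for $\mu$-a.e.\ $x$, again identifying $f=f_\ast$.
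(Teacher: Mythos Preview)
Your argument is correct and shares the paper's overall strategy---pass to the limit in the cell-average step functions using Corollary~\ref{cor.holder}---but the execution differs in two places worth noting. First, you prove directly that $(g_N)$ is uniformly Cauchy via the telescoping estimate $|g_N(x)-g_{N'}(x)|^p\le C_{\textup{UH}}|f|_{\mcF_p}^p\,a^{-(\beta_p-\alpha)N}$, whereas the paper establishes only uniform boundedness and an equicontinuity-type bound for its step functions $f_n$ and then invokes an Arzel\`a--Ascoli argument to extract a \emph{subsequential} limit on a countable dense set before extending; your route is shorter and yields convergence of the full sequence. Second, for the identification $f_\ast=f$ $\mu$-a.e.\ you appeal to the $L^p$-martingale convergence theorem (or Lebesgue differentiation on the Ahlfors regular space $(K,d,\mu)$), while the paper shows by direct computation that $\int_{K_w}f_{n_k}\,d\mu=\int_{K_w}f\,d\mu$ for every $w\in W_{\#}$ and then concludes via Dynkin's $\pi$-$\lambda$ theorem; both are standard, yours more conceptual, the paper's more self-contained. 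Your stated worry about the identification step is unfounded: $\diam K_w\le(\diam K)\,a^{-|w|}\to 0$ is immediate from the similitude structure, so $\bigvee_N\mathscr{F}_N=\mcB(K)$ and either route goes through. One minor point to tidy: in your level-$N$ H\"older estimate the choice $[x]_N\in v^0\cdot W_{N-k}$ is tailored to the pair $(x,y)$, while the Cauchy estimate uses a choice depending only on $x$; but any two admissible choices of $[x]_N$ are adjacent in $G_N$ and hence differ in value by at most $\mcE_p^{G_N}(M_Nf)^{1/p}\le\rho_p^{-N/p}|f|_{\mcF_p}\to 0$, so the limit $f_\ast$ is choice-independent and the H\"older bound survives, exactly as you anticipate.
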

\begin{proof}
    Let $f \in \mcF_p$.
    By Proposition \ref{prop.BF}-(3) and Lemma \ref{lem.invariant}, for each $n \ge 1$, we have that $\int_{K}f\,d\mu = N_{\ast}^{-n}\sum_{w \in W_{n}}M_{n}f(w)$.
    From this identity, there exists $w(n) \in W_{n}$ for each $n \in \bbN$ such that $M_{n}\abs{f}(w(n)) \le \int_{K}\abs{f}\,d\mu$.
    Then, by Theorem \ref{thm.holder_KM}, for any $n \in \bbN$ and $v \in W_{n}$,
    \begin{align*}
        \abs{M_{n}f(v)}^{p}
        &\le 2^{p - 1}\abs{M_{n}f(v) - M_{n}f(w(n))}^{p} + 2^{p - 1}\abs{M_{n}f(w(n))}^{p} \nonumber \\
        &\le 2^{p - 1}C_{\text{UH}}\abs{f}_{\mcF_{p}}^{p} + 2^{p - 1}\left(\int_{K}\abs{f}^{p}\,d\mu\right)^{p} \nonumber \\
        &\le 2^{p - 1}C_{\text{UH}}\abs{f}_{\mcF_{p}}^{p} + 2^{p - 1}\norm{f}_{L^p}^{p},
    \end{align*}
    and hence we obtain the following uniform bound of $f \in \mcF_{p}$:
    \begin{align}\label{e.ub}
        \adjustlimits\sup_{n \ge 1}\max_{v \in W_{n}}\abs{M_{n}f(v)} \le c_1\norm{f}_{\mcF_{p}} < \infty ,
    \end{align}
    where $c_1 > 0$ depends only on $p$ and $C_{\text{UH}}$.

    For each $n \in \bbN$, enumerate the elements $W_{n}$ as $W_{n} = \{ w_{n}(1), \dots, w_{n}(N_{\ast}^{n}) \}$ and inductively define $\bigl\{ \widehat{K}_{w_{n}(i)} \bigr\}_{i = 1}^{N_{\ast}^{n}}$ as follows: $\widehat{K}_{w_{n}(1)} \coloneqq K_{w_{n}(1)}$ and
    \[
    \widehat{K}_{w_{n}(i + 1)} \coloneqq K_{w_{n}(i + 1)} \setminus \bigcup_{j \le i}\widehat{K}_{w_{n}(j)}.
    \]
    Note that each $\widehat{K}_{w_{n}(i)}$ is a Borel set of $K$, $\widehat{K}_{w_{n}(i)} \, (i = 1, \dots, N_{\ast}^{n})$ are disjoint, and $K = \bigcup_{i = 1}^{N_{\ast}^{n}}\widehat{K}_{w_{n}(i)}$.
    Also, by Proposition \ref{prop.BF}-(3), we have $\mu\bigl(K_{w} \setminus \widehat{K}_{w}\bigr) = 0$ for any $w \in W_{n}$.
    Next, define a Borel measurable function $f_{n}\colon K \to \bbR$ by setting
    \[
    f_{n} \coloneqq \sum_{w \in W_{n}}M_{n}f(w)\indicator{\widehat{K}_{w}}.
    \]
    Then \eqref{e.ub} yields that
    \begin{equation}\label{eq.UB_AA}
        \sup_{n \ge 1}\sup_{x \in K}\abs{f_{n}(x)} \le c_{1}\norm{f}_{\mcF_{p}}.
    \end{equation}

    Let $n \in \bbN$, let $x \neq y \in K$ and set $n_{\ast} \coloneqq n(x, y) \in \bbZ_{\ge 0}$.
    If $n > n_{\ast}$, then there exist $v, w \in W_{n_{\ast}}$ such that $x \in K_{v}$, $y \in K_{w}$ and $K_{v} \cap K_{w} \neq \emptyset$.
    We can find $v', w' \in W_{n}$ such that $x \in \widehat{K}_{v'}$ and $y \in \widehat{K}_{w'}$.
    Then $v' \in \mcB_{n - n_{\ast}}(v, 1)$, $w' \in \mcB_{n - n_{\ast}}(w, 1)$ and $\mcB_{n - n_{\ast}}(v, 1) \cap \mcB_{n - n_{\ast}}(w, 1) \neq \emptyset$.
    Fix $z' \in \mcB_{n - n_{\ast}}(v, 1) \cap \mcB_{n - n_{\ast}}(w, 1)$.
    Applying Theorem \ref{thm.holder_KM}, we have that
    \begin{align*}
        \abs{f_{n}(x) - f_{n}(y)}^{p}
        &\le 2^{p - 1}\bigl(\abs{M_{n}f(v') - M_{n}f(z')}^{p} + \abs{M_{n}f(z') - M_{n}f(w')}^{p}\bigr) \\
        &\le 2^{p}C_{\textup{UH}}\widetilde{\mcE}_{p}^{G_{n}}(M_{n}f)a^{-(\beta_p - \alpha)n_{\ast}} \\
        &\le 2^{p + 1}a\sqrt{D}C_{\textup{UH}}\abs{f}_{\mcF_p}^{p}d(x, y)^{\beta_p - \alpha},
    \end{align*}
    where we used Lemma \ref{lem.1-adapted} in the last line.
    If $n \le n_{\ast}$, then there exist $v, w \in W_{n}$ such that $x \in K_{v}$, $y \in K_{w}$ and $K_{v} \cap K_{w} \neq \emptyset$.
    Let $v', w' \in W_{n}$ with $x \in \widehat{K}_{v'}$ and $y \in \widehat{K}_{w'}$.
    Then $[v', v, w, w']$ is a path in $G_{n}$, and hence we have that
    \begin{align*}
        &\abs{f_{n}(x) - f_{n}(y)}^{p} \\
        &\le 3^{p - 1}\bigl(\abs{M_{n}f(v') - M_{n}f(v)}^{p} + \abs{M_{n}f(v) - M_{n}f(w)}^{p} + \abs{M_{n}f(w) - M_{n}f(w')}^{p}\bigr) \\
        &\le 3^{p}\mcE_{p}^{G_{n}}(M_{n}f)
        \le 3^{p}\abs{f}_{\mcF_p}^{p}\rho_{p}^{-n}.
    \end{align*}
    As a result of this observation, we conclude that
    \begin{equation}\label{eq.equi_AA}
        \abs{f_{n}(x) - f_{n}(y)}^{p} \le c_{2}\abs{f}_{\mcF_p}^{p}\bigl(d(x, y)^{\beta_p - \alpha} + \rho_{p}^{-n}\bigr), \quad \text{$f \in \mcF_p, n \in \bbN, x, y \in K$,}
    \end{equation}
    where $c_2$ is a positive constant depending only on $p, a, D, C_{\text{UH}}$.

    Thanks to \eqref{eq.UB_AA} and \eqref{eq.equi_AA}, we can apply an Arzel\'{a}--Ascoli type argument for $\{ f_{n} \}_{n \ge 1}$ (see \cite{Kig21+}*{Lemma D.1}).
    For reader's convenience, we provide a complete proof.
    Set $A_{n} \coloneqq \bigl\{ F_{w}\bigl(\sum_{k = 1}^{D}\sigma_{k}\mathbf{e}_{k}\bigr) \bigr\}_{\sigma_{k} \in \{ -1, +1 \}, w \in W_{n}}$ and $A \coloneqq \bigcup_{n \ge 1}A_{n}$.
    Then $A$ is a countable dense subset of $K$.
    Since $\{ f_{n}(x) \}_{n \ge 1}$ is bounded for each $x \in A$ by \eqref{eq.UB_AA}, by a diagonal argument, we obtain a subsequence $\{ n_k \}_{k \ge 1}$ such that $\{ f_{n_k}(x) \}_{k \ge 1}$ converges as $k \to \infty$ for any $x \in A$.
    Define $f_{\ast}(x) \coloneqq \lim_{k \to \infty}f_{n_k}(x)$ for any $x \in A$.
    From \eqref{eq.equi_AA} and Assumption \ref{assum.lowdim}, we see that
    \[
    \abs{f_{\ast}(x) - f_{\ast}(y)}^{p} \le c_{2}\abs{f}_{\mcF_p}^{p}d(x, y)^{\beta_p - \alpha} \quad \text{for any $x, y \in A$.}
    \]
    Since $A$ is dense in $K$, $f_{\ast}$ is extended to a continuous function on $K$, which is again denoted by $f_{\ast} \in \mcC(K)$, and it follows that
    \[
    \abs{f_{\ast}(x) - f_{\ast}(y)}^{p} \le c_{2}\abs{f}_{\mcF_p}^{p}d(x, y)^{\beta_p - \alpha} \quad \text{for any $x, y \in K$.}
    \]
    (We can also show that $\sup_{x \in K}\abs{f_{\ast}(x) - f_{n_k}(x)} \to 0$ as $k \to \infty$. For a proof, see \cite{Kig21+}*{Lemma D.1}.)
    Then, for any $m \in \bbN$ and $w \in W_{m}$, we have $\int_{K_{w}}f_{n_{k}}\,d\mu \to \int_{K_{w}}f_{\ast}\,d\mu \quad \text{as $k \to \infty$}$.
    By Proposition \ref{prop.BF}-(3) and $\mu\bigl(K_{w} \setminus \widehat{K}_{w}\bigr) = 0$,
    \begin{align*}
        \int_{K_{w}}f_{n_{k}}\,d\mu
        &= \int_{K_{w}}\sum_{z \in W_{n_{k}}}M_{n_{k}}f(z)\indicator{\widehat{K}_{z}}\,d\mu
        = \int_{K_{w}}\sum_{z \in w \cdot W_{n_{k} - m}}M_{n_{k}}f(z)\indicator{K_{z}}\,d\mu \\
        &= \sum_{z \in w \cdot W_{n_{k} - m}}\frac{1}{\mu(K_{z})}\left(\int_{K_{z}}f\,d\mu\right)\int_{K_{w}}\indicator{K_{z}}\,d\mu
        = \int_{K_{w}}f\,d\mu,
    \end{align*}
    whenever $w \in W_{m}$ and $n_{k} > m$.
    Letting $k \to \infty$, we obtain $\int_{K_{w}}f_{\ast}\,d\mu = \int_{K_{w}}f\,d\mu$ for all $w \in W_{\#}$.
    By Proposition \ref{prop.BF} and Dynkin's $\pi$-$\lambda$ theorem, we conclude that $f_{\ast}$ is a continuous modification of $f$.
    The injectivity of $f \mapsto f_{\ast}$ is obvious.
    We complete the proof.
\end{proof}

Next, we prove the closability by proving that $(\mcF_{p}, \norm{\,\cdot\,}_{\mcF_{p}})$ is complete.
See also \cite{Kig21+}*{Lemmas 6.15 and 6.16}.

\begin{thm}\label{thm.close}
    $(\mcF_{p}, \norm{\,\cdot\,}_{\mcF_{p}})$ is a Banach space.
\end{thm}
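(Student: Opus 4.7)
The plan is to verify completeness by extracting a candidate limit from the $L^p$ part and then using lower semicontinuity (a Fatou-type argument) of $\abs{\,\cdot\,}_{\mcF_p}$ with respect to $L^p$-convergence. The key observation is that for each fixed $n$, the operator $M_n\colon L^p(K,\mu)\to \bbR^{W_n}$ is continuous: indeed, by Hölder's inequality,
\[
\abs{M_n g(w)}=\abs{\mint_{K_w}g\,d\mu}\le \mu(K_w)^{-1/p}\norm{g}_{L^p}=N_{\ast}^{n/p}\norm{g}_{L^p},
\]
so on the finite-dimensional target $\bbR^{W_n}$, pointwise evaluation commutes with $L^p$-limits. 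Since $\widetilde{\mcE}_p^{G_n}$ is a continuous functional on $\bbR^{W_n}$, it follows that $g\mapsto \widetilde{\mcE}_p^{G_n}(M_n g)^{1/p}$ is continuous on $L^p$, hence $\abs{\,\cdot\,}_{\mcF_p}$, being a supremum of continuous seminorms, is lower semicontinuous on $L^p(K,\mu)$.

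First, I would take a Cauchy sequence $\{f_k\}_{k\ge 1}\subset\mcF_p$ in $\norm{\,\cdot\,}_{\mcF_p}$. Because $\norm{\,\cdot\,}_{L^p}\le\norm{\,\cdot\,}_{\mcF_p}$, it is also Cauchy in $L^p(K,\mu)$; completeness of $L^p$ gives $f\in L^p(K,\mu)$ with $f_k\to f$ in $L^p$. Second, I would show $f\in\mcF_p$: fix $n\in\bbN$; the continuity of $M_n$ yields $M_n f_k\to M_n f$ pointwise on $W_n$, so
\[
\widetilde{\mcE}_p^{G_n}(M_n f)^{1/p}=\lim_{k\to\infty}\widetilde{\mcE}_p^{G_n}(M_n f_k)^{1/p}\le \sup_{k\ge 1}\abs{f_k}_{\mcF_p}<\infty,
\]
where the last supremum is finite because $\{f_k\}$ is bounded in $\mcF_p$. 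Taking the supremum over $n$ gives $\abs{f}_{\mcF_p}\le\sup_k\abs{f_k}_{\mcF_p}<\infty$, hence $f\in\mcF_p$.

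Third, I would upgrade to norm convergence $\norm{f_k-f}_{\mcF_p}\to 0$. The $L^p$ part is already settled, so it suffices to control the semi-norm. Given $\varepsilon>0$, choose $K_0$ with $\abs{f_k-f_j}_{\mcF_p}<\varepsilon$ for all $k,j\ge K_0$. Fix $k\ge K_0$ and $n\in\bbN$; applying the same continuity argument to the sequence $\{f_k-f_j\}_{j}$ (which converges to $f_k-f$ in $L^p$) gives
\[
\widetilde{\mcE}_p^{G_n}\bigl(M_n(f_k-f)\bigr)^{1/p}
=\lim_{j\to\infty}\widetilde{\mcE}_p^{G_n}\bigl(M_n(f_k-f_j)\bigr)^{1/p}
\le \varliminf_{j\to\infty}\abs{f_k-f_j}_{\mcF_p}\le\varepsilon.
\]
Taking the supremum over $n$ yields $\abs{f_k-f}_{\mcF_p}\le\varepsilon$ for every $k\ge K_0$, so $\abs{f_k-f}_{\mcF_p}\to 0$ and hence $\norm{f_k-f}_{\mcF_p}\to 0$.

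I do not expect a serious obstacle here; the argument is essentially the same Fatou-style completeness proof used for weighted Besov or $W^{1,p}$ spaces. The only delicate point is that $\abs{\,\cdot\,}_{\mcF_p}$ is defined via a supremum rather than a limit, but the supremum of lower semicontinuous functionals is again lower semicontinuous, which is exactly what drives both the membership $f\in\mcF_p$ and the Fatou-type inequality for $\abs{f_k-f}_{\mcF_p}$. Separability and reflexivity (stated in Theorem \ref{thm.main1}) will be handled elsewhere and are not needed for this statement.
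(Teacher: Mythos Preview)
Your proof is correct and takes a genuinely different, more elementary route than the paper. The paper's proof invokes the H\"older embedding of Theorem~\ref{thm.embed}: it fixes a basepoint $x_0$, shows via the Morrey-type estimate that $g_n\coloneqq f_n-f_n(x_0)$ is Cauchy in $\mcC(K)$, identifies a continuous representative of the $L^p$-limit $f$, obtains sup-norm convergence $f_{n_k}\to f$, and only then passes to the Fatou-type inequality \eqref{e.fatou}. By contrast, you bypass the H\"older embedding entirely by observing directly that each $M_n$ is $L^p$-continuous, so $g\mapsto\widetilde{\mcE}_p^{G_n}(M_n g)^{1/p}$ is continuous on $L^p$ and $\abs{\,\cdot\,}_{\mcF_p}$ is $L^p$-lower semicontinuous; the rest is the standard Fatou-style closure argument.

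What each approach buys: your argument is shorter and, notably, does not use Assumption~\ref{assum.lowdim} at all---the definition of $\mcF_p$ and the continuity of $M_n$ make sense for every $p>1$ (since $\rho_p$ exists by Theorem~\ref{thm.mult} without that assumption), so your proof shows $(\mcF_p,\norm{\,\cdot\,}_{\mcF_p})$ is a Banach space for \emph{all} $p>1$, not only $p>\dim_{\textup{ARC}}(K,d)$. The paper's route, on the other hand, yields as a byproduct the explicit identification of the limit with a continuous function and sup-norm convergence along the way; but of course once $f\in\mcF_p$ is established, that follows anyway from Theorem~\ref{thm.embed} under Assumption~\ref{assum.lowdim}.
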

\begin{proof}
    Let $\{ f_n \}_{n \ge 1}$ be a Cauchy sequence in $(\mcF_p, \norm{\,\cdot\,}_{\mcF_p})$.
    Then $\{ f_n \}_{n \ge 1}$ converges to some $f \in L^{p}(K, \mu)$ in $L^{p}$.
    Fix $x_0 \in K$ and set $g_n \coloneqq f_n - f_n(x_0)$.
    Then, by the H\"{o}lder estimate in Theorem \ref{thm.embed}, for all $n, m \ge 1$ and $x \in K$,
    \begin{align*}
        \abs{g_n(x) - g_m(x)}^{p}
        \le C_{\textup{H\"{o}l}}\abs{f_n - f_m}_{\mcF_p}^{p}d(x, x_0)^{\beta_p - \alpha} 
        \le C_{\textup{H\"{o}l}}\abs{f_n - f_m}_{\mcF_p}^{p},
    \end{align*}
    which implies that $\{ g_n \}_{n \ge 1}$ is a Cauchy sequence in $\mcC(K)$.
    Since $\mcC(K)$ is complete, $\{ g_n \}_{n \ge 1}$ converges to some $g \in \mcC(K)$ in the supremum norm.

    It is immediate that $\{ f_n - g_n \}_{n \ge 1}$ converges to $f - g$ in $L^{p}$, and thus we can pick a subsequence $\{ n_k \}_{k \ge 1}$ so that $f_{n_k} - g_{n_k} \to f - g$ for $\mu$-a.e. as $k \to \infty$.
    On the other hand, the definition of $g_n$ implies that $f_n - g_n \equiv f_n(x_0)$.
    Hence the limit $\lim_{k \to \infty}f_{n_k}(x_0) \eqqcolon b$ exists and $f - g = b$ for $\mu$-a.e.
    In particular, $f$ admits a continuous modification.
    We again write $f$ to denote this continuous version.
    Then $f$ is the limit of $f_{n_k}$.
    Indeed, we have
    \[
    \norm{f - f_{n_k}}_{\mcC(K)} \le \norm{g - g_{n_k}}_{\mcC(K)} + \abs{f_{n_k}(x_0) - b} \to 0 \quad\text{as $k \to \infty$}.
    \]
    Since $\{ f_n \}_{n \ge 1}$ is a Cauchy sequence in $\mcF_p$, for any $\varepsilon > 0$ there exists $N(\varepsilon) \ge 1$ such that
    \[
    \sup_{i \wedge j \ge N(\varepsilon)}\sup_{k \ge 1}\widetilde{\mcE}_{p}^{G_k}(M_{k}f_{n_{i}} - M_{k}f_{n_{j}}) \le \varepsilon,
    \]
    which implies that
    \begin{align}\label{e.fatou}
        \sup_{i \ge N(\varepsilon)}\sup_{k \ge 1}\widetilde{\mcE}_{p}^{G_k}(M_{k}f_{n_{i}} - M_{k}f) \le \varepsilon.
    \end{align}
    Therefore, we have that, for large $i \ge 1$ with $n_i \ge N(\varepsilon)$,
    \begin{align*}
        \widetilde{\mcE}_{p}^{G_k}(M_{k}f)^{1/p}
        \le \widetilde{\mcE}_{p}^{G_k}(M_{k}f_{n_{i}} - M_{k}f)^{1/p} + \widetilde{\mcE}_{p}^{G_k}(M_{k}f_{n_{i}})^{1/p}
        \le \varepsilon + \sup_{n \ge 1}\abs{f_{n}}_{\mcF_p},
    \end{align*}
    which implies that $f \in \mcF_p$.
    In addition, \eqref{e.fatou} yields that $\norm{f - f_{n_i}}_{\mcF_p} \to 0$ as $i \to \infty$.

    The convergence $\norm{f - f_n}_{\mcF_p} \to 0$ is easily derived by applying the above arguments for any subsequence of $\{ f_{n} \}_{n \ge 1}$.
    We complete the proof.
\end{proof}

Moreover, we can show that $\mcF_p$ is compactly embedded in $L^{p}(K, \mu)$.

\begin{prop}\label{prop.cptembed}
    The inclusion map from $\mcF_p$ to $L^{p}(K, \mu)$ is a compact operator.
\end{prop}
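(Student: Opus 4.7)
The plan is to realize the inclusion $\mcF_{p} \hookrightarrow L^{p}(K,\mu)$ as the composition
\[
\mcF_{p} \hookrightarrow \mcC^{0,(\beta_{p}-\alpha)/p}(K) \hookrightarrow \mcC(K) \hookrightarrow L^{p}(K,\mu),
\]
where the middle arrow is compact by an Arzel\`{a}--Ascoli argument and the last arrow is continuous because $\mu$ is a finite measure. Theorem \ref{thm.embed} already supplies the first arrow (continuously), together with the quantitative H\"{o}lder bound that will feed the equicontinuity input in Arzel\`{a}--Ascoli.

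Concretely, I would take a sequence $\{f_{n}\}_{n \ge 1} \subseteq \mcF_{p}$ with $\sup_{n}\norm{f_{n}}_{\mcF_{p}} \le M < \infty$ and, using Theorem \ref{thm.embed}, replace each $f_{n}$ by its continuous modification (still denoted $f_{n}$). Then two facts follow uniformly in $n$: first, equicontinuity, since
\[
\abs{f_{n}(x) - f_{n}(y)}^{p} \le C_{\textup{H\"{o}l}} M^{p}\, d(x,y)^{\beta_{p}-\alpha} \quad \text{for all } x,y \in K;
\]
second, uniform boundedness in $\mcC(K)$, which is obtained from the estimate \eqref{e.ub} in the proof of Theorem \ref{thm.embed} (or, equivalently, from the H\"{o}lder bound together with $\norm{f_{n}}_{L^{p}} \le M$ and $\mu(K)=1$, which forces $f_{n}$ to attain a value bounded by $M$ somewhere, after which the H\"{o}lder estimate propagates the bound to all of $K$). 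Since $(K,d)$ is compact, the Arzel\`{a}--Ascoli theorem yields a subsequence $\{f_{n_{k}}\}_{k \ge 1}$ converging uniformly to some $f_{\ast} \in \mcC(K)$.

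Finally, uniform convergence on the compact space $K$ combined with $\mu(K) = 1 < \infty$ gives
\[
\norm{f_{n_{k}} - f_{\ast}}_{L^{p}}^{p} \le \mu(K)\, \norm{f_{n_{k}} - f_{\ast}}_{\mcC(K)}^{p} \xrightarrow[k \to \infty]{} 0,
\]
so $\{f_{n_{k}}\}$ converges in $L^{p}(K,\mu)$, proving compactness of the inclusion. I do not anticipate any serious obstacle: the whole argument is a standard chaining of (i) the H\"{o}lder embedding established in Theorem \ref{thm.embed}, (ii) the compactness of $(K,d)$, and (iii) finiteness of $\mu$. The only point needing mild care is confirming that the uniform $\mcC(K)$-bound on $\{f_{n}\}$ really follows from the $\mcF_{p}$-bound rather than just the $|\cdot|_{\mcF_{p}}$-seminorm bound, which is why the $\norm{\,\cdot\,}_{L^{p}}$ part of $\norm{\,\cdot\,}_{\mcF_{p}}$ (and hence a mean-value representative as in the proof of Theorem \ref{thm.embed}) is used.
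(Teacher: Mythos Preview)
Your proposal is correct and follows essentially the same approach as the paper: bounded in $\mcF_{p}$ implies bounded in the H\"{o}lder space $\mcC^{0,(\beta_{p}-\alpha)/p}$ by Theorem~\ref{thm.embed}, then Arzel\`{a}--Ascoli on the compact $K$ extracts a uniformly convergent subsequence, which converges in $L^{p}$ since $\mu$ is finite. You simply spell out in more detail the uniform $\mcC(K)$-bound and the passage from sup-norm to $L^{p}$-convergence that the paper leaves implicit.
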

\begin{proof}
    Let $\{ f_n \}_{n \ge 1}$ be a bounded sequence in $\mcF_p$.
    Since the embedding of $\mcF_p$ in $\mcC^{0, (\beta_{p} - \alpha)/p}$ is continuous, we obtain a subsequence $\{ f_{n_k}\}_{k \ge 1}$ and $f \in \mcC(K)$ such that $f_{n_k}$ converges to $f$ in the supremum norm by applying the Arzel\'{a}--Ascoli theorem.
    This proves our assertion.
\end{proof}

Towards the regularity of $\mcF_{p}$, the following lemma gives a ``partition of unity'' in $\mcF_{p}$.
See also \cite{Kig21+}*{Lemma 6.18}.

\begin{lem}\label{lem.corebase}
    There exists a family $\{ \varphi_{w} \}_{w \in W_{\#}}$ in $\mcF_{p}$ such that
    \begin{itemize}
        \item [\textup{(1)}] for any $w \in W_{\#}$, $0 \le \varphi_{w} \le 1$;
        \item [\textup{(2)}] for any $n \in \bbN$, $\sum_{w \in W_{n}}\varphi_{w} \equiv 1$;
        \item [\textup{(3)}] for any $n \in \bbN$ and $w \in W_{n}$, $\supp[\varphi_{w}] \subseteq U_{1}^{(n)}(w)$, where $U_{1}^{(n)}(w)$ is defined as
        \begin{equation}\label{dfn.U1w}
        	U_{1}^{(n)}(w) \coloneqq \bigcup_{v \in W_{n}; d_{G_{n}}(v,w) \le 1}K_{v};
        \end{equation}
        \item [\textup{(4)}] there exists a constant $C_{\ref{lem.corebase}} > 0$ (depending only on $p, a, D, L_{\ast}, N_{\ast}, \rho_{p}$) such that
        \[
        \abs{\varphi_{w}}_{\mcF_{p}}^{p} \le C_{\ref{lem.corebase}}\rho_{p}^{n} \quad \text{for any $n \in \bbN$ and $w \in W_{n}$.}
        \]
    \end{itemize}
\end{lem}
\begin{proof}
    For $n, m \in \bbN$ and $w \in W_m$, let $\psi_{w}^{(n)}\colon W_{n + m} \to [0, 1]$ satisfy $\psi_{w}^{(n)}|_{w \cdot W_{n}} \equiv 1$, $\psi_{w}^{(n)}|_{W_{n + m} \setminus \mcB_{n}(w, 1)} \equiv 0$ and $\mcE_{p}^{G_{n + m}}\bigl(\psi_{w}^{(n)}\bigr) = \mcC_{p}^{G_{n+ m}}\bigl(w \cdot W_{n}, W_{n + m} \setminus \mcB_{n}(w, 1)\bigr)$.
    Define $\Psi_{m}^{(n)} \coloneqq \left(\sum_{v \in W_{m}}\psi_{v}^{(n)}\right)^{-1}$ and $\varphi_{w}^{(n)} \coloneqq \Psi_{m}^{(n)}\psi_{w}^{(n)}$.
    Note that $\varphi_{w}^{(n)}$ coincides with the function denoted by the same symbol in the proof of Lemma \ref{lem.subdiv.fcn}.
    We also set $\widetilde{\varphi}_{w}^{\,(n)}\colon K \to \bbR$ by setting
    \[
    \widetilde{\varphi}_{w}^{\,(n)} \coloneqq \sum_{z \in W_{n + m}}\varphi_{w}^{(n)}(z)\indicator{\widehat{K}_z},
    \]
    where $\bigl\{ \widehat{K}_z \bigr\}_{z \in W_{\#}}$ is the same as in the proof of Theorem \ref{thm.embed}.
    Then $M_{n + m}\widetilde{\varphi}_{w}^{\,(n)} = \varphi_{w}^{(n)}$ and, from \eqref{est.unityabove} in the proof of Lemma \ref{lem.subdiv.fcn} and \eqref{Poi-super}, we have $\widetilde{\mcE}_{p}^{G_{n + m}}\bigl(\varphi_{w}^{(n)}\bigr) \le c_1\rho_{p}^{m}$ for all $w \in W_m$ and $n \in \bbN$, where $c_1 > 0$ depends only on $p, a, D, N_{\ast}, L_{\ast}$.
    In particular, by Theorem \ref{thm.embed}, we obtain
    \[
    \abs{\widetilde{\varphi}_{w}^{\,(n)}(x) - \widetilde{\varphi}_{w}^{\,(n)}(y)}^{p} \le c_{1}C_{\textup{H\"{o}l}}\,\rho_{p}^{m}d(x, y)^{\beta_p - \alpha},
    \]
    for $x, y \in K$ with $n(x, y) < n + m$.
    Similarly to the Arzel\'{a}--Ascoli type argument in the proof of Theorem \ref{thm.embed}, we can find a subsequence $\{ n_k \}_{k \ge 1}$ and a continuous function $\varphi_{w} \in \mcC(K)$ such that $\lim_{k \to \infty}\widetilde{\varphi}_{w}^{\,(n_k)}(x) = \varphi_{w}(x)$ for any $x \in K$ and
    \[
    \abs{\varphi_w(x) - \varphi_w(y)}^{p} \le c_{1}C_{\textup{H\"{o}l}}\,\rho_{p}^{\abs{w}}d(x, y)^{\beta_p - \alpha} \quad\text{for any $x, y \in K$}.
    \]
    Then the properties (1), (2) and (3) are immediate from this convergence and the associated properties of $\widetilde{\varphi}_{w}^{\,(n)}$, so it will suffice to show (4).
    By the weak monotonicity (Corollary \ref{cor.wm}),
    \begin{align*}
      \widetilde{\mcE}_{p}^{G_{l}}\Bigl(M_{l}\widetilde{\varphi}_{w}^{\,(n_k)}\Bigr)
      &\le C_{\text{WM}}\widetilde{\mcE}_{p}^{G_{n_k + m}}\Bigl(M_{n_k + m}\widetilde{\varphi}_{w}^{\,(n_k)}\Bigr) \\
      &\le C_{\text{WM}}\sup_{n \ge 1}\widetilde{\mcE}_{p}^{G_{n + m}}\Bigl(\varphi_{w}^{(n)}\Bigr) \\
      &\le c_{1}C_{\text{WM}}\rho_{p}^{m},
    \end{align*}
    whenever $l \le n_k + m$.
    Passing to the limit $k \to \infty$ and supremum over $l \in \bbN$ in this estimate, we conclude that $\abs{\varphi_{w}}_{\mcF_p}^{p} \le c_{1}C_{\text{WM}}\rho_{p}^{m}$ for all $m \ge 1$ and $w \in W_m$ and complete the proof. 
\end{proof}

Now, define a subspace $\mcH_{p}^{\star}$ of $\mcF_p$ by setting
\begin{equation}\label{dfn.core}
    \mcH_{p}^{\star} \coloneqq \left\{ \sum_{w \in A}a_{w}\varphi_{w} \;\middle|\; \text{$A$ is a finite subset of $W_{\#}$, $a_{w} \in \bbR$ for each $w \in A$} \right\},
\end{equation}
where $\{ \varphi_{w} \}_{w \in W_{\#}}$ is a family of functions in $\mcF_p$ appeared in Lemma \ref{lem.corebase}.
Then we achieve the regularity of $\mcF_p$ (see also \cite{Kig21+}*{Lemma 6.19}).

\begin{thm}\label{thm.core}
    The space $\mcH_{p}^{\star}$ is dense in $\mcC(K)$ with respect to the sup norm.
    In particular, $\mcF_p$ is dense in $\mcC(K)$.
\end{thm}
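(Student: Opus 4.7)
The plan is to use the partition of unity $\{\varphi_{w}\}_{w \in W_{\#}} \subset \mcF_{p}$ supplied by Lemma \ref{lem.corebase} and build, for each $f \in \mcC(K)$, explicit approximants in $\mcH_{p}^{\star}$. For every $n \in \bbN$ and every $w \in W_{n}$, pick an arbitrary point $x_{w} \in K_{w}$ and define
\[
f_{n} \coloneqq \sum_{w \in W_{n}} f(x_{w})\,\varphi_{w}.
\]
Since this is a finite linear combination of members of $\{\varphi_{w}\}_{w \in W_{\#}}$, we have $f_{n} \in \mcH_{p}^{\star}$.

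For the sup-norm estimate, fix $x \in K$ and use property (b) of Lemma \ref{lem.corebase} to write
\[
f(x) - f_{n}(x) = \sum_{w \in W_{n}} \bigl(f(x) - f(x_{w})\bigr)\,\varphi_{w}(x).
\]
By property (c), the only $w$ contributing to the sum at $x$ are those with $x \in U_{1}^{(n)}(w)$, in which case there exists $v \in W_{n}$ with $d_{G_{n}}(v,w) \le 1$ and $x \in K_{v}$. Since $K_{v} \cap K_{w} \neq \emptyset$ (or $v = w$), and $\diam(K_{u}) \le \diam(K)\,a^{-n}$ for any $u \in W_{n}$, the triangle inequality yields $d(x, x_{w}) \le 2\diam(K)\,a^{-n}$. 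Combining with properties (a) and (b) and the uniform continuity of $f$ on the compact metric space $(K,d)$,
\[
\norm{f - f_{n}}_{\mcC(K)} \le \omega_{f}\!\bigl(2\diam(K)\,a^{-n}\bigr),
\]
where $\omega_{f}(\delta) \coloneqq \sup\{|f(x)-f(y)| : d(x,y) \le \delta\}$ is the modulus of continuity of $f$. Since $a > 1$ and $\omega_{f}(\delta) \to 0$ as $\delta \downarrow 0$, we conclude $\norm{f - f_{n}}_{\mcC(K)} \to 0$ as $n \to \infty$, proving density of $\mcH_{p}^{\star}$ in $\mcC(K)$.

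The ``in particular'' statement is then immediate: $\mcH_{p}^{\star} \subseteq \mcF_{p}$ because each $\varphi_{w}$ lies in $\mcF_{p}$ by Lemma \ref{lem.corebase}, so the density of $\mcH_{p}^{\star}$ in $\mcC(K)$ forces $\mcF_{p}$ to be dense in $\mcC(K)$ as well. There is no significant obstacle in this step; all the substantive work (constructing a partition of unity with controlled $\mcF_{p}$-seminorm) has already been absorbed into Lemma \ref{lem.corebase}, and what remains is the standard approximation-by-partition-of-unity argument on a compact metric space.
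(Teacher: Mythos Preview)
Your proof is correct and follows essentially the same approach as the paper's: both build approximants $f_{n}=\sum_{w\in W_{n}}c_{w}\varphi_{w}$ using the partition of unity from Lemma \ref{lem.corebase}, the only difference being that the paper takes $c_{w}=M_{n}f(w)=\mint_{K_{w}}f\,d\mu$ while you take $c_{w}=f(x_{w})$ for a chosen point $x_{w}\in K_{w}$. The uniform-continuity estimate is the same in spirit; your convex-combination bound (via $\sum_{w}\varphi_{w}=1$, $\varphi_{w}\ge 0$) is in fact slightly cleaner than the paper's, which carries an inessential factor $D_{\ast}^{2}$.
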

\begin{proof}
    Let $f \in \mcC(K)$ and define $f_n$ by setting $f_n \coloneqq \sum_{w \in W_n}M_{n}f(w)\varphi_w \in \mcH_{p}^{\star}$.
    Then
    \begin{align*}
      \abs{f(x) - f_n(x)}
      &\le L_{\ast}^{2}\max_{w \in W_n; x \in \supp[\varphi_w]}\abs{f(x) - M_{n}f(w)} \\
      &\le L_{\ast}^{2}\adjustlimits\max_{w \in W_n}\sup_{x \in U_{1}^{(n)}(w), y \in K_w}\abs{f(x) - f(y)}.
    \end{align*}
    Since $\max_{w \in W_n}\diam\Bigl(U_{1}^{(n)}(w), d\Bigr) \le 2a^{-n} \to 0$ as $n \to \infty$ and $f$ is uniformly continuous, we have $\norm{f - f_n}_{\mcC(K)} \to 0$. 
\end{proof}

%--- separability ---
\subsection{Separability}\label{subsec:separable}

In this subsection, we prove that $\mcF_{p}$ is separable with respect to $\norm{\,\cdot\,}_{\mcF_{p}}$.
In the case $p = 2$, this is done by applying easy functional analytic arguments since the polarization formula of $\mcE_{2}$ yields a non-negative definite closed quadratic form.
For example, by Proposition \ref{prop.cptembed}, the inclusion map from $\mcF_{2}$ to $L^{2}(K, \mu)$ is a compact operator, and thus there exists a countable complete orthonormal system of $\mcF_{2}$ (see \cite{Dav.ST}*{Exercise 4.2 and Corollary 4.2.3} for example).
One can also give a short proof of the separability of $\mcF_2$ using resolvents (see \cite{FOT}*{proof of Theorem 1.4.2-$\textrm{(i\hspace{-.08em}i\hspace{-.08em}i)}$} for example).
However, it is hopeless to execute similar arguments in our setting.

To overcome this difficulty, we directly show that the space $\mcH_{p}^{\star}$ defined in \eqref{dfn.core} is dense in $\mcF_{p}$ and hence $\bbQ$-hull of $\{ \varphi_{w} \}_{w \in W_{\#}}$ is also dense.
Our strategy is standard in calculus of variations, namely, we extract a strong convergent approximation from $\mcH_{p}^{\star}$ by using Mazur's lemma.
To this end, it will be a key ingredient to ensure the reflexivity of $\mcF_{p}$, which is deduced from a combination of \emph{Clarkson's inequality} and the Milman--Pettis theorem.
We will derive Clarkson's inequality by using $\Gamma$-convergence to find a norm of $\mcF_{p}$ having the required properties.

We start by recalling Clarkson's inequality.

\begin{dfn}[Clarkson's inequality]\label{dfn.clarkson}
    Let $\mathcal{X}$ be a vector space (over $\mathbb{R}$) and let $\norm{\,\cdot\,}$ be a semi-norm on $\mathcal{X}$.
    The semi-norm $\norm{\,\cdot\,}$ satisfies \emph{Clarkson's inequality} if and only if one of the following holds:
    \begin{itemize}
        \item [(1)] There exists $p \in (1, 2]$ such that for every $x, y \in \mcX$, 
        \[
        \norm{x + y}^{\frac{p}{p - 1}} + \norm{x - y}^{\frac{p}{p - 1}} \le 2\bigl(\norm{x}^{p} + \norm{y}^{p}\bigr)^{\frac{1}{p - 1}};
        \]
        \item [(2)] There exists $p \in [2, \infty)$ such that for every $x, y \in \mcX$, 
        \[
        \norm{x + y}^{p} + \norm{x - y}^{p} \le 2^{p - 1}\bigl(\norm{x}^{p} + \norm{y}^{p}\bigr).
        \]
    \end{itemize}
\end{dfn}

\noindent
It is well-known that $L^{p}$-norm on a measurable space satisfies Clarkson's inequality, and that a normed space satisfying Clarkson's inequality is \emph{uniformly convex}.
Recall that Milman--Pettis theorem says that any Banach spaces that possess a uniformly convex norm is reflexive. 

Next, let us recall the definition of $\Gamma$-convergence and its basic properties.
The reader is referred to \cite{Dal} for details on $\Gamma$-convergence.

\begin{dfn}[$\Gamma$-convergence]\label{dfn.gconv}
    Let $\{ \Phi_n \}_{n \ge 1}$ be a sequence of $[-\infty, \infty]$-valued functional on $L^{p}(K, \mu)$.
    We say that a functional $\Phi\colon L^{p}(K, \mu) \to [-\infty,\infty]$ is a $\Gamma$-limit of $\{ \Phi_n \}_{n \ge 1}$ as $n \to \infty$ if the following two inequalities hold;
    \begin{itemize}
        \item [(1)]\textup{(liminf inequality)} If $f_n \to f$ in $L^{p}$, then $\Phi(f) \le \varliminf_{n \to \infty}\Phi_n(f_n)$.
        \item [(2)]\textup{(limsup inequality)} For any $f \in L^{p}(K, \mu)$, there exists a sequence $\{ f_n \}_{n \ge 1}$ such that
        \begin{equation}\label{eq.liminf}
            \text{$f_n \to f$ in $L^{p}$ and $\displaystyle\varlimsup_{n \to \infty}\Phi_n(f_n) \le \Phi(f)$.}
        \end{equation}
        A sequence $\{ f_n \}_{n \ge 1}$ satisfying \eqref{eq.liminf} is called a \emph{recovery sequence} of $f$.
    \end{itemize}
\end{dfn}

Since $(K, d)$ is a separable metric space, the following fact holds.

\begin{thm}[\cite{Dal}*{Theorem 8.5}]\label{thm.gamma}
    Let $\{ \Phi_{n} \}_{n \ge 1}$ be a sequence of functionals on $L^{p}(K, \mu)$.
    Then there exists a subsequence $\{ n_{k} \}_{k \ge 1}$ and a functional $\Phi$ on $L^{p}(K, \mu)$ such that $\Phi$ is a $\Gamma$-limit of $\{ \Phi_{n_{k}} \}_{k \ge 1}$.
\end{thm}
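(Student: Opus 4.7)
The plan is to use the standard ``compactness of $\Gamma$-convergence'' argument, which exploits the fact that $L^{p}(K, \mu)$ is a separable metric space (this holds since $K$ is compact metrizable and $\mu$ is a Borel probability measure). The key idea is to reduce the problem of extracting a $\Gamma$-convergent subsequence to extracting convergent subsequences of a countable family of numerical sequences, which is achieved by a Cantor diagonal argument.

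First, I would fix a countable base $\{U_{k}\}_{k \ge 1}$ of open sets for the norm topology of $L^{p}(K, \mu)$ (for example, open balls of rational radii centered at a countable dense set). For each $k \in \bbN$ and $n \in \bbN$, define
\[
\beta_{n}(k) \coloneqq \inf_{g \in U_{k}} \Phi_{n}(g) \in [-\infty, \infty].
\]
Since $[-\infty, \infty]$ is sequentially compact, for each fixed $k$ the sequence $\{\beta_{n}(k)\}_{n \ge 1}$ has a convergent subsequence. A diagonal argument then produces a single subsequence $\{n_{k}\}_{k \ge 1}$ along which $\lim_{j \to \infty} \beta_{n_{j}}(k) \eqqcolon \beta(k)$ exists in $[-\infty, \infty]$ for every $k$.

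Next I would define the candidate $\Gamma$-limit by
\[
\Phi(f) \coloneqq \sup\bigl\{\beta(k) \bigm| k \in \bbN,\; f \in U_{k}\bigr\}, \qquad f \in L^{p}(K, \mu),
\]
and verify both the liminf and limsup inequalities of Definition \ref{dfn.gconv}. For the liminf inequality, if $f_{j} \to f$ in $L^{p}$ and $f \in U_{k}$, then eventually $f_{j} \in U_{k}$, so $\Phi_{n_{j}}(f_{j}) \ge \beta_{n_{j}}(k)$, giving $\liminf_{j} \Phi_{n_{j}}(f_{j}) \ge \beta(k)$; taking the supremum over admissible $k$ yields $\liminf_{j} \Phi_{n_{j}}(f_{j}) \ge \Phi(f)$. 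For the limsup inequality, I need to construct a recovery sequence: using that $\{U_{k}\}_{k \ge 1}$ is a base, one can choose a decreasing countable neighborhood basis $\{V_{m}\}_{m \ge 1}$ of $f$ from the $U_{k}$'s and, for each $m$, pick $f_{j, m} \in V_{m}$ with $\Phi_{n_{j}}(f_{j, m})$ close to $\beta_{n_{j}}(\text{index of } V_{m})$; then select $m(j) \to \infty$ slowly enough that the $f_{j} \coloneqq f_{j, m(j)}$ converges to $f$ in $L^{p}$ while $\limsup_{j} \Phi_{n_{j}}(f_{j}) \le \sup_{m} \beta(\text{index of } V_{m}) \le \Phi(f)$.

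The main obstacle is the construction of the recovery sequence, since it requires a careful ``diagonal pick'' that balances two competing requirements: $f_{j}$ must converge to $f$ (forcing us to pass to smaller and smaller neighborhoods as $j$ grows), while at the same time the near-infimizers must make $\Phi_{n_{j}}(f_{j})$ track $\beta(k)$ from above. Nothing here uses any structure of $L^{p}(K, \mu)$ beyond separability, so the proof is really a general topological statement about $\Gamma$-convergence on second-countable spaces; accordingly, one could equally invoke Dal Maso's \cite{Dal}*{Theorem 8.5} as a black box.
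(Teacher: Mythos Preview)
Your sketch is correct and follows the standard compactness argument for $\Gamma$-convergence on second-countable spaces, which is precisely the content of \cite{Dal}*{Theorem~8.5}. Note, however, that the paper does not give its own proof of this statement: it is quoted as an external result from Dal~Maso's monograph and used as a black box, exactly as you yourself suggest at the end of your proposal. So there is no ``paper's proof'' to compare against beyond the citation itself; your outline simply unpacks what that reference contains.
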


Now, we regard $\widetilde{\mcE}_{p}^{G_n}(\,\cdot\,)$ as a $[0,\infty]$-valued functional on $L^{p}(K, \mu)$ defined by $f \mapsto \widetilde{\mcE}_{p}^{G_n}(M_{n}f)$.
Then, by Theorem \ref{thm.gamma}, there exists a $\Gamma$-convergent subsequence $\Bigl\{ \widetilde{\mcE}_{p}^{\,G_{n_k}}(\,\cdot\,) \Bigr\}_{k \ge 1}$ and we write $\msfE_{p}(\,\cdot\,)$ to denote its $\Gamma$-limit.
We define $\trinorm{\,\cdot\,}_{\mcF_p} \coloneqq \bigl(\norm{\,\cdot\,}_{L^{p}}^{p} + \msfE_{p}(\,\cdot\,)\bigr)^{1/p}$.
This new ``norm'' $\trinorm{\,\cdot\,}_{\mcF_p}$ establishes the reflexivity.
(We also need to show that $\trinorm{\,\cdot\,}_{\mcF_p}$ is a norm.)

\begin{thm}\label{thm.newnorm}
    The norm $\trinorm{\,\cdot\,}_{\mcF_p}$ is equivalent to $\norm{\,\cdot\,}_{\mcF_p}$ and satisfies Clarkson's inequality.
    In particular, the Banach space $\mcF_p$ is reflexive.
\end{thm}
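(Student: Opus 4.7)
The plan is to verify directly that $\trinorm{\,\cdot\,}_{\mcF_p}$ satisfies Clarkson's inequality in the form appropriate to $p$, and separately to check the norm equivalence with $\norm{\,\cdot\,}_{\mcF_p}$. Once both are in hand, reflexivity follows from the Milman--Pettis theorem since Clarkson's inequality implies uniform convexity, and completeness transfers from $\norm{\,\cdot\,}_{\mcF_p}$ (Theorem \ref{thm.close}). The main obstacle is the ``lower'' comparison $\abs{f}_{\mcF_p}^p \lesssim \msfE_p(f)$: the upper bound on $\msfE_p$ is immediate from the $\Gamma$-liminf inequality, but recovering the whole supremum $\sup_n \widetilde{\mcE}_p^{G_n}(M_n f)$ requires coupling the recovery sequence with the weak monotonicity of Corollary \ref{cor.wm}.

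First I would observe that at each finite level the functional
\[
N_n(f) \coloneqq \bigl(\norm{f}_{L^p}^p + \widetilde{\mcE}_p^{G_n}(M_n f)\bigr)^{1/p}
\]
is a norm on $L^p(K,\mu)$ whose $p$-th power is an $\ell^p$-sum of the $L^p$-norm of $f$ and of the (weighted) $\ell^p$-norm of the vector $(M_n f(v) - M_n f(w))_{(v,w) \in E_n}$. Hence $N_n$ is the pullback of an $\ell^p$-style norm by a bounded linear map $L^p(K,\mu) \to L^p(K,\mu) \oplus_p \ell^p(E_n)$, and consequently $N_n$ satisfies Clarkson's inequality in the desired form for every $n$.

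Next, to transfer Clarkson's inequality to the $\Gamma$-limit, I would use the recovery-sequence characterization. Given $f, g \in \mcF_p$, pick recovery sequences $\{f_k\}_{k\ge 1}, \{g_k\}_{k \ge 1}$ with $f_k \to f$, $g_k \to g$ in $L^p(K,\mu)$ and $\widetilde{\mcE}_p^{G_{n_k}}(M_{n_k}f_k) \to \msfE_p(f)$, $\widetilde{\mcE}_p^{G_{n_k}}(M_{n_k}g_k) \to \msfE_p(g)$. Combined with the $L^p$-continuity of $\norm{\,\cdot\,}_{L^p}^p$ this yields $N_{n_k}(f_k)^p \to \trinorm{f}_{\mcF_p}^p$ and $N_{n_k}(g_k)^p \to \trinorm{g}_{\mcF_p}^p$. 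Since $f_k \pm g_k \to f \pm g$ in $L^p$, the liminf inequality of $\Gamma$-convergence produces $\trinorm{f \pm g}_{\mcF_p}^p \le \liminf_k N_{n_k}(f_k \pm g_k)^p$. Summing the two liminf inequalities (in case (b) with $p$, or after raising to $p/(p-1)$ in case (a)) and invoking Clarkson for $N_{n_k}$ evaluated at $(f_k, g_k)$ yields the same inequality in the limit, giving Clarkson's inequality for $\trinorm{\,\cdot\,}_{\mcF_p}$.

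Finally, for the norm equivalence, the easy direction $\trinorm{f}_{\mcF_p} \le \norm{f}_{\mcF_p}$ follows by testing the liminf inequality against the constant sequence $f_n \equiv f$, which gives $\msfE_p(f) \le \liminf_k \widetilde{\mcE}_p^{G_{n_k}}(M_{n_k} f) \le \abs{f}_{\mcF_p}^p$ and then $\trinorm{f}_{\mcF_p}^p \le \norm{f}_{L^p}^p + \abs{f}_{\mcF_p}^p \le \norm{f}_{\mcF_p}^p$. For the converse I would take a recovery sequence $\{f_k\}$ for $f$ and, for each fixed $n \in \bbN$, invoke Corollary \ref{cor.wm} once $n_k \ge n$ to get
\[
\widetilde{\mcE}_p^{G_n}(M_n f_k) \le C_{\textup{WM}}\,\widetilde{\mcE}_p^{G_{n_k}}(M_{n_k} f_k).
\]
Because $M_n$ is a continuous linear operator from $L^p(K,\mu)$ to the finite-dimensional space $\bbR^{W_n}$ (the averages $\mint_{K_w}f\,d\mu$ depend continuously on $f$ in $L^p$ by H\"older) and $\widetilde{\mcE}_p^{G_n}$ is a continuous function on $\bbR^{W_n}$, letting $k \to \infty$ gives $\widetilde{\mcE}_p^{G_n}(M_n f) \le C_{\textup{WM}}\,\msfE_p(f)$, and taking the supremum in $n$ yields $\abs{f}_{\mcF_p}^p \le C_{\textup{WM}}\,\msfE_p(f)$. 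This also verifies that $\mcF_p = \{f \in L^p(K,\mu) \mid \msfE_p(f) < \infty\}$, so $(\mcF_p, \trinorm{\,\cdot\,}_{\mcF_p})$ is a Banach space equivalent to $(\mcF_p, \norm{\,\cdot\,}_{\mcF_p})$, uniformly convex by Clarkson's inequality, and therefore reflexive by Milman--Pettis.
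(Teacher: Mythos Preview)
Your proposal is correct and follows essentially the same approach as the paper: view $N_n$ as an $\ell^p$-type norm on $L^p(K,\mu)\oplus_p\ell^p(E_n)$, transfer Clarkson's inequality to $\trinorm{\,\cdot\,}_{\mcF_p}$ by combining the liminf inequality for $f\pm g$ with recovery sequences for $f$ and $g$, and obtain the norm equivalence $\abs{f}_{\mcF_p}^p\le C_{\textup{WM}}\msfE_p(f)$ via the weak monotonicity of Corollary~\ref{cor.wm} together with the $L^p$-continuity of $M_n$. The only cosmetic difference is that the paper treats the case $p\ge 2$ by applying Clarkson separately to $\widetilde{\mcE}_p^{G_{n_k}}$ and to $\norm{\,\cdot\,}_{L^p}$ and then adding, whereas you work directly with $N_{n_k}$ in both regimes; your version is marginally cleaner but equivalent.
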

\begin{proof}
    Let $f, g \in L^{p}(K, \mu)$ and let $\{ f_n \}_{n \ge 1}$, $\{ g_n \}_{n \ge 1}$ be their recovery sequences throughout the proof.
    To verify the triangle inequality of $\trinorm{\,\cdot\,}_{\mcF_p}$, define
    \[
    \norm{f}_{p, n} \coloneqq \Bigl(\norm{f}_{L^p}^{p} + \widetilde{\mcE}_{p}^{G_n}(M_{n}f)\Bigr)^{1/p}.
    \]
    Note that the $\Gamma$-limit of $\bigl\{ \norm{\,\cdot\,}_{p, n_k} \bigr\}_{k \ge 1}$ coincides with $\trinorm{\,\cdot\,}_{\mcF_p}$ and that the norm $\norm{\,\cdot\,}_{p, n}$ can be regarded as a suitable $L^p$-norm on $K \sqcup E_{n}$, where $\sqcup$ denotes the disjoint union.
    Using the triangle inequality of $\norm{\,\cdot\,}_{p, n}$, we see that
    \begin{align*}
        \trinorm{f + g}_{\mcF_{p}}
        \le \varliminf_{k \to \infty}\norm{f_{n_k} + g_{n_k}}_{p, n_k}
        &\le \varlimsup_{k \to \infty}\norm{f_{n_k}}_{p, n_k} + \varlimsup_{k \to \infty}\norm{g_{n_k}}_{p, n_k} \\
        &\le \trinorm{f}_{\mcF_{p}}  + \trinorm{g}_{\mcF_{p}},
    \end{align*}
    and thus $\trinorm{\,\cdot\,}_{\mcF_p}$ is an extended norm on $L^{p}(K, \mu)$ (we admit $\trinorm{f}_{\mcF_p} = \infty$).

    Next, we prove $C_{\text{WM}}^{-1}\abs{f}_{\mcF_p}^{p} \le \msfE_{p}(f) \le \abs{f}_{\mcF_p}^{p}$ for every $f \in L^{p}(K, \mu)$ to conclude that $\trinorm{\,\cdot\,}_{\mcF_p}$ and $\norm{\,\cdot\,}_{\mcF_p}$ are equivalent.
    From the liminf inequality, we immediately have that $\msfE_{p}(f) \le \abs{f}_{\mcF_{p}}^{p}$ for $f \in L^{p}(K, \mu)$.
    To prove the converse, note that $M_{n}f_{n_k}(w) \to M_{n}f(w)$ for any $w \in W_{n}$ as $k \to \infty$ by the dominated convergence theorem.
    By the weak monotonicity (Corollary \ref{cor.wm}), we obtain
    \[
    \widetilde{\mcE}_{p}^{G_n}(M_{n}f) \le C_{\text{WM}}\varliminf_{k \to \infty}\widetilde{\mcE}_{p}^{G_{n_k}}(M_{n_k}f_{n_k}) \le C_{\text{WM}}\msfE_{p}(f)
    \]
    for all $n \ge 1$.
    We thus conclude that $\abs{f}_{\mcF_p}^{p} \le C_{\text{WM}}\msfE_{p}(f)$.

    The rest of the proof is mainly devoted to Clarkson's inequalities.
    Let $p \le 2$.
    Recall that $\norm{\,\cdot\,}_{p, n}$ can be regarded a suitable $L^{p}$-norm on $K \sqcup E_{n}$. 
    By Clarkson's inequality for $\norm{\,\cdot\,}_{p, n}$, we have
    \[
    \norm{f + g}_{p, n}^{\frac{p}{p - 1}} + \norm{f - g}_{p, n}^{\frac{p}{p - 1}} \le 2\bigl(\norm{f}_{p, n}^{p} + \norm{g}_{p, n}^{p}\bigr)^{\frac{1}{p - 1}}.
    \]
    Thus, we see that
    \begin{align*}
        \trinorm{f + g}_{\mcF_p}^{\frac{p}{p - 1}} + \trinorm{f - g}_{\mcF_p}^{\frac{p}{p - 1}}
        &\le \varliminf_{k \to \infty}\norm{f_{n_k} + g_{n_k}}_{p, n_k}^{\frac{p}{p - 1}} + \varliminf_{k \to \infty}\norm{f_{n_k} - g_{n_k}}_{p, n_k}^{\frac{p}{p - 1}} \\
        &\le \varlimsup_{k \to \infty}\bigl(\norm{f_{n_k} + g_{n_k}}_{p, n_k}^{\frac{p}{p - 1}} + \norm{f_{n_k} - g_{n_k}}_{p, n_k}^{\frac{p}{p - 1}} \bigr) \\
        &\le 2\varlimsup_{k \to \infty}\bigl(\norm{f_{n_k}}_{p, n_k}^{p} + \norm{g_{n_k}}_{p, n_k}^{p}\bigr)^{\frac{1}{p - 1}} \\
        &\le 2\Bigl(\varlimsup_{k \to \infty}\norm{f_{n_k}}_{p, n_k}^{p} + \varlimsup_{k \to \infty}\norm{g_{n_k}}_{p, n_k}^{p}\Bigr)^{\frac{1}{p - 1}} \\
        &\le 2\bigl(\trinorm{f}_{\mcF_p}^{p} + \trinorm{g}_{\mcF_p}^{p}\bigr)^{\frac{1}{p - 1}},
    \end{align*}
    which is Clarkson's inequality of $\trinorm{\,\cdot\,}_{\mcF_{p}}$ when $p \le 2$.
    Similarly, we get Clarkson's inequality for $p \ge 2$.

    Consequently, we get a new norm $\trinorm{\,\cdot\,}_{\mcF_p}$ of $\mcF_{p}$ satisfying Clarkson's inequality.
    Thus, we see that the Banach space $(\mcF_{p}, \trinorm{\,\cdot\,}_{\mcF_{p}})$ is uniformly convex (see \cite{Brezis}*{proof of Theorem 4.10 and its remark} for example).
    Therefore, we finish the proof by the Milman--Pettis theorem (see \cite{Brezis}*{Theorem 3.31} for example).
\end{proof}

\begin{thm}\label{thm.sep}
    The space $\mcH_{p}^{\star}$ defined in \eqref{dfn.core} is dense in $\mcF_p$.
    Furthermore, $\mcF_p$ is separable.
\end{thm}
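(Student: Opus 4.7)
The plan is to show that $\mcH_p^\star$ is dense in $(\mcF_p, \norm{\,\cdot\,}_{\mcF_p})$ by combining the explicit approximation from the proof of Theorem \ref{thm.core}, the reflexivity from Theorem \ref{thm.newnorm}, and Mazur's lemma; separability then follows by rationalization. Take $f \in \mcF_p$ and set $f_n \coloneqq \sum_{w \in W_n} M_n f(w)\,\varphi_w \in \mcH_p^\star$. Since $f$ admits a continuous representative by Theorem \ref{thm.embed}, the computation in the proof of Theorem \ref{thm.core} gives $\norm{f_n - f}_{\mcC(K)} \to 0$, hence $f_n \to f$ in $L^p(K, \mu)$.

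The main step, which I expect to be the chief obstacle, is the uniform seminorm bound
\[
\sup_{n \ge 1} \abs{f_n}_{\mcF_p}^p \le C\,\abs{f}_{\mcF_p}^p.
\]
By the weak monotonicity (Corollary \ref{cor.wm}) it suffices to estimate $\widetilde{\mcE}_p^{G_l}(M_l f_n)$ for $l \ge n$. For such $l$, the family $\{M_l \varphi_w\}_{w \in W_n}$ is a partition of unity on $W_l$ satisfying $\supp M_l \varphi_w \subseteq \mcB_{l-n}(w, 1)$ by Lemma \ref{lem.corebase}, so Lemma \ref{lem.unity} applies with $M_n f\colon W_n \to \bbR$ and $m = l - n$, yielding
\[
\mcE_p^{G_l}(M_l f_n) \le C_{\ref{lem.unity}}\,\mcE_p^{G_n}(M_n f) \max_{w \in W_n} \mcE_p^{G_l}(M_l \varphi_w).
\]
Since $\mcE_p^{G_l}(M_l \varphi_w) \le \rho_p^{-l}\abs{\varphi_w}_{\mcF_p}^p \le C_{\ref{lem.corebase}}\,\rho_p^{n-l}$ by Lemma \ref{lem.corebase}-(d), multiplying through by $\rho_p^l$ gives
\[
\widetilde{\mcE}_p^{G_l}(M_l f_n) \le C_{\ref{lem.unity}}C_{\ref{lem.corebase}}\,\widetilde{\mcE}_p^{G_n}(M_n f) \le C_{\ref{lem.unity}}C_{\ref{lem.corebase}}\,\abs{f}_{\mcF_p}^p,
\]
uniformly in $l \ge n$, as required.

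Granted this bound, the reflexivity of $(\mcF_p, \trinorm{\,\cdot\,}_{\mcF_p})$ established in Theorem \ref{thm.newnorm} allows us to extract a subsequence $\{f_{n_k}\}$ converging weakly in $\mcF_p$; the continuous inclusion $\mcF_p \hookrightarrow L^p(K,\mu)$ forces the weak limit to coincide with the strong $L^p$-limit $f$. Mazur's lemma then produces a sequence of convex combinations of $\{f_{n_k}\}$ converging to $f$ in $\mcF_p$-norm, and these combinations lie in the vector space $\mcH_p^\star$, establishing density. Separability is now immediate: the countable set of $\bbQ$-linear combinations of $\{\varphi_w\}_{w \in W_\#}$ is dense in $\mcH_p^\star$ in $\mcF_p$-norm and therefore dense in $\mcF_p$.
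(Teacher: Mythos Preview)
Your proof is correct and follows the same overall strategy as the paper: show $f_n \coloneqq \sum_{w\in W_n} M_nf(w)\varphi_w$ is bounded in $\mcF_p$ and converges to $f$ in $L^p$, then invoke reflexivity (Theorem \ref{thm.newnorm}) and Mazur's lemma.

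The one point where you diverge from the paper is in the derivation of the uniform bound $\sup_n\abs{f_n}_{\mcF_p}\le C\abs{f}_{\mcF_p}$. The paper obtains this bound indirectly: it passes through the pre-limit approximants $f_n^{(k)}\coloneqq\sum_w M_nf(w)\,\widetilde{\varphi}_w^{\,(n_k)}$, applies Lemma \ref{lem.unity} at the single level $n+n_k$ (where the discrete partition of unity is exactly the one from Lemma \ref{lem.subdiv.fcn}), uses weak monotonicity for $l\le n+n_k$, and only then lets $k\to\infty$. Your route is more direct: you observe that for each $l\ge n$ the family $\{M_l\varphi_w\}_{w\in W_n}$ itself satisfies the hypotheses of Lemma \ref{lem.unity} on $W_l$ (the support condition $\supp M_l\varphi_w\subseteq\mcB_{l-n}(w,1)$ follows from Lemma \ref{lem.corebase}-(c) together with Lemma \ref{lem.noISmeas}, since $M_l\varphi_w(v)\neq 0$ forces $\mu(K_v\cap K_u)>0$ for some $u$ with $d_{G_n}(u,w)\le 1$, hence $[v]_n=u$), and then combine Lemma \ref{lem.unity} with the energy bound of Lemma \ref{lem.corebase}-(d). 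This is a genuine simplification, as it avoids reintroducing the auxiliary functions $\widetilde{\varphi}_w^{\,(n_k)}$ and the subsequence $\{n_k\}$ from the proof of Lemma \ref{lem.corebase}.
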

\begin{proof}
    Recall the definition of $\widetilde{\varphi}_{w}^{\,(n)}$ in the construction of $\varphi_{w}$ (see the proof of Lemma \ref{lem.corebase}).
    By the diagonal procedure, we can pick a subsequence $\{ n_k \}_{k \ge 1}$ such that $\bigl\{ \widetilde{\varphi}_{w}^{\,(n_k)} \bigr\}_{k \ge 1}$ converges to $\varphi_w$ with respect to the supremum norm for all $w \in W_{\#}$.
    Next, for $f \in \mcF_p$, we define $f_n$ and $f_{n}^{(k)}$ by setting
    \[
    f_n \coloneqq \sum_{w \in W_n}M_{n}f(w)\varphi_{w}, \quad f_{n}^{(k)} \coloneqq \sum_{w \in W_n}M_{n}f(w)\widetilde{\varphi}_{w}^{\,(n_k)}.
    \]
    Similarly to the proof of Theorem \ref{thm.core}, we see that $\{ f_n \}_{n \ge 1}$ converges to $f$ with respect to the supremum norm.
    Also, by Lemma \ref{lem.unity}, we obtain $\widetilde{\mcE}_{p}^{G_{n + n_k}}\bigl(M_{n + n_k}f_{n}^{(k)}\bigr) \le c_1\abs{f}_{\mcF_p}^{p}$, where $c_1 > 0$ depends only on $p, a, D, N_{\ast}, L_{\ast}$.
    Thus, by the weak monotonicity (Corollary \ref{cor.wm}), it holds that $\widetilde{\mcE}_{p}^{G_l}\bigl(M_{l}f_{n}^{(k)}\bigr) \le c_{1}C_{\textup{WM}}\abs{f}_{\mcF_p}^{p}$ whenever $l \le n + n_k$.
    Letting $k \to \infty$, we see that $\{ f _{n} \}_{n \ge 1}$ is bounded in $\mcF_p$.
    Since $\mcF_p$ is reflexive, we may assume that a subsequence $\{ f_{n_k} \}_{k \ge 1}$ converges to $f$ with respect to the weak topology of $\mcF_p$.
    By Mazur's lemma (see \cite{Brezis}*{Corollary 3.8} for example), we can find a sequence $\{ g_l \}_{l \ge 1}$ such that each $g_l$ is a convex combination of $\{ f_{n_k} \}_{k \ge 1}$ and $g_l \to f$ in $\mcF_p$ as $l \to \infty$.
    In particular, we obtain $\overline{\mcH_{p}^{\star}}^{\norm{\,\cdot\,}_{\mcF_{p}}} = \mcF_{p}$.
    Clearly, the $\bbQ$-hull of $\{ \varphi_{w} \}_{w \in W_{\#}}$ also gives this approximation, that is,
    \[
    \overline{\Biggl\{ \sum_{w \in A}a_{w}\varphi_{w} \Biggm| \text{$A$ is a finite subset of $W_{\#}$ and $a_{w} \in \bbQ \, (w \in A)$} \Biggr\}}^{\norm{\,\cdot\,}_{\mcF_{p}}} = \overline{\mcH_{p}^{\star}}^{\norm{\,\cdot\,}_{\mcF_{p}}}.
    \]
    Therefore, $\mcF_p$ is separable.
\end{proof}

%----- Lipschitz--Besov -----
\subsection{Lipschitz--Besov type expression}\label{subsec:LBtype}
This subsection is devoted to proving Theorem \ref{thm.main2}.
Let us start by introducing the definition of Lipschitz--Besov spaces on the underlying generalized Sierpi\'{n}ski carpet $(K, d, \mu)$ in some specific cases (see \cite{Bod09} for example).
%Recall that $a = 3$ denotes the inverse of the contraction ratio of $F_i$ (see Definition \ref{dfn.GSC}).

\begin{dfn}\label{dfn.LB}
    For $s \in (0, \infty)$ and $p \in [1, \infty)$, the \emph{Lipschitz--Besov space} $\Lambda_{p, \infty}^{s}$ is defined as
    \[
    \Lambda_{p, \infty}^{s} \coloneqq \bigl\{ f \in L^{p}(K, \mu) \bigm| \abs{f}_{\Lambda_{p, \infty}^{s}} < \infty \bigr\},
    \]
    where
    \[
    \abs{f}_{\Lambda_{p, \infty}^{s}} \coloneqq \sup_{n \in \bbN}\left(\int_{K}\mint_{B_{d}(x, a^{-n})}\frac{\abs{f(x) - f(y)}^{p}}{a^{-nsp}}\,d\mu(y)d\mu(x)\right)^{1/p}.
    \]
    We also define its norm $\norm{\,\cdot\,}_{\Lambda_{p, \infty}^{s}}$ by setting $\norm{f}_{\Lambda_{p, \infty}^{s}} \coloneqq \norm{f}_{L^{p}} + \abs{f}_{\Lambda_{p, \infty}^{s}}$.
\end{dfn}
Then $(\Lambda_{p, \infty}^{s}, \norm{\,\cdot\,}_{\Lambda_{p, \infty}^{s}})$ is a Banach space.
Furthermore, for any $c \in [1, \infty)$ there exists a positive constant $C_{\text{LB}}(c)$, which depends only on $c, a, N_{\ast}, C_{\text{AR}}$, such that, for any $f \in L^{p}(K, \mu)$,
\begin{equation}\label{LBequiv2}
    \abs{f}_{\Lambda_{p, \infty}^{s}}^{p} \le C_{\text{LB}}(c)\sup_{n \in \bbN}\int_{K}\mint_{B_{d}(x, ca^{-n})}\frac{\abs{f(x) - f(y)}^{p}}{a^{-nsp}}\,d\mu(y)d\mu(x),
\end{equation}
and
\begin{align}\label{LBequiv}
    &C_{\text{LB}}(c)^{-1}\varlimsup_{n \to \infty}\int_{K}\mint_{B_{d}(x, ca^{-n})}\frac{\abs{f(x) - f(y)}^{p}}{a^{-nsp}}\,d\mu(y)d\mu(x) \\
    &\hspace*{25pt} \le \varlimsup_{r \downarrow 0}\int_{K}\mint_{B_{d}(x, r)}\frac{\abs{f(x) - f(y)}^{p}}{r^{sp}}\,d\mu(y)d\mu(x) \nonumber \\
    &\hspace*{50pt} \le C_{\text{LB}}(c)\varlimsup_{n \to \infty}\int_{K}\mint_{B_{d}(x, ca^{-n})}\frac{\abs{f(x) - f(y)}^{p}}{a^{-nsp}}\,d\mu(y)d\mu(x). \nonumber
\end{align}

First, we prove a $(p,p)$-Poincar\'{e} inequality in the sense of Kumagai and Sturm (see \cite{KS05}*{pp. 315}).
Recall that $\alpha = \log{N_{\ast}}/\log{a}$ denotes the Hausdorff dimension of $(K, d)$, and that $\beta_{p} = \log{(N_{\ast}\rho_{p})}/\log{a}$, where $\rho_{p}$ is the resistance scaling factor (see Theorem \ref{thm.sub-mult} and \eqref{Poi-super}).

\begin{lem}\label{lem.PI}
    There exists a positive constant $C_{\textup{PI-KS}}$ (depending only on $p, a, D$, $L_{\ast}$, $N_{\ast}$, $\rho_{p}, C_{\textup{AD}}$) such that
    \begin{equation}\label{PI}
        a^{\beta_{p} n}\sum_{w \in A}\int_{K_{w}}\abs{f(x) - M_{n}f(w)}^{p}\,d\mu(x)
        \le C_{\textup{PI-KS}}\,\varliminf_{l \to \infty}\widetilde{\mcE}_{p, A \cdot W_{l}}^{G_{l + n}}(M_{l + n}f),
    \end{equation}
    for every $n \in \bbN$, $f \in \mcF_{p}$ and subset $A \subseteq W_{n}$.
    In particular, 
    \begin{equation*}
        a^{\beta_{p} n}\sum_{w \in W_{n}}\int_{K_{w}}\abs{f(x) - M_{n}f(w)}^{p}\,d\mu(x)
        \le C_{\textup{PI-KS}}\,\abs{f}_{\mcF_{p}}^{p}.
    \end{equation*}
\end{lem}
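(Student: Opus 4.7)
The plan is to reduce the cell-wise Poincaré bound to the global Hölder embedding $\mcF_p \hookrightarrow \mcC^{0,(\beta_p-\alpha)/p}$ from Theorem~\ref{thm.embed}, transporting it to each sub-cell $K_w$ via the self-similar change of variables in Lemma~\ref{lem.invariant}, and then recovering the discrete subgraph energy $\widetilde{\mcE}_p^{A\cdot W_l}$ on the right-hand side using the weak monotonicity of Corollary~\ref{cor.wm}. The arithmetic backbone is the identity $a^{\beta_p n} = N_{\ast}^n \rho_p^n$ that comes directly from $\beta_p = \log(N_{\ast}\rho_p)/\log a$.

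First I would fix $w \in W_n$ and study $g_w \coloneqq F_w^{\ast} f$. The commutation identity \eqref{eq.commute} gives $M_l g_w(v) = M_{l+n} f(wv)$ for every $v \in W_l$, so
\[
\widetilde{\mcE}_p^{G_l}(M_l g_w) = \rho_p^{-n}\,\widetilde{\mcE}_p^{w\cdot W_l}(M_{l+n}f).
\]
Because the edge sets $E^{w\cdot W_l}$ are pairwise disjoint subsets of $E^{A\cdot W_l}$, summing over $w\in A$ yields
\[
\rho_p^n \sum_{w \in A}\widetilde{\mcE}_p^{G_l}(M_l g_w) \le \widetilde{\mcE}_p^{A\cdot W_l}(M_{l+n}f).
\]
Applying $\varliminf_{l\to\infty}$, using $\sum_{w\in A}\varliminf_l \le \varliminf_l \sum_{w\in A}$ for finite non-negative sums, and invoking Corollary~\ref{cor.wm} cell by cell, I obtain
\[
\rho_p^n \sum_{w \in A}\abs{g_w}_{\mcF_p}^{p} \le C_{\textup{WM}}\,\varliminf_{l\to\infty}\widetilde{\mcE}_p^{A\cdot W_l}(M_{l+n}f),
\]
which in particular ensures $g_w \in \mcF_p$ for each $w \in A$.

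Next, for any $h \in \mcF_p$, Theorem~\ref{thm.embed} and Jensen's inequality combine to give the global Poincaré estimate on $K$:
\[
\int_K \abs{h - \mean{h}{K,\mu}}^{p}\,d\mu \le \int_K \mint_K \abs{h(x) - h(y)}^{p}\,d\mu(y)\,d\mu(x) \le C_{\textup{H\"{o}l}}\diam(K)^{\beta_p - \alpha}\abs{h}_{\mcF_p}^{p}.
\]
Applying this with $h = g_w$ and noting $\mean{g_w}{K,\mu} = \int_K F_w^{\ast} f\,d\mu = M_n f(w)$, then pulling back via Lemma~\ref{lem.invariant} ($\int_{K_w} \abs{f - M_n f(w)}^{p} d\mu = N_{\ast}^{-n}\int_K \abs{g_w - M_n f(w)}^{p} d\mu$), gives the cell-wise bound
\[
\int_{K_w}\abs{f - M_n f(w)}^{p}\,d\mu \le N_{\ast}^{-n} C_{\textup{H\"{o}l}} \diam(K)^{\beta_p-\alpha}\abs{g_w}_{\mcF_p}^{p}.
\]

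To conclude, I multiply by $a^{\beta_p n} = N_{\ast}^n \rho_p^n$, sum over $w\in A$, and substitute the energy bound obtained for $\sum_w \abs{g_w}_{\mcF_p}^{p}$. The $N_{\ast}^{-n}$ from change of variables cancels with the $N_{\ast}^n$ in $a^{\beta_p n}$, leaving precisely the $\rho_p^n$ prefactor required to match the cell-decomposition inequality, and one reads off \eqref{PI} with $C_{\textup{PI-KS}} = C_{\textup{WM}} C_{\textup{H\"{o}l}}\diam(K)^{\beta_p-\alpha}$. The "in particular" statement is the case $A = W_n$, since then $A\cdot W_l = W_{n+l}$ and $\varliminf_l \widetilde{\mcE}_p^{G_{n+l}}(M_{n+l}f) \le \sup_k \widetilde{\mcE}_p^{G_k}(M_k f) = \abs{f}_{\mcF_p}^{p}$. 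There is no serious obstacle once Theorem~\ref{thm.embed} and Corollary~\ref{cor.wm} are available; the only care needed is the bookkeeping of the three rescaling factors ($a^{\beta_p n}$ on the left, $N_{\ast}^{-n}$ from self-similar pull-back, and $\rho_p^n$ separating $\widetilde{\mcE}_p^{G_l}$ after reparametrization from $\widetilde{\mcE}_p^{w\cdot W_l}$), which conspire to cancel exactly by the definition of $\beta_p$.
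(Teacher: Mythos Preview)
Your proof is correct and follows essentially the same approach as the paper: pull back to $g_w = F_w^{\ast} f$, apply the H\"older estimate of Theorem~\ref{thm.embed} cell by cell, and then sum over $w\in A$ using the commutation identity \eqref{eq.commute} together with weak monotonicity (Corollary~\ref{cor.wm}). The only cosmetic difference is that the paper invokes the mean value theorem to write $M_n f(w) = f(x_w)$ for some $x_w\in K_w$ and applies the H\"older bound pointwise, whereas you reach the same cell-wise estimate via Jensen's inequality; both routes produce the identical constant $C_{\textup{PI-KS}} = C_{\textup{WM}}\,C_{\textup{H\"ol}}\,\diam(K)^{\beta_p-\alpha}$.
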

\begin{proof}
    Let $f \in \mcF_{p}$ be the continuous version.
    Then, by the mean value theorem, for any $n \in \bbN$ and $w \in W_{n}$ there exists $x_{w} \in K_{w}$ such that $f(x_{w}) = M_{n}f(w)$.
    From the H\"{o}lder estimate (Theorem \ref{thm.embed}), we have that, for any $x \in K_{w}$,
    \begin{align*}
        \abs{f(x) - M_{n}f(w)}^{p}
        &= \abs{F_{w}^{\ast}f(F_{w}^{-1}(x)) - F_{w}^{\ast}f(F_{w}^{-1}(x_{w}))}^{p} \\
        &\le C_{\text{H\"{o}l}}\abs{F_{w}^{\ast}f}_{\mcF_{p}}^{p}d(F_{w}^{-1}(x), F_{w}^{-1}(x_{w}))^{\beta_{p} - \alpha} \\
        &\le C_{\text{H\"{o}l}}\diam(K, d)^{\beta_{p} - \alpha}\abs{F_{w}^{\ast}f}_{\mcF_{p}}^{p}.
    \end{align*}
    Consequently, we obtain
    \[
    \int_{K_{w}}\abs{f(x) - M_{n}f(w)}^{p}\,d\mu(x) \le C_{\text{H\"{o}l}}a^{-\alpha n}\abs{F_{w}^{\ast}f}_{\mcF_{p}}^{p}.
    \]
    Summing over $w \in W_{n}$, we conclude that
    \begin{align*}
        \sum_{w \in A}\int_{K_{w}}\abs{f(x) - M_{n}f(w)}^{p}\,d\mu(x) 
        &\le C_{\text{H\"{o}l}}\,a^{-\alpha n}\sum_{w \in A}\abs{F_{w}^{\ast}f}_{\mcF_{p}}^{p} \\
        &\le C_{\text{H\"{o}l}}C_{\text{WM}}\,a^{-\alpha n}\sum_{w \in A}\varliminf_{l \to \infty}\widetilde{\mcE}_{p}^{G_{l}}\bigl(M_{l}(F_{w}^{\ast}f)\bigr) \\
        &\le C_{\text{H\"{o}l}}C_{\text{WM}}\,a^{-\alpha n}\varliminf_{l \to \infty}\sum_{w \in A}\widetilde{\mcE}_{p}^{G_{l}}\bigl(M_{l}(F_{w}^{\ast}f)\bigr),
    \end{align*}
    where we used the weak monotonicity (Corollary \ref{cor.wm}) in the second line.
    From \eqref{eq.commute}, we see that $\sum_{w \in A}\widetilde{\mcE}_{p}^{G_{l}}\bigl(M_{l}(F_{w}^{\ast}f)\bigr) \le \rho_{p}^{-n}\widetilde{\mcE}_{p, A \cdot W_{l}}^{G_{l + n}}(M_{l + n}f)$.
    In particular, 
    \[
    a^{-\alpha n}\varliminf_{l \to \infty}\sum_{w \in A}\widetilde{\mcE}_{p}^{G_{l}}\bigl(M_{l}(F_{w}^{\ast}f)\bigr) \le a^{-\beta_{p}n}\varliminf_{l \to \infty}\widetilde{\mcE}_{p, A \cdot W_{l}}^{G_{l + n}}(M_{l + n}f),
    \]
    which proves \eqref{PI}.
\end{proof}

Next, we give an extension of \cite{GHL03}*{Theorem 4.11-$\textrm{(i\hspace{-.08em}i\hspace{-.08em}i)}$}.
This is essentially proved in \cite{AB21}*{Theorem 5.1}, so its proof is omitted here.
Since the function space $\mathbf{B}^{p, \alpha}$ in \cite{AB21}*{Theorem 5.1} is defined using heat kernels, we give a direct proof of the lemma in Appendix \ref{sec:LBembed} for reader's convenience.

\begin{lem}\label{lem.LBembed}
    Let $\beta > \alpha$ and $p > 1$.
    Then there exists a positive constant $C_{\ref{lem.LBembed}}$ (depending only on $p, \beta, a, N_{\ast}, C_{\textup{AR}}$) such that
    \begin{align*}
        &\abs{f(x) - f(y)}^{p} \\
        &\quad\le C_{\ref{lem.LBembed}}d(x, y)^{\beta - \alpha}\sup_{r \in (0, 3d(x,y)]}r^{-\beta}\int_{K}\mint_{B_{d}(z, r)}\abs{f(z) - f(z')}^{p}\,d\mu(z')d\mu(z),
    \end{align*}
    for every $f \in \Lambda_{p, \infty}^{\beta/p}$ and $\mu$-a.e. $x, y \in K$.
    In particular, for any $g \in \mcC(K)$ and $x, y \in K$,
    \begin{equation*}
        \abs{g(x) - g(y)}^{p} \le C_{\ref{lem.LBembed}}\abs{g}_{\Lambda_{p, \infty}^{\beta/p}}^{p}d(x, y)^{\beta - \alpha}.
    \end{equation*}
\end{lem}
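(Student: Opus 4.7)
The plan is to run a standard Morrey-type telescoping argument using averages over dyadically shrinking balls centered at $x$ and $y$, then glue the two chains together by a single cross-term comparing averages over the two top-level balls. Fix $\mu$-a.e.\ points $x,y \in K$ that are simultaneously Lebesgue points of $f$ (this is the only place where ``$\mu$-a.e.'' enters; note that by Ahlfors regularity the Lebesgue differentiation theorem holds). Set $r \coloneqq d(x,y)$, $r_n \coloneqq 2^{-n}r$, $B_n \coloneqq B(x,r_n)$, $B'_n \coloneqq B(y,r_n)$, and abbreviate $f_E \coloneqq \mint_E f\, d\mu$ for any Borel set $E$ of positive $\mu$-measure. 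I also set
\[
M \coloneqq \sup_{s \in (0, 3r]} s^{-\beta}\int_{K}\mint_{B(z, s)}\abs{f(z) - f(z')}^{p}\,d\mu(z')d\mu(z).
\]

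\smallskip

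First I would decompose by telescoping:
\[
\abs{f(x) - f(y)} \le \sum_{n=0}^{\infty}\abs{f_{B_n} - f_{B_{n+1}}} + \abs{f_{B_0} - f_{B'_0}} + \sum_{n=0}^{\infty}\abs{f_{B'_n} - f_{B'_{n+1}}},
\]
which is legitimate at Lebesgue points since $f_{B_n}\to f(x)$ and $f_{B'_n}\to f(y)$. Each nested-ball term I would estimate by Jensen's inequality:
\[
\abs{f_{B_n} - f_{B_{n+1}}}^{p} \le \mint_{B_{n+1}}\mint_{B_n}\abs{f(z) - f(z')}^{p}\,d\mu(z)d\mu(z').
\]
For $z' \in B_{n+1}$ and $z \in B_n$ we have $z \in B(z', 2r_n)$, and Ahlfors regularity gives $\mu(B(z',2r_n))/\mu(B_n) \le C(p,\alpha,C_{\textup{AR}})$. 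Combining this with a second application of Ahlfors regularity and Fubini, I obtain
\[
\abs{f_{B_n} - f_{B_{n+1}}}^{p} \le \frac{C}{\mu(B_n)}\int_{K}\mint_{B(z,2r_n)}\abs{f(z) - f(z')}^{p}\,d\mu(z')d\mu(z) \le C' r_n^{\beta - \alpha}\, M,
\]
where in the last step I use $\mu(B_n) \ge C_{\textup{AR}}^{-1}r_n^{\alpha}$, the bound $(2r_n)^{-\beta}I(2r_n) \le M$ (valid since $2r_n \le 2r \le 3r$), and absorb $2^{\beta}$ into the constant.

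\smallskip

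The cross term $\abs{f_{B_0} - f_{B'_0}}$ is handled the same way but using $B_0, B'_0 \subseteq B(x, 2r)$ and noting that $z \in B_0$, $z' \in B'_0$ implies $z' \in B(z, 3r)$; this produces a bound of the form $C\, r^{\beta-\alpha}M$ with the factor $3r$ appearing in the sup in $M$ (this is exactly where the constant $3$ in the statement comes from). Summing both geometric series and using $\beta > \alpha$,
\[
\sum_{n=0}^{\infty}\abs{f_{B_n} - f_{B_{n+1}}} \le (C'M)^{1/p}\,r^{(\beta-\alpha)/p}\sum_{n=0}^{\infty}2^{-n(\beta-\alpha)/p} \le C''\,r^{(\beta-\alpha)/p}M^{1/p},
\]
and likewise for the $B'_n$ sum. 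Raising the whole estimate to the $p$-th power yields the claimed bound with $C_{\ref{lem.LBembed}}$ depending only on $p,\beta,\alpha,C_{\textup{AR}}$.

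\smallskip

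For the ``in particular'' statement with $g \in \mcC(K)$, every point is a Lebesgue point by continuity, so the pointwise inequality extends to all $x,y \in K$, and replacing the supremum over $r \in (0,3d(x,y)]$ by $\abs{g}_{\Lambda_{p,\infty}^{\beta/p}}^{p}$ (using \eqref{LBequiv2} with $c = 3$ if necessary to convert balls of radius $3d(x,y)$ back to $a^{-n}$-scale balls) gives the desired global H\"{o}lder bound. The only subtlety I anticipate is the careful bookkeeping of the Ahlfors-regularity constants when passing between $B_n$, $B_{n+1}$, and the larger balls $B(z, 2r_n)$, $B(z,3r)$; this is routine but must be done cleanly to keep the final constant explicit.
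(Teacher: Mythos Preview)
Your proposal is correct and follows essentially the same Morrey-type telescoping argument as the paper's proof: both compare $f(x)$ to $f_{B(x,r)}$ via the dyadic chain $\sum_{n}|f_{B_n}-f_{B_{n+1}}|$, bound each term by Jensen/H\"older together with Ahlfors regularity to get a factor $r_n^{(\beta-\alpha)/p}M^{1/p}$, sum the resulting geometric series, and handle the single cross term $|f_{B(x,r)}-f_{B(y,r)}|$ using the inclusion $B(y,r)\subseteq B(z,3r)$ for $z\in B(x,r)$ (which is precisely where the ``$3$'' in the statement arises). The only cosmetic difference is that the paper packages the nested-ball estimate as a separate display (their \eqref{LBembed2}) and phrases it for the pair $B(x,r),B(x,2r)$, while you work directly with $B_n,B_{n+1}$; the content is the same.
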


An important consequence of the above lemma is the following type ``$(p,p)$-Poincar\'{e} inequality''.

\begin{lem}\label{lem.LBPI}
    Let $\beta > \alpha$ and $p > 1$.
    Then there exists a positive constant $C_{\textup{PI-LB}}$ (depending only on $p, \beta, a, D, N_{\ast}, C_{\textup{AR}}$) such that
    \begin{align}\label{eq.LBPI}
        &a^{\beta n}\sum_{w \in W_{n}}\int_{K_{w}}\abs{f(x) - M_{n}f(w)}^{p}\,d\mu(x) \\
        &\le C_{\textup{PI-LB}}\,\sup_{r \in (0, 3a^{-n}]}r^{-\beta}\int_{K}\mint_{B_{d}(x, r)}\abs{f(x) - f(y)}^{p}\,d\mu(y)d\mu(x), \nonumber
    \end{align}
    for every $n \in \bbN$ and $f \in \mcC(K)$.
\end{lem}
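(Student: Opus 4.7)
The plan is to establish the bound purely through Jensen's inequality, Fubini, and Ahlfors regularity \eqref{eq.AR}; notably, the pointwise H\"older estimate from Lemma \ref{lem.LBembed} is not actually needed here. The core idea is that each cell $K_w$ sits inside a ball of radius $\diam(K)\cdot a^{-n}$ around any of its points, so the averaged oscillation on $K_w$ is controlled by the double integral over $K$ against balls of that radius, which is precisely the quantity the Lipschitz--Besov seminorm measures.

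First I would fix $w \in W_n$ and $x \in K_w$, and apply Jensen's inequality in the form
\[
    \abs{f(x) - M_n f(w)}^p = \left|\mint_{K_w}(f(x) - f(y))\,d\mu(y)\right|^p \le \mint_{K_w}\abs{f(x) - f(y)}^p\,d\mu(y).
\]
Integrating over $x \in K_w$ and using $\mu(K_w) = N_\ast^{-n} = a^{-\alpha n}$ (Lemma \ref{lem.noISmeas}) yields
\[
    \int_{K_w}\abs{f(x) - M_n f(w)}^p d\mu(x) \le a^{\alpha n}\int_{K_w}\int_{K_w}\abs{f(x) - f(y)}^p\,d\mu(y)\,d\mu(x).
\]

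Next, I would set $r_n \coloneqq \diam(K)\cdot a^{-n}$ and note that $K_w \subseteq B(x, r_n)$ for any $x \in K_w$. Summing over $w \in W_n$ and using that the cells cover $K$ with $\mu$-null overlaps (Lemma \ref{lem.noISmeas}) gives
\[
    \sum_{w \in W_n}\int_{K_w}\int_{K_w}\abs{f(x) - f(y)}^p\,d\mu(y)\,d\mu(x) \le \int_K \int_{B(x, r_n)}\abs{f(x) - f(y)}^p\,d\mu(y)\,d\mu(x).
\]
Writing $\int_{B(x, r_n)} = \mu(B(x, r_n))\cdot\mint_{B(x, r_n)}$ and invoking Ahlfors regularity $\mu(B(x, r_n)) \le C_{\textup{AR}}\,r_n^\alpha$ from \eqref{eq.AR}, I obtain
\[
    \int_K\int_{B(x, r_n)}\abs{f(x) - f(y)}^p\,d\mu(y)\,d\mu(x) \le C_{\textup{AR}}\,r_n^\alpha\int_K\mint_{B(x, r_n)}\abs{f(x) - f(y)}^p\,d\mu(y)\,d\mu(x).
\]

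Combining these inequalities and collecting the powers of $a^{-n}$ via $a^{\alpha n}\cdot r_n^\alpha = \diam(K)^\alpha$, one is left with
\[
    \sum_{w \in W_n}\int_{K_w}\abs{f - M_n f(w)}^p d\mu \le C_{\textup{AR}}\diam(K)^\alpha \cdot r_n^\beta \cdot \left[r_n^{-\beta}\int_K\mint_{B(x, r_n)}\abs{f(x) - f(y)}^p\,d\mu(y)\,d\mu(x)\right].
\]
Since $r_n \in (0, 3\diam(K)\,a^{-n}]$, the bracketed quantity is dominated by the supremum on the right-hand side of \eqref{eq.LBPI}, and $r_n^\beta = \diam(K)^\beta a^{-\beta n}$. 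Multiplying through by $a^{\beta n}$ yields \eqref{eq.LBPI} with $C_{\textup{PI-LB}} = C_{\textup{AR}}\diam(K)^{\alpha + \beta}$, whose dependence matches the statement. The argument is essentially routine; no substantive obstacle arises, and I do not expect to need the condition $\beta > \alpha$ for this particular lemma (it becomes essential elsewhere, for instance in Lemma \ref{lem.LBembed}).
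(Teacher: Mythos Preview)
Your proof is correct and considerably simpler than the paper's. The paper first replaces $M_nf(w)$ by $f(x_w)$ for some $x_w \in K_w$ via the mean value theorem, then runs a telescoping argument along a chain of nested cells $K_{w(0)} \supset K_{w(1)} \supset \cdots$ with step size governed by an auxiliary parameter $k$; one telescoped term is bounded using the pointwise H\"older estimate of Lemma~\ref{lem.LBembed}, the rest by direct integration, and the choice of $k$ (conditions \eqref{eq.k1word}--\eqref{eq.k2word}) is precisely where the hypothesis $\beta > \alpha$ enters. Your single-step Jensen/Ahlfors argument bypasses all of this, never invokes Lemma~\ref{lem.LBembed}, and---as you observe---does not require $\beta > \alpha$. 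The paper's more elaborate machinery is not wasted in the larger picture, however: the closely related Lemma~\ref{lem.newPI}(1) has point evaluations $f(x)$ for $x \in V_w(\bbG_1)$ in place of the cell average $M_nf(w)$, so Jensen is unavailable there and the telescoping together with the pointwise H\"older bound becomes genuinely necessary. One trivial remark: since $\diam(K_w) = r_n$ exactly, $K_w \subseteq B(x, r_n)$ may fail on a $\mu$-null set; this is harmless for the integral, or alternatively one may take $r_n = 2\diam(K)\,a^{-n}$, which still lies in $(0, 3\diam(K)\,a^{-n}]$.
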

\begin{proof}
    We adopt a method in \cite{GY19}*{proof of Theorem 3.5} and generalize it to fit our context.
    Let $f \in \Lambda_{p, \infty}^{\beta/p}$, let $w = w_{1} \cdots w_{n} \in W_{n}$ and fix $k \in \bbN$ that we choose later.
    Then, by the mean value theorem, there exists $x_{w} \in K_{w}$ such that $M_{n}f(w) = f(x_{w})$.
    Let $\omega \in \pi^{-1}(\{ x_{w} \})$ such that $[\omega]_{n} = w$.
    For each $m \in \bbZ_{\ge 0}$, we define $w(m) \coloneqq [\omega]_{n + km} \in W_{n + km}$.
    Then, for $z_{m} \in K_{w(m)} \, (m = 0, \dots, n)$,
    \begin{align}\label{ineq.refine}
        &\abs{f(x_{w}) - f(z_{0})}^{p} \\
        &\le 2^{p - 1}\abs{f(x_{w}) - f(z_{n})}^{p} + 2^{p - 1}\sum_{i = 0}^{n - 1}2^{i(p - 1)}\abs{f(z_{i}) - f(z_{i + 1})}^{p}. \nonumber
    \end{align}
    Integrating \eqref{ineq.refine}, we obtain
    \begin{align}\label{est.iterateword}
        &\mint_{K_{w}}\abs{f(z) - M_{n}f(w)}^{p}\,d\mu(z) \\
        &\le 2^{p - 1}\mint_{K_{w(n)}}\abs{f(x_{w}) - f(z_{n})}^{p}\,d\mu(z_{n}) \nonumber \\
        &\hspace{15pt}+ 2^{2(p - 1)}\sum_{i = 0}^{n - 1}2^{i(p - 1)}\mint_{K_{w(i)}}\mint_{K_{w(i + 1)}}\abs{f(z_{i}) - f(z_{i + 1})}^{p}\,d\mu(z_{i + 1})d\mu(z_{i}). \nonumber
    \end{align}
    Set $c \coloneqq 3\diam(K, d) = 3$ and define
    \begin{equation}\label{eq.LBlocal}
        S_{p,\beta}^{(n)}(f) \coloneqq \sup_{r \in (0, ca^{-n}]}r^{-\beta}\int_{K}\mint_{B_{d}(z, r)}\abs{f(z) - f(z')}^{p}\,d\mu(z')d\mu(z).
    \end{equation}
    By Lemma \ref{lem.LBembed}, the first term of the right-hand side of \eqref{est.iterateword} has a bound:
    \begin{align}\label{ineq.LBPI1word}
        &\mint_{K_{w(n)}}\abs{f(x_{w}) - f(z_{n})}^{p}\,d\mu(z_{n}) \\
        &\le C_{\ref{lem.LBembed}}S_{p,\beta}^{(n)}(f)\,\mint_{K_{w(n)}}d(x_{w}, z_{n})^{\beta - \alpha}\,d\mu(z_{n}) \nonumber \\
        &\le \bigl(C_{\ref{lem.LBembed}}\diam(K, d)^{\beta - \alpha}\bigr)a^{-(n + kn)(\beta - \alpha)}S_{p,\beta}^{(n)}(f). \nonumber
    \end{align}
    For the second term of \eqref{est.iterateword}, we see that
    \begin{align}\label{ineq.LBPI2word}
        &2^{i(p - 1)}\mint_{K_{w(i)}}\mint_{K_{w(i + 1)}}\abs{f(z_{i}) - f(z_{i + 1})}^{p}\,d\mu(z_{i + 1})d\mu(z_{i}) \\
        &\le c_{1}\,2^{i(p - 1)}a^{\alpha k}a^{2\alpha(n + ki)}I_{i}(f), \nonumber 
    \end{align}
    where 
    \[
    I_{i}(f) \coloneqq \int_{K_{w(i)}}\int_{B_{d}(z_{i}, ca^{-(n + ki)})}\abs{f(z_{i}) - f(z_{i + 1})}^{p}\,d\mu(z_{i + 1})d\mu(z_{i})
    \]
    and $c_1 > 0$ depends only on $c$ and $C_{\text{AR}}$.

    Now, we consider $k \in \bbN$ large enough so that
    \begin{equation}\label{eq.k1word}
        k(\beta - \alpha) \ge \alpha \quad \text{and} \quad N_{\ast}a^{-(\beta - \alpha)k} \vee 2^{p - 1}a^{-(\beta - \alpha)k} < 1.
    \end{equation}
    Then, by summing \eqref{ineq.LBPI1word} and \eqref{ineq.LBPI2word} over $w \in W_{n}$, we have from \eqref{eq.k1word} that
    \begin{align*}
        \sum_{w \in W_{n}}\,\mint_{K_{w(n)}}\abs{f(x_w) - f(z_{n})}^{p}\,d\mu(z_{n}) 
        &\le C_{\ref{lem.LBembed}}S_{p,\beta}^{(n)}(f)N_{\ast}^{n}a^{-(n + kn)(\beta - \alpha)} \\
        &\le C_{\ref{lem.LBembed}}S_{p,\beta}^{(n)}(f)\,a^{-(\beta - \alpha)n},
    \end{align*}
    and from Proposition \ref{prop.BF}-(3) and \eqref{eq.k1word} that
    \begin{align*}
        \sum_{i = 0}^{n - 1}&\,\sum_{w \in W_{n}}2^{i(p - 1)}\mint_{K_{w(i)}}\mint_{K_{w(i + 1)}}\abs{f(z_{i}) - f(z_{i + 1})}^{p}\,d\mu(z_{i + 1})d\mu(z_{i}) \\
        &\le c_{1}a^{\alpha k}\sum_{i = 0}^{n - 1}2^{i(p - 1)}a^{2\alpha(n + ki)}\int_{K}\int_{B(x, ca^{-(n + ki)})}\abs{f(x) - f(y)}^{p}\,d\mu(y)d\mu(x) \\
        &\le c_{2}\,a^{\alpha k}S_{p,\beta}^{(n)}(f)\left(\,\sum_{i = 0}^{n - 1}2^{i(p - 1)}a^{-(\beta - \alpha)(n + ki)}\,\right) \\
        &\le c_{2}\,a^{\alpha k}S_{p,\beta}^{(n)}(f)\left(\,\sum_{i = 0}^{\infty}\Bigl(2^{p - 1}a^{-(\beta - \alpha)k}\Bigr)^{i}\,\right)a^{-(\beta - \alpha)n}
        \eqqcolon c_{3}S_{p,\beta}^{(n)}(f)\,a^{-(\beta - \alpha)n},
    \end{align*}
    where $c_{2}, c_{3} > 0$ depend only on $C_{\text{AR}}, c, \beta, a, N_{\ast}, p$.
    From these estimates and \eqref{est.iterateword}, we finish the proof.
\end{proof}

Now we are ready to finish the proof of Theorem \ref{thm.main2}.

\begin{thm}\label{thm.Besov-charact}
  \begin{equation*}\label{eq.Besov-charact}
      \mcF_{p} = \Lambda_{p, \infty}^{\beta_{p}/p} = \Biggl\{ f \in L^{p}(K, \mu) \Biggm| \varlimsup_{r \downarrow 0} \int_{K}\mint_{B_{d}(x, r)}\frac{\abs{f(x) - f(y)}^{p}}{r^{\beta_{p}}}\,d\mu(y)d\mu(x) < \infty \Biggr\}.
  \end{equation*}
\end{thm}
\begin{proof}
  Let $c > 0$ such that $\max_{(v, w) \in E_{n}}\sup_{x \in K_{v}, y \in K_{w}}d(x, y) < ca^{-n}$.
  We can choose such $c$ depending only on $C_{\text{AD}}$ by Lemma \ref{lem.1-adapted} and we can assume that $c \ge 3\diam(K, d)$ without loss of generality.
  Then $y \in B(x, ca^{-n})$ whenever $(v,w) \in E_{n}$ and $x \in K_{v}$, $y \in K_{w}$.
  Let $\beta > \alpha$ and set
  \[
  A_{p, \beta}^{(n)}(f) \coloneqq a^{\beta n}\int_{K}\mint_{B_{d}(x, ca^{-n})}\abs{f(x) - f(y)}^{p}\,d\mu(y)d\mu(x),
  \]
  and define $S_{p,\beta}^{(n)}(f)$ as in \eqref{eq.LBlocal} for each $n \in \bbN$ and $f \in L^{p}(K, \mu)$.
  Then, thanks to Lemma \ref{lem.PI}, it will suffice to show the following two estimates:
  \begin{align}\label{eq.PIKS-upper}
      &A_{p, \beta_p}^{(n)}(f) \\
      &\hspace*{10pt} \le c_{\ref{eq.PIKS-upper}}\biggl(\widetilde{\mcE}_{p}^{G_{n}}(M_{n}f) \nonumber \\
      &\hspace*{55pt} + a^{\beta_{p}n}\sum_{w \in W_{n}}\int_{K_{w}}\abs{f(x) - M_{n}f(w)}^{p}\,d\mu(x)\biggl), \quad f \in L^{p}(K, \mu), \nonumber
  \end{align}
  \begin{equation}\label{eq.PIKS-lower}
      a^{(\beta - \alpha)n}\cdot\mcE_{p}^{G_{n}}(M_{n}f) \le c_{\ref{eq.PIKS-lower}}S_{p,\beta}^{(n)}(f), \quad f \in \mcC(K),
  \end{equation}
  for some positive constants $c_{\ref{eq.PIKS-upper}}, c_{\ref{eq.PIKS-lower}}$ (without depending on $f$ and $n$).
  Indeed, by \eqref{LBequiv2}, \eqref{eq.PIKS-upper} and Lemma \ref{lem.PI}, we immediately see that
  \begin{equation}\label{eq.LBcomp1}
      C_{\textup{LB}}(c)^{-1}\abs{f}_{\Lambda_{p, \infty}^{\beta_p/p}}^{p} \le \sup_{n \in \bbN}A_{p, \beta_p}^{(n)}(f) \le c_{\ref{eq.PIKS-upper}}(1 + C_{\text{PI-KS}})\abs{f}_{\mcF_{p}}^{p}.
  \end{equation}
  Additionally, by the weak monotonicity (Corollary \ref{cor.wm}), \eqref{eq.PIKS-lower} and \eqref{LBequiv}, we have 
  \begin{align}\label{eq.LBcomp2}
      \abs{f}_{\mcF_p}^{p} &\le c_{\ref{eq.PIKS-lower}}C_{\text{WM}}\varlimsup_{r \downarrow 0}r^{-\beta_p}\int_{K}\mint_{B_{d}(x, r)}\abs{f(x) - f(y)}^{p}\,d\mu(y)d\mu(x) \\
      &\le c_{\ref{eq.PIKS-lower}}'\varlimsup_{n \to \infty}a^{\beta_p n}\int_{K}\mint_{B_{d}(x, a^{-n})}\abs{f(x) - f(y)}^{p}\,d\mu(y)d\mu(x), \nonumber
  \end{align}
  where $c_{\ref{eq.PIKS-lower}}' = c_{\ref{eq.PIKS-lower}}C_{\text{WM}}C_{\textup{LB}}(c)C_{\textup{LB}}(1)$. 
  Our assertion follows from \eqref{eq.LBcomp1} and \eqref{eq.LBcomp2}.

  The rest of the proof is devoted to proving \eqref{eq.PIKS-upper} and \eqref{eq.PIKS-lower}.
  First, we will prove \eqref{eq.PIKS-upper}.
  Let $x, y \in K$ with $d(x, y) < ca^{-n}$.
  Then, by the metric doubling property of $(K, d)$ (see \cite{Hei}*{pp. 81} for example), there exists a constant $L \ge 2$ depending only on $C_{\textup{AR}}$ such that, for any $w \in W_{n}$ with $x \in K_{w}$, we can choose $v \in W_{n}$ satisfying $d_{G_{n}}(v, w) \le L$ and $y \in K_{v}$.
  From this observation, we have that
  \begin{align}\label{eq.cover1}
      &A_{p, \beta_p}^{(n)}(f) \\
      &\le a^{\beta_{p}n}\sum_{\substack{w \in W_{n}, v \in W_{n};\\ d_{G_{n}}(v, w) \le L}}\int_{K_{w}}\frac{1}{\mu(B_{d}\bigl(x, ca^{-n})\bigr)}\int_{K_{v}}\abs{f(x) - f(y)}^{p}\,d\mu(y)d\mu(x). \nonumber 
  \end{align}
  To estimate the integral in \eqref{eq.cover1}, let $v, w \in W_{n}$ with $d_{G_{n}}(v, w) \le L$.
  Then we can pick a path $\bigl[w(0), w(1), \dots, w(L)\bigr]$ in $G_{n}$ from $w$ to $v$, that is, $w(i) \, (i = 0, \dots, L)$ satisfy $w(0) = w$, $w(L) = v$ and
  \[
  \text{$w(i - 1) = w(i)$ or $\bigl(w(i - 1), w(i)\bigr) \in E_{n}$ for each $i = 1, \dots, L$.}
  \]
  Let $x_{i} \in K_{w(i)}$ for each $i = 0, \dots, L$.
  Then H\"{o}lder's inequality implies that
  \[
  \abs{f(x_0) - f(x_L)}^{p} \le L^{p - 1}\sum_{i = 1}^{L}\abs{f(x_{i - 1}) - f(x_{i})}^{p}.
  \]
  Now, by integrating this, we deduce that
  \begin{align*}
      &\left(\prod_{i = 1}^{L - 1}\mu\bigl(K_{w(i)}\bigr)\right)\int_{K_{w}}\int_{K_{v}}\abs{f(x) - f(y)}^{p}\,d\mu(x)d\mu(y) \\
      &\le L^{p - 1}\sum_{i = 1}^{L}\frac{\prod_{j = 0}^{L}\mu\bigl(K_{w(j)}\bigr)}{\mu\bigl(K_{w(i - 1)}\bigr)\mu\bigl(K_{w(i)}\bigr)}\int_{K_{w(i - 1)}}\int_{K_{w(i)}}\abs{f(x_{i}) - f(x_{i - 1})}^{p}\,d\mu(x_{i})d\mu(x_{i - 1}).
  \end{align*}
  Since $\mu$ is the self-similar measure with weights $(N_{\ast}^{-1}, \dots, N_{\ast}^{-1})$, it is a simple computation that
  \[
  \frac{\prod_{j = 0}^{L}\mu\bigl(K_{w(j)}\bigr)}{\mu\bigl(K_{w(i - 1)}\bigr)\mu\bigl(K_{w(i)}\bigr)}\frac{1}{\prod_{i = 1}^{L - 1}\mu\bigl(K_{w(i)}\bigr)}
  = \frac{\mu(K_{v})\mu(K_{w})}{\mu\bigl(K_{w(i - 1)}\bigr)\mu\bigl(K_{w(i)}\bigr)} = 1.
  \]
  Furthermore, the Ahlfors regularity of $\mu$ (more precisely, the volume doubling property of $\mu$) implies that there exists a constant $c_1 > 0$ depending only on $C_{\text{AR}}, a, N_{\ast}, c$ such that $\mu(K_{z}) \le c_{1}\mu(B_{d}(x, ca^{-n}))$ for any $n \in \bbN$, $z \in W_{n}$ and $x \in K$.
  Thus, it follows from \eqref{eq.cover1} that
  \begin{align*}
      &A_{p, \beta_p}^{(n)}(f) \\
      &\le \bigl(c_{1}L^{p - 1}L_{\ast}^{L}\bigr)a^{\beta_{p}n}\sum_{(v, w) \in E_{n}}\int_{K_{w}}\mint_{K_{v}}\abs{f(x) - f(y)}^{p}\,d\mu(y)d\mu(x) \\
      &\le c_{\ref{eq.PIKS-upper}}\Biggl(a^{\beta_{p}n}\sum_{v \in W_{n}}\int_{K_{v}}\abs{f(x) - M_{n}f(v)}^{p}\,d\mu(x) \\
      &\hspace{90pt}+ \rho_{p}^{n}\sum_{(v,w) \in E_{n}}\abs{M_{n}f(v) - M_{n}f(w)}^{p}\Biggr),
  \end{align*}
  where $c_{\ref{eq.PIKS-upper}} \coloneqq c_{1}(2L)^{p - 1}L_{\ast}^{L + 1}$.
  This proves \eqref{eq.PIKS-upper}.

  Next let us prove \eqref{eq.PIKS-lower}.
  Let $\beta > \alpha$, let $p > 1$ and let $f \in \mcC(K)$.
  For $n \in \bbN$, $(v, w) \in E_{n}$, $x \in K_{v}$ and $y \in K_{w}$, we see that
  \begin{align*}
      &\abs{M_{n}f(v) - M_{n}f(w)}^{p} \\
      &\le 3^{p - 1}\bigl(\abs{M_{n}f(v) - f(x)}^{p} + \abs{f(x) - f(y)}^{p} + \abs{M_{n}f(w) - f(y)}^{p}\bigr).
  \end{align*}
  Integrating this over $K_{v}$ and $K_{w}$, we obtain
  \begin{align*}
      &a^{(\beta - \alpha)n}\cdot\abs{M_{n}f(v) - M_{n}f(w)}^{p} \\
      &\le 3^{p - 1}\Biggl(\;a^{\beta n}\int_{K_{v}}\abs{M_{n}f(v) - f(x)}^{p}\,d\mu(x)  \\
      &\hspace{10pt}+ a^{\beta n}\int_{K_{w}}\mint_{K_{v}}\abs{f(x) - f(y)}^{p}\,d\mu(x)d\mu(y) + a^{\beta n}\int_{K_{w}}\abs{M_{n}f(w) - f(y)}^{p}\,d\mu(y)\Biggr).
  \end{align*}
  Summing over $(v, w) \in E_{n}$, we obtain
  \begin{align*}
      &a^{(\beta - \alpha)n}\cdot\mcE_{p}^{G_{n}}(M_{n}f) \\
      &\le 2\cdot3^{p - 1}L_{\ast}\Biggl(a^{\beta n}\sum_{v \in W_{n}}\int_{K_{v}}\abs{M_{n}f(v) - f(x)}^{p}\,d\mu(x) \\
      &\hspace{125pt}+ a^{\beta n}\sum_{(v,w) \in E_{n}}\int_{K_{w}}\mint_{K_{v}}\abs{f(x) - f(y)}^{p}\,d\mu(x)d\mu(y)\Biggr).
  \end{align*}
  A bound of the first term in the right-hand side is obtained in Lemma \ref{lem.LBPI}.
  Noting that $K_{v} \subseteq B_{d}(y, ca^{-n})$ for $(v, w) \in E_{n}$ and $y \in K_{w}$, we can estimate the second term as follows:
  \begin{align*}
      &a^{\beta n}\sum_{(v,w) \in E_{n}}\int_{K_{w}}\mint_{K_{v}}\abs{f(x) - f(y)}^{p}\,d\mu(x)d\mu(y) \\
      &\le c_{2}a^{\beta n}\sum_{(v,w) \in E_{n}}\int_{K_{w}}\mint_{B_{d}(y, ca^{-n})}\abs{f(x) - f(y)}^{p}\,d\mu(x)d\mu(y)
      \le c_{2}c^{-\beta}L_{\ast}S_{p, \beta}^{(n)}(f),
  \end{align*}
  where $c_2 > 0$ depends only on $C_{\text{AR}}, c$.
  This proves \eqref{eq.PIKS-lower} and finishes the proof.
\end{proof}

As an immediate consequence of Theorem \ref{thm.Besov-charact}, we have a characterization of $\beta_{p}$ as \emph{critical Besov exponents}.
For details on critical Besov exponents, see \cites{ABCRST21,GHL03} for example.
This result is well-known when $p = 2$ (see \cite{GHL03}*{Theorem 4.6}).

\begin{cor}\label{cor.Lp-Besov}
    It holds that $\beta_{p} = p\cdot\sup\bigl\{ s > 0 \bigm| \Lambda_{p, \infty}^{s} \neq \{ \text{constant} \} \bigr\}$.
\end{cor}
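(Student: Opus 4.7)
The plan is to derive this statement directly from Theorem \ref{thm.main2} and the estimates already established in its proof, with essentially no new analytic work required. Write $s_{\ast} \coloneqq \sup\{ s > 0 \mid \Lambda_{p, \infty}^{s} \neq \{ \text{constant} \} \}$, so that the goal is to prove $s_{\ast} = \beta_p/p$. As a preliminary observation I would record the trivial monotonicity: for $0 < s \le s'$ and $f \in \Lambda_{p,\infty}^{s'}$,
$$\abs{f}_{\Lambda_{p,\infty}^{s}}^{p} = \sup_{n \in \bbN} a^{n(s - s')p}\biggl(a^{ns'p}\int_{K}\mint_{B(x, a^{-n})}\abs{f(x) - f(y)}^{p}\,d\mu(y)d\mu(x)\biggr) \le \abs{f}_{\Lambda_{p,\infty}^{s'}}^{p},$$
so that $\Lambda_{p,\infty}^{s'} \subseteq \Lambda_{p,\infty}^{s}$; both required inequalities then reduce to verifying things at the single exponent $s = \beta_p/p$.

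For the lower bound $s_{\ast} \ge \beta_p/p$, the key is the inclusion $\mcF_p \subseteq \Lambda_{p, \infty}^{\beta_p/p}$. This is already contained in the chain \eqref{eq.LBcomp1} established in the proof of Theorem \ref{thm.main2}, which gives $\abs{f}_{\Lambda_{p, \infty}^{\beta_p/p}}^{p} \le C \abs{f}_{\mcF_p}^{p}$ for every $f \in \mcF_p$ (crucially involving the $\sup_{n}$ of the Besov expression, not merely the $\limsup$, with the auxiliary constant $c$ absorbed via \eqref{LBequiv2}). By the regularity statement Theorem \ref{thm.core}, $\mcF_p$ is dense in $\mcC(K)$ in the sup norm, so it contains non-constants; hence so does $\Lambda_{p,\infty}^{\beta_p/p}$, and monotonicity yields $s_{\ast} \ge \beta_p/p$.

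For the reverse bound $s_{\ast} \le \beta_p/p$, I would fix $s > \beta_p/p$ and any $f \in \Lambda_{p,\infty}^{s}$. Since $sp > \beta_p$ and $\abs{f}_{\Lambda_{p,\infty}^{s}} < \infty$, one directly computes
$$a^{n\beta_{p}}\int_{K}\mint_{B(x, a^{-n})}\abs{f(x) - f(y)}^{p}\,d\mu(y)d\mu(x) \le a^{-n(sp - \beta_{p})}\abs{f}_{\Lambda_{p,\infty}^{s}}^{p} \xrightarrow[n \to \infty]{} 0.$$
Then inequality \eqref{eq.LBcomp2} from the proof of Theorem \ref{thm.main2} forces $\abs{f}_{\mcF_p} = 0$, so $f \in \mcF_p$; applying the H\"older embedding in Theorem \ref{thm.embed} to the continuous modification of $f$ gives
$$\abs{f_{\ast}(x) - f_{\ast}(y)}^{p} \le C_{\textup{H\"ol}}\abs{f}_{\mcF_p}^{p}d(x,y)^{\beta_p - \alpha} = 0,$$
so $f$ is constant and $\Lambda_{p,\infty}^{s} = \{\text{constant}\}$, as required.

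I do not anticipate any serious obstacle: the whole argument is a matter of observing that Theorem \ref{thm.main2} together with the internal estimates \eqref{eq.LBcomp1}, \eqref{eq.LBcomp2} and \eqref{LBequiv2} already provide a two-sided equivalence between $\abs{\,\cdot\,}_{\mcF_p}$ and the $\sup_n$-version of the Besov seminorm at the single critical exponent $s = \beta_p/p$. The only minor point of care is keeping track of which direction (sup vs.\ limsup) is needed where: \eqref{eq.LBcomp1} supplies the sup-control required to place $\mcF_p$ inside $\Lambda_{p,\infty}^{\beta_p/p}$, while \eqref{eq.LBcomp2} only demands the weaker limsup, which is automatic once $sp > \beta_p$.
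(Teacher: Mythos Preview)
Your proposal is correct and follows essentially the same route as the paper: both directions rest on the two-sided comparison between $\abs{\,\cdot\,}_{\mcF_p}$ and the Besov seminorm at exponent $\beta_p/p$ established in the proof of Theorem~\ref{thm.main2}, with the paper arguing the upper bound contrapositively (non-constant $f\in\mcC(K)$ $\Rightarrow$ $\abs{f}_{\Lambda_{p,\infty}^{\beta/p}}=\infty$ via weak monotonicity and \eqref{eq.PIKS-lower} at exponent $\beta$) while you argue directly (finite $\abs{f}_{\Lambda_{p,\infty}^{s}}$ $\Rightarrow$ $\abs{f}_{\mcF_p}=0$ via \eqref{eq.LBcomp2}, then H\"older embedding). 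One small point of care: \eqref{eq.LBcomp2} is derived from \eqref{eq.PIKS-lower}, which is stated only for $f\in\mcC(K)$, so before invoking it you should first pass to the continuous modification of $f$ via Lemma~\ref{lem.LBembed} (applicable since $sp>\beta_p>\alpha$); this is a one-line fix and the argument then goes through as written.
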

\begin{proof}
    Note that $\Lambda_{p, \infty}^{\beta'/p} \subseteq \Lambda_{p, \infty}^{\beta/p}$ for any $\beta \le \beta'$.
    It is immediate that 
    \[
    \beta_{p} \le p\cdot\sup\bigl\{ s > 0 \bigm| \Lambda_{p, \infty}^{s} \neq \{ \text{constant} \} \bigr\}. 
    \]
    To prove the converse, let $\beta > \beta_{p}$.
    If $f \in \mcC(K)$ is not constant, then there exists $N \in \bbN$ such that $\widetilde{\mcE}_{p}^{G_{N}}(M_{N}f) > 0$.
    By Corollary \ref{cor.wm}, for any $n \ge N$,
    \begin{align*}
        a^{(\beta - \alpha)n}\cdot\mcE_{p}^{G_{n}}(M_{n}f) = a^{(\beta - \alpha)n}\rho_{p}^{-n}\cdot\widetilde{\mcE}_{p}^{G_{n}}(M_{n}f)
        \ge C_{\text{WM}}^{-1}\,a^{(\beta - \alpha)n}\rho_{p}^{-n}\cdot\widetilde{\mcE}_{p}^{G_{N}}(M_{N}f).
    \end{align*}
    Letting $n \to \infty$ in this inequality, we obtain $\varlimsup_{n \to \infty}a^{(\beta - \alpha)n}\cdot\mcE_{p}^{G_{n}}(M_{n}f) = \infty$ since $\rho_{p}^{-1}a^{\beta - \alpha} > 1$.
    By \eqref{eq.PIKS-lower}, we conclude that $\abs{f}_{\Lambda_{p, \infty}^{\beta/p}} = \infty$ whenever $\beta > \beta_{p}$ and $f \in \mcC(K)$ is not constant.
    This proves our assertion.
\end{proof}

%=== Constr. ===
\section{Construction of a canonical scaling limit of $p$-energies}\label{sec:constr}
To construct a canonical Dirichlet form on fractals, there is already an established way as appeared in \cite{KZ92}*{proof of Theorem 6.9}.
However, in the original argument of \cite{KZ92}, the Markov property of their ``Dirichlet form'' was not clarified.
In \cite{Kig00}, Kigami has pointed out this gap and filled it. 

After Kigami's work, another very simple way to check the Markov property is given by Barlow, Bass, Kumagai, and Teplyaev \cite{BBKT10}*{proof of Theorem 2.1}.
This method deduces that the Dirichlet forms of Kusuoka and Zhou in \cite{KZ92} have the Markov property, but it very heavily relies on being Dirichlet forms, that is, the use of bilinearity (and locality) is essential to follow \cite{BBKT10}*{proof of Theorem 2.1}.

Regrettably, these ways are insufficient to follow the arguments in Section \ref{sec:em}, where the main results about $\mcE_{p}$-energy measures will be proved.
Indeed, our strategy to prove the chain rule (Theorem \ref{thm.main3}) will require some expression of constructed $p$-energy $\mathcal{E}_{p}$ (see \eqref{representation}) due to the lack of representation formula (see \cite{FOT}*{(3.2.10)} for example) that is very useful in Dirichlet form theory. 
Also, as shown in \cites{CQ21+, GY19, Kig21+}, the usage of $\Gamma$-convergence is very useful to construct energies. 
However, we need to adopt alternative approach because our argument in Theorem \ref{thm.main3} will heavily use the compactness of $f(K)$ for a fixed function $f \in \mathcal{F}_{p}$. 
If we construct $\mathcal{E}_{p}$ by using $\Gamma$-convergence, then we lose this compactness since we have to consider $\bigcup_{n \ge 1}f_{n}(K)$ instead of $f(K)$, where $\{ f_{n} \}_{n \ge 1}$ is a sequence converging to $f$ in $L^{p}$. 
%An approach based on the combination of Kusuoka--Zhou's and Kigami's arguments is enough to construct a $p$-energy satisfying all the properties in Theorem \ref{thm.main1} (see \cite{Kig21+}*{Theorem 9.3}).
%However, our method to prove Theorem \ref{thm.main3} (chain rule) will heavily rely on expressions of discrete $p$-energies before taking a scaling limit. 
%We need such discrete expressions in order to get information of derivatives.  
%In the case $p = 2$, due to the ``$L^{2}$-structure'', we can use some expression formulas of a limited Dirichlet form (see \cite{FOT}*{(3.2.10)} for example) to avoid this difficulty, but we do not have any representation formula in our non-linear setting. 

In order to overcome these difficulties, we will introduce a new series of graphs $\bbG_{n}$ approximating the underlying Sierpi\'{n}ski carpet in subsection \ref{subsec:newgraph}.
Then in subsection \ref{subsec:main1} we directly construct $p$-energy $\mcE_{p}$ as a subsequential scaling limit of discrete $p$-energies on this new series of graphs.
Since we already prove \hyperlink{Bp}{\textup{(B$_{p}$)}} and \hyperlink{KMp}{\textup{(KM$_{p}$)}}, Kigami's result \cite{Kig21+}*{Theorem 9.3} gives $p$-energies on GSCs satisfying all properties in Theorem \ref{thm.main1.2} (except Clarkson's inequality). 
We emphasize that a main aim of this section is to construct $p$-energies that will be useful to prove the chain rule in Section \ref{sec:em} (Theorem \ref{thm.main3}-(2)). 

Throughout this section, let $(K, S, \{ F_{i} \}_{i \in S}) = \GSC(D, a, S)$ be a generalized Sierpi\'{n}ski carpet and suppose that Assumption \ref{assum.lowdim} holds. 

%--- replacing graphs ---
\subsection{Behavior of $p$-energies on modified Sierpi\'{n}ski carpet graphs}\label{subsec:newgraph}
For $n \in \mathbb{Z}_{\ge 0}$, define subsets $\mathbb{V}_{n} \subseteq K$ by 
\[
\mathbb{V}_{n} = \left\{ F_{w}\left(\sum_{k = 1}^{D}\sigma_{k}\mathbf{e}_{k}\right) \;\middle|\; w \in W_{n}, \text{$\sigma_{k} \in \{ -1, +1 \}$ for $k = 1, \dots, D$} \right\}. 
\] 
(The conditions \hyperlink{GSC1}{\textup{(GSC1)}} and \hyperlink{GSC4}{\textup{(GSC4)}} ensure that $\sum_{k = 1}^{D}\sigma_{k}\mathbf{e}_{k} \in K$.)
Note that $\#\mathbb{V}_{0} = 2^{D}$. 
Next, we inductively define edge sets $\mathbb{E}_{n}$ by
\[
\mathbb{E}_{0} \coloneqq \{ (x, y) \mid x \neq y, x, y \in \mathbb{V}_{0} \},
\]
and
\[
\mathbb{E}_{n} \coloneqq \bigl\{ \bigl(F_{i}(x), F_{i}(y)\bigr) \bigm| i \in S, (x, y) \in \mathbb{E}_{n - 1} \bigr\}.
\]
Define a new finite graph by $\bbG_{n} \coloneqq (\mathbb{V}_{n}, \mathbb{E}_{n})$ (see Figure \ref{fig:modified}). 
Note that $\mathbb{G}_{0}$ coincides with the complete graph $\mathbb{K}_{2^{D}}$ having $2^{D}$ vertices. 
We will write $d_{\mathbb{G}_{n}}$ for the graph distance of $\mathbb{G}_{n}$. 
By \hyperlink{GSC2}{\textup{(GSC2)}}, we see that $\mathbb{G}_{n}$ is connected.
Furthermore, we easily see that $\{ \mathbb{G}_{n} \}_{n \ge 0}$ is an increasing sequence. 
It is also immediate that
\[
L_{\ast, \textup{modif}} \coloneqq \adjustlimits\sup_{n \in \bbZ_{\ge 0}}\max_{x \in \mathbb{V}_{n}}\#\{ y \in \mathbb{V}_{n} \mid (x, y) \in \mathbb{E}_{n} \} \le L_{\ast}2^{D}.
\]
For any $n, m \in \bbZ_{\ge 0}$ and $w \in W_{n}$, we define a subset $\mathbb{V}^{w}_{m}$ of $\mathbb{V}_{n + m}$ by setting $\mathbb{V}^{w}_{m} \coloneqq \{ F_{w}(x) \mid x \in \mathbb{V}_{m} \}$, and define a subgraph $\bbG_{m}^{w} \coloneqq \bigl(\mathbb{V}^{w}_{m}, \mathbb{E}^{w}_{m}\bigr)$, where 
\[
\mathbb{E}^{w}_{m} = \bigl\{ (x, y) \in \mathbb{E}_{n + m} \bigm| x, y \in \mathbb{V}^{w}_{m} \bigr\}. 
\]
Note that, for $v, w \in W_{\ast}$ with $\abs{v} = \abs{w}$, $\mathbb{V}_{m}^{w} \cap \mathbb{V}_{m}^{v} \neq \emptyset$ if and only if $K_{v} \cap K_{w} \neq \emptyset$. 

For simplicity, we write $\mcR_{p}^{\bbG_{n}}(x, y)$ to denote $\mcC_{p}^{\bbG_{n}}(\{ x \}, \{ y \})^{-1}$, that is,
\[
\mcR_{p}^{\bbG_{n}}(x, y) \coloneqq \sup\left\{ \frac{\abs{f(x) - f(y)}^{p}}{\mcE_{p}^{\bbG_{n}}(f)} \;\middle|\; \text{$f \colon \mathbb{V}_{n} \to \bbR$ is not constant} \right\},
\]
for each $x, y \in \mathbb{V}_{n}$.
Then one of the key ingredients is the next proposition; $\mcR_{p}^{\bbG_{n}}(x, y)$ behaves like $\mcR_{p}^{(n)}$.

\begin{figure}[tb]
    \centering
    \includegraphics[height=85pt]{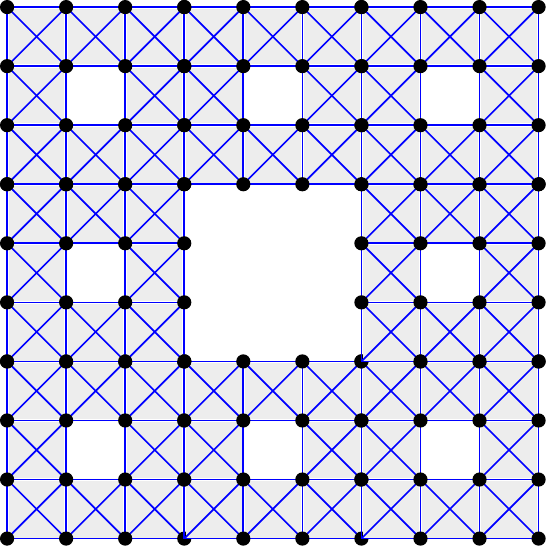}
    \caption{Modified Sierpi\'{n}ski carpet graph $\{ \bbG_{n} \}_{n \ge 1}$ (This figure draws $\bbG_{3}$ in the SC case)}
    \label{fig:modified}
\end{figure}

\begin{prop}\label{prop.ptpt}
    There exists a positive constant $C_{\ref{prop.ptpt}}$ (depending only on $p$, $a$, $D$, $N_{\ast}$, $L_{\ast}$, $\rho_p$) such that, for any $n \in\bbN$ and $x, y \in \mathbb{V}_{0}$,
    \begin{equation}\label{ineq.CH}
        \mcR_{p}^{\bbG_{n}}(x, y) \le C_{\ref{prop.ptpt}}\rho_{p}^{n}.
    \end{equation}
\end{prop}

\begin{rmk}\label{rmk.pttopt}
    For $p = 2$ in the SC case, such point-to-point estimates on a series of Sierpi\'{n}ski carpet graphs are proved in \cite{Sas21+}*{Appendix}, where a uniform Hanarck inequality by Barlow and Bass (see also Remark \ref{rmk.harnack}) is used.
    In \cite{Kig21+}*{Lemma 8.5}, Kigami also shows similar estimates for all $p > \dim_{\text{ARC}}$ assuming $p$-conductive homogeneity, where he also uses some uniform H\"{o}lder estimate.
    Our proof also relies on the uniform H\"{o}lder estimate: Theorem \ref{thm.holder_KM}.
\end{rmk}

To prove this proposition, we need an estimate of $p$-conductance between points on the original graph $\{ G_{n} \}_{n \ge 1}$.
For $n \in \mathbb{Z}_{\ge 0}$ and $x$, we fix $w^{x}(n) \in W_{n}$ such that $x \in K_{w^{x}(n)}$. 
If $x = \sum_{k = 1}^{D}\sigma_{k}\mathbf{e}_{k} \in \mathbb{V}_{0}$, then such $w^{x}(n)$ is uniquely given by 
\[
w^{x}(n) = (\sigma_{1}, \dots, \sigma_{D})^{n} \in W_{n}.
\]
We can show the following lemma in a similar way to the `chain argument' in the proof of Theorem \ref{thm.KM}.

\begin{lem}\label{lem.pttopt}
    There exists a constant $C_{\ref{lem.pttopt}} \ge 1$ (depending only on $p$, $a$, $D$, $N_{\ast}$, $L_{\ast},$ $\rho_p$) such that, for any $n \in \bbN$ and $x, y \in \mathbb{V}_{0}$ with $x \neq y$,
    \begin{equation}\label{ineq.ptpt}
        C_{\ref{lem.pttopt}}^{-1}\rho_{p}^{-n} \le \mcC_{p}^{G_{n}}\bigl(\{ w^{x}(n) \}, \{ w^{y}(n) \}\bigr) \le C_{\ref{lem.pttopt}}\rho_{p}^{-n}. 
    \end{equation}
\end{lem}
\begin{proof}
    An upper bound is easy. 
    Indeed, by Proposition \ref{prop.mono}, Theorems \ref{thm.sub-mult} and \ref{thm.superP}, 
    \begin{align}\label{ptpt.upper}
        \mcC_{p}^{G_{n}}\bigl(\{ w^{x}(n) \}, \{ w^{y}(n) \}\bigr) 
        \le \mathcal{C}_{p}^{(n - 1)} 
        \le c_{\ast}\rho_{p}\cdot\rho_{p}^{-n}, 
    \end{align}
     where $c_{\ast}$ is the constant in \eqref{Poi-super}. 

    In order to prove the converse, we first consider the case $\abs{x - y}_{\mathbb{R}^{D}} = 2$.
%    It will suffice to treat the case $j = 1$ and $k = 2$ by the symmetries of $K$.
    Let $C_{\text{UH}} > 0$ be the constant (depending only on $p, a, D, L_{\ast}, N_{\ast}, \rho_{p}$) in Theorem \ref{thm.holder_KM} and choose $l_{\ast} \in \bbN$ such that 
    \begin{equation}\label{ptpt.choice}
    	C_{\text{UH}}a^{-(\beta_{p} - \alpha)l_{\ast}} \le \frac{1}{4}.	
    \end{equation}
    We also set $v(x) \coloneqq [w^{x}(n)]_{l_{\ast}} \in W_{l_{\ast}}$ for each $x \in \mathbb{V}_{0}$.
    Since $\abs{x - y}_{\mathbb{R}^{D}} = 2$, there exists a horizontal chain $[z(1), \dots, z(L)]$ in $\widetilde{G}_{l_{\ast}}$ such that $z(1) = v(x)$, $z(L) = v(y)$, $z(k) \in \partial_{\ast}G_{l_{\ast}}$ for any $k = 1, \dots, L$ and $(z(k), z(k + 1)) \in \widetilde{E}_{l_{\ast}}$ for each $k = 1, \dots, L - 1$, where $L = a^{l_{\ast}}$. 
    
    Let $f_{n}\colon W_{n} \to \bbR$ satisfy $f_{n}(w^{x}(n)) = 0$, $f_{n}(w^{y}(n)) = 1$ and 
    \[
    \mcE_{p}^{G_{n}}(f_n) = \mcC_{p}^{G_{n}}\bigl(\{ w^{x}(n) \}, \{ w^{y}(n) \}\bigr). 
    \]
    Note that $f_{n}$ is $[0, 1]$-valued.
    From Theorem \ref{thm.holder_KM} and \eqref{ptpt.choice}, 
    \[
    \max_{v(x) \cdot W_{n - l_{\ast}}}f_{n} \le \frac{1}{4} \quad\text{and}\quad \min_{v(y) \cdot W_{n - l_{\ast}}}f_{n} \ge \frac{3}{4}.
    \]
    Now we define $g_{n}$ by setting $g_{n} \coloneqq 2\bigl(\left(f_{n} \vee 1/4\right) \wedge 3/4\bigr)$.
    Then we have that
    \begin{equation*}
        \mcC_{p}^{(n - l_{\ast}, L)} \le \mcE_{p}^{G_{n}}(g_{n}) \le 2^{p}\mcE_{p}^{G_{n}}(f_{n}).
    \end{equation*}
    (Recall the definition of $\mathcal{C}_{p}^{(n, L)}$ in subsection \ref{subsec:KM}.)
    By Lemma \ref{lem.chain} and Theorem \ref{thm.sub-mult}, there exists a constant $C(L) > 0$ depending only on $p, a, L_{\ast}, N_{\ast}$ such that 
    \[
    \mcC_{p}^{(n - l_{\ast}, L)} \ge C(L)^{-1}\mcC_{p}^{(n - l_{\ast})} \ge \bigl(C(L)C_{\ref{thm.sub-mult}}\mathcal{C}_{p}^{(l_{\ast})}\bigr)^{-1}\mcC_{p}^{(n)}. 
    \]
    Hence, by using \eqref{Poi-super}, 
    \[
    \mcC_{p}^{G_{n}}(\{ w^{x}(n) \}, \{ w^{y}(n) \}) \ge \bigl(2^{p}C(L)C_{\ref{thm.sub-mult}}\mathcal{C}_{p}^{(l_{\ast})}\bigr)^{-1}\mcC_{p}^{(n)} \ge c_{\ast}\bigl(2^{p}C(L)C_{\ref{thm.sub-mult}}\mathcal{C}_{p}^{(l_{\ast})}\bigr)^{-1}\rho_{p}^{-n}, 
    \]
    which proves the case $\abs{x - y}_{\mathbb{R}^{D}} = 2$.

    Finally, note that $(v, w) \mapsto \mcC_{p}^{G}(\{ v \}, \{ w \})^{-1/p}$ is a metric on a graph $G$.
    Indeed, this fact immediately follows from the representation:
    \[
    \mcC_{p}^{G}(\{ v \}, \{ w \})^{-1/p} = \max\left\{ \frac{\abs{f(v) - f(w)}}{\mcE_{p}^{G}(f)^{1/p}} \;\middle|\; f\colon V \to \bbR \text{ with } \mcE_{p}^{G}(f) > 0 \right\}.
    \]
    Applying the triangle inequality of this metric, we obtain 
    \begin{align*}
    	\max_{x \neq y \in \mathbb{V}_{0}}\mcC_{p}^{G_{n}}(\{ w^{x}(n) \}, \{ w^{y}(n) \})^{-1/p}
		\le DC_{1}\rho_{p}^{n/p}, 
    \end{align*}
    where $C_{1} = \bigl(2^{p}c_{\ast}^{-1}C(L)C_{\ref{thm.sub-mult}}\mathcal{C}_{p}^{(l_{\ast})}\bigr)^{1/p}$. 
\end{proof}

\begin{rmk}
    It is known that $(x, y) \mapsto \mcC_{p}^{G}(\{ x \}, \{ y \})^{-1/(p - 1)}$ also becomes a metric, and thus $\mcC_{p}^{G}(\,\cdot\,, \,\cdot\,)^{-1/(p - 1)}$ gives a generalization of the \emph{resistance metric}.
    This fact is proved in \cite{ACFPc19}*{Theorem 8} when $G$ is a finite graph.
    One can check the case of infinite graphs in \cite{Mthesis}*{Theorem 4.3}.
\end{rmk}

Now we are ready to prove Proposition \ref{prop.ptpt} by using Lemma \ref{lem.URI}.

\noindent
\textit{Proof of Proposition \ref{prop.ptpt}.}\,
    For each $n \in \mathbb{N}$, let $\varphi_{n}\colon W_{n} \to \mathbb{V}_{n}$ be a function such that $\varphi_{n}(w) \in \mathbb{V}_{0}^{w}$ for $w \in W_{n}$ and
    \[
    \varphi_{n}\bigl((\sigma_{1}, \dots, \sigma_{D})^{n}\bigr) = \sum_{k = 1}^{D}\sigma_{k}\mathbf{e}_{k} \quad \text{if $(\sigma_{k})_{k = 1}^{D} \in \{ -1, +1 \}^{D}$. }
    \]
    Then we easily see that $\varphi_{n}$ is a rough isometry from $G_{n}$ to $\mathbb{G}_{n}$ with $C_{1} = 1, C_{2} = 2, C_{3} = 2, C_{4} = \bigl((2^{D} - 1)/L_{\ast}\bigr) \vee \bigl(L_{\ast, \textup{modif}}/2\bigr)$ (recall Definition \ref{dfn.URI}). 
    Hence $\{ \varphi_{n} \}_{n \ge 1}$ is a uniform rough isometry from $\{ G_{n} \}_{n \ge 1}$ to $\{ \bbG_{n} \}_{n \ge 1}$.
    Note that for $(\sigma_{k})_{k = 1}^{D} \in \{ -1, +1 \}^{D}$,   
    \[
    \varphi_{n}(w) = \sum_{k = 1}^{D}\sigma_{k}\mathbf{e}_{k} \quad \text{if and only if} \quad w = (\sigma_{1}, \dots, \sigma_{D})^{n},  
    \]
    i.e. $\varphi_{n}^{-1}\bigl(\bigl\{ \sum_{k = 1}^{D}\sigma_{k}\mathbf{e}_{k} \bigr\}\bigr) = \{ (\sigma_{1}, \dots, \sigma_{D})^{n} \}$. 
    Thus, for $x \neq y \in \mathbb{V}_{0}$, Lemma \ref{lem.URI} yields that 
    \[
    \mcC_{p}^{G_{n}}\bigl(\{ w^{x}(n) \}, \{ w^{y}(n) \}\bigr) \le C_{\text{URI}}\mcR_{p}^{\bbG_{n}}(x, y)^{-1},
    \]
    which together with Lemma \ref{lem.pttopt} implies \eqref{ineq.CH}.\qed

Next, we define rescaled $p$-energy $\widetilde{\mcE}_{p}^{\,\bbG_{n}}\colon \mcC(K) \to \bbR$ on $\bbG_{n}$ by setting
\[
\widetilde{\mcE}_{p}^{\,\bbG_{n}}(f) \coloneqq \frac{\rho_{p}^{n}}{2}\sum_{(x, y) \in \mathbb{E}_{n}}\abs{f(x) - f(y)}^{p},
\]
for each $f \in \mcC(K)$.
Then the following lemma, especially statement (4) below, is a collection of benefits of the new graphical approximation $\{ \bbG_{n} \}_{n \ge 0}$.

\begin{lem}\label{lem.ss}
	Let $f \in \mathcal{C}(K)$ and let $n \in \mathbb{Z}_{\ge 0}$. 
    The following statements hold.
    \begin{itemize}
        \item [\textup{(1)}] It holds that $\sup_{n \ge 0}\widetilde{\mcE}_{p}^{\,\bbG_{n}}(f) = 0$ if and only if $f$ is constant.
        \item [\textup{(2)}] If $T \in \mathcal{G}_{0}$, then $\widetilde{\mcE}_{p}^{\,\bbG_{n}}(f \circ T) = \widetilde{\mcE}_{p}^{\,\bbG_{n}}(f)$.
        \item [\textup{(3)}] For $m \in \mathbb{Z}_{\ge 0}$, 
        \begin{equation}\label{eq.ss}
            L_{\ast}^{-1}\rho_{p}^{m}\sum_{w \in W_{m}}\widetilde{\mcE}_{p}^{\,\bbG_{n}}(f \circ F_{w}) 
            \le \widetilde{\mcE}_{p}^{\,\bbG_{n + m}}(f) 
            \le \rho_{p}^{m}\sum_{w \in W_{m}}\widetilde{\mcE}_{p}^{\,\bbG_{n}}(f \circ F_{w}).
        \end{equation}
        \item [\textup{(4)}] If $\varphi\colon \bbR \to \bbR$ with $\mathrm{Lip}(\varphi) \le 1$, then $\widetilde{\mcE}_{p}^{\,\bbG_{n}}(\varphi \circ f) \le \widetilde{\mcE}_{p}^{\,\bbG_{n}}(f)$.
    \end{itemize}
\end{lem}
\begin{proof}
    (1) It is immediate that $\sup_{n \in \bbN}\widetilde{\mcE}_{p}^{\,\bbG_{n}}(f) = 0$ if $f \in \mcC(K)$ is constant.
    We easily see that $\closure{\bigcup_{n \in \bbN}\mathbb{V}_{n}}^{K} = K$.
    Thus, if $f \in \mcC(K)$ satisfies $\mcE_{p}^{\,\bbG_{n}}(f) = 0$ for any $n \in \bbZ_{\ge 0}$, then $f$ is constant.

    (2) This immediately follows from the symmetries of $\bbG_{n}$.

    (3) Let $f \in \mcC(K)$ and $n \in \bbZ_{\ge 0}$.
    Since $\bigcup_{w \in W_{m}}\mathbb{V}_{n}^{w} = \mathbb{V}_{n + m}$, we have 
    \begin{align*}
        \widetilde{\mcE}_{p}^{\,\bbG_{n + m}}(f)
        &\le \rho_{p}^{m}\sum_{w \in W_{m}}\left(\frac{\rho_{p}^{n}}{2}\sum_{\substack{(x,y) \in \mathbb{E}_{n + m}; \\ x, y \in K_{w}}}\abs{f(F_{w}(F_{w}^{-1}(x))) - f(F_{w}(F_{w}^{-1}(y)))}^{p}\right) \\
        &= \rho_{p}^{m}\sum_{w \in W_{m}}\widetilde{\mcE}_{p}^{\,\bbG_{n}}(f \circ F_{w}). 
    \end{align*}
    Next, by noting that 
    \[
    \sup_{(x, y) \in \mathbb{E}_{n + m}}\#\bigl\{ w \in W_{m} \bigm| \text{$x, y \in \mathbb{V}_{n}^{w}$} \bigr\} \le L_{\ast}, 
    \]
    we see that 
    \begin{align*}
    	\rho_{p}^{m}\sum_{w \in W_{m}}\widetilde{\mcE}_{p}^{\,\bbG_{n}}(f \circ F_{w}) 
    	&= \frac{\rho_{p}^{n + m}}{2}\sum_{w \in W_{m}}\sum_{\substack{(x, y) \in \mathbb{E}_{n + m}; \\ x, y \in \mathbb{V}_{n}^{w}}}\abs{f(x) - f(y)}^{p} \\
    	&\le L_{\ast}\frac{\rho_{p}^{n + m}}{2}\sum_{(x, y) \in \mathbb{E}_{n + m}}\abs{f(x) - f(y)}^{p} 
    	= L_{\ast}\widetilde{\mathcal{E}}_{p}^{\,\bbG_{n + m}}(f), 
    \end{align*}
    which proves \eqref{eq.ss}. 

    (4) The required estimate immediately follows from the fact that $\abs{\varphi(a) - \varphi(b)} \le \abs{a - b}$ for any $a, b \in \bbR$ whenever $\varphi\colon \bbR \to \bbR$ satisfies $\mathrm{Lip}(\varphi) \le 1$.
\end{proof}

We also have the weak monotonicity of $\widetilde{\mcE}_{p}^{\,\bbG_{n}}$ as follows.
Its proof is similar to \cite{GY19}*{Theorem 7.1}, where $2$-energies are considered.

\begin{lem}\label{lem.newwm}
    There exists a constant $C_{\textup{WM}, \textup{modif}} > 0$ (depending only on $p, a, D$, $N_{\ast}$, $L_{\ast}$, $\rho_p$) such that
    \begin{equation}\label{ineq.newwm}
        \widetilde{\mcE}_{p}^{\,\bbG_{n}}(f) \le C_{\textup{WM}, \textup{modif}}\widetilde{\mcE}_{p}^{\,\bbG_{n + m}}(f),
    \end{equation}
    for every $n, m \in \bbZ_{\ge 0}$ and $f \in \mcC(K)$.
    In particular, for any $f \in \mcC(K)$,
    \begin{equation*}
        \sup_{n \in \bbN}\widetilde{\mcE}_{p}^{\,\bbG_{n}}(f) \le C_{\textup{WM}, \textup{modif}}\varliminf_{n \to \infty}\widetilde{\mcE}_{p}^{\,\bbG_{n}}(f).
    \end{equation*}
\end{lem}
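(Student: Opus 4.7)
The plan is to reduce the general inequality \eqref{ineq.newwm} to a single-scale comparison of the form $\widetilde{\mcE}_{p}^{\,\bbG_{1}}(g) \le C\,\widetilde{\mcE}_{p}^{\,\bbG_{1+m}}(g)$ via the self-similarity identity \eqref{eq.ss}, and then to deduce that single-scale comparison from the point-to-point resistance estimate in Proposition \ref{prop.ptpt} together with the symmetry of $\bbG_{1}$.

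First, I would iterate \eqref{eq.ss}: a straightforward induction on $n$ gives, for every $1 \le k \le n$ and $f \in \mcC(K)$,
\[
\widetilde{\mcE}_{p}^{\,\bbG_{n}}(f) = \rho_{p}^{\,n-k}\sum_{w \in W_{n-k}} \widetilde{\mcE}_{p}^{\,\bbG_{k}}(F_{w}^{\ast}f).
\]
Applying this with $k=1$ and with $k=1+m$ respectively yields
\[
\widetilde{\mcE}_{p}^{\,\bbG_{n}}(f) = \rho_{p}^{\,n-1}\sum_{w \in W_{n-1}} \widetilde{\mcE}_{p}^{\,\bbG_{1}}(F_{w}^{\ast}f),\qquad \widetilde{\mcE}_{p}^{\,\bbG_{n+m}}(f) = \rho_{p}^{\,n-1}\sum_{w \in W_{n-1}} \widetilde{\mcE}_{p}^{\,\bbG_{1+m}}(F_{w}^{\ast}f).
\]
Hence it suffices to prove a uniform bound $\widetilde{\mcE}_{p}^{\,\bbG_{1}}(g) \le C\,\widetilde{\mcE}_{p}^{\,\bbG_{1+m}}(g)$ for every $g \in \mcC(K)$ and $m \in \bbN$, apply it to each $g = F_{w}^{\ast}f$, and sum over $w \in W_{n-1}$ to recover \eqref{ineq.newwm}.

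Second, for the single-scale bound I would note that $\bbG_{1}$ has only finitely many (eight) unordered edges, each with endpoints in $V(\bbG_{1}) \subset V(\bbG_{1+m})$. For every such pair $(x,y)$, the variational definition of the $p$-resistance gives
\[
\abs{g(x) - g(y)}^{p} \le \mcR_{p}^{\bbG_{1+m}}(x, y)\,\mcE_{p}^{\bbG_{1+m}}(g).
\]
Every edge of $\bbG_{1}$ is, up to reversal, either of the form $(p_{2m_{0}}, \widehat{p}_{m_{0}})$ or $(\widehat{p}_{m_{0}}, p_{2m_{0}+2})$ for some $m_{0} \in \{0,1,2,3\}$. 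The first type is directly controlled by Proposition \ref{prop.ptpt}, which gives $\mcR_{p}^{\bbG_{1+m}}(p_{2m_{0}}, \widehat{p}_{m_{0}}) \le C_{\ref{prop.ptpt}}\rho_{p}^{\,1+m}$. For the second type I would invoke the reflection $T_{+} \in \on{Sym}(K)$ (see Remark \ref{rmk.D4}), which swaps $p_{2m_{0}}$ and $p_{2m_{0}+2}$ while fixing $\widehat{p}_{m_{0}}$; substituting $g \circ T_{+}$ into the supremum defining $\mcR_{p}^{\bbG_{1+m}}(\widehat{p}_{m_{0}}, p_{2m_{0}+2})$ and using $\mcE_{p}^{\bbG_{1+m}}(g \circ T_{+}) = \mcE_{p}^{\bbG_{1+m}}(g)$ from Lemma \ref{lem.ss}(2) shows that the two resistances agree, so the same bound applies. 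Summing over the eight edges and converting $\mcE_{p}$ to $\widetilde{\mcE}_{p}$ via the factors $\rho_{p}$ and $\rho_{p}^{\,1+m}$ yields $\widetilde{\mcE}_{p}^{\,\bbG_{1}}(g) \le 8\,C_{\ref{prop.ptpt}}\,\rho_{p}\,\widetilde{\mcE}_{p}^{\,\bbG_{1+m}}(g)$, which is the desired single-scale inequality. The ``in particular'' statement then follows from \eqref{ineq.newwm} by taking $\liminf_{m \to \infty}$ on the right (relabelling $k = n + m$) and then $\sup_{n}$ on the left.

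There is no really hard step here; the proof is a clean chaining of two heavy ingredients that are already in place, namely the self-similarity (Lemma \ref{lem.ss}(3)) and the uniform point-to-point resistance estimate (Proposition \ref{prop.ptpt}, which itself rests on Corollary \ref{cor.holder} and the rough-isometry comparison with $\{G_{n}\}_{n \ge 1}$). The one delicate point worth flagging is the symmetry argument used to transfer Proposition \ref{prop.ptpt} from the specific pair $(p_{2m_{0}}, \widehat{p}_{m_{0}})$ to the ``mirror partner'' $(\widehat{p}_{m_{0}}, p_{2m_{0}+2})$; this is where the invariance of $\widetilde{\mcE}_{p}^{\,\bbG_{n}}$ under $\mathrm{Sym}(K)$ (Lemma \ref{lem.ss}(2)) is essential.
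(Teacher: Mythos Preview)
Your argument is correct and is essentially the paper's own proof, just reorganized: the paper works directly at level $n$, bounding each edge $(F_{w}(p_{2l}), F_{w}(\widehat{p}_{l}))$ by the \emph{local} energy $\mcE_{p}^{F_{w}(\bbG_{m+1})}(f)$ via the same resistance estimate (Proposition~\ref{prop.ptpt}), whereas you first peel off the self-similarity to reduce to $n=1$ and then use the global energy on $\bbG_{1+m}$; after unwinding, these are the same computation. One small correction is needed in your symmetry step: the reflection $T_{+}$ fixes $\widehat{p}_{m_{0}}$ and swaps $p_{2m_{0}}$ with $p_{2m_{0}+2}$ only when $\widehat{p}_{m_{0}}$ lies on the line $\{y=x\}$ (i.e.\ for two of the four values of $m_{0}$); for the other two you must use $T_{-}$ instead, or simply note that $\mathrm{Sym}(K)$ acts transitively on the eight unordered edges of $\bbG_{1}$ so that all eight resistances coincide.
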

\begin{proof}
    Let $(x, y) \in \mathbb{E}_{n}$.
    Then there exists $w \in W_{n - 1}$ such that $x, y \in K_{w}$.
    Furthermore, there exist $x_{0}, y_{0} \in \mathbb{V}_{0}$ such that $x = F_{w}(x_{0})$ and $y = F_{w}(y_{0})$.
    Now we define
    \begin{align*}
        \mcR_{p}^{\,\bbG_{m}^{w}}(z_{1}, z_{2}) 
        \coloneqq \sup\left\{ \frac{\abs{f(z_{1}) - f(z_{2})}^{p}}{\mcE_{p}^{\,\bbG_{m}^{w}}(f)} \;\middle|\; \text{$f \in \mcC(K)$ with $f|_{\mathbb{V}_{m}^{w}}$ is not constant} \right\}.
    \end{align*}
    From Proposition \ref{prop.ptpt} and the cutting law (\cite{Mthesis}*{Proposition 3.18}), 
    \begin{align*}
        \abs{f(x) - f(y)}^{p}
        &\le \mcR_{p}^{\,\bbG_{m}^{w}}\bigl(F_{w}(x_{0}), F_{w}(y_{0})\bigr)\mcE_{p}^{\,\bbG_{m}^{w}}(f) \\
        &\le \mcR_{p}^{\,\bbG_{m}}(x_{0}, y_{0})\mcE_{p}^{\,\bbG_{m}^{w}}(f) \\
        &\le C_{\ref{prop.ptpt}}\rho_{p}^{m}\mcE_{p}^{\,\bbG_{n + m}}(f).
    \end{align*}
    Summing over $(x,y) \in \mathbb{E}_{n}$, we obtain $\mcE_{p}^{\,\bbG_{n}}(f) \le 2^{-1}C_{\ref{prop.ptpt}}\rho_{p}^{m}\mcE_{p}^{\,\bbG_{n + m}}(f)$, which deduces our assertion with $C_{\textup{WM}, \textup{modif}} = 2^{-1}C_{\ref{prop.ptpt}}$.
\end{proof}

The rest of this subsection is devoted to proving the following lemma.
Recall the definition of $\abs{\,\cdot\,}_{\mathcal{F}_{p}}$ that is a semi-norm of $\mathcal{F}_{p}$ (see \eqref{dfn.1p-semi}). 

\begin{lem}\label{lem.comparable}
    There exists a constant $C_{\ref{lem.comparable}} \ge 1$ depending only on $p, a, D, N_{\ast}, L_{\ast}$, $C_{\textup{AR}}$, $\rho_{p}$ such that
    \begin{equation}\label{ineq.ptcomparable}
        C_{\ref{lem.comparable}}^{-1}\abs{f}_{\mcF_{p}}^{p} \le \sup_{n \in \bbN}\widetilde{\mcE}_{p}^{\,\bbG_{n}}(f) \le C_{\ref{lem.comparable}}\abs{f}_{\mcF_{p}}^{p} \quad \text{for any $f \in L^{p}(K, \mu)$.}
    \end{equation}
\end{lem}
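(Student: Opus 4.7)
The plan is to decompose each energy into cell-level contributions via the self-similarity of Lemma \ref{lem.ss}(3), bound each contribution by the H\"older embedding (for the upper bound) or by a per-cell oscillation estimate (for the lower bound), and then sum using Fatou's lemma together with the weak monotonicities from Corollary \ref{cor.wm} and Lemma \ref{lem.newwm}. Throughout we work with the continuous representative of $f \in \mcF_p$ guaranteed by Theorem \ref{thm.embed}; if $f \notin \mcF_p$ both sides of \eqref{ineq.ptcomparable} are infinite.

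For the upper bound $\sup_n \widetilde{\mcE}_p^{\,\bbG_n}(f) \le C_{\ref{lem.comparable}}\abs{f}_{\mcF_p}^p$, the partition $E(\bbG_n) = \bigsqcup_{w \in W_{n-1}} F_w(E(\bbG_1))$ yields
\[
\mcE_p^{\bbG_n}(f|_{V(\bbG_n)}) = \sum_{w \in W_{n-1}} \mcE_p^{\bbG_1}\bigl(F_w^*f|_{V(\bbG_1)}\bigr),
\]
and applying the H\"older embedding (Theorem \ref{thm.embed}) to $F_w^*f$ at the eight fixed edges of $\bbG_1$ gives $\mcE_p^{\bbG_1}(F_w^*f|_{V(\bbG_1)}) \le c_1 \abs{F_w^*f}_{\mcF_p}^p$ with $c_1$ independent of $w$. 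To sum over $w$, I combine the shift identity \eqref{eq.commute} with the subgraph bound $\sum_{w \in W_{n-1}} \widetilde{\mcE}_p^{G_l}(M_l F_w^*f) \le \rho_p^{-(n-1)}\widetilde{\mcE}_p^{G_{l+n-1}}(M_{l+n-1}f)$, then invoke Corollary \ref{cor.wm} followed by Fatou's lemma to exchange $\sum_w$ with $\varliminf_l$; this produces $\sum_{w \in W_{n-1}} \abs{F_w^*f}_{\mcF_p}^p \le C_{\textup{WM}}\rho_p^{-(n-1)}\abs{f}_{\mcF_p}^p$, and multiplying by $\rho_p^n$ closes the upper bound with a constant independent of $n$.

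For the lower bound $C_{\ref{lem.comparable}}^{-1}\abs{f}_{\mcF_p}^p \le \sup_n \widetilde{\mcE}_p^{\,\bbG_n}(f)$ the core step is the per-cell oscillation estimate
\[
\sup_{x,y \in K_w}|f(x) - f(y)|^p \le c_2 \sup_k \widetilde{\mcE}_p^{\,\bbG_k}(F_w^*f), \qquad w \in W_{\#}.
\]
Granting the uniform point-to-point resistance bound $\mcR_p^{\bbG_k}(p,q) \le c_3 \rho_p^k$ for every $p, q \in V(\bbG_k)$, the cutting law and self-similarity give, for every $x, y \in V_w(\bbG_k)$,
\[
|f(x) - f(y)|^p \le \mcR_p^{F_w(\bbG_k)}(x,y) \cdot \mcE_p^{F_w(\bbG_k)}(f|_{V_w(\bbG_k)}) \le c_3 \rho_p^k \cdot \rho_p^{-k} \widetilde{\mcE}_p^{\,\bbG_k}(F_w^*f),
\]
and the oscillation estimate follows from the continuity of $f$ and the density of $\bigcup_k V_w(\bbG_k)$ in $K_w$. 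For $(v, v') \in E_n$ and any $x_0 \in K_v \cap K_{v'}$ one then has $|M_n f(v) - M_n f(v')|^p \le 2^{p-1}(\sup_{K_v}|f - f(x_0)|^p + \sup_{K_{v'}}|f - f(x_0)|^p)$; summing over $E_n$ using $D_* < \infty$, applying Lemma \ref{lem.newwm} and Fatou's lemma to exchange $\sum_{v \in W_n}$ with $\varliminf_k$, and invoking the self-similar decomposition $\sum_{v \in W_n}\widetilde{\mcE}_p^{\,\bbG_k}(F_v^*f) = \rho_p^{-n}\widetilde{\mcE}_p^{\,\bbG_{n+k}}(f)$ then yields $\widetilde{\mcE}_p^{G_n}(M_n f) \le c_4 \sup_l \widetilde{\mcE}_p^{\,\bbG_l}(f)$ uniformly in $n$, from which the lower bound follows after taking $\sup_n$.

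The main obstacle is the uniform resistance bound $\mcR_p^{\bbG_k}(p,q) \le c_3\rho_p^k$ on $V(\bbG_k)$: Proposition \ref{prop.ptpt} covers only the eight canonical pairs $(p_{2m}, \widehat{p}_m)$, and a na\"ive chain along a shortest $\bbG_k$-path of length $\sim a^k$ produces only $\mcR_p^{\bbG_k}(p,q) \lesssim a^{kp}$, which is much worse than $\rho_p^k$ whenever $\rho_p < a^p$. The resolution is to transfer through the uniform rough isometry $\varphi_k$ from the proof of Proposition \ref{prop.ptpt} and prove the analogous bound $\mcR_p^{G_k}(v,w) \le c_3\rho_p^k$ on $W_k$ by induction on $k$: any word $v = v_1\cdots v_k$ is connected inside the $v_1$-cell to the corner word $v_1 j^{k-1}$ at resistance cost controlled by the induction hypothesis applied to the self-similar copy of $G_{k-1}$, Lemma \ref{lem.pttopt} bounds the corner-to-corner resistances $\mcR_p^{G_k}(i^k, j^k) \le c_3\rho_p^k$, and the triangle inequality of the resistance metric $\mcR_p^{1/p}(\cdot,\cdot)$ combined with $\rho_p > 1$ from Theorem \ref{thm.lowdim} assembles these contributions into a convergent geometric series that closes the induction.
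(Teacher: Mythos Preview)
Your argument is correct and takes a genuinely different route from the paper's. The paper passes through the Lipschitz--Besov identification of Theorem~\ref{thm.main2}: it proves that $\sup_n\widetilde{\mcE}_p^{\,\bbG_n}(f)$ is comparable to $\abs{f}_{\Lambda_{p,\infty}^{\beta_p/p}}^{p}$ by introducing the simplified graphs $\bbG_n^{\ast}$, the discrete approximating measures $\mu_n$ of Lemma~\ref{lem.weak}, and the two Poincar\'e-type inequalities of Lemma~\ref{lem.newPI} (the second of which relies on $\beta_p\ge p$). Your approach bypasses the Besov machinery entirely: the upper bound uses only the H\"older embedding cell-by-cell together with the Fatou/weak-monotonicity summation (essentially the mechanism behind Lemma~\ref{lem.PI}), and the lower bound replaces the measure-theoretic argument by a uniform point-to-point resistance estimate on $\bbG_k$ plus the same summation device via Lemma~\ref{lem.newwm}. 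Your route is shorter and more self-contained; the paper's route has the advantage of exhibiting the Besov comparison explicitly, which is of independent interest.

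One simplification you may have missed: the induction you set up to prove $\mcR_p^{G_k}(v,w)\le c_3\rho_p^{k}$ is correct as written (the recursion $S_k\le S_{k-1}+C^{1/p}\rho_p^{k/p}$ closes under $\rho_p>1$, provided you take $j=v_1$ so that the ``corner word'' $v_1 j^{k-1}=v_1^{k}$ really is one of the $w^{i}(k)$ covered by Lemma~\ref{lem.pttopt}), but it is unnecessary. Corollary~\ref{cor.holder} with $m=0$ already gives $\abs{g(v)-g(w)}^{p}\le C_{\textup{UH}}\,\widetilde{\mcE}_p^{G_k}(g)$ for \emph{all} $v,w\in W_k$, which is exactly $\mcR_p^{G_k}(v,w)\le C_{\textup{UH}}\rho_p^{k}$; the transfer to $\bbG_k$ then follows from Lemma~\ref{lem.URI} applied to the rough inverse of $\varphi_k$. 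A minor presentational point: your opening sentence assumes $f\in\mcF_p$ for both directions, but for the lower bound you should instead assume only $f\in\mcC(K)$ with $\sup_n\widetilde{\mcE}_p^{\,\bbG_n}(f)<\infty$ and deduce $\abs{f}_{\mcF_p}<\infty$ from your chain of inequalities---which your argument already does.
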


\begin{rmk}\label{rmk.discrete}
    Such discrete characterizations of Lipschitz--Besov space are treated in \cite{Bod09}, but we need some modification as stated in \cite{GL20}*{Remark 1 in Section 3}.
    To be self-contained, we give complete proofs in a similar way as in \cite{GY19}*{Theorems 3.5 and 3.6 and Proposition 11.1}, where they give discrete characterizations of $\mcF_{2}$ on the SC.
\end{rmk}

By virtue of Theorem \ref{thm.main2}, it will suffice to show that $\sup_{n \in \bbN}\widetilde{\mcE}_{p}^{\,\bbG_{n}}(f)$ and $\abs{f}_{\Lambda_{p, \infty}^{\beta_{p}/p}}^{p}$ are comparable.
Similarly to \cite{Bod09}, we will apply an argument using discrete approximations of measure $\mu$ with respect to the weak convergence of probability measures (see Lemma \ref{lem.newPI}-(2)).
The following lemma is elementary (see also \cite{Bod09}*{Lemma 3.12}, \cite{GL20}*{Remark 1 in Section 3}).
Recall the definition of $U_{1}(x, s)$ in \eqref{dfn.quasiball}. 

\begin{lem}\label{lem.weak}
    Let $\{ \mu_{n} \}_{n \ge 1}$ be a sequence of probability measures on $K$ given by
    \[
    \mu_{n} \coloneqq \frac{1}{\#\mathbb{V}_{n}}\sum_{x \in \mathbb{V}_{n}}\delta_{x},
    \]
    where $\delta_{x}$ denotes the Dirac measure with support $\{ x \}$ for each $x \in K$. 
    Then there exist a subsequence $\{ n_k \}_{k \ge 1}$ and a constant $c_{\ref{lem.weak}} > 0$ (depending only on $a, D, N_{\ast}, L_{\ast}, C_{\textup{AR}}$) such that for any $m \in \bbZ_{\ge 0}$, $p > 1$ and $f \in \mcC(K)$,
    \begin{align}\label{eq.weak}
        &\lim_{k \to \infty}\int_{K}\int_{U_{1}(x, a^{-m})}\abs{f(x) - f(y)}^{p}\,d\mu_{n_k}(y)d\mu_{n_k}(x) \\
        &\hspace{10pt}\ge c_{\ref{lem.weak}}\int_{K}\int_{U_{1}(x, a^{-m})}\abs{f(x) - f(y)}^{p}\,d\mu(y)d\mu(x). \nonumber
    \end{align}
\end{lem}
\begin{proof}
    Since $K$ is compact, by Prokhorov's theorem (see \cite{Bil}*{Theorem 5.1} for example), there exist a subsequence $\{ n_k \}_{k \ge 1}$ and a Borel probability measure $\widetilde{\mu}$ on $K$ such that $\mu_{n_k}$ converges weakly to $\widetilde{\mu}$ as $k \to \infty$.
    From the definition of $\bbG_{n}$, by noting that 
    \[
    \sup_{n \ge 0, x \in \mathbb{V}_{n}}\#\bigl\{ w \in W_{n} \bigm| x \in \mathbb{V}_{0}^{w} \bigr\} \le L_{\ast}, 
    \]
    we have 
    \begin{equation}\label{weak.number-pt}
    	L_{\ast}^{-1}2^{D}N_{\ast}^{n} \le \#\mathbb{V}_{n} \le 2^{D}N_{\ast}^{n}, \quad n \in \mathbb{Z}_{\ge 0}. 
    \end{equation}
    
    Next we will see that $\widetilde{\mu}$ is ``$\alpha$-Ahlfors regular'' and $\widetilde{\mu}(\bord{K_{w}}) = 0$ for any $w \in W_{\ast}$. 
    (Recall that $\alpha = \log{N_{\ast}/\log{a}}$ and thus $a^{\alpha} = N_{\ast}$.) 
    For $r \in (0, 1]$, let $n(r) \in \mathbb{Z}_{\ge 0}$ denote the unique non-negative integer such that $a^{-n(r)} \le r < a^{-n(r) + 1}$.
    Let $C_{\textup{AD}} \ge 1$ be the constant in Proposition \ref{prop.BF}-(2A) and let $n_{\ast} \in \mathbb{Z}_{\ge 0}$ such that $a^{-n_{\ast}} \le C_{\textup{AD}}^{-1} < a^{-n_{\ast} + 1}$. 
    Note that $n_{\ast}$ depends only on $C_{\textup{AD}}$ and $a$.
    In addition, note that we can choose $r_{0} \in (0, 1)$ (depending only on $C_{\textup{AD}}$ and $a$) so that $n(r) \ge n_{\ast}$ for all $r \in (0, r_{0}]$. 
    Hereafter, we fix such a small $r_{0} \in (0, 1)$. 
    Let $x \in K$ and $r \in (0, r_{0}]$. 
    By Proposition \ref{prop.BF}-(2A), we know that $U_{1}(x, C_{\textup{AD}}^{-1}r) \subseteq B_{d}(x, r) \subseteq U_{1}(x, C_{\textup{AD}}r)$. 
    From $U_{1}(x, C_{\textup{AD}}^{-1}r) \subseteq B_{d}(x, r)$, we can find $w \in W_{n(r) + n_{\ast}}$ such that $K_{w} \subseteq B_{d}(x, r)$. 
    The portmanteau theorem (see \cite{Bil}*{Theorem 2.1} for example) and \eqref{weak.number-pt} yield that  
    \[
    \widetilde{\mu}\Bigl(\closure{B_{d}(x, r)}\Bigr) 
    \ge \varlimsup_{k \to \infty}\mu_{n_{k}}\Bigl(\closure{B_{d}(x, r)}\Bigr) 
    \ge \varlimsup_{k \to \infty}\mu_{n_{k}}(K_{w}) 
    \ge L_{\ast}^{-1}a^{-(\alpha n_{\ast} + 1)}r^{\alpha}.  
    \]
    Similarly, by $B_{d}(x, r) \subseteq U_{1}(x, C_{\textup{AD}}r)$, portmanteau theorem and \eqref{weak.number-pt}, 
    \[
    \widetilde{\mu}\bigl(B_{d}(x, r)\bigr) 
    \le \varliminf_{k \to \infty}\mu_{n_{k}}\bigl(B_{d}(x, r)\bigr)  
    \le \varliminf_{k \to \infty}\mu_{n_{k}}\bigl(U_{1}(x, C_{\textup{AD}}r)\bigr) 
    \le L_{\ast}^{2}a^{\alpha n_{\ast}}r^{\alpha}. 
    \]    
    Consequently, for any $x \in K$ and $r \in (0, \diam(K, d)]$, we obtain 
    \[
    (aN_{\ast}L_{\ast}C_{\textup{AD}}^{\alpha})^{-1}r^{\alpha} \le \widetilde{\mu}\Bigl(\closure{B_{d}(x, r)}\Bigr), \quad \widetilde{\mu}\bigl(B_{d}(x, r)\bigr) \le L_{\ast}^{2}N_{\ast}C_{\textup{AD}}^{\alpha}r^{\alpha}. 
    \]
	By \cite{Hei}*{Exercise 8.11} (or by following the argument in \cite{Gri10}*{Lemma 2.13}), there exists a constant $C_{1} \ge 1$ depending only on $a, N_{\ast}, L_{\ast}, C_{\textup{AD}}$ such that 
	\[
	C_{1}^{-1}\mathcal{H}^{\alpha}(A) \le \widetilde{\mu}(A) \le C_{1}\mathcal{H}^{\alpha}(A) \quad \text{for any $A \in \mathcal{B}(K)$,}
	\]
	where $\mathcal{H}^{\alpha}$ is the normalized $\alpha$-dimensional Hausdorff measure on $(K, d)$. 
	Since $\mu$ is $\alpha$-Ahlfors regular, the measure $\mu$ is also comparable to $\mathcal{H}^{\alpha}$. 
	Hence Proposition \ref{prop.BF}-(3) yields that $\widetilde{\mu}(\bord{K_{w}}) = 0$ for all $w \in W_{\ast}$. 

    Finally, we will prove \eqref{eq.weak}.
    Let $f \in \mcC(K)$ be not constant.
    Then there exists $N \ge 1$ such that $f|_{\mathbb{V}_{N}}$ is not constant.
    For a Borel measure $\nu$ on $K$ and $n \ge N$, define
    \[
    I(f, \nu) \coloneqq \int_{K}\int_{K}\abs{f(x) - f(y)}^{p}\,d\nu(y)d\nu(x). 
    \]
    Since $f|_{\mathbb{V}_{N}}$ is not constant and $f$ is bounded, we have $I\bigl(f, \widetilde{\mu}\bigr), I(f, \mu_{n}) \in (0, \infty)$.
    The weak convergence of $\mu_{n_{k}} \times \mu_{n_{k}}$ to $\widetilde{\mu} \times \widetilde{\mu}$ (see \cite{Bil}*{Theorem 2.8} for example) implies that $I(f, \mu_{n_k}) \to I\bigl(f, \widetilde{\mu}\bigr)$ as $n \to \infty$.
    Next, we define $\nu_{n}(dx \otimes dy) \coloneqq I(f, \mu_{n})^{-1}\abs{f(x) - f(y)}^{p}\,d\mu_{n}(x)d\mu_{n}(y)$.
    Since $f$ is continuous, we easily see that $\nu_{n_{k}}$ converges weakly to the probability measure $\nu$ on $K \times K$ given by 
    \[
    \nu(dx \otimes dy) \coloneqq I\bigl(f, \widetilde{\mu}\bigr)^{-1}\abs{f(x) - f(y)}^{p}\,d\widetilde{\mu}(x)d\widetilde{\mu}(y). 
    \]
    Thanks to $\widetilde{\mu}(\bord{K_{w}}) = 0$, it is immediate that 
    \[
    \nu\bigl(\partial(K \times U_{1}(x, a^{-m}))\bigr) 
    \le \nu(\bord{K} \times K) + \nu\bigl(K \times \bord{U_{1}(x, a^{-m})}\bigr) = 0. 
    \]
    (Note that $\bord{U_{1}(x, a^{-m})} \subseteq \bigcup_{w \in W_{m}}\bord{K_{w}}$.)
    The portmanteau theorem and the fact that $\mu \asymp \widetilde{\mu}$ deduce \eqref{eq.weak}.
\end{proof}

We next prove some $(p, p)$-Poincar\'{e} type inequalities (in the sense of Kumagai and Sturm), which plays the same role as Lemma \ref{lem.LBPI}.

\begin{lem}\label{lem.newPI}
    Suppose that $\beta > \alpha$ and $p > 1$.
    Then there exists a positive constant $C_{\ref{lem.newPI}}$ (depending only on $p, a, D, N_{\ast}$, $\beta$, $C_{\textup{AR}}$ and $\rho_p$) such that for every $n \in \bbZ_{\ge 0}$ and $f \in L^{p}(K, \mu)$,
        \begin{equation}\label{ineq.newPI}
        a^{\beta n}\sum_{w \in W_{n}}\sum_{x \in \mathbb{V}_{0}^{w}}\,\int_{K_{w}}\abs{f(x) - f(z)}^{p}\,d\mu(z) \le C_{\ref{lem.newPI}}\abs{f}_{\Lambda_{p, \infty}^{\beta/p}}^{p}. 
        \end{equation}
\end{lem}
\begin{proof}
    The proof is essentially the same as that of Lemma \ref{lem.LBPI}, so we omit the proof. 
    (Consider $x \in \mathbb{V}_{0}^{w}$ instead of $x_{w}$ in Lemma \ref{lem.LBPI} and note that $\max_{w \in W_{\ast}}\#\mathbb{V}_{0}^{w} = 2^{D}$.)
\end{proof}

Now, we are ready to prove Lemma \ref{lem.comparable}.

\noindent
\textit{Proof of Lemma \ref{lem.comparable}.}\,
    Let $\beta > \alpha$.
    We will prove the following two bounds:
    \begin{align*}
        &\text{Upper bound:}\; \sup_{n \in \bbN}a^{(\beta - \alpha)n}\cdot\mcE_{p}^{\,\bbG_{n}}(f) \le C_{\textup{upper}}\abs{f}_{\Lambda_{p, \infty}^{\beta/p}}^{p} \quad \text{for every $f \in L^{p}(K, \mu)$}, \\
        &\text{Lower bound:}\; \sup_{n \in \bbN}a^{(\beta_{p} - \alpha)n}\cdot\mcE_{p}^{\,\bbG_{n}}(f) \ge C_{\textup{lower}}\abs{f}_{\Lambda_{p, \infty}^{\beta/p}}^{p} \quad \text{for every $f \in \mcC(K)$},
    \end{align*}
    for some positive constants $C_{\textup{upper}}, C_{\textup{lower}}$ (without depending on $f$).
    Note that, from Theorem \ref{thm.main2}, the case $\beta = \beta_p$ in these bounds includes our assertion.

    \noindent
    \textbf{Upper bound.} 
    For $f \in L^{p}(K, \mu)$, we easily see that
    \[
    a^{(\beta - \alpha)n}\cdot\mcE_{p}^{\,\bbG_{n}}(f) \le 2^{p - 2}a^{\beta n}L_{\ast, \textup{modif}}\sum_{w \in W_{n}}\sum_{x \in \mathbb{V}_{0}^{w}}\int_{K_{w}}\abs{f(x) - f(z)}^{p}\,d\mu(z).
    \]
    Applying Lemma \ref{lem.newPI}, we get the desired bound with $C_{\textup{upper}} = 2^{p - 2}L_{\ast, \textup{modif}}C_{\ref{lem.newPI}}$ that depends only on $p, a, D, N_{\ast}, L_{\ast}, \beta, C_{\textup{AR}}, \rho_{p}$.

    \noindent
    \textbf{Lower bound.} 
    The desired bound can be obtained by applying \cite{Bod09}*{Theorem 3.18}, but we give a complete proof for reader's convenience. 
    Our proof is inspired by the arguments in those of \cite{Bod09}*{Lemma 4.2} and \cite{GY19}*{Theorem 3.6}. 
    
    Let $f \in \mcC(K)$ and let $n, m \in \bbZ_{\ge 0}$.
    Then we see that
    \begin{align}\label{est.lowernew}
        &\int_{K}\int_{U_{1}(x, a^{-n})}\abs{f(x) - f(y)}^{p}\,d\mu_{n + m}(y)d\mu_{n + m}(x) \\
        &\le L_{\ast}\sum_{w \in W_{n}}\int_{K_{w}}\int_{U_{1}(x, a^{-n})}\abs{f(x) - f(y)}^{p}\,d\mu_{n + m}(y)d\mu_{n + m}(x) \nonumber \\
        &\le L_{\ast}\sum_{\substack{v, w \in W_{n}; \\ d_{G_{n}}(v,w) \le 2}}\int_{K_{w}}\int_{K_{v}}\abs{f(x) - f(y)}^{p}\,d\mu_{n + m}(y)d\mu_{n + m}(x). \nonumber
    \end{align}
    Now, for $v, w \in W_{n}$, we define 
    \begin{align*}
    	I_{m}(f; v, w) 
    	&\coloneqq \int_{K_{w}}\int_{K_{v}}\abs{f(x) - f(y)}^{p}\,d\mu_{n + m}(y)d\mu_{n + m}(x) \\
    	&= (\#\mathbb{V}_{n + m})^{-2}\sum_{x \in \mathbb{V}_{m}^{w}}\sum_{y \in \mathbb{V}_{m}^{v}}\abs{f(x) - f(y)}^{p}. 
    \end{align*}
    For $v, w \in W_{n}$ with $d_{G_{n}}(v, w) \le 2$, we fix $\mathsf{t}[v, w] \in W_{n}$ such that 
    \[
    \max_{z \in \{ v, w \}}d_{G_{n}}\bigl(z, \mathsf{t}[v, w]\bigr) \le 1, 
    \]
    i.e. $K_{v} \cap K_{\mathsf{t}[v, w]} \neq \emptyset$ and $K_{w} \cap K_{\mathsf{t}[v, w]} \neq \emptyset$. 
    We also fix $x_{0} \in \mathbb{V}_{0}^{w} \cap \mathbb{V}_{0}^{\mathsf{t}[v, w]}$ and $x_{1} \in \mathbb{V}_{0}^{v} \cap \mathbb{V}_{0}^{\mathsf{t}[v, w]}$. 
    Then, by H\"{o}lder's inequality, 
    \begin{align*}
    	&I_{m}(f; v, w) \\
    	&\le 3^{p - 1}(\#\mathbb{V}_{n + m})^{-2}\sum_{x \in \mathbb{V}_{m}^{w}}\sum_{y \in \mathbb{V}_{m}^{v}}\bigl\{ \abs{f(x) - f(x_{0})}^{p} + \abs{f(x_{0}) - f(x_{1})}^{p} \\
    	&\hspace*{220pt}+ \abs{f(y) - f(x_{1})}^{p} \bigr\}. 
    \end{align*}
    Summing over suitable $v, w \in W_{n}$ and using \eqref{weak.number-pt}, we get 
    \begin{align}\label{Im-1}
    	&\sum_{\substack{v, w \in W_{n}; \\ d_{G_{n}}(v,w) \le 2}}I_{m}(f; v, w) \\
    	&\le 3^{p - 1}L_{\ast}^{2}(\#\mathbb{V}_{n + m})^{-2}(\#\mathbb{V}_{m})\sum_{w \in W_{n}}\max_{y \in \mathbb{V}_{0}^{w}}\sum_{x \in \mathbb{V}_{m}^{w}}\abs{f(x) - f(y)}^{p} \nonumber \\
    	&\le 2^{-D}3^{p - 1}L_{\ast}^{4}a^{-\alpha(2n + m)}\sum_{w \in W_{n}}\max_{y \in \mathbb{V}_{0}^{w}}\sum_{x \in \mathbb{V}_{m}^{w}}\abs{f(x) - f(y)}^{p}. \nonumber 
    \end{align}
    To obtain estimates of the sums in \eqref{Im-1}, we will find ``good sequences in $\mathbb{V}_{n + m}$''. 
    Let $z \in w \cdot W_{m}$ and define $z(k) \coloneqq [z]_{k} \in W_{k}$ for $k = n, \dots, n + m$. 
    Let $q_{k} \in \mathbb{V}_{0}^{z(k)}$ for $k = n, \dots, n + m$. 
    Then, by using H\"{o}lder's inequality repeatedly, 
    \begin{align}\label{LPBI.new-repeat}
    	&\abs{f(q_{n}) - f(q_{n + m})}^{p} \\
    	&\le 2^{p - 1}\bigl(\abs{f(q_{n}) - f(q_{n + 1})}^{p} + \abs{f(q_{n + 1}) - f(q_{n + m})}^{p}\bigr) \nonumber \\
    	&\le 2^{p - 1}\abs{f(q_{n}) - f(q_{n + 1})}^{p} \nonumber \\
    	&\hspace*{30pt}+ 2^{2(p - 1)}\bigl(\abs{f(q_{n + 1}) - f(q_{n + 2})}^{p} + \abs{f(q_{n + 2}) - f(q_{n + m})}^{p}\bigr) \nonumber \\
    	&\le \cdots \le 2^{p - 1}\sum_{k = 0}^{m - 1}2^{k(p - 1)}\abs{f(q_{n + k}) - f(q_{n + k + 1})}^{p}. \nonumber 
    \end{align}
    Since $q_{k}, q_{k + 1} \in \mathbb{V}_{1}^{z(k)}$ for any $k$, there exists $\bigl[x_{k}(0), \dots, x_{k}(L)\bigr]$ such that $x_{k}(j) \in \mathbb{V}_{1}^{z(k)}$, $x_{k}(0) = q_{k}$, $x_{k}(L) = q_{k + 1}$ and $d_{\mathbb{G}_{k + 1}}\bigl(x_{k}(j), x_{k}(j + 1)\bigr) \le 1$, where $L \coloneqq \diam(\mathbb{G}_{1}, d_{\mathbb{G}_{1}}) \le \#W_{1} = N_{\ast}$. 
    Again by H\"{o}lder's inequality, for $k = n, \dots, n + m$, 
    \[
    \abs{f(q_{n + k}) - f(q_{n + k + 1})}^{p} 
    \le L^{p - 1}\sum_{j = 0}^{L - 1}\abs{f\bigl(x_{n + k}(j)\bigr) - f\bigl(x_{n + k}(j + 1)\bigr)}^{p}. 
    \]
    We can assume that $\bigl[x_{k}(0), \dots, x_{k}(L)\bigr]$ is a simple path in the sense that 
    \[
    \max_{\substack{k = n, \dots, n + m; \\ (q, q') \in \mathbb{E}_{1}^{z(k)}}}\#\Bigl\{ j \Bigm| \text{$(q, q') = \bigl(x_{k}(j), x_{k}(j + 1)\bigr)$} \Bigr\} = 1. 
    \]
    Then we have 
    \[
    \abs{f(q_{n + k}) - f(q_{n + k + 1})}^{p} 
    \le 2L^{p - 1}\mathcal{E}_{p}^{\mathbb{G}_{1}^{z(n + k)}}(f) 
    \le 2N_{\ast}^{p - 1}\mathcal{E}_{p}^{\mathbb{G}_{1}^{z(n + k)}}(f).  
    \]
    Combining with \eqref{LPBI.new-repeat}, we get 
    \begin{equation}\label{repeat-arbitrary}
    	\abs{f(q_{n}) - f(q_{n + m})}^{p} \le 2^{p}N_{\ast}^{p - 1}\sum_{k = 0}^{m - 1}2^{k(p - 1)}\mathcal{E}_{p}^{\mathbb{G}_{1}^{z(n + k)}}(f).
    \end{equation}
    Note that choices of $w \in W_{n}$, $q_{n}$ and $q_{n + m}$ are arbitrary.
    Hence, by noting that 
    \[
    \max_{v \in W_{n + k}}\#\{ z \in w \cdot W_{m} \mid [z]_{n + k} = v \} = N_{\ast}^{m - k} = a^{\alpha(m - k)} \quad (k = 0, \dots, m)
    \]
    and using \eqref{repeat-arbitrary}, we obtain the following estimate of the sum in \eqref{Im-1}: 
    \begin{align*}
    	&\sum_{w \in W_{n}}\max_{y \in \mathbb{V}_{0}^{w}}\sum_{x \in \mathbb{V}_{m}^{w}}\abs{f(x) - f(y)}^{p} \\
    	&\le \sum_{w \in W_{n}}\max_{y \in \mathbb{V}_{0}^{w}}\sum_{z \in w \cdot W_{m}}\sum_{x \in \mathbb{V}_{0}^{z}}\abs{f(x) - f(y)}^{p} \\
    	&\le 2^{p + D}N_{\ast}^{p - 1}\sum_{w \in W_{n}}\sum_{z \in w \cdot W_{m}}\sum_{k = 0}^{m - 1}2^{k(p - 1)}\mathcal{E}_{p}^{\mathbb{G}_{1}^{z(n + k)}}(f) \\
    	&\le 2^{p + D}N_{\ast}^{p - 1}\sum_{k = 0}^{m - 1}2^{k(p - 1)}a^{\alpha(m - k)}\sum_{w \in W_{n}}\sum_{z' \in w \cdot W_{k}}\mathcal{E}_{p}^{\mathbb{G}_{1}^{z'}}(f) \\
    	&\le 2^{p + D}N_{\ast}^{p - 1}L_{\ast}a^{\alpha m}\sum_{k = 0}^{m - 1}2^{k(p - 1)}a^{-\alpha k}\mathcal{E}_{p}^{\mathbb{G}_{n + k + 1}}(f). 
    \end{align*}
    Hence \eqref{Im-1} becomes 
    \begin{align}\label{Im-2}
    	\sum_{\substack{v, w \in W_{n}; \\ d_{G_{n}}(v,w) \le 2}}I_{m}(f; v, w)  
    	\le C_{1}a^{-2\alpha n}\sum_{k = 1}^{m}2^{k(p - 1)}a^{-\alpha k}\mathcal{E}_{p}^{\mathbb{G}_{n + k}}(f). 
    \end{align}
    where $C_{1} = 2a(3N_{\ast})^{p - 1}L_{\ast}^{5}$. 
     
    From \eqref{est.lowernew}, \eqref{Im-2} and \eqref{weak.number-pt}, we see that 
    \begin{align}\label{Im-3}
    	&a^{(\alpha + \beta_{p})n}\int_{K}\int_{U_{1}(x, a^{-n})}\abs{f(x) - f(y)}^{p}\,d\mu_{n + m}(y)d\mu_{n + m}(x) \\
    	&\le C_{1}L_{\ast}a^{(\beta_{p} - \alpha)n}\sum_{k = 1}^{m}2^{k(p - 1)}a^{-\alpha k}\mathcal{E}_{p}^{\mathbb{G}_{n + k}}(f) \nonumber \\
    	&\le C_{1}L_{\ast}\left(\sup_{n \ge 0}a^{(\beta_{p} - \alpha)n}\mathcal{E}_{p}^{\mathbb{G}_{n}}(f)\right)\sum_{k = 0}^{m - 1}2^{k(p - 1)}a^{-\beta_{p} k} 
    	\le C_{2}\sup_{n \ge 0}\widetilde{\mathcal{E}}_{p}^{\,\mathbb{G}_{n}}(f). \nonumber 
    \end{align}
    where $C_{2} = C_{1}L_{\ast}\sum_{k = 0}^{\infty}a^{(p - 1 - \beta_{p})k} < +\infty$. 
    (Note that $p - \beta_{p} \le 0$ by Proposition \ref{prop.betap}.) 
    
    Letting $m \to \infty$ in \eqref{Im-3}, we see from Lemma \ref{lem.weak} that
    \[
    a^{(\alpha + \beta_{p})n}\int_{K}\int_{U_{1}(x, a^{-n})}\abs{f(x) - f(y)}^{p}\,d\mu(y)d\mu(x)
    \le c_{\ref{lem.weak}}^{-1}C_{2}\sup_{n \ge 0}\widetilde{\mathcal{E}}_{p}^{\,\mathbb{G}_{n}}(f),
    \]
    for any $n \in \bbZ_{\ge 0}$.
    Since $U_{1}(x, a^{-n}) \supseteq B_{d}(x, C_{\textup{AD}}^{-1}a^{-n})$ by Lemma \ref{lem.1-adapted}, we conclude that $\abs{f}_{\Lambda_{p, \infty}^{\beta_{p}/p}} \le C_{\textup{lower}}^{-1}\sup_{n \in \bbN}a^{(\beta_{p} - \alpha)n}\cdot\mcE_{p}^{\,\bbG_{n}}(f)$ for all $f \in \mcC(K)$, where $C_{\textup{lower}}$ depends only on $p, a, D, N_{\ast}, L_{\ast}, \rho_{p}, C_{\textup{AR}}$. \hspace{\fill}$\square$ 

%----- newgraph -----
\subsection{Proof of Theorem \ref{thm.main1.2}}\label{subsec:main1}

Now, we construct the desired $p$-energy $\mcE_{p}$ on $K$.

\noindent
\textit{Proof of Theorem \ref{thm.main1.2}.}\,
    From \eqref{eq.ss} and Lemma \ref{lem.comparable}, we immediately conclude that
    \begin{equation}\label{ss-domain}
    	\mcF_{p} = \{ f \in \mcC(K) \mid f \circ F_{i} \in \mcF_{p} \text{ for any $i \in S$} \}.
    \end{equation}
    
    Since $\mcF_{p}$ is separable by Theorem \ref{thm.sep}, there exists a countable dense subset $\mcF_{p}^{0} = \{ f_j \}_{j \ge 1}$ of $\mcF_{p}$.
    By virtue of Lemma \ref{lem.comparable}, we know that 
    \begin{equation}\label{new.comparable}
    	C_{\ref{lem.comparable}}^{-1}\abs{f}_{\mcF_{p}}^{p} 
    	\le \sup_{n \ge 0}\widetilde{\mathcal{E}}_{p}^{\,\mathbb{G}_{n}}(f) 
    	\le C_{\ref{lem.comparable}}\abs{f}_{\mcF_{p}}^{p}. 
    \end{equation}
    In particular, for each $f \in \mathcal{F}_{p}$, a sequence $\bigl\{ \widetilde{\mathcal{E}}_{p}^{\,\mathbb{G}_{n}}(f) \bigr\}_{n \ge 0}$ is bounded. 
    By the diagonal argument, we can take a subsequence $\{ n_k \}_{k \ge 1}$ such that 
    \[
    \text{$\Bigl\{ \widetilde{\mcE}_{p}^{\,\mathbb{G}_{n_{k}}}(f_j) \Bigr\}_{k \ge 1}$ converges for each $j \ge 1$ as $k \to \infty$.}
    \]
    For simplicity, we write $\mathscr{E}_{p, n}$ to denote $\widetilde{\mathcal{E}}_{p}^{\,\mathbb{G}_{n}}$. 
    Note that $\mathscr{E}_{p, n}(\,\cdot\,)^{1/p}$ is a semi-norm on $\mathbb{R}^{\mathbb{V}_{n}}$. 
    Let $f \in \mcF_p$, let $\varepsilon > 0$ and let $f_{\ast} \in \mcF_{p}^{0}$ such that $\norm{f - f_{\ast}}_{\mathcal{F}_p} < \varepsilon$.
    For $k, l \ge 1$, by using the triangle inequality of $\mathscr{E}_{p, n}(\,\cdot\,)^{1/p}$, 
    \begin{align*}
        &\abs{\mathscr{E}_{p, n_k}(f)^{1/p} - \mathscr{E}_{p, n_l}(f)^{1/p}} \\
        &\le \abs{\mathscr{E}_{p, n_k}(f)^{1/p} - \mathscr{E}_{p, n_k}(f_{\ast})^{1/p}}  \\
        &\hspace{30pt}+ \abs{\mathscr{E}_{p, n_k}(f_{\ast})^{1/p} - \mathscr{E}_{p, n_l}(f_{\ast})^{1/p}} + \abs{\mathscr{E}_{p, n_l}(f)^{1/p} - \mathscr{E}_{p, n_l}(f_{\ast})^{1/p}} \\
        &\le 2C_{\ref{lem.comparable}}\abs{f - f_{\ast}}_{\mcF_{p}} + \abs{\mathscr{E}_{p, n_k}(f_{\ast})^{1/p} - \mathscr{E}_{p, n_l}(f_{\ast})^{1/p}},
    \end{align*}
    and hence we obtain $\varlimsup_{k \wedge l \to \infty}\abs{\mathscr{E}_{p, n_k}(f)^{1/p} - \mathscr{E}_{p, n_l}(f)^{1/p}} \le 2C_{\ref{lem.comparable}}\varepsilon$.
    This shows that a sequence $\bigl\{ \mathscr{E}_{p, n_k}(f) \bigr\}_{k \ge 1}$ is Cauchy for any $f \in \mathcal{F}_{p}$.
    We conclude that $\lim_{k \to \infty}\mathscr{E}_{p, n_k}(f)$ exists for all $f \in \mathcal{F}_p$.
    Now, define 
    \[
    \mathscr{E}(f) \coloneqq \lim_{k \to \infty}\mathscr{E}_{p, n_k}(f), \quad f \in \mathcal{F}_{p}. 
    \]
    
    Next, for $f \in \mathcal{C}(K)$ and $m \in \mathbb{N}$, we define 
    \begin{equation*}
        \overline{\mcE}_{p, m}(f) \coloneqq \rho_{p}^{m}\sum_{w \in W_{m}}\mathscr{E}(f \circ F_{w}), 
    \end{equation*}
    and 
    \begin{equation*}
        \widehat{\mcE}_{p, m}(f) \coloneqq \frac{1}{m}\sum_{l = 0}^{m - 1}\overline{\mcE}_{p, l}(f), 
    \end{equation*}
    where we set $\overline{\mathcal{E}}_{p, 0}(f) \coloneqq \mathscr{E}(f)$.  
    Note that 
    \begin{align}\label{eq.originalEp}
    	\overline{\mcE}_{p, m}(f) 
    	&= \lim_{k \to \infty}\rho_{p}^{m}\sum_{w \in W_{m}}\widetilde{\mathcal{E}}_{p}^{\,\mathbb{G}_{n_{k}}}(f \circ F_{w}) \\
    	&= \lim_{k \to \infty}\frac{\rho_{p}^{m + n_{k}}}{2}\sum_{w \in W_{m}}\sum_{(x, y) \in \mathbb{E}_{n_{k}}^{w}}\abs{f(x) - f(y)}^{p}, \nonumber 
    \end{align}
    and that 
    \begin{equation}\label{eq.originalEp2}
    	\widehat{\mcE}_{p, m}(f) 
    	= \lim_{k \to \infty}\frac{1}{2m}\sum_{l = 0}^{m - 1}\rho_{p}^{l + n_{k}}\sum_{w \in W_{l}}\sum_{(x, y) \in \mathbb{E}_{n_{k}}^{w}}\abs{f(x) - f(y)}^{p}. 
    \end{equation}
    
    From \eqref{eq.originalEp}, Lemmas \ref{lem.comparable}, \ref{lem.newwm} and \ref{lem.ss}-(3), for any $m \in \mathbb{Z}_{\ge 0}$, we have 
    \begin{equation*}
    	(C_{\ref{lem.comparable}}C_{\textup{WM,modif}})^{-1}\abs{f}_{\mathcal{F}_{p}}^{p}
    \le \overline{\mathcal{E}}_{p, m}(f) 
    \le L_{\ast}C_{\ref{lem.comparable}}\abs{f}_{\mathcal{F}_{p}}^{p}, \quad f \in \mathcal{F}_{p}. 
    \end{equation*}
    From this comparability and the definition of $\widehat{\mathcal{E}}_{p, m}$, it is immediate that 
    \begin{equation}\label{pre-comparable}
    	(C_{\ref{lem.comparable}}C_{\textup{WM,modif}})^{-1}\abs{f}_{\mathcal{F}_{p}}^{p}
    	\le \widehat{\mathcal{E}}_{p, m}(f) 
    	\le L_{\ast}C_{\ref{lem.comparable}}\abs{f}_{\mathcal{F}_{p}}^{p}, \quad f \in \mathcal{F}_{p}. 
    \end{equation}
    Hence $\bigl\{ \widehat{\mathcal{E}}_{p, m}(f)  \bigr\}_{m \ge 1}$ is bounded for each $f \in \mathcal{F}_{p}$. 
    We also know that $\widehat{\mathcal{E}}_{p, m}(\,\cdot\,)^{1/p}$ satisfies the triangle inequality from the expression \eqref{eq.originalEp2}. 
    From the argument used to define $\mathscr{E}$, we conclude that there exists a subsequence $\{ m_{j} \}_{j \ge 1}$ such that 
    \[
    \text{$\Bigl\{ \widehat{\mathcal{E}}_{p, m_{j}}(f) \Bigr\}_{j \ge 1}$ converges for any $f \in \mathcal{F}_{p}$.}
    \]
    Define 
    \begin{align}\label{representation}
    	 \mathcal{E}_{p}(f) 
    	 &\coloneqq \lim_{j \to \infty}\widehat{\mathcal{E}}_{p, m_{j}}(f) \\
    	 &= \lim_{j \to \infty}\lim_{k \to \infty}\frac{1}{2m_{j}}\sum_{l = 0}^{m_{j} - 1}\rho_{p}^{l + n_{k}}\sum_{w \in W_{l}}\sum_{(x, y) \in \mathbb{E}_{n_{k}}^{w}}\abs{f(x) - f(y)}^{p}, \quad f \in \mathcal{F}_{p}. \nonumber 
    \end{align}
    We shall show that $\mathcal{E}_{p}$ have the desired properties. 
    
    First, we will see that $\mathcal{E}_{p}(\,\cdot\,)^{1/p}$ is a semi-norm, which is comparable to $\abs{\,\cdot\,}_{\mathcal{F}_{p}}$, and satisfies Clarkson's inequality (see Definition \ref{dfn.clarkson}). 
    Obviously, we have from \eqref{pre-comparable} that 
    \begin{equation}\label{comparable}
    	(C_{\ref{lem.comparable}}C_{\textup{WM,modif}})^{-1}\abs{f}_{\mathcal{F}_{p}}^{p}
    	\le \mathcal{E}_{p}(f) 
    	\le L_{\ast}C_{\ref{lem.comparable}}\abs{f}_{\mathcal{F}_{p}}^{p}, \quad f \in \mathcal{F}_{p}. 
    \end{equation}
    By virtue of the expression \eqref{eq.originalEp2}, we can regard $\widehat{\mathcal{E}}_{p, m}(\,\cdot\,)^{1/p}$ as a limit of $\ell^{p}$-norms on suitable finite sets, i.e. 
    \[
    I_{m, k}(f) \coloneqq \left(\frac{1}{2m}\sum_{l = 0}^{m - 1}\rho_{p}^{l + n_{k}}\sum_{w \in W_{l}}\sum_{(x, y) \in \mathbb{E}_{n_{k}}^{w}}\abs{f(x) - f(y)}^{p}\right)^{1/p}, \quad f \in \mathcal{F}_{p}, 
    \]
    coincides with suitable $\ell^{p}$-norm on $\bigsqcup_{l = 0}^{m - 1}\bigsqcup_{w \in W_{l}}\mathbb{E}_{n_{k}}^{w}$ and 
    \[
    \widehat{\mathcal{E}}_{p, m}(f)^{1/p} = \lim_{k \to \infty}I_{m, k}(f). 
    \]
    By Clarkson's inequality for $\ell^{p}$ spaces, for $m, k \in \mathbb{N}$ and $f, g \in \mathcal{F}_{p}$, 
    \begin{itemize}
    	\item if $p \le 2$, then 
    	\begin{equation}\label{pre-clarkson1}
    		I_{m, k}(f + g)^{\frac{p}{p - 1}} + I_{m, k}(f - g)^{\frac{p}{p - 1}} 
    		\le 2\bigl(I_{m, k}(f)^{p} + I_{m, k}(g)^{p}\bigr)^{\frac{1}{p - 1}}; 
    	\end{equation}
    	\item if $p \ge 2$, then 
    	\begin{equation}\label{pre-clarkson2}
    		I_{m, k}(f + g)^{p} + I_{m, k}(f - g)^{p} 
    		\le 2^{p - 1}\bigl(I_{m, k}(f)^{p} + I_{m, k}(g)^{p}\bigr). 
    	\end{equation}
    \end{itemize}
    Passing limits in \eqref{pre-clarkson1} and \eqref{pre-clarkson2}, we get Clarkson's inequalities of $\mathcal{E}_{p}(\,\cdot\,)^{1/p}$. 
    
    Next, we will show that $\mathcal{E}_{p}$ satisfies the properties (1)-(6). 

    \noindent
    (1)
    Let $f \in \mcC(K)$ be constant.
    Then we easily see that $\widetilde{\mcE}_{p}^{\,\bbG_{n}}(f \circ F_{w}) = 0$ for any $n \in \bbZ_{\ge 0}$ and $w \in W_{\ast}$.
    From \eqref{eq.originalEp} and \eqref{eq.originalEp2}, we have $\overline{\mathcal{E}}_{p, m}(f) = \widehat{\mathcal{E}}_{p, m}(f) = 0$ and thus $\mathcal{E}_{p}(f) = 0$.
    Conversely, if $\mcE_{p}(f) = 0$, then, by \eqref{pre-comparable} and Lemma \ref{lem.comparable}, $\widetilde{\mcE}_{p}^{\,\bbG_{n}}(f) = 0$ for any $n \in \bbZ_{\ge 0}$.
    Lemma \ref{lem.ss}-(1) yields that $f|_{\bigcup_{n \ge 0}\mathbb{V}_{n}}$ is constant.
    Since $\bigcup_{n \ge 0}\mathbb{V}_{n}$ is dense in $K$, we conclude that $f$ is constant.
    Next, let $f \in \mcF_p$ and let $a \in \bbR$.
    Then it is immediate that $\widetilde{\mcE}_{p}^{\,\bbG_{n}}(f) = \widetilde{\mcE}_{p}^{\,\bbG_{n}}(f + a\indicator{K})$ for any $n \in \bbZ_{\ge 0}$.
    Hence $\mcE_{p}(f) = \mcE_{p}(f + a\indicator{K})$.

    \noindent
    (2)
    This is proved in Theorem \ref{thm.core}.

    \noindent
    (3)
    Note that $\norm{\varphi \circ f}_{L^{p}}^{p} \le 2^{p - 1}(\abs{\varphi(0)}^{p} + \norm{f}_{L^{p}}^{p})$ whenever $\varphi\colon\bbR \to \bbR$ satisfies $\mathrm{Lip}(\varphi) \le 1$.
    Since $\widetilde{\mcE}_{p}^{\,\bbG_n}$ has the Markov property (Lemma \ref{lem.ss}-(4)), we see that $\mathscr{E}, \overline{\mcE}_{p, m}$ and $\widehat{\mathcal{E}}_{p, m}$ also have the same property.
    This immediately deduces the Markov property of $\mcE_{p}$.

    \noindent
    (4)
    For the same reason as (3), $\mcE_p$ has the required symmetries (see Lemma \ref{lem.ss}-(2)).

    \noindent
    (5)
    For any $m \in \mathbb{Z}_{\ge 0}$, the definition of $\overline{\mathcal{E}}_{p, m}$ implies that 
    \begin{align*}
    	\overline{\mathcal{E}}_{p. m + 1}(f) 
    	&= \rho_{p}^{m + 1}\sum_{w \in W_{m + 1}}\mathscr{E}(f \circ F_{w}) \\
    	&= \rho_{p}^{m + 1}\sum_{i \in S}\sum_{w \in W_{m}}\mathscr{E}(f \circ F_{i} \circ F_{w}) 
    	= \rho_{p}\sum_{i \in S}\overline{\mathcal{E}}_{p. m}(f \circ F_{i}). 
    \end{align*}
    Hence, we see that 
    \begin{align*}
    	\rho_{p}\sum_{i \in S}\mathcal{E}_{p}(f \circ F_{i}) 
    	&= \rho_{p}\sum_{i \in S}\lim_{j \to \infty}\widehat{\mathcal{E}}_{p, m_{j}}(f \circ F_{i}) \\
    	&= \lim_{j \to \infty}\frac{1}{m_{j}}\sum_{l = 0}^{m_{j} - 1}\rho_{p}\sum_{i \in S}\overline{\mathcal{E}}_{p, l}(f \circ F_{i}) \\
    	&= \lim_{j \to \infty}\frac{1}{m_{j}}\sum_{l = 0}^{m_{j} - 1}\overline{\mathcal{E}}_{p, l + 1}(f) \\
    	&= \lim_{j \to \infty}\frac{1}{m_{j}}\left(\sum_{l = 0}^{m_{j} - 1}\overline{\mathcal{E}}_{p, l}(f) + \overline{\mathcal{E}}_{p, m_{j}}(f) - \overline{\mathcal{E}}_{p, 0}(f) \right) 
    	= \mathcal{E}_{p}(f). 
    \end{align*}
    
    \noindent
    (6)
    Let $A_{1} \coloneqq \supp[f]$ and let $A_{2} \coloneqq \supp[g - a\indicator{K}]$.
    Since $\mathrm{dist}(A_{1}, A_{2}) > 0$, there exists $N \in \bbN$ such that $\sup_{n \ge N}\max_{w \in W_{n}}\diam(K_{w}, d) < \mathrm{dist}(A_{1}, A_{2})$.
    Then $f \circ F_{w}$ or $(g - a\indicator{K}) \circ F_{w}$ is equal to $0$ for any $w \in W_n$ and $n \ge N$.
    From the self-similarity, we deduce that, for $n \ge N$,
    \begin{align*}
        \mcE_{p}(f + g)
        &= \mcE_{p}(f + g - a\indicator{K}) \\
        &= \rho_{p}^{n}\sum_{w \in W_{n}}\mcE_{p}\bigl(f \circ F_{w} + (g - a\indicator{K}) \circ F_{w}\bigr) \\
        &= \rho_{p}^{n}\sum_{w \in W_{n}[A_{1}]}\mcE_{p}\bigl(f \circ F_{w}\bigr) + \rho_{p}^{n}\sum_{w \in W_{n}[A_{2}]}\mcE_{p}\bigl((g - a\indicator{K}) \circ F_{w}\bigr) \\
        &= \mcE_{p}(f) + \mcE_{p}(g). 
    \end{align*}
    We complete the proof. \qed

\begin{rmk}\label{rmk.beyond}
    \begin{itemize}
        \item [(1)] Since $\mcE_{p}(\,\cdot\,)^{1/p}$ satisfies Clarkson's inequality,  $\mcE_{p}(\,\cdot\,)^{1/p}$ is strictly convex, that is, if $\lambda \in (0, 1)$ $f, g \in \mcF_{p}$ with $f - g$ is not constant, then
        \[
        \mcE_{p}\bigl(\lambda f + (1-\lambda)g\bigr)^{1/p} < \lambda\,\mcE_{p}(f)^{1/p} + (1 - \lambda)\mcE_{p}(g)^{1/p}.
        \]
        The convexity of $\mcE_{p}(\,\cdot\,)$ is also immediate from the construction.
        Moreover, from the convexity of $x \mapsto \abs{x}^{p}$, we can show that $\mcE_{p}(\,\cdot\,)$ is strictly convex.
        \item [(2)] The framework in \cite{KZ92} includes not only the standard planar Sierpi\'{n}ski carpet but also Sierpi\'{n}ski gaskets and other self-similar sets (nested fractals for example).
        A recent paper by Kigami \cite{Kig21+} gives a more general framework to construct canonical $p$-energy on $p$-conductively homogeneous compact metric spaces, which includes new results even when $p = 2$.
    \end{itemize}
\end{rmk}

For future work, it is useful to provide the following estimate concerning products of functions in $\mcF_{p}$.
When $p = 2$, this result is standard (see \cite{FOT}*{Theorem 1.4.2-(ii)} for example).
See also \cite{Kig21+}*{Lemma 6.17-(2)}.

\begin{prop}\label{prop.prod}
    For any $f, g \in \mcF_p$,
    \begin{equation*}
        \mcE_{p}(f \cdot g) \le 2^{p - 1}\bigl(\norm{f}_{\mcC(K)}^{p}\mcE_{p}(g) + \norm{g}_{\mcC(K)}^{p}\mcE_{p}(f)\bigr).
    \end{equation*}
    In particular, $f \cdot g \in \mcF_{p}$.
\end{prop}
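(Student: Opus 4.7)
The plan is to reduce to the discrete analog on the graphs $\bbG_{n}$ and then pass to the subsequential Ces\`aro limit used to define $\mcE_{p}$. First, observe that both $f$ and $g$ lie in $\mcF_{p} \subseteq \mcC(K)$ by Theorem \ref{thm.embed}, so $\norm{f}_{\mcC(K)}$ and $\norm{g}_{\mcC(K)}$ are finite, and $fg \in \mcC(K) \subseteq L^{p}(K, \mu)$. The key pointwise identity is
\[
f(x)g(x) - f(y)g(y) = f(x)\bigl(g(x) - g(y)\bigr) + g(y)\bigl(f(x) - f(y)\bigr),
\]
which combined with $\abs{a+b}^{p} \le 2^{p-1}(\abs{a}^{p} + \abs{b}^{p})$ gives
\[
\abs{f(x)g(x) - f(y)g(y)}^{p} \le 2^{p-1}\bigl(\norm{f}_{\mcC(K)}^{p}\abs{g(x)-g(y)}^{p} + \norm{g}_{\mcC(K)}^{p}\abs{f(x)-f(y)}^{p}\bigr)
\]
for all $x, y \in K$.

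Second, I would sum this inequality over edges of $\bbG_{n}$ and multiply by $\rho_{p}^{n}/2$ to deduce
\[
\widetilde{\mcE}_{p}^{\,\bbG_{n}}(fg) \le 2^{p-1}\bigl(\norm{f}_{\mcC(K)}^{p}\widetilde{\mcE}_{p}^{\,\bbG_{n}}(g) + \norm{g}_{\mcC(K)}^{p}\widetilde{\mcE}_{p}^{\,\bbG_{n}}(f)\bigr),
\]
which holds for every $n \in \bbN$. Averaging in $n$ gives the analogous estimate for $\overline{\mcE}_{p, n}$ defined in \eqref{eq.originalEp}.

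Third, taking $n = n_{k}$ along the defining subsequence and letting $k \to \infty$, the right hand side converges to $2^{p-1}(\norm{f}_{\mcC(K)}^{p}\mcE_{p}(g) + \norm{g}_{\mcC(K)}^{p}\mcE_{p}(f))$, which is finite. By Lemma \ref{lem.comparable} and Lemma \ref{lem.newwm}, a uniform bound on $\widetilde{\mcE}_{p}^{\,\bbG_{n}}(fg)$ implies $fg \in \mcF_{p}$ (equivalently, $\abs{fg}_{\mcF_{p}} < \infty$). Since $\overline{\mcE}_{p, n_{k}}(fg)$ then lies in a bounded set, and since the construction of $\mcE_{p}$ in the proof of Theorem \ref{thm.main1.2} shows that $\lim_{k\to\infty}\overline{\mcE}_{p, n_{k}}(h)$ exists and equals $\mcE_{p}(h)$ for every $h \in \mcF_{p}$, passing to the limit on the left yields the desired inequality.

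There is no serious obstacle: the argument is purely mechanical once one notices that the same linear combination inequality that yields the product rule in the $p = 2$ Dirichlet form setting works at the level of discrete $p$-energies, and that $\mcE_{p}$ inherits any inequality satisfied uniformly by $\widetilde{\mcE}_{p}^{\,\bbG_{n}}$ (via Ces\`aro averaging and subsequential limits). The only point requiring minor care is confirming $fg \in \mcF_{p}$ before invoking the identity $\mcE_{p}(fg) = \lim_{k} \overline{\mcE}_{p, n_{k}}(fg)$; this is handled by the uniform discrete bound and the equivalence of $\sup_{n}\widetilde{\mcE}_{p}^{\,\bbG_{n}}(\cdot)$ with $\abs{\cdot}_{\mcF_{p}}^{p}$ provided by Lemma \ref{lem.comparable}.
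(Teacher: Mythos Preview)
Your proof is correct and follows essentially the same approach as the paper: both use the product decomposition $f(x)g(x)-f(y)g(y)=f(x)(g(x)-g(y))+g(y)(f(x)-f(y))$ together with $|a+b|^{p}\le 2^{p-1}(|a|^{p}+|b|^{p})$ to obtain the discrete inequality on $\bbG_{n}$, then pass to the Ces\`aro subsequential limit as in the construction of $\mcE_{p}$. You are in fact more explicit than the paper about checking $fg\in\mcF_{p}$ before invoking $\mcE_{p}(fg)=\lim_{k}\overline{\mcE}_{p,n_{k}}(fg)$; the paper compresses this into ``in view of the proof of Theorem~\ref{thm.main1.2}.''
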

\begin{proof}
    For any $n \in \bbN$, we have
    \begin{align*}
        \mcE_{p}^{\bbG_{n}}(f \cdot g)
        &\le 2^{p - 1}\frac{1}{2}\sum_{(x, y) \in \mathbb{E}_{n}}\bigl(\abs{g(x)}^{p}\abs{f(x) - f(y)}^{p} + \abs{f(y)}^{p}\abs{g(x) - g(y)}^{p}\bigr) \\
        &\le 2^{p - 1}\bigl(\norm{g}_{\mcC(K)}^{p}\mcE_{p}^{\bbG_{n}}(f) + \norm{f}_{\mcC(K)}^{p}\mcE_{p}^{\bbG_{n}}(g)\bigr).
    \end{align*}
    In view of the proof of Theorem \ref{thm.main1.2}, this immediately implies our assertion.
\end{proof}

%=== Energy measure ===
\section{Construction and basic properties of $\mcE_{p}$-energy measures}\label{sec:em}
In this section, we construct $\mcE_{p}$-energy measures in the same way as Hino's work \cite{Hin05}*{Lemma 4.1}.
We also investigate some properties, especially the chain rule of $\mcE_{p}$-energy measures.
Let $(K, S, \{ F_{i} \}_{i \in S}) = \GSC(D, a, S)$ be a generalized Sierpi\'{n}ski carpet.  
We always suppose that Assumption \ref{assum.lowdim} holds in this section. 

Let $(\mcE_{p}, \mcF_{p})$ be the $p$-energy in subsection \ref{subsec:main1}, and let $f \in \mcF_{p}$.
For each $n \ge 0$, we define a measure $\mathfrak{m}_{\langle f \rangle}^{p, n}$ on $W_{n}$ (equipped with the $\sigma$-algebra $2^{W_{n}}$) by setting
\[
\mathfrak{m}_{\langle f \rangle}^{p, n}(A) \coloneqq \rho_{p}^{n}\sum_{w \in A}\mcE_{p}\bigl(f \circ F_{w}\bigr), \quad A \subseteq W_{n}.
\]
Then we easily see that the total mass of $\mathfrak{m}_{\langle f \rangle}^{p, n}$ is equal to $\mcE_{p}(f) < \infty$.
Furthermore, it follows from the self-similarity of $\mcE_{p}$ that, for any $A \subseteq W_{n}$,
\begin{align*}
    \mathfrak{m}_{\langle f \rangle}^{p, n + 1}(A \cdot W_{1})
    = \rho_{p}^{n}\sum_{w \in A}\rho_{p}\sum_{i \in S}\mcE_{p}\bigl(f \circ F_{wi}\bigr)
    = \rho_{p}^{n}\sum_{w \in A}\mcE_{p}\bigl(f \circ F_{w}\bigr)
    = \mathfrak{m}_{\langle f \rangle}^{p, n}(A).
\end{align*}
Therefore, $\bigl\{ \mathfrak{m}_{\langle f \rangle}^{p, n} \bigr\}_{n \ge 0}$ satisfies the consistency condition, and hence Kolmogorov's extension theorem (see \cite{Dud}*{Theorem 12.1.2} for example) yields a unique Borel finite measure $\mathfrak{m}_{\langle f \rangle}^{p}$ on $\Sigma$ such that $\mathfrak{m}_{\langle f \rangle}^{p}(\Sigma_{w}) = \mathfrak{m}_{\langle f \rangle}^{p, \abs{w}}\bigl(\{ w \}\bigr)$ for every $w \in W_{\ast}$.
Then we define $\mu_{\langle f \rangle}^{p} \coloneqq \pi_{\ast}\mathfrak{m}_{\langle f \rangle}^{p}$, where $\pi$ is the natural projection (recall Proposition \ref{prop.pi}).
Note that $\mu_{\langle f \rangle}^{p}$ is Borel regular (see \cite{Dud}*{Theorem 7.1.3} for example).

\begin{rmk}
	We used only the self-similar property of $\mathcal{E}_{p}$ to verify the consistency condition of Kolmogorov's extension theorem. 
	Therefore, our definition of $\mu^{p}_{\langle \,\cdot\, \rangle}$ works if we have a self-similar $p$-energy on a self-similar set.  
	In other words, specific structures of generalized Sierpi\'{n}ski carpets except their self-similarities are irrevalent for the above approach. 
\end{rmk}

Before proving Theorem \ref{thm.main3}, we observe two fundamental properties of $\mu_{\langle f \rangle}^{p}$.

\begin{prop}
    Let $f \in \mcF_{p}$.
    Then $\mu_{\langle f \rangle}^{p} \equiv 0$ if and only if $f$ is constant.
\end{prop}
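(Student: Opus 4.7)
The plan is essentially to reduce this to the characterization of constant functions via $\mcE_p$ given by Theorem \ref{thm.main1.2}(a). The key observation is that the total mass of $\mu_{\langle f \rangle}^{p}$ equals $\mcE_{p}(f)$, and this identification follows directly from the construction.

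First, I would compute $\mu_{\langle f \rangle}^{p}(K)$. By definition, $\mu_{\langle f \rangle}^{p}(K) = \mathfrak{m}_{\langle f \rangle}^{p}(\pi^{-1}(K)) = \mathfrak{m}_{\langle f \rangle}^{p}(\Sigma)$. Since $\Sigma = \bigsqcup_{w \in W_{n}} \Sigma_{w}$ modulo a $\mathfrak{m}_{\langle f \rangle}^{p}$-null set (the union is disjoint up to the countable set of sequences in more than one $\Sigma_w$, which are all singletons and hence null for this construction via Kolmogorov extension), the consistency used in the construction gives
\[
\mathfrak{m}_{\langle f \rangle}^{p}(\Sigma) = \mathfrak{m}_{\langle f \rangle}^{p, 1}(W_{1}) = \rho_{p}\sum_{i \in S}\mcE_{p}(F_{i}^{\ast}f) = \mcE_{p}(f),
\]
where the last equality uses the self-similarity property \eqref{eq.ene-ss} from Theorem \ref{thm.main1.2}(e). (Alternatively, one can just take $n = 0$ with $W_0 = \{\emptyset\}$ and $F_\emptyset = \mathrm{id}$, giving directly $\mathfrak{m}_{\langle f \rangle}^{p}(\Sigma) = \mcE_p(f)$.)

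With this identity in hand, the equivalence is immediate in both directions. If $f$ is constant, then Theorem \ref{thm.main1.2}(a) gives $\mcE_{p}(f) = 0$, so $\mu_{\langle f \rangle}^{p}$ is a non-negative Borel measure with zero total mass and hence $\mu_{\langle f \rangle}^{p} \equiv 0$. Conversely, if $\mu_{\langle f \rangle}^{p} \equiv 0$, then $\mcE_{p}(f) = \mu_{\langle f \rangle}^{p}(K) = 0$, and Theorem \ref{thm.main1.2}(a) forces $f$ to be constant.

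There is no real obstacle here; the only subtlety is that the total mass identity needs to be extracted cleanly from the construction, but this is precisely what the consistency condition $\mathfrak{m}_{\langle f \rangle}^{p, n + 1}(A \cdot W_{1}) = \mathfrak{m}_{\langle f \rangle}^{p, n}(A)$ (verified just above the statement using \eqref{eq.ene-ss}) ensures. The whole argument is a one-line consequence once this is made explicit.
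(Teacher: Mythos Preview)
Your proof is correct and follows the same route as the paper: both reduce the statement to the identity $\mu_{\langle f \rangle}^{p}(K) = \mcE_{p}(f)$ together with Theorem \ref{thm.main1.2}(a). One small remark: the cylinders $\Sigma_{w}$ for $w \in W_{n}$ are genuinely pairwise disjoint in $\Sigma$ (their first $n$ symbols differ), so no ``modulo a null set'' caveat is needed---the overlap you are thinking of occurs only after projecting via $\pi$ to the cells $K_{w}$ in $K$.
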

\begin{proof}
    It is immediate from $\mu_{\langle f \rangle}^{p}(K) = \mcE_{p}(f)$ and Theorem \ref{thm.main1.2}-(1).
\end{proof}

\begin{prop}\label{prop.peneapp}
    For every $f, g \in \mcF_{p}$ and $A \in \mcB(K)$, it holds that
    \begin{equation}\label{ineq.pene-semi}
        \abs{\mu_{\langle f \rangle}^{p}(A)^{1/p} - \mu_{\langle g \rangle}^{p}(A)^{1/p}} \le \mu_{\langle f - g \rangle}^{p}(A)^{1/p}.
    \end{equation}
    In particular, if $f_{n} \in \mcF_{p}$ converges to $f$ in $\mcF_{p}$, then $\mu_{\langle f_{n} \rangle}^{p}(A) \to \mu_{\langle f \rangle}^{p}(A)$ for every $A \in \mcB(K)$.
\end{prop}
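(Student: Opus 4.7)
The plan is to first establish the inequality at the level of the measures $\mathfrak{m}_{\langle\cdot\rangle}^{p,n}$ on the finite sets $W_n$, then lift it to the measures $\mathfrak{m}_{\langle\cdot\rangle}^{p}$ on $\Sigma$, and finally transfer to $\mu_{\langle\cdot\rangle}^{p}$ on $K$ via the push-forward. The starting point will be the fact that $\mcE_p(\,\cdot\,)^{1/p}$ is a semi-norm on $\mcF_p$ (Theorem \ref{thm.main1.2}), so for each word $w \in W_n$,
\[
\bigl|\mcE_p(F_w^{\ast}f)^{1/p} - \mcE_p(F_w^{\ast}g)^{1/p}\bigr| \le \mcE_p\bigl(F_w^{\ast}(f-g)\bigr)^{1/p}.
\]
Multiplying by $\rho_p^{n/p}$, this says that with $a_w \coloneqq \mathfrak{m}_{\langle f \rangle}^{p,n}(\{w\})^{1/p}$ etc., we have $|a_w - b_w| \le c_w$ where $c_w \coloneqq \mathfrak{m}_{\langle f-g \rangle}^{p,n}(\{w\})^{1/p}$.

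Next I would apply Minkowski's inequality in $\ell^p$ to deduce that, for any $B \subseteq W_n$,
\[
\Bigl|\Bigl(\sum_{w \in B}a_w^{p}\Bigr)^{1/p} - \Bigl(\sum_{w \in B}b_w^{p}\Bigr)^{1/p}\Bigr| \le \Bigl(\sum_{w \in B}c_w^{p}\Bigr)^{1/p},
\]
which is precisely \eqref{ineq.pene-semi} restricted to finite disjoint unions of cylinder sets of the form $\bigsqcup_{w \in B}\Sigma_w \subseteq \Sigma$. Denote this algebra of sets by $\mcA$; since distinct cylinder sets of the same level are disjoint and refinement embeds lower levels into higher ones, $\mcA$ is an algebra generating the Borel $\sigma$-algebra on $\Sigma$.

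The main technical step will be the promotion from $\mcA$ to all Borel sets of $\Sigma$. Since each of the three measures $\mathfrak{m}_{\langle f\rangle}^{p}, \mathfrak{m}_{\langle g\rangle}^{p}, \mathfrak{m}_{\langle f-g\rangle}^{p}$ is finite, a standard approximation argument (apply it to the finite sum $\nu \coloneqq \mathfrak{m}_{\langle f\rangle}^{p} + \mathfrak{m}_{\langle g\rangle}^{p} + \mathfrak{m}_{\langle f-g\rangle}^{p}$) produces, for any Borel $E \subseteq \Sigma$ and any $\varepsilon > 0$, a set $A_\varepsilon \in \mcA$ with $\nu(E \triangle A_\varepsilon) < \varepsilon$. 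Combined with the elementary inequality $|x^{1/p} - y^{1/p}| \le |x - y|^{1/p}$ for $x, y \ge 0$ and $p \ge 1$, this gives $|\mathfrak{m}_{\langle h\rangle}^{p}(E)^{1/p} - \mathfrak{m}_{\langle h\rangle}^{p}(A_\varepsilon)^{1/p}| \le \varepsilon^{1/p}$ for each of $h \in \{f,g,f-g\}$, so that the inequality on $A_\varepsilon$ passes to $E$ after letting $\varepsilon \downarrow 0$. Taking $E = \pi^{-1}(A)$ for Borel $A \subseteq K$ and recalling $\mu_{\langle h\rangle}^{p} = \pi_{\ast}\mathfrak{m}_{\langle h\rangle}^{p}$ yields \eqref{ineq.pene-semi}.

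For the final assertion, I would just apply \eqref{ineq.pene-semi} with the pair $(f_n, f)$: since $\mu_{\langle f_n - f\rangle}^{p}(A) \le \mu_{\langle f_n - f\rangle}^{p}(K) = \mcE_p(f_n - f) \to 0$, we obtain $\mu_{\langle f_n\rangle}^{p}(A)^{1/p} \to \mu_{\langle f\rangle}^{p}(A)^{1/p}$ and hence the convergence claimed. The main obstacle is really only the passage from the generating algebra $\mcA$ to general Borel sets; everything else is a direct combination of the semi-norm property of $\mcE_p^{1/p}$ with Minkowski's inequality.
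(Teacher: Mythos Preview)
Your argument is correct, and it takes a slightly different route from the paper's. The paper does not invoke the already-established semi-norm property of $\mcE_{p}(\,\cdot\,)^{1/p}$ at the level of cells; instead it goes back to the discrete approximations $\overline{\mcE}_{p,n}$, views $\bigl(\sum_{w\in C_{l}}\overline{\mcE}_{p,n}(F_{w}^{\ast}\,\cdot\,)\bigr)^{1/p}$ as a genuine $\ell^{p}$-norm (so that the reverse triangle inequality is immediate), and only then passes to the limit $n\to\infty$ to obtain the inequality on each finite union $\Sigma_{C_{l}}$. Your shortcut---using Theorem~\ref{thm.main1.2} to get $|a_{w}-b_{w}|\le c_{w}$ directly and then applying the $\ell^{p}$ reverse triangle inequality---is cleaner, since it avoids re-opening the construction of $\mcE_{p}$.

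For the promotion from cylinder unions to general Borel sets you use a standard symmetric-difference approximation against the algebra $\mcA$, together with the concavity estimate $|x^{1/p}-y^{1/p}|\le|x-y|^{1/p}$. The paper instead reduces to closed $A$ via Borel regularity and approximates $\pi^{-1}(A)$ from outside by the decreasing sequence $\Sigma_{C_{l}}$ with $C_{l}=\{w\in W_{l}\mid\Sigma_{w}\cap\pi^{-1}(A)\neq\emptyset\}$, using $\bigcap_{l}\Sigma_{C_{l}}=\pi^{-1}(A)$. Both approaches are valid; yours is more generic measure theory, the paper's is more tied to the partition structure of $\Sigma$. Either way, the final convergence statement follows exactly as you wrote.
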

\begin{proof}
    Since $\mu_{\langle f \rangle}^{p}$ is Borel regular, it will suffice to prove \eqref{ineq.pene-semi} when $A$ is a closed set.
    Let $A$ be a closed set of $K$ and define $C_{l} \coloneqq \{ w \in W_{l} \mid \Sigma_{w} \cap \pi^{-1}(A) \neq \emptyset \}$ for each $l \in \bbN$.
    Then, as proved in \cite{Hin05}*{proof of Lemma 4.1}, one can show that $\bigl\{ \Sigma_{C_{l}} \bigr\}_{l \ge 1}$ is a decreasing sequence and $\bigcap_{l \in \bbN}\Sigma_{C_{l}} = \pi^{-1}(A)$, where $\Sigma_{C_l} \coloneqq \{ \omega \in \Sigma \mid [\omega]_{l} \in C_{l} \}$.

    Recall that $\mcE_{p}$ is obtained as a subsequential limit of $\bigl\{ \widehat{\mcE}_{p, n} \bigr\}_{n \ge 1}$, where $\widehat{\mcE}_{p, n}$ is given in \eqref{eq.originalEp2}.
    We may assume that $\mcE_{p}(f) = \lim_{n \to \infty}\widehat{\mcE}_{p, n}(f)$ for every $f \in \mcF_{p}$.
    For each $l, n \in \bbN$, we can regard $\left(\sum_{w \in C_{l}}\widehat{\mcE}_{p, n}\bigl(f \circ F_{w}\bigr)\right)^{1/p}$ as a limit of suitable $\ell^{p}$-norms on finite sets.
    Consequently, we have that
    \begin{align*}
        &\abs{\left(\rho_{p}^{l}\sum_{w \in C_{l}}\widehat{\mcE}_{p, n}\bigl(f \circ F_{w}\bigr)\right)^{1/p} - \left(\rho_{p}^{l}\sum_{w \in C_{l}}\widehat{\mcE}_{p, n}\bigl(g \circ F_{w}\bigr)\right)^{1/p}} \\
        &\le \left(\rho_{p}^{l}\sum_{w \in C_{l}}\widehat{\mcE}_{p, n}\bigl((f - g) \circ F_{w}\bigr)\right)^{1/p}.
    \end{align*}
    Letting $n \to \infty$ in this inequality, we conclude that
    \[
    \abs{\mathfrak{m}_{\langle f \rangle}^{p}\bigl(\Sigma_{C_{l}}\bigr)^{1/p} - \mathfrak{m}_{\langle g \rangle}^{p}\bigl(\Sigma_{C_{l}}\bigr)^{1/p}} \le \mathfrak{m}_{\langle f - g \rangle}^{p}\bigl(\Sigma_{C_{l}}\bigr)^{1/p},
    \]
    for any $l \in \bbN$.
    Letting $l \to \infty$, we obtain \eqref{ineq.pene-semi} for any closed set $A$.
\end{proof}

First, we prove Theorem \ref{thm.main3}-(2) and (3)
\begin{thm}[Theorem \ref{thm.main3}-(2)]\label{thm.peneMarkov}
  For any $\Phi \in \mcC^{1}(\bbR)$ and $f \in \mcF_p$,
  \begin{equation}\label{eq.pene-chain}
      d\mu_{\langle \Phi \circ f \rangle}^{p} = \abs{\Phi' \circ f}^{p}d\mu_{\langle f \rangle}^{p}.
  \end{equation}
\end{thm}
\begin{proof}
  Note that $\Phi \circ f \in \mcF_p$ for any $f \in \mcF_p$ by the Markov property of $\mcE_p$ (Theorem \ref{thm.main1.2}-(c)) and the compactness of $f(K)$.

  First, we prove \eqref{eq.pene-chain} when $\Phi$ is a polynomial.
  Let $\Phi$ be a polynomial and let $f \in \mcF_{p}$.
  Since $f(K)$ is compact, for any $\varepsilon > 0$ there exists $N(\varepsilon) \in \bbN$ such that
  \[
  \abs{\,\abs{\frac{\Phi(f(y)) - \Phi(f(y'))}{f(y) - f(y')}}^{p} - \abs{\Phi'(f(x))}^{p}} < \varepsilon
  \]
  whenever $x, y, y' \in K_{w}$ for some $w \in W_{n}$ with $n \ge N(\varepsilon)$.
  Thus, for any $m \in \bbN$, $n \ge N(\varepsilon)$, $(y, y') \in \mathbb{E}_{m}$, $w \in W_{n}$ and $x \in K_{w}$,
  \begin{align*}
      &\abs{\abs{\Phi(f(F_{w}(y))) - \Phi(f(F_{w}(y')))}^{p} - \abs{\Phi'(x)}^{p}\abs{f(F_{w}(y)) - f(F_{w}(y'))}^{p}} \\
      &\le \varepsilon\abs{f(F_{w}(y)) - f(F_{w}(y'))}^{p},
  \end{align*}
  and we conclude that
  \[
      \abs{\widetilde{\mcE}_{p}^{\,\bbG_{m}}\bigl((\Phi \circ f) \circ F_{w}\bigr) - \abs{\Phi'(f(x))}^{p}\widetilde{\mcE}_{p}^{\,\bbG_{m}}(f \circ F_{w})}
      \le \varepsilon\,\widetilde{\mcE}_{p}^{\,\bbG_{m}}(f\circ F_{w}).
  \]
  Taking an appropriate limit, we have
  \begin{equation*}
      \abs{\mathscr{E}_{p}\bigl((\Phi \circ f) \circ F_{w}\bigr) - \abs{\Phi'(f(x))}^{p}\mathscr{E}_{p}(f\circ F_{w})} \le \varepsilon\,\mathscr{E}_{p}(f\circ F_{w}),
  \end{equation*}
  whenever $w \in W_{n}$, $n \ge N(\varepsilon)$ and $f \in \mathcal{F}_{p}$, where $\mathscr{E}$ is the same as in the proof of Theorem \ref{thm.main1.2}.
  Combining with the definition of $\mathcal{E}_{p}$, we obtain 
  \begin{equation}\label{est.chainss}
      \abs{\mathcal{E}_{p}\bigl((\Phi \circ f) \circ F_{w}\bigr) - \abs{\Phi'(f(x))}^{p}\mathcal{E}_{p}(f\circ F_{w})} \le \varepsilon\,\mathcal{E}_{p}(f\circ F_{w}),
  \end{equation}
  For any $m \in \bbN$, $w \in W_{m}$ and $n \ge N(\varepsilon)$, we see from the self-similarity of $\mcE_{p}$ that
  \begin{align*}
      &\abs{\mathfrak{m}_{\langle \Phi(f) \rangle}^{p}(\Sigma_{w}) - \int_{\Sigma_{w}}\abs{\Phi'(f(\pi(\omega)))}^{p}\,d\mathfrak{m}_{\langle f \rangle}^{p}(\omega)} \\
      &\le \sum_{v \in W_{n}}\int_{\Sigma_{wv}}\abs{\frac{\mcE_{p}\bigl(F_{wv}^{\ast}(\Phi(f))\bigr)}{\mcE_{p}\bigl(F_{wv}^{\ast}f\bigr)} - \abs{\Phi'(f(\pi(\omega)))}^{p}}\,d\mathfrak{m}_{\langle f \rangle}^{p}(\omega) \\
      &\le \varepsilon\,\mathfrak{m}_{\langle f \rangle}^{p}(\Sigma_{w}).
  \end{align*}
  Hence, for any $w \in W_{\#}$,
  \[
  \mathfrak{m}_{\langle \Phi(f) \rangle}^{p}(\Sigma_{w}) = \int_{\Sigma_{w}}\abs{\Phi'(f(\pi(\omega)))}^{p}\,d\mathfrak{m}_{\langle f \rangle}^{p}(\omega).
  \]
  By Dynkin's $\pi$-$\lambda$ theorem, we get $\mathfrak{m}_{\langle \Phi(f) \rangle}^{p}(d\omega) = \abs{\Phi'(f(\pi(\omega)))}\mathfrak{m}_{\langle f \rangle}^{p}(d\omega)$.
  By the change of variable formula (see \cite{Dud}*{Theorem 4.1.11} for example), we have \eqref{eq.pene-chain} in this case.

  Next, let $\Phi \in \mcC^{1}(\bbR)$.
  Then, by applying Weierstrass' approximation theorem for $\Phi'$, we can obtain a sequence of polynomials $\{ \Phi_{k} \}_{k \ge 1}$ with $\Phi_{k}(0) = \Phi(0)$ such that $\Phi_{k} \to \Phi$ and $\Phi_{k}' \to \Phi'$ uniformly on $f(K)$.
  By the argument in the last paragraph, we know that
  \begin{equation}\label{eq.chainapp}
      \mu_{\langle \Phi_{k}(f) \rangle}^{p}(dx) = \abs{\Phi_{k}'(f(x))}^{p}\mu_{\langle f \rangle}^{p}(dx),
  \end{equation}
  for every $k \in \bbN$.
  For any $\widetilde{\Phi} \in \mcC^{1}(\bbR)$, it is immediate that
  \[
  \abs{\widetilde{\Phi}(f(F_{w}(y))) - \widetilde{\Phi}(f(F_{w}(y')))} \le \sup_{s \in f(K_{w})}\abs{\widetilde{\Phi}'(s)}\abs{f(F_{w}(y)) - f(F_{w}(y'))},
  \]
  and hence,
  \[
  \widetilde{\mcE}_{p}^{\,\bbG_{m}}\bigl(F_{w}^{\ast}\bigl(\widetilde{\Phi} \circ f\bigr)\bigr) \le \sup_{s \in f(K_{w})}\abs{\widetilde{\Phi}'(s)}^{p}\widetilde{\mcE}_{p}^{\,\bbG_{m}}(f\circ F_{w}).
  \]
  From the construction in subsection \ref{subsec:main1} and the self-similarity of $\mcE_{p}$, we get
  \begin{equation}\label{ineq.localPhi}
      \mcE_{p}\bigl(\widetilde{\Phi} \circ f\bigr) \le \rho_{p}^{n}\,\sum_{w \in W_{n}}\sup_{s \in f(K_{w})}\abs{\widetilde{\Phi}'(s)}^{p}\widetilde{\mcE}_{p}^{\,\bbG_{m}}(f\circ F_{w}),
  \end{equation}
  for every $n \in \bbN$.
  From \eqref{ineq.localPhi} and the self-similarity of $\mcE_{p}$, since the convergence $\Phi_{k}' \to \Phi'$ is uniform, we obtain $\lim_{k \to \infty}\mcE_{p}\bigl(\Phi \circ f - \Phi_{k} \circ f\bigr) = 0$.
  We deduce our assertion by letting $k \to \infty$ in \eqref{eq.chainapp} and applying Proposition \ref{prop.peneapp}.
\end{proof}

\begin{thm}[Theorem \ref{thm.main3}-(3)]\label{thm.peneSS}
  For any $n \in \bbN$ and $f \in \mcF_p$,
  \begin{equation}\label{eq.peness}
      \mu_{\langle f \rangle}^{p}(dx) = \rho_{p}^{n}\sum_{w \in W_{n}}(F_{w})_{\ast}\mu_{\langle F_{w}^{\ast}f \rangle}^{p}(dx).
  \end{equation}
\end{thm}
\begin{proof}
  Let $n, m \in \bbN$, let $w \in W_{m}$ and let $f \in \mcF_{p}$.
  If $m \le n$, then we see that
  \begin{align*}
      \rho_{p}^{n}\,\sum_{v \in W_{n}}(\sigma_{v})_{\ast}\mathfrak{m}_{\langle F_{v}^{\ast}f \rangle}^{p}(\Sigma_{w})
      &= \rho_{p}^{n}\,\sum_{v \in w \cdot W_{n - m}}(\sigma_{v})_{\ast}\mathfrak{m}_{\langle F_{v}^{\ast}f \rangle}^{p}(\Sigma_{w}) \\
      &= \rho_{p}^{n}\,\sum_{v \in w \cdot W_{n - m}}\mathfrak{m}_{\langle F_{v}^{\ast}f \rangle}^{p}(\Sigma) \\
      &= \rho_{p}^{n + m}\sum_{v \in w \cdot W_{n - m}}\mcE_{p}\bigl(F_{v}^{\ast}f\bigr)
      = \mathfrak{m}_{\langle f \rangle}^{p}(\Sigma_{w}).
  \end{align*}
  If $m \ge n$, then we have that
  \begin{align*}
      \rho_{p}^{n}\,\sum_{v \in W_{n}}(\sigma_{v})_{\ast}\mathfrak{m}_{\langle F_{v}^{\ast}f \rangle}^{p}(\Sigma_{w})
      &= \rho_{p}^{n}\,(\sigma_{[w]_{n}})_{\ast}\mathfrak{m}_{\langle F_{[w]_{n}}^{\ast}f \rangle}^{p}(\Sigma_{w}) \\
      &= \rho_{p}^{m}\,\mcE_{p}\bigl(F_{w}^{\ast}f\bigr)
      = \mathfrak{m}_{\langle f \rangle}^{p}(\Sigma_{w}).
  \end{align*}
  Therefore, by Dynkin's $\pi$-$\lambda$ theorem, we deduce that
  \[
  \mathfrak{m}_{\langle f \rangle}^{p}(d\omega)
  = \rho_{p}^{n}\,\sum_{w \in W_{n}}(\sigma_{w})_{\ast}\mathfrak{m}_{\langle F_{w}^{\ast}f \rangle}^{p}(d\omega),
  \]
  for every $n \in \bbN$.
  By Proposition \ref{prop.pi}, we have the desired result.
\end{proof}

As an immediate consequence of Theorems \ref{thm.peneMarkov} and \ref{thm.peneSS}, we can prove the following \emph{energy image density property} (we borrow this naming from \cite{BH.DF}*{Theorem I.7.1.1}).

\begin{prop}\label{prop.EID}
    For any $f \in \mcF_{p}$, it holds that the image measure of $\mu_{\langle f \rangle}^{p}$ by $f$ is absolutely continuous with respect to the one-dimensional Lebesgue measure $\mathscr{L}^{1}$ on $\bbR$.
    In particular, $\mu_{\langle f \rangle}^{p}(\{ x \}) = 0$ for any $x \in K$.
\end{prop}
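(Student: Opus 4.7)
My plan is to adapt the Bouleau--Hirsch strategy to the $p$-energy setting, factoring the proof into two steps: first establish the auxiliary \emph{no-atom} property $\mu_{\langle f\rangle}^{p}(f^{-1}(\{a\}))=0$ for every $a\in\bbR$ (which yields the ``in particular'' assertion since $\{x\}\subseteq f^{-1}(\{f(x)\})$), and then bootstrap from this to full absolute continuity of $\sigma\coloneqq f_{*}\mu_{\langle f\rangle}^{p}$ with respect to $\mathscr{L}^{1}$.

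For the no-atom step, fix $a\in\bbR$ and decompose $f-a=(f-a)^{+}-(f-a)^{-}$. I would first prove $\mcE_{p}((f-a)^{\pm})\le\mu_{\langle f\rangle}^{p}(\{\pm(f-a)>0\})$ by approximating $t\mapsto(t-a)^{+}$ uniformly by functions $\psi_{n}\in\mcC^{1}(\bbR)$ with $\psi_{n}'\in[0,1]$, $\psi_{n}'\equiv 0$ on $(-\infty,a]$, and $\psi_{n}'\equiv 1$ on $[a+1/n,\infty)$; the crucial point is that $\psi_{n}'(a)=0$, so $\psi_{n}'(t)\to\indicator{\{t>a\}}(t)$ \emph{everywhere} on $\bbR$. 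Applying Theorem~\ref{thm.peneMarkov} and dominated convergence to the identity $\mcE_{p}(\psi_{n}\circ f)=\int|\psi_{n}'(f)|^{p}\,d\mu_{\langle f\rangle}^{p}$, then invoking weak lower semicontinuity of $\mcE_{p}$ in the reflexive Banach space $\mcF_{p}$ (Theorem~\ref{thm.newnorm}), yields the desired bound. For the matching equality, set $h_{\varepsilon}^{+}\coloneqq(f-a-\varepsilon)^{+}$ and $h_{\varepsilon}^{-}\coloneqq(a-\varepsilon-f)^{+}$: the uniform continuity of $f$ coming from the H\"{o}lder embedding (Theorem~\ref{thm.embed}) gives $\dist(\supp[h_{\varepsilon}^{+}],\supp[h_{\varepsilon}^{-}])>0$, so strong locality (Corollary~\ref{cor.slocal}) provides $\mcE_{p}(h_{\varepsilon}^{+}-h_{\varepsilon}^{-})=\mcE_{p}(h_{\varepsilon}^{+})+\mcE_{p}(h_{\varepsilon}^{-})$. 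A second $\mcC^{1}$-approximation---now with derivatives vanishing at both interval endpoints---shows $\mcE_{p}((f-a)^{\pm}-h_{\varepsilon}^{\pm})\le\mu_{\langle f\rangle}^{p}(\{0<\pm(f-a)\le\varepsilon\})\to 0$, so letting $\varepsilon\downarrow 0$ gives $\mcE_{p}(f)=\mcE_{p}((f-a)^{+})+\mcE_{p}((f-a)^{-})\le\mcE_{p}(f)-\mu_{\langle f\rangle}^{p}(\{f=a\})$, forcing the atom to vanish.

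For full absolute continuity, fix Borel $B\subseteq\bbR$ with $\mathscr{L}^{1}(B)=0$ and choose open sets $U_{n}\supseteq B$ with $\mathscr{L}^{1}(U_{n})<1/n$; since every open $U_{n}\subseteq\bbR$ is a countable disjoint union of open intervals, $\partial U_{n}$ is countable. Set $\phi_{n}(t)\coloneqq\int_{0}^{t}\indicator{U_{n}^{c}}(s)\,ds$, which is $1$-Lipschitz with $\|\phi_{n}-\mathrm{id}\|_{\infty}\le 1/n$. The Markov property, uniform convergence $\phi_{n}\circ f\to f$, and weak lower semicontinuity force $\mcE_{p}(\phi_{n}\circ f)\to\mcE_{p}(f)$. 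On the other hand, mollifying $\phi_{n,\delta}\coloneqq\phi_{n}*\rho_{\delta}\in\mcC^{\infty}$ and applying the $\mcC^{1}$ chain rule gives $\mcE_{p}(\phi_{n,\delta}\circ f)=\int|\phi_{n,\delta}'(f)|^{p}\,d\mu_{\langle f\rangle}^{p}$; as $\delta\downarrow 0$ one has $\phi_{n,\delta}'(t)\to\indicator{U_{n}^{c}}(t)$ for every $t\notin\partial U_{n}$, and by the no-atom step $\mu_{\langle f\rangle}^{p}(f^{-1}(\partial U_{n}))=0$ (the sum of countably many vanishing level-set measures). Dominated convergence therefore yields $\mcE_{p}(\phi_{n,\delta}\circ f)\to\sigma(U_{n}^{c})$, and weak lower semicontinuity promotes this to $\mcE_{p}(\phi_{n}\circ f)\le\sigma(U_{n}^{c})$. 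Combining, $\mcE_{p}(f)=\lim_{n}\mcE_{p}(\phi_{n}\circ f)\le\liminf_{n}\sigma(U_{n}^{c})=\mcE_{p}(f)-\limsup_{n}\sigma(U_{n})$, so $\sigma(U_{n})\to 0$ and $\sigma(B)\le\sigma(U_{n})\to 0$.

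The main technical obstacle is precisely this bootstrap: the chain-rule identity for the Lipschitz $\phi_{n}$ becomes meaningful only once one knows that $\mu_{\langle f\rangle}^{p}$ does not charge the bad set of the $\mcC^{1}$ approximants' derivatives (here $\partial U_{n}$), and this is exactly what the preliminary no-atom step supplies via the countability of $\partial U$ for any open $U\subseteq\bbR$. The sequencing (no-atom first, EID second) is therefore essential rather than cosmetic, and the key lower-order identity $\mcE_{p}(f)=\mcE_{p}((f-a)^{+})+\mcE_{p}((f-a)^{-})$ derived from strong locality plays the role of the ``Dirichlet-form bilinear identity'' that would be unavailable for general $p\neq 2$.
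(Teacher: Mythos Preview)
Your argument is correct, but it takes a genuinely different route from the paper's. The paper (following \cite{CF}*{Theorem 4.3.8}) proceeds in a single pass: for compact $F\subseteq\bbR$ with $\mathscr{L}^{1}(F)=0$, it chooses continuous compactly supported $\varphi_{n}$ with $|\varphi_{n}|\le 1$, $\varphi_{n}\to\indicator{F}$ pointwise, and $\int_{0}^{\infty}\varphi_{n}=\int_{-\infty}^{0}\varphi_{n}=0$; then $\Phi_{n}(t)\coloneqq\int_{0}^{t}\varphi_{n}$ is $\mcC^{1}$ with $\Phi_{n}\to 0$ pointwise, so $\Phi_{n}(f)\to 0$ in $L^{p}$ and is $\mcF_{p}$-bounded. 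A Mazur step yields convex combinations $\Psi_{l}(f)\to 0$ \emph{strongly} in $\mcF_{p}$, hence $\mcE_{p}(\Psi_{l}(f))\to 0$; since $\Psi_{l}'\to\indicator{F}$ pointwise and the chain rule gives $\mcE_{p}(\Psi_{l}(f))=\int|\Psi_{l}'(f)|^{p}\,d\mu_{\langle f\rangle}^{p}$, Fatou's lemma yields $f_{*}\mu_{\langle f\rangle}^{p}(F)\le\liminf_{l}\mcE_{p}(\Psi_{l}(f))=0$ directly.

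The contrast is instructive. You treat the no-atom property as a prerequisite, needed to control the bad set $\partial U_{n}$ in the mollification $\phi_{n,\delta}'\to\indicator{U_{n}^{c}}$, and you obtain it from strong locality (Corollary~\ref{cor.slocal}) via the decomposition $\mcE_{p}(f)=\mcE_{p}((f-a)^{+})+\mcE_{p}((f-a)^{-})$. The paper sidesteps this entirely: by building the approximants with \emph{continuous} derivatives $\Phi_{n}'=\varphi_{n}$ from the outset, there is no mollification step and hence no bad set to worry about; and by using Mazur's lemma rather than mere weak lower semicontinuity, it gets $\mcE_{p}(\Psi_{l}(f))\to 0$ exactly rather than just an inequality. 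The paper's route is shorter and does not invoke strong locality at all. Your route, on the other hand, isolates the identity $\mcE_{p}(f)=\mcE_{p}((f-a)^{+})+\mcE_{p}((f-a)^{-})$---a $p$-energy analogue of the standard Dirichlet-form splitting---which is of independent interest and makes the role of locality explicit.
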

\begin{proof}
    We follow \cite{CF}*{Theorem 4.3.8}.
    It will suffice to show that $f_{\ast}\mu_{\langle f \rangle}^{p}(F) = 0$ whenever $f \in \mcF_{p}$ and $F$ is a compact subset of $\bbR$ with $\mathscr{L}^{1}(F) = 0$.
    We can choose a sequence $\{ \varphi_{n} \}_{n \ge 1}$ from continuous functions on $\bbR$ with compact supports such that $\abs{\varphi_{n}} \le 1$, $\lim_{n \to \infty}\varphi_{n}(x) = \indicator{F}(x)$ for each $x \in \bbR$, and
    \[
    \int_{0}^{\infty}\varphi_{n}(t)\,dt = \int_{-\infty}^{0}\varphi_{n}(t)\,dt = 0,
    \]
    for each $n \in \bbN$.
    Define $\Phi_{n}(x) \coloneqq \int_{0}^{x}\varphi_{n}(t)\,dt$ for each $x \in \bbR$ and $n \in \bbN$.
    Then we easily see that $\Phi_{n} \in \mcC^{1}(\bbR)$ with compact support, $\Phi_{n}(0) = 0$, and $\abs{\Phi_{n}'} \le 1$ for each $n \in \bbN$.
    By the dominated convergence theorem, it is immediate that $\lim_{n \to \infty}\Phi_{n}(x) = 0$ for each $x \in \bbR$ and $\Phi_{n} \circ f$ converges to $0$ in $L^{p}(K, \mu)$.
    Since $\mcE_{p}(\Phi_{n} \circ f) \le \mcE_{p}(f)$ by the Markov property of $\mcE_{p}$, we deduce that $\{ \Phi_{n} \}_{n \ge 1}$ is $\mcF_{p}$-bounded.
    Therefore, there exists a subsequence $\{ n_k \}_{k \ge 1}$ such that $\{ \Phi_{n_k} \circ f \}_{k \ge 1}$ converges to $0$ weakly in $\mcF_{p}$.
    By Mazur's lemma, there exist $N(l) \in \bbN$ and $\{ a(l)_{k} \}_{k = l}^{N(l)}$ with $a(l)_{k} \ge 0$ and $\sum_{k = l}^{N(l)}a(l)_{k} = 1$ such that $\Psi_{l} \circ f \coloneqq \sum_{k = l}^{N(l)}a(l)_{k}\Phi_{n_k} \circ f$ converges to $0$ in $\mcF_{p}$ as $l \to \infty$.
    Then, by Fatou's lemma and the change of variable formula, we conclude that
    \begin{align*}
        f_{\ast}\mu_{\langle f \rangle}^{p}(F)
        &= \int_{\bbR}\lim_{l \to \infty}\abs{\sum_{k = l}^{N(l)}a(l)_{k}\Phi_{n_k}'(t)}^{p}\,f_{\ast}\mu_{\langle f \rangle}^{p}(dt) \\
        &\le \varliminf_{l \to \infty}\int_{K}\abs{\Psi_{l}'(f(x))}^{p}\,\mu_{\langle f \rangle}^{p}(dx) \\
        &= \varliminf_{l \to \infty}\mu_{\langle \Psi_{l} \circ f \rangle}^{p}(K)
        = \varliminf_{l \to \infty}\mcE_{p}\bigl(\Psi_{l} \circ f\bigr) = 0. \qedhere
    \end{align*}
\end{proof}

Finally, we prove Theorem \ref{thm.main1.2}-(1).
\begin{thm}[Theorem \ref{thm.main3}-(1)]\label{thm.peneLocal}
  Let $f, g \in \mcF_p$.
  If $(f - g)|_{A}$ is constant for some Borel subset $A$ of $K$, then $\mu_{\langle f \rangle}^{p}(A) = \mu_{\langle g \rangle}^{p}(A)$.
\end{thm}
\begin{proof}
  Let $f \in \mcF_p$ and let $A \in \mcB(K)$.
  Suppose that $f|_{A} = c$ for some $c \in \bbR$.
  Then, by Proposition \ref{prop.EID}, we have $\mu_{\langle f \rangle}^{p}(f^{-1}(\{ c \})) = 0$, which implies that $\mu_{\langle f \rangle}^{p}(A) = 0$.
  Combining this result and Proposition \ref{prop.peneapp}, we finish the proof.
\end{proof}

We conclude this section by showing a consequence of the symmetries of $\mcE_p$ that will be used to prove Theorem \ref{thm.betap_strict} in the next section.
\begin{prop}\label{prop.pemeas-sym}
    For any $f \in \mcF_p$ and $T \in \mathcal{G}_{0}$, it holds that $T_{\ast}\mu_{\langle f \rangle}^{p} = \mu_{\langle T^{\ast}f \rangle}^{p}$.
\end{prop}
\begin{proof}
    Let $A \in \mcB(K)$ be a closed set and let $T \in \mathcal{G}_{0}$.
    For each $l \in \bbN$, define
    \[
    C_{l} \coloneqq \{ w \in W_{l} \mid \Sigma_{w} \cap \pi^{-1}(A) \neq \emptyset \} \quad \text{and} \quad C_{l}^{T} \coloneqq \{ w \in W_{l} \mid \Sigma_{w} \cap \pi^{-1}(T^{-1}(A)) \neq \emptyset \}.
    \]
    Then we easily see that $\tau[T]|_{W_{l}}$ gives a bijection between $C_{l}$ and $C_{l}^{T}$.
    Hence, for any $n \in \bbN$,
    \begin{align*}
        \sum_{w \in C_{l}}\widetilde{\mcE}_{p}^{\bbG_{n}}(F_{w}^{\ast}(T^{\ast}f))
        = \sum_{w \in C_{l}}\widetilde{\mcE}_{p}^{\bbG_{n}}((T \circ F_{w})^{\ast}f)
        = \sum_{w \in C_{l}^{T}}\widetilde{\mcE}_{p}^{\bbG_{n}}(F_{w}^{\ast}f).
    \end{align*}
    From \eqref{eq.originalEp2}, we get $\sum_{w \in C_{l}}\widehat{\mcE}_{p, n}(F_{w}^{\ast}(T^{\ast}f)) = \sum_{w \in C_{l}^{T}}\widehat{\mcE}_{p, n}(F_{w}^{\ast}f)$, and thus
    \begin{align*}
    	\mathfrak{m}_{\langle T^{\ast}f \rangle}^{p}\bigl(\Sigma_{C_{l}}\bigr) 
    	&= \rho_{p}^{l}\sum_{w \in C_{l}}\mcE_{p}\bigl(T^{\ast}f \circ F_{w}\bigr) \\
    	&= \rho_{p}^{l}\lim_{k \to \infty}\sum_{w \in C_{l}}\widehat{\mcE}_{p, n_k}\bigl(F_{w}^{\ast}(T^{\ast}f)\bigr) \\ 
    	&= \rho_{p}^{l}\lim_{k \to \infty}\sum_{w \in C_{l}^{T}}\widehat{\mcE}_{p, n}\bigl(F_{w}^{\ast}f\bigr) \\
    	&= \rho_{p}^{l}\sum_{w \in C_{l}^{T}}\mcE_{p}\bigl(f \circ F_{w}\bigr) 
    	= \mathfrak{m}_{\langle f \rangle}^{p}\Bigl(\Sigma_{C_{l}^{T}}\Bigr). 
    \end{align*}
    Letting $l \to \infty$, we obtain $\mu_{\langle T^{\ast}f \rangle}^{p}(A) = T_{\ast}\mu_{\langle f \rangle}^{p}(A)$ because $\bigcap_{l \in \bbN}\Sigma_{C_{l}} = \pi^{-1}(A)$ and $\bigcap_{l \in \bbN}\Sigma_{C_{l}^{T}} = \pi^{-1}(T^{-1}(A))$ as seen in the proof of Proposition \ref{prop.peneapp}.
    Since both of these measures $\mu_{\langle T^{\ast}f \rangle}^{p}$ and $T_{\ast}\mu_{\langle f \rangle}^{p}$ are Borel regular, we complete the proof.
\end{proof}

%=== Proof of beta_p > p ===
\section{Proof of Theorem \ref{thm.betap_strict}}\label{sec:betap}
We conclude this paper by proving Theorem \ref{thm.betap_strict}: a strict inequality $\beta_p > p$.
Our argument is similar to \cite{Kaj20+}*{Section 3}.
A key to prove Theorem \ref{thm.betap_strict} is the notion of \emph{$\mcE_p$-harmonicity} (see Definition \ref{dfn.p-harm}).
In this section, we suppose that Assumption \ref{assum.lowdim} holds (except Theorem \ref{thm.betap_strict}) and we write $(\mathcal{E}_{p}, \mathcal{F}_{p})$ for the $p$-energy on a generalized Sierpi\'{n}ski carpet $(K, S, \{ F_{i} \}_{i \in S}) = \GSC(D, a, S)$ in Theorem \ref{thm.main1.2}. 
Recall that $\mathcal{F}_{p} \subseteq \mathcal{C}(K)$ in this case. 

\begin{dfn}
    Let $U$ be a non-empty open subset of $K$.
    We define
    \begin{equation}\label{dfn.bourd}
        \mcC^{U} \coloneqq \{ f \in \mcF_{p} \mid \supp[f] \subseteq U \}, \quad\text{and}\quad \quad \mcF_{p}^{U} \coloneqq \closure{\mcC^{U}}^{\,\norm{\,\cdot\,}_{\mcF_{p}}}.
    \end{equation}
\end{dfn}

\begin{prop}\label{prop.FpU}
	It holds that 
    \begin{equation}\label{eq.FpU}
        \mcF_{p}^{U} = \{ f \in \mcF_{p} \mid \text{$f(x) = 0$ for any $x \in K \setminus U$} \}.
    \end{equation}
\end{prop}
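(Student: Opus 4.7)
The plan is to prove the two inclusions separately, with the ``$\supseteq$'' direction relying on reflexivity of $\mcF_p$ together with a truncation argument.

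For the easy inclusion $\mcF_p^U \subseteq \{f \in \mcF_p \mid f|_{K \setminus U} \equiv 0\}$, I would use that Theorem~\ref{thm.embed} embeds $\mcF_p$ continuously into $\mcC(K)$, so $\norm{\,\cdot\,}_{\mcF_p}$-convergence implies uniform convergence. If $\{f_n\} \subseteq \mcC^U$ converges to $f$ in $\mcF_p$, then each $f_n$ vanishes on $K \setminus U$, and so does the uniform limit.

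For the converse, given $f \in \mcF_p$ with $f \equiv 0$ on $K \setminus U$, the idea is to truncate. Define $\varphi_\varepsilon(t) \coloneqq \mathrm{sgn}(t)\max(\abs{t} - \varepsilon, 0)$, a $1$-Lipschitz function with $\varphi_\varepsilon(0) = 0$, and set $f_\varepsilon \coloneqq \varphi_\varepsilon \circ f$. The Markov property (Theorem~\ref{thm.main1.2}(c)) gives $f_\varepsilon \in \mcF_p$ with $\mcE_p(f_\varepsilon) \le \mcE_p(f)$. Since $f$ is continuous and vanishes on $K \setminus U$, one has $\supp[f_\varepsilon] \subseteq \{\abs{f} \ge \varepsilon\} \subseteq U$, so $f_\varepsilon \in \mcC^U$, while $f_\varepsilon \to f$ uniformly as $\varepsilon \downarrow 0$.

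To pass to the limit, I would exploit reflexivity: for any sequence $\varepsilon_n \downarrow 0$, the family $\{f_{\varepsilon_n}\}$ is bounded in $\mcF_p$, so Theorem~\ref{thm.newnorm} yields a subsequence along which $f_{\varepsilon_{n_k}}$ converges weakly in $\mcF_p$; uniform convergence forces the weak limit to equal $f$. Mazur's lemma then produces convex combinations $g_k = \sum_{i = k}^{N_k} \lambda_i^{(k)} f_{\varepsilon_{n_i}}$ converging to $f$ in $\norm{\,\cdot\,}_{\mcF_p}$, and convexity of $\mcC^U$ ensures each $g_k \in \mcC^U$. Hence $f \in \closure{\mcC^U}^{\,\norm{\,\cdot\,}_{\mcF_p}} = \mcF_p^U$.

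The main obstacle is that we have no direct chain-rule-type estimate that would yield $\mcE_p(f - f_\varepsilon) \to 0$ from the truncation construction alone; we only control $\mcE_p(f_\varepsilon) \le \mcE_p(f)$ and uniform convergence. The functional-analytic detour through reflexivity and Mazur's lemma is precisely what converts this weak-type information into strong $\mcF_p$-approximation, and this is where the Clarkson inequality established in Theorem~\ref{thm.newnorm} plays its crucial role.
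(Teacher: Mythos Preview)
Your proof is correct, and it takes a different---arguably cleaner---route than the paper for the hard inclusion. Both arguments end the same way (bounded approximants in $\mcC^U$, then reflexivity plus Mazur's lemma), but the construction of the approximants differs. The paper builds
\[
f_{n}(x) = \sum_{w \in W_{n};\, U_{1}^{(n)}(w) \cap (K \setminus U) = \emptyset} M_{n}f(w)\,\varphi_{w}(x)
\]
using its partition of unity $\{\varphi_{w}\}_{w \in W_{\#}}$ from Lemma~\ref{lem.corebase}, then invokes the same boundedness estimate as in the proof of Theorem~\ref{thm.sep} (which ultimately rests on Lemma~\ref{lem.unity}) before applying Mazur's lemma. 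Your truncation $f_{\varepsilon} = \varphi_{\varepsilon} \circ f$ sidesteps all of that: the Markov property alone gives $\mcE_{p}(f_{\varepsilon}) \le \mcE_{p}(f)$ and hence $\mcF_{p}$-boundedness, and continuity of $f$ immediately forces $\supp[f_{\varepsilon}] \subseteq \{\abs{f} \ge \varepsilon\} \subseteq U$. The paper's approach reuses infrastructure already developed for regularity and separability; yours is more self-contained and would work in any setting where one has a Markov-type contraction and reflexivity, without any need for a partition of unity.
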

\begin{proof}
    It is easy to show that $\mcF_{p}^{U} \subseteq \{ f \in \mcF_{p} \mid \text{$f(x) = 0$ for any $x \in K \setminus U$} \} \eqqcolon \widetilde{\mathcal{F}}_{p}^{U}$, so we will prove the converse. 
    Let $f \in \widetilde{\mathcal{F}}_{p}^{U}$ be non-negative. 
  	For $n \ge 1$, since $f$ is uniformly continuous on $K$, we can choose $r_{n} > 0$ such that 
  	\[
  	f(x) < n^{-1} \text{ for all } x \in O_{n} \coloneqq \bigcup_{x \in K \setminus U}B(x, r_{n}). 
  	\]
  	We can assume that $\{ r_{n} \}_{n \ge 1}$ is non-increasing. 
  	Also, we define 
  	\[
  	f_{n} \coloneqq \bigl(f - n^{-1}\bigr) \vee 0 \in \mathcal{F}_{p}. 
  	\]
  	Then it is clear that $\supp[f_{n}] \subseteq K \setminus U$ and $f_{n} \to f$ in $\mathcal{C}(K)$ as $n \to \infty$. 
  	In particular, $f_{n} \in \mathcal{C}^{U}$ for all $n \ge 1$. 
  	By the Markov property of $\mathcal{E}_{p}$, we have 
  	\[
  	\mathcal{E}_{p}(f_{n}) \le \mathcal{E}_{p}\bigl(f - n^{-1}\bigr) = \mathcal{E}_{p}(f), 
  	\]
  	and hence $\{ f_{n} \}_{n \ge 1}$ is bounded in $\mathcal{F}_{p}$. 
  	Since $\mathcal{F}_{p}$ is reflexive, there exists a subsequence $\{ n_k \}_{k \ge 1}$ such that $f_{n_{k}}$ converges weakly to $f$. 
  	By Mazur's lemma, we can find a sequence $\{ g_{m} \}_{m \ge 1}$ from 
  	\[
  	\Biggl\{ \sum_{k = 1}^{N}a_{k}f_{n_{k}} \Biggm| N \in \mathbb{N}, \text{$a_{k} \ge 0$ and $\sum_{k = 1}^{N}a_{k} = 1$} \Biggr\} \subseteq \mathcal{C}^{U}
  	\]
  	such that $g_{m} \to f$ in $\mathcal{F}_{p}$, which completes the proof. 
\end{proof}

\begin{dfn}\label{dfn.p-harm}
    Let $U$ be a non-empty open subset of $K$.
    For $f \in \mathcal{F}_{p}$, we set 
    \[
    f + \mathcal{F}_{p}^{U} \coloneqq \bigl\{ f + g \bigm| g \in \mathcal{F}_{p}^{U} \bigr\}. 
    \]
    A function $h \in \mcF_p$ is \emph{$\mcE_p$-harmonic} on $U$ if     
    \begin{equation}\label{eq.p-harm}
        \mcE_{p}(h) = \inf\bigl\{ \mcE_{p}(f) \bigm| f \in h + \mathcal{F}_{p}^{U} \bigr\}.
    \end{equation}
\end{dfn}

\begin{prop}\label{prop.p-harm}
    Let $U$ be a non-empty open subset of $K$ with $U \neq K$ and let $g \in \mcF_p$.
    Then there exists a unique function $h \in \mcF_{p}$ that is $\mcE_{p}$-harmonic on $U$ and $h|_{K \setminus U} \equiv g|_{K \setminus U}$.
\end{prop}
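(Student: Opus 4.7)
The plan is to prove existence and uniqueness by the direct method of the calculus of variations, exploiting the reflexivity of $\mcF_{p}$ (Theorem \ref{thm.newnorm}), the lower-semicontinuity of $\mcE_{p}$ coming from its convexity and continuity, and the strict convexity of $\mcE_{p}(\,\cdot\,)^{1/p}$ arising from Clarkson's inequality (Remark \ref{rmk.beyond}). Define the admissible class
\[
\mcA_{g} \coloneqq \{ f \in \mcF_{p} \mid f - g \in \mcF_{p}^{U} \}.
\]
Since $g \in \mcA_{g}$ the infimum $I \coloneqq \inf_{f \in \mcA_{g}}\mcE_{p}(f)$ is finite. By Proposition \ref{prop.FpU}, $\mcF_{p}^{U}$ is the closed (hence weakly closed, by convexity) subspace of $f \in \mcF_{p}$ vanishing on $K \setminus U$, so $\mcA_{g}$ is a closed affine subspace of $\mcF_{p}$ and the condition $h|_{K\setminus U} = g|_{K \setminus U}$ is equivalent to $h \in \mcA_{g}$.

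Take a minimizing sequence $\{f_{n}\} \subset \mcA_{g}$ with $\mcE_{p}(f_{n}) \to I$. The equivalence of $\norm{\,\cdot\,}_{\mcE_{p}}$ and $\norm{\,\cdot\,}_{\mcF_{p}}$ in Theorem \ref{thm.main1.2} gives a uniform bound on $\abs{f_{n}}_{\mcF_{p}}$, hence on $\abs{f_{n} - g}_{\mcF_{p}}$. To bound $\norm{f_{n}}_{L^{p}}$ I would fix any $x_{0} \in K \setminus U$ (possible because $U \neq K$); then $(f_{n} - g)(x_{0}) = 0$ by Proposition \ref{prop.FpU}, and the H\"older embedding of Theorem \ref{thm.embed} applied to $f_{n} - g$ yields
\[
\norm{f_{n} - g}_{\mcC(K)}^{p} \le C_{\textup{H\"{o}l}}\abs{f_{n} - g}_{\mcF_{p}}^{p}\diam(K)^{\beta_{p} - \alpha},
\]
so $\norm{f_{n}}_{L^{p}}$ is bounded. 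Reflexivity of $(\mcF_{p}, \norm{\,\cdot\,}_{\mcF_{p}})$ (Theorem \ref{thm.newnorm}) produces a subsequence $f_{n_{k}} \rightharpoonup h$ weakly in $\mcF_{p}$, and weak closedness of $\mcA_{g}$ gives $h \in \mcA_{g}$. Because $\mcE_{p}(\,\cdot\,)^{1/p}$ is a continuous seminorm, it is convex and $\norm{\,\cdot\,}_{\mcF_{p}}$-lower-semicontinuous, and hence weakly lower-semicontinuous; therefore $\mcE_{p}(h)^{1/p} \le \liminf_{k \to \infty} \mcE_{p}(f_{n_{k}})^{1/p} = I^{1/p}$, which forces $\mcE_{p}(h) = I$ and shows that $h$ is $\mcE_{p}$-harmonic on $U$ with the prescribed boundary values.

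For uniqueness, suppose $h_{1}, h_{2} \in \mcA_{g}$ both attain $I$. Then $(h_{1} + h_{2})/2 \in \mcA_{g}$, so $\mcE_{p}((h_{1} + h_{2})/2) \ge I$. If $h_{1} - h_{2}$ were non-constant, the strict convexity of $\mcE_{p}(\,\cdot\,)^{1/p}$ noted in Remark \ref{rmk.beyond} (a direct consequence of Clarkson's inequality for $\mcE_{p}(\,\cdot\,)^{1/p}$ established in Theorem \ref{thm.main1.2}) would give
\[
\mcE_{p}\!\left(\tfrac{h_{1} + h_{2}}{2}\right)^{1/p} < \tfrac{1}{2}\mcE_{p}(h_{1})^{1/p} + \tfrac{1}{2}\mcE_{p}(h_{2})^{1/p} = I^{1/p},
\]
a contradiction. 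Hence $h_{1} - h_{2}$ is a constant; since both agree with $g$ on the non-empty set $K \setminus U$, this constant is $0$ and $h_{1} = h_{2}$.

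The main point requiring care is the uniform bound on the minimizing sequence in the full norm $\norm{\,\cdot\,}_{\mcF_{p}}$ (not just the seminorm), which is handled above using the H\"older embedding together with the fact that $K \setminus U$ is non-empty and $f_{n} - g$ vanishes there; without this step the reflexivity argument cannot be invoked. The remaining ingredients, weak closedness of $\mcA_{g}$, weak lower semicontinuity of the convex continuous $\mcE_{p}$, and strict convexity via Clarkson, are immediate from the functional-analytic infrastructure already set up in Sections \ref{sec:domain} and \ref{sec:constr}.
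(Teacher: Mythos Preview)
Your proof is correct and takes a cleaner, more functional-analytic route than the paper's. The paper first reduces (by shifting and Markov truncation) to the case $0 \le g \le 1$, introduces the perturbed functionals $\mathsf{c}_{\lambda} = \inf\{\mcE_{p}(f) + \lambda\norm{f}_{L^{p}}^{p}\}$ to produce a minimizing sequence whose truncations $f_{n}^{\#} = (f_{n} \vee 0) \wedge 1$ are uniformly bounded in $L^{p}$, extracts a weak $L^{p}$ limit, passes via Mazur's lemma to strongly $L^{p}$-convergent convex combinations, and then uses Clarkson's inequality directly to show these convex combinations are Cauchy in $\mcF_{p}$. You instead bound the minimizing sequence in the full $\mcF_{p}$-norm via the H\"{o}lder embedding and the vanishing of $f_{n} - g$ at a point of $K \setminus U$, invoke reflexivity of $\mcF_{p}$ to pass to a weak $\mcF_{p}$-limit in the weakly closed affine set $\mcA_{g}$, and conclude by weak lower semicontinuity of the convex continuous seminorm $\mcE_{p}(\,\cdot\,)^{1/p}$. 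Your approach exploits Theorems~\ref{thm.embed} and~\ref{thm.newnorm} more directly and avoids the truncation/Mazur detour; the paper's approach is more hands-on and does not appeal to weak lower semicontinuity. The uniqueness arguments are essentially identical, both resting on Clarkson's inequality for $\mcE_{p}(\,\cdot\,)^{1/p}$.
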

\begin{proof}
    If $g|_{K \setminus U} \equiv a$ for some $a \in \bbR$, then $h \coloneqq a$ is the required function.
    Suppose that $g \in \mcF_p$ is not constant on $K \setminus U$.
    Since $g$ is bounded and $\mcE_{p}(f + a\indicator{K}) = \mcE_{p}(f)$ for any $f \in \mcF_p$ and $a \in \bbR$, we may assume that $0 \le g \le 1$.
    Clearly, $\{ f \in \mcF_p \mid f|_{K \setminus U} \equiv g |_{K \setminus U} \}$ is non-empty.
    For each $\lambda \ge 0$, define
    \[
    \mathsf{c}_{\lambda} \coloneqq \inf\{ \mcE_{p}(f) + \lambda\norm{f}_{L^{p}}^{p} \mid \text{$f \in \mcF_{p}$ with $f|_{K \setminus U} \equiv g |_{K \setminus U}$} \}.
    \]
    Note that $\mathsf{c}_{\lambda} < \infty$.
    Let $f \in \mcF_{p}$ satisfy $f|_{K \setminus U} \equiv g|_{K \setminus U}$.
    Set $f^{\#} \coloneqq (f \vee 0) \wedge 1 \in \mcF_{p}$.
    Then, it follows from $0 \le g \le 1$ that $f^{\#}|_{K \setminus U} = g |_{K \setminus U}$.
    Thus,
    \[
    \mcE_{p}(f) \ge \mcE_{p}(f^{\#}) + \lambda\norm{f^{\#}}_{L^{p}}^{p} - \lambda \ge \mathsf{c}_{\lambda} - \lambda,
    \]
    which implies that $\mathsf{c}_{0} \ge \mathsf{c}_{\lambda} - \lambda$ for any $\lambda \ge 0$.
    For each $n \in \bbN$, we can choose $f_n \in \mcF_{p}$ with $f_{n}|_{K \setminus U} \equiv g |_{K \setminus U}$ such that
    \[
    \mcE_{p}(f_n) + n^{-1}\norm{f_{n}}_{L^p}^{p} < \mathsf{c}_{n^{-1}} + n^{-1}.
    \]
    Then $\mcE_{p}(f_{n}^{\#}) \le \mathsf{c}_{0} + 2n^{-1}$ for each $n \in \bbN$, where $f_{n}^{\#} \coloneqq (f_n \vee 0) \wedge 1 \in \mcF_{p}$.
    Since $\norm{f_{n}^{\#}}_{L^p}^{p} \le 1$ for any $n \in \bbN$, there exist $h \in L^{p}(K, \mu)$ and a subsequence $\{ n_k \}_{k \ge 1}$ such that $\{ f_{n_k}^{\#} \}_{k \ge 1}$ converges weakly to $h$ in $L^{p}$.
    Applying Mazur's lemma, we find convex combinations $u_{k} = \sum_{j = k}^{N_{k}}a_{k, j}f_{n_j}^{\#}$ (i.e. $N_k \in \bbN$, $a_{k, j} \ge 0$ and $\sum_{j = k}^{N_k}a_{k, j} = 1$) such that $u_{k}$ converges to $h$ in $L^{p}$ as $k \to \infty$.
    Note that $f_{n}^{\#}|_{K \setminus U} \equiv g |_{K \setminus U}$ and thus $u_{k}|_{K \setminus U} \equiv g |_{K \setminus U}$.
    Also, we obtain $h|_{K \setminus U} = g |_{K \setminus U} \, \mu$-a.e. since $u_{k} \to h$ in $L^{p}$ as $k \to \infty$.
    By the triangle inequality of $\mcE_{p}(\,\cdot\,)^{1/p}$, we have that $\mcE_{p}(u_k) \in [\mathsf{c}_{0}, \mathsf{c}_{0} + 2n_{k}^{-1})$, which together with Clarkson's inequality implies that $\lim_{k \wedge l \to \infty}\mcE_{p}(u_k - u_l) = 0$.
    Indeed, when $p < 2$,
    \begin{align*}
        \mcE_{p}(u_k - u_l)^{\frac{1}{p - 1}}
        &\le 2\bigl(\mcE_{p}(u_k) + \mcE_{p}(u_l)\bigr)^{\frac{1}{p - 1}} - \mcE_{p}(u_k + u_l)^{\frac{1}{p - 1}} \\
        &\le 2\bigl(2\mathsf{c}_{0} + 2n_{k}^{-1} + 2n_{l}^{-1}\bigr)^{\frac{1}{p - 1}} - 2^{\frac{p}{p - 1}}\mathsf{c}_{0}^{\frac{1}{p - 1}} \\
        &\underset{k \wedge l \to \infty}{\rightarrow}
        2(2\mathsf{c}_{0})^{\frac{1}{p - 1}} - 2^{\frac{p}{p - 1}}\mathsf{c}_{0}^{\frac{1}{p - 1}} = 0.
    \end{align*}
    The case $p \ge 2$ is similar to the above.
    Therefore, $\{ u_k \}_{k \ge 1}$ is a Cauchy sequence in $\mcF_{p}$.
    Since $\mcF_{p}$ is a Banach space, we see that $h \in \mcF_p$ and $u_k$ converges to $h$ in $\mcF_{p}$.
    Moreover, by $\lim_{k \to \infty}\norm{u_{k}}_{L^p} = \norm{h}_{L^{p}}$, we conclude that $\mcE_{p}(h) = \lim_{k \to \infty}\mcE_{p}(u_k) = \mathsf{c}_{0}$, that is, $h$ is a minimizer of $\inf\{ \mcE_{p}(f) \mid \text{$f \in \mcF_{p}$ with $f|_{K \setminus U} \equiv g|_{K \setminus U}$} \}$.

    Lastly, we prove the uniqueness.
    Let $h_{i} \in \mcF_p \, (i = 1, 2)$ be $\mcE_p$-harmonic on $U$ with $h_{i}|_{K \setminus U} \equiv g|_{K \setminus U}$.
    When $p < 2$, by Clarkson's inequality of $\mcE_{p}$,
    \begin{align*}
        \mcE_{p}(h_1 - h_2)^{\frac{1}{p - 1}}
        &\le 2\bigl(\mcE_{p}(h_1) + \mcE_{p}(h_2)\bigr)^{\frac{1}{p - 1}} - \mcE_{p}(h_1 + h_2)^{\frac{1}{p - 1}} \\
        &\le 2^{1 + \frac{1}{p - 1}}\mathsf{c}_{0}^{\frac{1}{p - 1}} - 2^{\frac{p}{p - 1}}\mathsf{c}_{0}^{\frac{1}{p - 1}}
        = 0.
    \end{align*}
    Thus $h_1 - h_2$ is constant by Theorem \ref{thm.main1.2}-(1).
    Since $K \setminus U$ is not empty, we have that $h_1 = h_2$.
    The case $p \le 2$ is similar.
\end{proof}

Recall the definitions of $\mathcal{K}^{-}, \mathcal{K}^{+}$ (see \eqref{dfn.LR}). 
It is immediate from Theorems \ref{thm.main1} and \ref{thm.main1.2}-(3) that $\{ f \in \mcF_{p} \mid f|_{\mathcal{K}^{\,-}} \equiv 0, f|_{\mathcal{K}^{\,+}} \equiv 1 \} \neq \emptyset$.
Thus we have the following lemma by applying Proposition \ref{prop.p-harm} and using the symmetries of Sierpi\'{n}ski carpets.

\begin{lem}\label{lem.p-harm_LR}
    There exists a function $h_{0} \in \mcF_{p}$ such that $h_{0}|_{\mathcal{K}^{-}} \equiv 0$, $h_{0}|_{\mathcal{K}^{+}} \equiv 1$ and $h_{0}$ is $\mcE_{p}$-harmonic on $K \setminus (\mathcal{K}^{-} \cup \mathcal{K}^{+})$.
    Moreover, it holds that $h_{0} \circ R_{j} = h_{0}$ for all $j = 2, \dots, D$. 
\end{lem}

Let $h_0$ be the $\mathcal{E}_{p}$-harmonic function given in Lemma \ref{lem.p-harm_LR}.
Since $\mcE_p(f) = 0$ if and only if $f$ is constant, we immediately have that $\mcE_{p}(h_0) > 0$.
Inductively, we define
\begin{align}\label{hn}
    &h_{n} \coloneqq \sum_{i = (i_{k})_{k = 1}^{D} \in S}(F_{i})_{\ast}\bigl(a^{-1}(h_{n - 1} + i_{1}\indicator{K})\bigr). 
\end{align}
The following proposition is clear by its definition and the self-similarity of $\mcE_p$.

\begin{lem}\label{lem.hn}
    For any $n \in \bbZ_{\ge 0}$, it holds that $h_{n} \in \mcF_p$, $h_{n}|_{\mathcal{K}^{-}} \equiv 0$, and $h_{n}|_{\mathcal{K}^{+}} \equiv 1$.
\end{lem}

Hereafter we suppose that $D = 2$. 
The following lemma is a key to prove Theorem \ref{thm.betap_strict}.

\begin{lem}\label{lem.not-harm}
	Suppose that $D = 2$.
    Then the function $h_2$ is not $\mcE_p$-harmonic on $K \setminus (\mathcal{K}^{-} \cup \mathcal{K}^{+})$.
\end{lem}
\begin{proof}
	Set 
	\[
	\mathcal{K}^{\,-}_{\ast} \coloneqq K \cap B_{2, -1}, \quad \mathcal{K}^{\,+}_{\ast} \coloneqq K \cap B_{2, +1}. 
	\]
	(Recall notations in Definition \ref{dfn.hyperplane}.)
    Suppose to the contrary that $h_2$ were $\mcE_p$-harmonic on $K \setminus (\mathcal{K}^{-} \cup \mathcal{K}^{+})$.
    We claim that then a contradiction that $h_{0}|_{\mathcal{K}^{+}_{\ast}} \equiv 0$ would be implied.
    Let $\varphi \in \mcF_{p}^{K \setminus (\mathcal{K}^{-} \cup \mathcal{K}^{-}_{\ast})}$, i.e. $\varphi \in \mathcal{F}_{p}$ satisfies $\varphi(x) = 0$ for any $x \in \mathcal{K}^{-} \cup \mathcal{K}^{-}_{\ast}$.
    
    From $S \subsetneq \{ 0, \dots, a - 1 \}^{2}$, $a \ge 3$ and \hyperlink{GSC4}{\textup{(GSC4)}}, there exist $v_{1}, v_{2} \in S \subseteq \mathbb{R}^{2}$ such that $v_{1} + \mathbf{e}_{1} \not\in S$, $v_{2} \in W_{1}\bigl[\mathcal{K}^{+}\bigr]$ and $v_{2} + \mathbf{e}_{2} \in S$. 
    Now, we define $\varphi_{\star} \in \mathcal{C}(K)$ by setting 
    \[
    \varphi_{\star}(x) \coloneqq 
    \begin{cases}
    	a^{-2}(F_{v_{1}v_{2}})_{\ast}\varphi(x) \, &\text{if $x \in K_{v_{1}v_{2}}$,} \\
    	a^{-2}(F_{v_{1}(v_{2} + \mathbf{e}_{2})})_{\ast}\bigl(\varphi \circ R_{2}\bigr)(x) \, &\text{if $x \in K_{v_{1}(v_{2} + \mathbf{e}_{2})}$,} \\
    	0 \, &\text{otherwise.}
    \end{cases}
    \] 
    Note that $\varphi_{\star}$ is well-defined since 
    \[
    (F_{v_{1}v_{2}})_{\ast}\varphi(x)|_{F_{v_{1}v_{2}}(\mathcal{K}^{\,-} \cup \mathcal{K}^{\,+}_{\ast})} = 0, \quad (F_{v_{1}(v_{2} + \mathbf{e}_{2})})_{\ast}\varphi(x)|_{F_{v_{1}(v_{2} + \mathbf{e}_{2})}(\mathcal{K}^{\,-} \cup \mathcal{K}^{\,-}_{\ast})} = 0, 
    \]
    and
    \[
    \bord{\bigl(K_{v_{1}v_{2}} \cup K_{v_{1}(v_{2} + \mathbf{e}_{2})}\bigr)} = F_{v_{1}v_{2}}\bigl(\mathcal{K}^{-} \cup \mathcal{K}^{+}_{\ast}\bigr) \cup  F_{v_{1}(v_{2} + \mathbf{e}_{2})}\bigl(\mathcal{K}^{-} \cup \mathcal{K}^{-}_{\ast}\bigr) \cup \bigcup_{\mathbf{s} \in \{ \mathbf{0}, \mathbf{e}_{2} \}}F_{v_{1}(v_{2} + \mathbf{s})}(\mathcal{K}^{+}). 
    \]
    Moreover, it follows that $\varphi_{\star} \in \mathcal{F}_{p}^{K \setminus (\mathcal{K}^{-} \cup \mathcal{K}^{+})}$. 
    Since $h_{2}$ is $\mathcal{E}_{p}$-harmonic on $K \setminus (\mathcal{K}^{-} \cup \mathcal{K}^{+})$, we have $\mcE_p(h_2 + \varphi_{\star}) > \mcE_p(h_2)$ unless $\varphi \equiv 0$.
    Using Theorem \ref{thm.main1.2}-(4), (5) and $\varphi \circ R_{2} = \varphi$, we see that
    \begin{align*}
        &\mcE_p(h_2 + \varphi_{\star}) - \mcE_p(h_2) \\
        &= \rho_{p}^{2}\sum_{w \in W_{2}}\left(\mcE_{p}(F_{w}^{\ast}h_2 + F_{w}^{\ast}\varphi_{\star}) - \mcE_{p}(F_{w}^{\ast}h_2)\right) \\
        &= \rho_{p}^{2}\sum_{\mathbf{s} \in \{ \mathbf{0}, \mathbf{e}_{2} \}}\Bigl(\mcE_{p}(F_{v_{1}(v_{2} + \mathbf{s})}^{\ast}h_2 + F_{v_{1}(v_{2} + \mathbf{s})}^{\ast}\varphi_{\star}) - \mcE_{p}(F_{v_{1}(v_{2} + \mathbf{s})}^{\ast}h_2)\Bigr) \\
        &= 2\rho_{p}^{2}a^{-2p}\left(\mcE_{p}(h_0 + \varphi) - \mcE_{p}(h_0)\right).
    \end{align*}
    Hence, we conclude that $\mcE_{p}(h_0 + \varphi) > \mcE_{p}(h_0)$ for any $\varphi \in \mcF_{p}^{K \setminus (\mathcal{K}^{-} \cup \mathcal{K}^{-}_{\ast})} \setminus \{ 0 \}$.
    This implies that $h_0$ is the minimizer of $\inf\bigl\{ \mcE_p(f) \bigm| \text{$f \in \mcF_p$ with $f|_{\mathcal{K}^{\,-} \cup \mathcal{K}^{\,-}_{\ast}} \equiv h_{0}|_{\mathcal{K}^{\,-} \cup \mathcal{K}^{\,-}_{\ast}}$} \bigr\}$.

    Next, we define $\widetilde{h}_0\in \mathcal{C}(K)$ by
    \begin{align}\label{dfn.h0-sym}
        \widetilde{h}_0(x)\coloneqq
    \begin{cases}
        h_0(x) \quad &\text{if $x \in \mathcal{H}_{1, 2}^{\,-, \le}$,} \\
        \bigl(h_{0} \circ R_{1, 2}^{-}\bigr)(x) &\text{if $x \in \mathcal{H}_{1, 2}^{\,-, \ge}$.}
    \end{cases}
    \end{align}
    Then it is clear that $\widetilde{h}_0 \in \mcF_{p}$ and $\widetilde{h}_{0}|_{\mathcal{K}^{\,-} \cup \mathcal{K}^{-}_{\ast}} \equiv h_{0}|_{\mathcal{K}^{-} \cup \mathcal{K}^{-}_{\ast}}$.
    Moreover, we can show that $\mcE_{p}\bigl(\widetilde{h}_0\bigr) = \mcE_p(h_0)$. 
    To prove this, let 
    \[
    A_{1} \coloneqq \mathcal{H}_{1, 2}^{-, \le} \setminus \mathcal{H}_{1, 2}^{-}, \quad A_{2} \coloneqq \mathcal{H}_{1, 2}^{-} \quad \text{and} \quad A_{3} \coloneqq \mathcal{H}_{1, 2}^{\,-, \ge} \setminus \mathcal{H}_{1, 2}^{-}. 
    \]
    Since $h_{0}|_{A_{1} \sqcup A_{2}} = \widetilde{h}_{0}|_{A_{1} \sqcup A_{2}}$, Theorem \ref{thm.peneLocal} yields $\mu^{p}_{\langle h_{0} \rangle}(A_{1} \sqcup A_{2}) = \mu^{p}_{\langle \widetilde{h}_{0} \rangle}(A_{1} \sqcup A_{2})$. 
    Moreover, since $R_{1, 2}^{\,+}(A_{3}) = A_{3}$ and $\bigl(\widetilde{h}_{0} \circ R_{1, 2}^{+}\bigr)|_{A_3} = (1 - h_0)|_{A_3}$, Proposition \ref{prop.pemeas-sym} and Theorem \ref{thm.peneLocal} imply 
    \[
    \mu_{\langle \widetilde{h}_0 \rangle}^{p}(A_3) = \mu_{\bigl\langle (R_{1, 2}^{\,+})^{\ast}\widetilde{h}_0 \bigr\rangle}^{p}(A_3) = \mu_{\langle 1 - h_0 \rangle}^{p}(A_3) = \mu_{\langle h_0 \rangle}^{p}(A_3).
    \]
    Therefore, we obtain $\mu^{p}_{\langle h_{0} \rangle}(K) = \mu^{p}_{\langle \widetilde{h}_{0} \rangle}(K)$, which implies that $\mcE_{p}\bigl(\widetilde{h}_0\bigr) = \mcE_p(h_0)$. 

    By Proposition \ref{prop.p-harm}, we have $h_0 = \widetilde{h}_0$.
    Hence $h_{0}|_{\mathcal{K}^{+}_{\ast}} \equiv 0$, which contradicts the fact that $h_{0}\bigl((1, 1)\bigr) = 1$.
    We complete the proof.
\end{proof}

Now we are ready to prove Theorem \ref{thm.betap_strict}.

\noindent
\textit{Proof of Theorem \ref{thm.betap_strict}.}\,
    By Lemmas \ref{lem.hn} and \ref{lem.not-harm}, we obtain $\mcE_{p}(h_2) > \mcE_{p}(h_0)$.
    Since
    \[
    \mcE_{p}(h_2) = \rho_{p}^{2}\sum_{w \in W_{2}}\mcE_{p}(F_{w}^{\ast}h_2) = \rho_{p}^{2}a^{-2p}\sum_{w \in W_{2}}\mcE_{p}(h_0),
    \]
    we conclude that $\rho_{p}^{2}a^{-2p}N_{\ast}^{2} > 1$, which proves our assertion for $p > \dim_{\textrm{ARC}}(K, d)$. 
    We know that $\beta_p/p$ is monotonically non-increasing by \cite{Kig20}*{Lemma 4.7.4}, and thus we obtain the desired result. \qed

\begin{rmk}\label{rmk.conj-betap}
    A recent study of $L^{p}$ Besov critical exponent in \cite{ABCRST21} implies a partial result of Theorem \ref{thm.betap_strict}. 
    (Note that sub-Gaussian type heat kernels estimates on GSCs are obtained by Barlow and Bass in \cite{BB99}*{Theorem 1.3}.)
    Indeed, $\beta_p$ is characterized as the $L^{p}$ Besov critical exponent in Corollary \ref{cor.Lp-Besov}.
    Thus a critical exponent $\alpha_{p}^{\#}$ in \cite{ABCRST21}*{equation (7)} coincides with $\beta_{p}/(p\beta_{2})$.
    Therefore, \cite{ABCRST21}*{Theorem 3.11} gives a lower bound of $\beta_{p}$:
    \begin{itemize}
        \item $\beta_{p} \ge \frac{p\beta_{2}}{2}$ for $p \in (\dim_{\textup{ARC}}(K, d), 2)$;
        \item $\beta_{p} \ge (p - 2)(\beta_{2} - \alpha) + \beta_{2}$ for $p \ge 2$.
    \end{itemize}
    This bound implies $\beta_p > p$ for $p \in \bigl(\dim_{\textup{ARC}}(K, d), (2\alpha - \beta_2)/(\alpha - \beta_2 + 1)\bigr)$.
\end{rmk}

%==== Appendix =====
\appendix
\section{Miscellaneous facts}\label{sec:appendix}
%=== URI ===
\subsection{Proof of Lemma \ref{lem.URI}}\label{sec:URI}

This lemma is obtained by observing that the estimates in \cite{Mthesis}*{Lemma 8.4} depend only on the constants controlling rough isometries.
For the reader's convenience, we give a complete proof.

\begin{lem}
    \it
    For each $i = 1, 2$, let $\{ G_{n}^{i} = (V_{n}^{i}, E_{n}^{i}) \}_{n \ge 1}$ be a series of finite graphs with
    \[
    L_{\ast}^{i} \coloneqq \sup_{n \in \bbN}\max_{x \in V_{n}^{i}}\#\{ y \in V_{n}^{i} \mid (x, y) \in E_{n}^{i} \} < \infty,
    \]
    and let $\varphi_{n}\colon V_{n}^{1} \to V_{n}^{2}$ be a uniform rough isometry from $\{ G_{n}^{1} \}_{n \ge 1}$ to $\{ G_{n}^{2} \}_{n \ge 1}$.
    Then there exists a positive constant $C_{\textup{URI}}$ (depending only on $C_{1}, C_{2}$ in Definition \ref{dfn.URI}, $L_{\ast}^{1}$ and $p$) such that
    \begin{equation*}
        \mcE_{p}^{G_{n}^{1}}(f \circ \varphi_{n}) \le C_{\textup{URI}}\,\mcE_{p}^{G_{n}^{2}}(f),
    \end{equation*}
    for every $n \in \bbN$ and $f\colon V_{n}^{2} \to \bbR$.
    In particular,
    \[
    \mcC_{p}^{G_{n}^{1}}(\varphi_{n}^{-1}(A_{n}), \varphi_{n}^{-1}(B_{n})) \le C_{\textup{URI}}\mcC_{p}^{G_{n}^{2}}(A_{n}, B_{n})
    \]
    for every $n \in \bbN$, where $A_{n}, B_{n}$ are disjoint subsets of $V_{n}^{2}$.
\end{lem}
\begin{proof}
    Let $n \in \bbN$, let $f\colon V_{n}^{2} \to \bbR$ and let $(x, y) \in E_{n}^{1}$.
    We set $x' = \varphi_{n}(x)$ and $y' = \varphi_{n}(y)$.
    Let $C_{i} \, (i = 1, \dots 4)$ be constants in the definition of uniform rough isometry.
    Then we get
    \begin{align*}
        0 \leq d_{G_{n}^{2}}(x', y')
        \leq
        C_{1} + C_{2}.
    \end{align*}
    We set $L \in \bbN$ such that $L - 1 < C_{1} + C_{2} \le L$.
    Since $d_{G_{n}^{2}}(x', y') \le L$, there exist $l \le L$ and a path $[z_{0}, z_{1}, \dots, z_{l}]$ in $G_{n}^{2}$ from $x'$ to $y'$, that is $z_{0} = x'$, $z_{l} = y'$ and $(z_{i - 1}, z_{i}) \in E_{n}^{2}$ for each $i = 1, \dots, l$.
    Now, by H\"{o}lder's inequality, we have that
    \begin{align*}
        \abs{f \circ \varphi_{n}(y) - f \circ \varphi_{n}(x)}
        =
        \abs{f(x') - f(y')}
        &\leq
        \sum_{i = 1}^{l}\abs{f(z_{i - 1}) - f(z_{i})} \\
        &\leq
        L^{(p - 1)/p}\left(\sum_{i = 1}^{l}\abs{f(z_{i - 1}) - f(z_{i})}^{p}\right)^{1/p}.
    \end{align*}
    In particular, it follows that
    \begin{equation}\label{ineq.edge}
        \abs{f \circ \varphi_{n}(y) - f \circ \varphi_{n}(x)}^{p} \le L^{p - 1}\sum_{i = 1}^{l}\abs{f(z_{i - 1}) - f(z_{i})}^{p}.
    \end{equation}

    For each $(x, y) \in E_{n}^{1}$, fix a path $\gamma_{xy}' \coloneqq [z_{0}', \dots, z_{l}']$ in $G_{n}^{2}$ from $\varphi_{n}(x)$ to $\varphi_{n}(y)$ with $l \le L$.
    For each $(v, w) \in E_{n}^{2}$, we set
    \begin{equation*}
        M_{(v,w)} \coloneqq \#\{ (x,y) \in E_{n}^{1} \mid \text{path $\gamma_{xy}'$ contains $(v,w)$} \}.
    \end{equation*}
    We also define $\mcA_{v}^{(n)} \coloneqq \{ x \in V_{n}^{1} \mid \varphi_{n}(x) \in B_{d_{G_{n}^{2}}}(v, L) \}$ for each $y \in V_{n}^{2}$.
    Then, for any $a, b \in \mcA_{v}^{(n)}$,
    \begin{align*}
        C_{1}^{-1}d_{G_{n}^{1}}(a, b) - C_{2}
        \leq
        d_{G_{n}^{2}}(\varphi_{n}(a), \varphi_{n}(b))
        \leq
        d_{G_{n}^{2}}(\varphi_{n}(a), v) + d_{G_{n}^{2}}(\varphi_{n}(b), v)
        \leq
        2L.
    \end{align*}
    Therefore, we have $\diam(\mcA_{v}^{(n)}, d_{G_{n}^{1}}) \le C_{1}(2L + C_{2}) \eqqcolon C_{\star}$, which implies that $\#\mcA_{v}^{(n)} \le C_{\star}L_{\ast}^{1}$ for any $n \in \bbN$ and $v \in V_{n}^{2}$.
    Now, since the length of $\gamma_{\varphi_{n}(x)\varphi_{n}(y)}$ is less than $L$, it follows that
    \[
    \{ (x, y) \in E_{n}^{1} \mid \text{a path $\gamma'_{xy}$ contains a edge $(v, w) \in E_{n}^{2}$} \} \subseteq \mcA_{v}^{(n)} \times \mcA_{v}^{(n)}.
    \]
    This yields that $\#M_{(v,w)} \le (C_{\star}L_{\ast}^{1})^{2}$ for any $n \in \bbN$ and $(v,w) \in E_{n}^{2}$.
    Using this bound and summing \eqref{ineq.edge} over $(x,y) \in E_{n}^{1}$, we conclude that
    \begin{equation*}
        \mcE_{p}^{G_{n}^{1}}(f \circ \varphi_{n}) \le L^{p - 1}(C_{\star}D_{\ast}^{1})^{2}\,\mcE_{p}^{G_{n}^{2}}(f). \qedhere
    \end{equation*}
\end{proof}

%=== LB embedding ===
\subsection{Proof of Lemma \ref{lem.LBembed}}\label{sec:LBembed}

We prove Lemma \ref{lem.LBembed} in a metric measure space setting by extending \cite{GHL03}*{Theorem 4.11-$\textrm{(i\hspace{-.08em}i\hspace{-.08em}i)}$}.
Let $(\mathcal{X}, d, \mu)$ be $\alpha$-Ahlfors regular, that is, $(\mcX, d)$ is a non-empty metric space, $\mu$ is a Borel regular measure on $(\mcX, d)$ without point mass, and there exist $\alpha > 0$ and $C_{\text{AR}} \ge 1$ such that
\[
C_{\text{AR}}^{-1}r^{\alpha} \le \mu(B_{d}(x, r)) \le C_{\text{AR}}r^{\alpha}
\]
for every $x \in X$ and $r \in (0, \diam(\mcX, d))$.
Note that $\dim_{\text{H}}(\mcX, d) = \alpha$.

\begin{lem}
    Let $\beta > \alpha$ and $p > 1$.
    Then there exists a positive constant $C_{\ref{lem.LBembed}}$ (depending only on $p, \beta, \alpha, C_{\textup{AR}}$) such that
    \begin{align*}
        &\abs{f(x) - f(y)}^{p} \\
        &\le C_{\ref{lem.LBembed}}d(x, y)^{\beta - \alpha}\sup_{r \in (0, 3d(x,y)]}r^{-\beta}\int_{\mathcal{X}}\mint_{B_{d}(z, r)}\abs{f(z) - f(z')}^{p}\,d\mu(z')d\mu(z),
    \end{align*}
    for every $f \in \Lambda_{p, \infty}^{\beta/p}$ and $\mu$-a.e. $x, y \in \mcX$.
\end{lem}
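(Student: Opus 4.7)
The plan is to prove the pointwise estimate by a standard telescoping argument with averages on nested balls, exploiting the Lebesgue differentiation theorem (which holds $\mu$-a.e.\ since $(\mcX, d, \mu)$ is Ahlfors regular, hence doubling).

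Set $R \coloneqq d(x, y)$ and put $B^{\ast} \coloneqq B_{d}(x, 3R)$, so that both $x$ and $y$ lie in $B^{\ast}$. The first step is to reduce the estimate on $\abs{f(x) - f(y)}$ to controlling the two differences $\abs{f(x) - \bar f_{B^{\ast}}}$ and $\abs{f(y) - \bar f_{B^{\ast}}}$, where $\bar f_B \coloneqq \mint_B f\,d\mu$. For $\mu$-a.e.\ $x$, the Lebesgue differentiation theorem gives $f(x) = \lim_{k \to \infty}\bar f_{B_{k}^{x}}$ where $B_{k}^{x} \coloneqq B_{d}(x, 2^{-k}\cdot 3R)$, and similarly for $y$. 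Telescoping,
\[
f(x) - \bar f_{B^{\ast}} = \sum_{k = 0}^{\infty}\bigl(\bar f_{B_{k+1}^{x}} - \bar f_{B_{k}^{x}}\bigr),
\]
and the second step is to estimate each term $\bar f_{B_{k+1}^{x}} - \bar f_{B_{k}^{x}}$ by a double average: since $B_{k+1}^{x} \subseteq B_{k}^{x}$,
\[
\bigl|\bar f_{B_{k+1}^{x}} - \bar f_{B_{k}^{x}}\bigr|^{p} \le \mint_{B_{k+1}^{x}}\mint_{B_{k}^{x}}\abs{f(u) - f(v)}^{p}\,d\mu(v)d\mu(u),
\]
followed by Ahlfors regularity to replace the two inner means by a single mean over $B_{d}(u, r_{k})$ with $r_{k} \coloneqq 2\cdot 2^{-k}\cdot 3R \le 3R$, at the cost of a constant depending only on $C_{\textup{AR}}$.

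The third step is to relate this to the supremum on the right-hand side. Writing $S$ for the supremum in the statement, Ahlfors regularity yields
\[
\bigl|\bar f_{B_{k+1}^{x}} - \bar f_{B_{k}^{x}}\bigr|^{p} \le C\,r_{k}^{-\alpha}\int_{B_{k}^{x}}\mint_{B_{d}(u, r_{k})}\abs{f(u) - f(v)}^{p}\,d\mu(v)d\mu(u) \le C'\,r_{k}^{\beta - \alpha}\,S,
\]
with $C'$ depending only on $p, \beta, \alpha, C_{\textup{AR}}$. Summing over $k$ and using $\beta > \alpha$ to guarantee convergence of the geometric series $\sum_{k} 2^{-k(\beta - \alpha)/p}$, we obtain
\[
\abs{f(x) - \bar f_{B^{\ast}}}^{p} \le \widetilde{C}\,R^{\beta - \alpha}\,S.
\]
Applying the identical argument to $f(y)$ (with the same center ball $B^{\ast}$, noting that $y \in B^{\ast}$, so we can start the telescope from $B_{0}^{y} \coloneqq B_{d}(y, 3R) \subseteq B_{d}(x, 6R)$ and redo the estimates with $R$ replaced by a constant multiple) and combining via $|a - b|^{p} \le 2^{p - 1}(|a|^{p} + |b|^{p})$ finishes the proof.

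The only genuinely delicate point is making sure the $\mu$-a.e.\ set on which Lebesgue differentiation holds is adequate for the conclusion to hold on a full-measure set in $\mcX \times \mcX$; this is routine since the Ahlfors regular space is doubling and $\mu \times \mu$ is obtained from $\mu$. The rest is a clean interplay between Ahlfors regularity, Jensen's inequality (hidden in the double-average bound), and the geometric convergence coming from $\beta > \alpha$; no obstacle of substance arises beyond bookkeeping of constants.
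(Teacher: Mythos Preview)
Your approach is essentially the same as the paper's: telescope via nested balls, use the Lebesgue differentiation theorem (valid $\mu$-a.e.\ by Ahlfors regularity), bound each telescope term by a double average via Jensen, convert to the running supremum via Ahlfors regularity, and sum the resulting geometric series using $\beta > \alpha$.

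There is, however, a small but real calibration error in your constants. You start the telescope at $B^{\ast} = B_d(x, 3R)$, so $B_k^x = B_d(x, 2^{-k}\cdot 3R)$. For the $k=0$ term you need $v \in B_d(u, r_0)$ whenever $u \in B_1^x$ and $v \in B_0^x$, which forces $r_0 \ge \tfrac{3}{2}\cdot 3R = \tfrac{9}{2}R$, not $r_0 \le 3R$ as you assert; your claim $r_k = 2\cdot 2^{-k}\cdot 3R \le 3R$ fails at $k=0$. The same overshoot recurs in your treatment of the $y$-term (``redo with $R$ replaced by a constant multiple''), which would push the supremum range beyond $(0, 3d(x,y)]$. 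Since the statement fixes the constant $3$ in that range, this matters.

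The paper avoids this by a slightly different three-term split: with $r = d(x,y)$ it writes
\[
|f(x) - f(y)| \le |f(x) - f_{B_d(x,r)}| + |f_{B_d(x,r)} - f_{B_d(y,r)}| + |f(y) - f_{B_d(y,r)}|,
\]
telescoping each outer term from radius $r$ (not $3r$) down to $0$, and handling the middle ``cross'' term directly via $B_d(y,r) \subseteq B_d(z, 3r)$ for $z \in B_d(x,r)$. All radii then stay within $(0, 3r]$ exactly. Your two-term split with the common large ball $B^{\ast}$ is conceptually equivalent but forces larger radii; either enlarge the $3$ in the statement or adopt the paper's decomposition to make the constants honest.
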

\begin{proof}
    For $f \in L^{1}_{\textrm{loc}}(\mathcal{X}, \mu)$, $x \in \mathcal{X}$ and $r > 0$, we set $f_{B_{d}(x, r)} \coloneqq \mint_{B_{d}(x, r)}f(z)\,d\mu(z)$ and 
    \[
    A_{r}(f) \coloneqq \sup_{\rho \in (0, 3r]}\rho^{-\beta}\int_{\mathcal{X}}\mint_{B_{d}(z, \rho)}\abs{f(z) - f(z')}^{p}\,d\mu(z')d\mu(z). 
    \]
    Let $f \in \Lambda_{p, \infty}^{\beta/p}$, let $x \neq y \in \mcX$ and $r > 0$ such that $d(x, y) \le r$.
    By Fubini's theorem, 
    \begin{align*}
        f_{B_{d}(x, r)}
        = \frac{1}{\mu(B_{d}(x, r))\mu(B_{d}(y, r))}\int_{B_{d}(x, r)}\int_{B_{d}(y, r)}f(z)\,d\mu(z')d\mu(z).
    \end{align*}
    Also, we have
    \begin{equation*}
        f_{B_{d}(y, r)} = \frac{1}{\mu(B_{d}(x, r))\mu(B_{d}(y, r))}\int_{B_{d}(x, r)}\int_{B_{d}(y, r)}f(z')\,d\mu(z')d\mu(z).
    \end{equation*}
    From these identities, we have that
    \begin{align}\label{LBembed1}
        &\abs{f_{B_{d}(x, r)} - f_{B_{d}(y, r)}}^{p} \\
        &= \abs{\frac{1}{\mu(B_{d}(x, r))\mu(B_{d}(y, r))}\int_{B_{d}(x, r)}\int_{B_{d}(y, r)}(f(z) - f(z'))\,d\mu(z')d\mu(z)}^{p} \nonumber \\
        &\le \frac{1}{\mu(B_{d}(x, r))\mu(B_{d}(y, r))}\int_{B_{d}(x, r)}\int_{B_{d}(y, r)}\abs{f(z) - f(z')}^{p}\,d\mu(z')d\mu(z) \nonumber \\
        &\le \frac{1}{\mu(B_{d}(x, r))\mu(B_{d}(y, r))}\int_{\mathcal{X}}\int_{B_{d}(z, 3r)}\abs{f(z) - f(z')}^{p}\,d\mu(z')d\mu(z)  \nonumber \\
        &\le c_{1}r^{-\alpha}\int_{\mathcal{X}}\mint_{B_{d}(z, 3r)}\abs{f(z) - f(z')}^{p}\,d\mu(z')d\mu(z)  
        \le c_{1}r^{-\alpha + \beta}A_{r}(f), 
    \end{align}
    where we used H\"{o}lder's inequality in the third line and the Ahlfors regularity in the fifth line.
    ($c_1$ is a positive constant depending only on $C_{\text{AR}}$.)
    Similarly, we obtain
    \begin{align}\label{LBembed2}
        \abs{f_{B_{d}(x, 2r)} - f_{B_{d}(x, r)}}^{p} \le c_{1}r^{-\alpha + \beta}A_{r}(f).
    \end{align}

    Next, let $\mathcal{X}_{\ast}$ be the set of Lebesgue points with respect to $f$ and let $r > 0$.
    By Lebesgue's differential theorem on Ahlfors regular metric measure space (see \cite{Hei}*{Theorem 1.8} for example), it holds that $\mu(\mcX\setminus\mcX_{\ast}) = 0$.
    Set $r_{k} \coloneqq 2^{-k}r$ for any $k \in \mathbb{Z}_{\ge 0}$.
    Then, for any $x \in \mathcal{X}_{\ast}$ and any $\varepsilon > 0$ there exists $K \in \mathbb{N}$ such that $\abs{f(x) - f_{B_{d}(x, r_{k})}} < \epsilon$ for all $k \geq K$.
    Now we have that
    \begin{align*}
        &\abs{f(x) - f_{B_{d}(x, r)}} \\
        &\le \abs{f(x) - f_{B_{d}(x, r_{k})}} + \abs{f_{B_{d}(x, r_{K})} - f_{B_{d}(x, r_{0})}} \le \varepsilon + \sum_{k = 0}^{\infty}\abs{f_{B_{d}(x, r_{k})} - f_{B_{d}(x, r_{k + 1})}}.
    \end{align*}
    Since $\varepsilon > 0$ is arbitrary, we get
    \begin{equation*}
        \abs{f(x) - f_{B_{d}(x, r)}} \le \sum_{k = 0}^{\infty}\abs{f_{B_{d}(x, r_{k})} - f_{B_{d}(x, r_{k + 1})}},
    \end{equation*}
    for any $x \in \mcX_{\ast}$.
    From this inequality and \eqref{LBembed2}, we see that
    \begin{align}\label{LBembed3}
        \abs{f(x) - f_{B_{d}(x, r)}} 
        \le \sum_{k = 0}^{\infty}\abs{f_{B_{d}(x, r_{k})} - f_{B_{d}(x, 2r_{k})}} 
        \le c_{2}r^{(\beta - \alpha)/p}A_{r}(f)^{1/p}, 
    \end{align}
    where $c_{2} \coloneqq c_{1}^{1/p}\sum_{k = 0}^{\infty}2^{-k(\beta - \alpha)/p}$.

    Since $\mu$ has no point mass, it holds that $\mathcal{X}_{\ast} \setminus \{ x \} \neq \emptyset$ for any $x \in \mathcal{X}_{\ast}$.
    Let $y \in \mathcal{X}_{\ast} \setminus \{ x \}$ and set $r \coloneqq d(x, y) > 0$.
    From \eqref{LBembed1} and \eqref{LBembed3}, we conclude that
    \begin{align*}
        \abs{f(x) - f(y)}
        &\le \abs{f(x) - f_{B_{d}(x, r)}} + \abs{f_{B_{d}(x, r)} - f_{B_{d}(y, r)}} + \abs{f(y) - f_{B_{d}(y, r)}} \\
        &\hspace{-5pt}\le c_{3}r^{(\beta - \alpha)/p}A_{r}(f)^{1/p},
    \end{align*}
    where $c_{3} \coloneqq c_{1}^{1/p} + 2c_{2}$.
    This proves our assertion.
\end{proof}

%    Bibliographies can be prepared with BibTeX using amsplain,
%    amsalpha, or (for "historical" overviews) natbib style.
%\bibliographystyle{amsplain}
%\bibliography{pSC_v3.bib}
%    Insert the bibliography data here.
% \bib, bibdiv, biblist are defined by the amsrefs package.
\begin{bibdiv}
\begin{biblist}

\bib{ACFPc19}{article}{
      author={Albin, Nathan},
      author={Clemens, Jason},
      author={Fernando, Nethali},
      author={Poggi-Corradini, Pietro},
       title={Blocking duality for {$p$}-modulus on networks and applications},
        date={2019},
        ISSN={0373-3114},
     journal={Ann. Mat. Pura Appl. (4)},
      volume={198},
      number={3},
       pages={973\ndash 999},
         url={https://doi.org/10.1007/s10231-018-0806-0},
      review={\MR{3954401}},
}

\bib{AB21}{article}{
      author={Alonso~Ruiz, Patricia},
      author={Baudoin, Fabrice},
       title={Gagliardo-{N}irenberg, {T}rudinger-{M}oser and {M}orrey
  inequalities on {D}irichlet spaces},
        date={2021},
        ISSN={0022-247X},
     journal={J. Math. Anal. Appl.},
      volume={497},
      number={2},
       pages={Paper No. 124899, 26},
         url={https://doi.org/10.1016/j.jmaa.2020.124899},
      review={\MR{4196573}},
}

\bib{ABCRST21}{article}{
      author={Alonso-Ruiz, Patricia},
      author={Baudoin, Fabrice},
      author={Chen, Li},
      author={Rogers, Luke},
      author={Shanmugalingam, Nageswari},
      author={Teplyaev, Alexander},
       title={Besov class via heat semigroup on {D}irichlet spaces {III}: {BV}
  functions and sub-{G}aussian heat kernel estimates},
        date={2021},
        ISSN={0944-2669},
     journal={Calc. Var. Partial Differential Equations},
      volume={60},
      number={5},
       pages={Paper No. 170, 38},
         url={https://doi.org/10.1007/s00526-021-02041-2},
      review={\MR{4290383}},
}

\bib{Bar13}{incollection}{
      author={Barlow, Martin~T.},
       title={Analysis on the {S}ierpinski carpet},
        date={2013},
   booktitle={Analysis and geometry of metric measure spaces},
      series={CRM Proc. Lecture Notes},
      volume={56},
   publisher={Amer. Math. Soc., Providence, RI},
       pages={27\ndash 53},
         url={https://doi.org/10.1090/crmp/056/02},
      review={\MR{3060498}},
}

\bib{BB89}{article}{
      author={Barlow, Martin~T.},
      author={Bass, Richard~F.},
       title={The construction of {B}rownian motion on the {S}ierpi\'{n}ski
  carpet},
        date={1989},
        ISSN={0246-0203},
     journal={Ann. Inst. H. Poincar\'{e} Probab. Statist.},
      volume={25},
      number={3},
       pages={225\ndash 257},
         url={http://www.numdam.org/item?id=AIHPB_1989__25_3_225_0},
      review={\MR{1023950}},
}

\bib{BB90}{article}{
      author={Barlow, Martin~T.},
      author={Bass, Richard~F.},
       title={On the resistance of the {S}ierpi\'{n}ski carpet},
        date={1990},
        ISSN={0962-8444},
     journal={Proc. Roy. Soc. London Ser. A},
      volume={431},
      number={1882},
       pages={345\ndash 360},
         url={https://doi.org/10.1098/rspa.1990.0135},
      review={\MR{1080496}},
}

\bib{BB99}{article}{
      author={Barlow, Martin~T.},
      author={Bass, Richard~F.},
       title={Brownian motion and harmonic analysis on {S}ierpinski carpets},
        date={1999},
        ISSN={0008-414X},
     journal={Canad. J. Math.},
      volume={51},
      number={4},
       pages={673\ndash 744},
         url={https://doi.org/10.4153/CJM-1999-031-4},
      review={\MR{1701339}},
}

\bib{BBKT10}{article}{
      author={Barlow, Martin~T.},
      author={Bass, Richard~F.},
      author={Kumagai, Takashi},
      author={Teplyaev, Alexander},
       title={Uniqueness of {B}rownian motion on {S}ierpi\'{n}ski carpets},
        date={2010},
        ISSN={1435-9855},
     journal={J. Eur. Math. Soc. (JEMS)},
      volume={12},
      number={3},
       pages={655\ndash 701},
         url={https://doi.org/10.4171/jems/211},
      review={\MR{2639315}},
}

\bib{BC23}{article}{
      author={Baudoin, Fabrice},
      author={Chen, Li},
       title={Sobolev spaces and {P}oincar\'{e} inequalities on the {V}icsek
  fractal},
        date={2023},
        ISSN={2737-0690},
     journal={Ann. Fenn. Math.},
      volume={48},
      number={1},
       pages={3\ndash 26},
         url={https://doi.org/10.54330/afm.122168},
      review={\MR{4494039}},
}

\bib{Bil}{book}{
      author={Billingsley, Patrick},
       title={Convergence of probability measures},
     edition={Second},
      series={Wiley Series in Probability and Statistics: Probability and
  Statistics},
   publisher={John Wiley \& Sons, Inc., New York},
        date={1999},
        ISBN={0-471-19745-9},
         url={https://doi.org/10.1002/9780470316962},
        note={A Wiley-Interscience Publication},
      review={\MR{1700749}},
}

\bib{BB.NPT}{book}{
      author={Bj\"{o}rn, Anders},
      author={Bj\"{o}rn, Jana},
       title={Nonlinear potential theory on metric spaces},
      series={EMS Tracts in Mathematics},
   publisher={European Mathematical Society (EMS), Z\"{u}rich},
        date={2011},
      volume={17},
        ISBN={978-3-03719-099-9},
         url={https://doi.org/10.4171/099},
      review={\MR{2867756}},
}

\bib{Bod09}{article}{
      author={Bodin, Mats},
       title={Discrete characterisations of {L}ipschitz spaces on fractals},
        date={2009},
        ISSN={0025-584X},
     journal={Math. Nachr.},
      volume={282},
      number={1},
       pages={26\ndash 43},
         url={https://doi.org/10.1002/mana.200610720},
      review={\MR{2473129}},
}

\bib{Bon06}{incollection}{
      author={Bonk, Mario},
       title={Quasiconformal geometry of fractals},
        date={2006},
   booktitle={International {C}ongress of {M}athematicians. {V}ol. {II}},
   publisher={Eur. Math. Soc., Z\"{u}rich},
       pages={1349\ndash 1373},
      review={\MR{2275649}},
}

\bib{BH.DF}{book}{
      author={Bouleau, Nicolas},
      author={Hirsch, Francis},
       title={Dirichlet forms and analysis on {W}iener space},
      series={De Gruyter Studies in Mathematics},
   publisher={Walter de Gruyter \& Co., Berlin},
        date={1991},
      volume={14},
        ISBN={3-11-012919-1},
         url={https://doi.org/10.1515/9783110858389},
      review={\MR{1133391}},
}

\bib{BK13}{article}{
      author={Bourdon, Marc},
      author={Kleiner, Bruce},
       title={Combinatorial modulus, the combinatorial {L}oewner property, and
  {C}oxeter groups},
        date={2013},
        ISSN={1661-7207},
     journal={Groups Geom. Dyn.},
      volume={7},
      number={1},
       pages={39\ndash 107},
         url={https://doi.org/10.4171/GGD/177},
      review={\MR{3019076}},
}

\bib{BP03}{article}{
      author={Bourdon, Marc},
      author={Pajot, Herv\'{e}},
       title={Cohomologie {$l_p$} et espaces de {B}esov},
        date={2003},
        ISSN={0075-4102},
     journal={J. Reine Angew. Math.},
      volume={558},
       pages={85\ndash 108},
         url={https://doi.org/10.1515/crll.2003.043},
      review={\MR{1979183}},
}

\bib{Brezis}{book}{
      author={Brezis, Haim},
       title={Functional analysis, {S}obolev spaces and partial differential
  equations},
      series={Universitext},
   publisher={Springer, New York},
        date={2011},
        ISBN={978-0-387-70913-0},
      review={\MR{2759829}},
}

\bib{CGQ22}{article}{
      author={Cao, Shiping},
      author={Gu, Qingsong},
      author={Qiu, Hua},
       title={{$p$}-energies on p.c.f. self-similar sets},
        date={2022},
        ISSN={0001-8708},
     journal={Adv. Math.},
      volume={405},
       pages={Paper No. 108517, 58},
         url={https://doi.org/10.1016/j.aim.2022.108517},
      review={\MR{4437617}},
}

\bib{CQ21+}{misc}{
      author={Cao, Shiping},
      author={Qiu, Hua},
       title={Dirichlet forms on unconstrained sierpinski carpets},
        note={preprint, \texttt{arXiv:2104.01529}},
}

\bib{CP13}{article}{
      author={Carrasco~Piaggio, Matias},
       title={On the conformal gauge of a compact metric space},
        date={2013},
        ISSN={0012-9593},
     journal={Ann. Sci. \'{E}c. Norm. Sup\'{e}r. (4)},
      volume={46},
      number={3},
       pages={495\ndash 548 (2013)},
         url={https://doi.org/10.24033/asens.2195},
      review={\MR{3099984}},
}

\bib{CF}{book}{
      author={Chen, Zhen-Qing},
      author={Fukushima, Masatoshi},
       title={Symmetric {M}arkov processes, time change, and boundary theory},
      series={London Mathematical Society Monographs Series},
   publisher={Princeton University Press, Princeton, NJ},
        date={2012},
      volume={35},
        ISBN={978-0-691-13605-9},
      review={\MR{2849840}},
}

\bib{Dal}{book}{
      author={Dal~Maso, Gianni},
       title={An introduction to {$\Gamma$}-convergence},
      series={Progress in Nonlinear Differential Equations and their
  Applications},
   publisher={Birkh\"{a}user Boston, Inc., Boston, MA},
        date={1993},
      volume={8},
        ISBN={0-8176-3679-X},
         url={https://doi.org/10.1007/978-1-4612-0327-8},
      review={\MR{1201152}},
}

\bib{Dav.ST}{book}{
      author={Davies, E.~B.},
       title={Spectral theory and differential operators},
      series={Cambridge Studies in Advanced Mathematics},
   publisher={Cambridge University Press, Cambridge},
        date={1995},
      volume={42},
        ISBN={0-521-47250-4},
         url={https://doi.org/10.1017/CBO9780511623721},
      review={\MR{1349825}},
}

\bib{Dud}{book}{
      author={Dudley, R.~M.},
       title={Real analysis and probability},
      series={Cambridge Studies in Advanced Mathematics},
   publisher={Cambridge University Press, Cambridge},
        date={2002},
      volume={74},
        ISBN={0-521-00754-2},
         url={https://doi.org/10.1017/CBO9780511755347},
        note={Revised reprint of the 1989 original},
      review={\MR{1932358}},
}

\bib{FOT}{book}{
      author={Fukushima, Masatoshi},
      author={Oshima, Yoichi},
      author={Takeda, Masayoshi},
       title={Dirichlet forms and symmetric {M}arkov processes},
     edition={extended},
      series={De Gruyter Studies in Mathematics},
   publisher={Walter de Gruyter \& Co., Berlin},
        date={2011},
      volume={19},
        ISBN={978-3-11-021808-4},
      review={\MR{2778606}},
}

\bib{GYZ22}{article}{
      author={Gao, Jun},
      author={Yu, Zhenyu},
      author={Zhang, Junda},
       title={Convergence of $p$-energy forms on homogeneous p.c.f self-similar
  sets},
        date={2022},
     journal={Potential Anal.},
         url={https://doi.org/10.1007/s11118-022-10031-y},
}

\bib{Gri10}{incollection}{
      author={Grigor'yan, Alexander},
       title={Heat kernels on metric measure spaces with regular volume
  growth},
        date={2010},
   booktitle={Handbook of geometric analysis, {N}o. 2},
      series={Adv. Lect. Math. (ALM)},
      volume={13},
   publisher={Int. Press, Somerville, MA},
       pages={1\ndash 60},
      review={\MR{2743439}},
}

\bib{GHL03}{article}{
      author={Grigor'yan, Alexander},
      author={Hu, Jiaxin},
      author={Lau, Ka-Sing},
       title={Heat kernels on metric measure spaces and an application to
  semilinear elliptic equations},
        date={2003},
        ISSN={0002-9947},
     journal={Trans. Amer. Math. Soc.},
      volume={355},
      number={5},
       pages={2065\ndash 2095},
         url={https://doi.org/10.1090/S0002-9947-03-03211-2},
      review={\MR{1953538}},
}

\bib{GY19}{article}{
      author={Grigor'yan, Alexander},
      author={Yang, Meng},
       title={Local and non-local {D}irichlet forms on the {S}ierpi\'{n}ski
  carpet},
        date={2019},
        ISSN={0002-9947},
     journal={Trans. Amer. Math. Soc.},
      volume={372},
      number={6},
       pages={3985\ndash 4030},
         url={https://doi.org/10.1090/tran/7753},
      review={\MR{4009425}},
}

\bib{GL20}{article}{
      author={Gu, Qingsong},
      author={Lau, Ka-Sing},
       title={Dirichlet forms and convergence of {B}esov norms on self-similar
  sets},
        date={2020},
        ISSN={1239-629X},
     journal={Ann. Acad. Sci. Fenn. Math.},
      volume={45},
      number={2},
       pages={625\ndash 646},
         url={https://doi.org/10.5186/aasfm.2020.4536},
      review={\MR{4112250}},
}

\bib{Hei}{book}{
      author={Heinonen, Juha},
       title={Lectures on analysis on metric spaces},
      series={Universitext},
   publisher={Springer-Verlag, New York},
        date={2001},
        ISBN={0-387-95104-0},
         url={https://doi.org/10.1007/978-1-4613-0131-8},
      review={\MR{1800917}},
}

\bib{HK98}{article}{
      author={Heinonen, Juha},
      author={Koskela, Pekka},
       title={Quasiconformal maps in metric spaces with controlled geometry},
        date={1998},
        ISSN={0001-5962},
     journal={Acta Math.},
      volume={181},
      number={1},
       pages={1\ndash 61},
         url={https://doi.org/10.1007/BF02392747},
      review={\MR{1654771}},
}

\bib{HKST}{book}{
      author={Heinonen, Juha},
      author={Koskela, Pekka},
      author={Shanmugalingam, Nageswari},
      author={Tyson, Jeremy~T.},
       title={Sobolev spaces on metric measure spaces},
      series={New Mathematical Monographs},
   publisher={Cambridge University Press, Cambridge},
        date={2015},
      volume={27},
        ISBN={978-1-107-09234-1},
         url={https://doi.org/10.1017/CBO9781316135914},
        note={An approach based on upper gradients},
      review={\MR{3363168}},
}

\bib{HPS04}{article}{
      author={Herman, P.~Edward},
      author={Peirone, Roberto},
      author={Strichartz, Robert~S.},
       title={{$p$}-energy and {$p$}-harmonic functions on {S}ierpinski gasket
  type fractals},
        date={2004},
        ISSN={0926-2601},
     journal={Potential Anal.},
      volume={20},
      number={2},
       pages={125\ndash 148},
         url={https://doi.org/10.1023/A:1026377524793},
      review={\MR{2032945}},
}

\bib{Hin05}{article}{
      author={Hino, Masanori},
       title={On singularity of energy measures on self-similar sets},
        date={2005},
        ISSN={0178-8051},
     journal={Probab. Theory Related Fields},
      volume={132},
      number={2},
       pages={265\ndash 290},
         url={https://doi.org/10.1007/s00440-004-0396-1},
      review={\MR{2199293}},
}

\bib{Hin13}{article}{
      author={Hino, Masanori},
       title={Upper estimate of martingale dimension for self-similar
  fractals},
        date={2013},
        ISSN={0178-8051},
     journal={Probab. Theory Related Fields},
      volume={156},
      number={3-4},
       pages={739\ndash 793},
         url={https://doi.org/10.1007/s00440-012-0442-3},
      review={\MR{3078285}},
}

\bib{Kaj10.Rmk}{incollection}{
      author={Kajino, Naotaka},
       title={Remarks on non-diagonality conditions for {S}ierpinski carpets},
        date={2010},
   booktitle={Probabilistic approach to geometry},
      series={Adv. Stud. Pure Math.},
      volume={57},
   publisher={Math. Soc. Japan, Tokyo},
       pages={231\ndash 241},
         url={https://doi.org/10.2969/aspm/05710231},
      review={\MR{2648262}},
}

\bib{Kaj13.osc}{incollection}{
      author={Kajino, Naotaka},
       title={Non-regularly varying and non-periodic oscillation of the
  on-diagonal heat kernels on self-similar fractals},
        date={2013},
   booktitle={Fractal geometry and dynamical systems in pure and applied
  mathematics. {II}. {F}ractals in applied mathematics},
      series={Contemp. Math.},
      volume={601},
   publisher={Amer. Math. Soc., Providence, RI},
       pages={165\ndash 194},
         url={https://doi.org/10.1090/conm/601/11935},
      review={\MR{3203831}},
}

\bib{Kaj20+}{article}{
      author={Kajino, Naotaka},
       title={An elementary proof that walk dimension is greater than two for
  {B}rownian motion on {S}ierpi\'{n}ski carpets},
        date={2023},
     journal={Bull. Lond. Math. Soc.},
      volume={55},
      number={1},
       pages={508\ndash 521},
         url={https://doi.org/10.1112/blms.12742},
}

\bib{KM20}{article}{
      author={Kajino, Naotaka},
      author={Murugan, Mathav},
       title={On singularity of energy measures for symmetric diffusions with
  full off-diagonal heat kernel estimates},
        date={2020},
        ISSN={0091-1798},
     journal={Ann. Probab.},
      volume={48},
      number={6},
       pages={2920\ndash 2951},
         url={https://doi.org/10.1214/20-AOP1440},
      review={\MR{4164457}},
}

\bib{KM21+}{article}{
      author={Kajino, Naotaka},
      author={Murugan, Mathav},
       title={On the conformal walk dimension: quasisymmetric uniformization
  for symmetric diffusions},
        date={2023},
        ISSN={0020-9910},
     journal={Invent. Math.},
      volume={231},
      number={1},
       pages={263\ndash 405},
         url={https://doi.org/10.1007/s00222-022-01148-3},
      review={\MR{4526824}},
}

\bib{KS22+}{misc}{
      author={Kajino, Naotaka},
      author={Shimizu, Ryosuke},
        note={in preparation},
}

\bib{KL04}{article}{
      author={Keith, S.},
      author={Laakso, T.},
       title={Conformal {A}ssouad dimension and modulus},
        date={2004},
        ISSN={1016-443X},
     journal={Geom. Funct. Anal.},
      volume={14},
      number={6},
       pages={1278\ndash 1321},
         url={https://doi.org/10.1007/s00039-004-0492-5},
      review={\MR{2135168}},
}

\bib{Kig21+}{misc}{
      author={Kigami, Jun},
       title={Conductive homogeneity of compact metric spaces and construction
  of $p$-energy},
        note={preprint (to appear in \textit{Memoirs of the European
  Mathematical Society}), \texttt{arXiv:2109.08335}},
}

\bib{Kig00}{article}{
      author={Kigami, Jun},
       title={Markov property of {K}usuoka-{Z}hou's {D}irichlet forms on
  self-similar sets},
        date={2000},
        ISSN={1340-5705},
     journal={J. Math. Sci. Univ. Tokyo},
      volume={7},
      number={1},
       pages={27\ndash 33},
      review={\MR{1749979}},
}

\bib{AOF}{book}{
      author={Kigami, Jun},
       title={Analysis on fractals},
      series={Cambridge Tracts in Mathematics},
   publisher={Cambridge University Press, Cambridge},
        date={2001},
      volume={143},
        ISBN={0-521-79321-1},
         url={https://doi.org/10.1017/CBO9780511470943},
      review={\MR{1840042}},
}

\bib{Kig20}{book}{
      author={Kigami, Jun},
       title={Geometry and analysis of metric spaces via weighted partitions},
      series={Lecture Notes in Mathematics},
   publisher={Springer, Cham},
        date={[2020] \copyright 2020},
      volume={2265},
        ISBN={978-3-030-54154-5; 978-3-030-54153-8},
         url={https://doi.org/10.1007/978-3-030-54154-5},
      review={\MR{4175733}},
}

\bib{Kle06}{incollection}{
      author={Kleiner, Bruce},
       title={The asymptotic geometry of negatively curved spaces:
  uniformization, geometrization and rigidity},
        date={2006},
   booktitle={International {C}ongress of {M}athematicians. {V}ol. {II}},
   publisher={Eur. Math. Soc., Z\"{u}rich},
       pages={743\ndash 768},
      review={\MR{2275621}},
}

\bib{KS93}{article}{
      author={Korevaar, Nicholas~J.},
      author={Schoen, Richard~M.},
       title={Sobolev spaces and harmonic maps for metric space targets},
        date={1993},
        ISSN={1019-8385},
     journal={Comm. Anal. Geom.},
      volume={1},
      number={3-4},
       pages={561\ndash 659},
         url={https://doi.org/10.4310/CAG.1993.v1.n4.a4},
      review={\MR{1266480}},
}

\bib{KM98}{article}{
      author={Koskela, Pekka},
      author={MacManus, Paul},
       title={Quasiconformal mappings and {S}obolev spaces},
        date={1998},
        ISSN={0039-3223},
     journal={Studia Math.},
      volume={131},
      number={1},
       pages={1\ndash 17},
      review={\MR{1628655}},
}

\bib{Kum00}{article}{
      author={Kumagai, Takashi},
       title={Brownian motion penetrating fractals: an application of the trace
  theorem of {B}esov spaces},
        date={2000},
        ISSN={0022-1236},
     journal={J. Funct. Anal.},
      volume={170},
      number={1},
       pages={69\ndash 92},
         url={https://doi.org/10.1006/jfan.1999.3500},
      review={\MR{1736196}},
}

\bib{Kum14}{inproceedings}{
      author={Kumagai, Takashi},
       title={Anomalous random walks and diffusions: from fractals to random
  media},
        date={2014},
   booktitle={Proceedings of the {I}nternational {C}ongress of
  {M}athematicians---{S}eoul 2014. {V}ol. {IV}},
   publisher={Kyung Moon Sa, Seoul},
       pages={75\ndash 94},
      review={\MR{3727603}},
}

\bib{KS05}{article}{
      author={Kumagai, Takashi},
      author={Sturm, Karl-Theodor},
       title={Construction of diffusion processes on fractals, {$d$}-sets, and
  general metric measure spaces},
        date={2005},
        ISSN={0023-608X},
     journal={J. Math. Kyoto Univ.},
      volume={45},
      number={2},
       pages={307\ndash 327},
         url={https://doi.org/10.1215/kjm/1250281992},
      review={\MR{2161694}},
}

\bib{KZ92}{article}{
      author={Kusuoka, Shigeo},
      author={Yin, Zhou~Xian},
       title={Dirichlet forms on fractals: {P}oincar\'{e} constant and
  resistance},
        date={1992},
        ISSN={0178-8051},
     journal={Probab. Theory Related Fields},
      volume={93},
      number={2},
       pages={169\ndash 196},
         url={https://doi.org/10.1007/BF01195228},
      review={\MR{1176724}},
}

\bib{MT}{book}{
      author={Mackay, John~M.},
      author={Tyson, Jeremy~T.},
       title={Conformal dimension},
      series={University Lecture Series},
   publisher={American Mathematical Society, Providence, RI},
        date={2010},
      volume={54},
        ISBN={978-0-8218-5229-3},
         url={https://doi.org/10.1090/ulect/054},
        note={Theory and application},
      review={\MR{2662522}},
}

\bib{Sas21+}{misc}{
      author={Sasaya, K\^{o}hei},
       title={Some relation between spectral dimension and ahlfors regular
  conformal dimension on infinite graphs},
        note={preprint, \texttt{arXiv:2109.00851}},
}

\bib{Mthesis}{misc}{
      author={Shimizu, Ryosuke},
      editor={Grigor’yan, Alexander},
      editor={Sun, Yuhua},
       title={Parabolic index of an infinite graph and ahlfors regular
  conformal dimension of a self-similar set},
   publisher={De Gruyter},
        date={2021},
         url={https://doi.org/10.1515/9783110700763-008},
}

\bib{SW04}{article}{
      author={Strichartz, Robert~S.},
      author={Wong, Carto},
       title={The {$p$}-{L}aplacian on the {S}ierpinski gasket},
        date={2004},
        ISSN={0951-7715},
     journal={Nonlinearity},
      volume={17},
      number={2},
       pages={595\ndash 616},
         url={https://doi.org/10.1088/0951-7715/17/2/014},
      review={\MR{2039061}},
}

\bib{Tys00}{article}{
      author={Tyson, Jeremy~T.},
       title={Sets of minimal {H}ausdorff dimension for quasiconformal maps},
        date={2000},
        ISSN={0002-9939},
     journal={Proc. Amer. Math. Soc.},
      volume={128},
      number={11},
       pages={3361\ndash 3367},
         url={https://doi.org/10.1090/S0002-9939-00-05433-2},
      review={\MR{1676353}},
}

\end{biblist}
\end{bibdiv}
\end{document}